\newcommand{\Kahler}{K\"{a}hler}
\newtheorem{thm}{Theorem}
\newtheorem{prop}{Proposition}
\newtheorem{lem}{Lemma}
\newtheorem*{thm*}{Theorem}
\theoremstyle{remark}
\newtheorem{dfn}{\textbf{Definition}}
\newtheorem{eg}{Example}
\newtheorem{Conj*}{\textbf{Conjecture}}
\title[on asymptotic behavior of the second chern forms]%
{ON ASYMPTOTIC BEHAVIOR OF THE SECOND CHERN FORMS ON DEGENERATING K\"{A}HLER-EINSTEIN SURFACES}
\author{Itsuki Tazoe} 
\address{Department of Mathematics, Kyoto University, Kyoto 606-8285. JAPAN}
\email{tazoe.itsuki.52n@st.kyoto-u.ac.jp}
\thanks{Mathematics subject classification:53C26}
\date{}
\begin{document}

\maketitle

\begin{abstract}
    We study an asymptotic behavior of the second Chern forms of canonical metrics on a degenerating family of Kähler surfaces with the central fibre having ADE-singularities. We investigate a function on the unit disc defined by fiber integrals of the forms with a smooth test function on the family. We show a lower bound of the Hölder exponent of the function at the origin. Our main results consists of two cases: one is a bound of Hölder exponent along a line for cscK-metrics, using Biquard-Rollin's a priori estimates for cscK-metrics, and the other is a bound of Hölder exponent at the origin for Ricci-flat metrics.
\end{abstract}

\section{Introduction}

In \cite{King}, J.R.King studied behavior of flat proper family of analytic spaces as currents:
\begin{thm}[Theorem 3.3.2. in \cite{King}]
    Let $f:X \to Y$ be a proper flat map of complex analytic spaces with irreducible fibers of dimension $d$.
    Then for a continuous $(d,d)$ form $\varphi$ on $X$, the function $\Phi: Y \to \mathbb{C}$ defined by 
    \begin{equation}
        \Phi(y) := \int_{f^{-1}(y)} \varphi
    \end{equation}
    is a continuous function on $Y$.
\end{thm}
In \cite{Bar}, D. Barlet developed the behavior of the fibre integral for a one parameter degenerating family of manifolds with assumption on the smoothness of the integrant:
\begin{thm}[Th\'eor\`eme 1 in \cite{Bar}]
    Let $\mathcal{X}$ be an irreducible and reduced analytic space of dimension $n+1$. Let $f : \mathcal{X} \to \Delta$ be a holomorphic surjection onto the unit disc $\Delta \subset \mathbb{C}$ and let $X_t := f^{-1}(t) \subset \mathcal{X}$ be the fibre on $t \in \Delta$. Then for every compact subset $K \subset \mathcal{X}$, there are rational numbers $r_1, \ldots , r_k \in [0,2) \cap \mathbb{Q}$ such that for every smooth $(n,n)$-form $\phi$ on $\mathcal{X}$ supported on $K$, its fibre integration
    \begin{equation*}
        F_\phi(t) = \int_{X_t} \phi 
    \end{equation*}
    admits an asymptotic expansion 
    \begin{equation*}
        F_\phi(t) \sim \sum_{\substack{ r= r_1,\ldots, r_k \\
        j=1,\ldots,n \\
        (m,n) \in \mathbb{N}^2}} T^{r, j}_{m,n} (\phi) t^m \overline{t}^n |t|^r (\log(|t|))^j
    \end{equation*}
    as $t \to 0$, where $T^{r,j}_{m,n}$ are $(1,1)$-currents on $\mathcal{X}$.
\end{thm}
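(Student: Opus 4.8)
The plan is to reduce, by resolution of singularities, to an explicit monomial situation near the central fibre, and then to obtain the expansion from the meromorphic continuation of an associated family of Mellin-type integrals. First I would apply Hironaka's theorem to obtain a proper modification $\pi\colon\widetilde{\mathcal X}\to\mathcal X$, biholomorphic over $\mathcal X\setminus X_0$, with $\widetilde{\mathcal X}$ smooth and $(f\circ\pi)^{-1}(0)$ a simple normal crossing divisor. Since $\pi$ is proper and an isomorphism over a dense open subset, fibre integration is unchanged, $F_\phi(t)=\int_{(f\circ\pi)^{-1}(t)}\pi^*\phi$ for $t\neq0$, where $\pi^*\phi$ is smooth and supported on the compact set $\pi^{-1}(K)$; moreover currents on $\widetilde{\mathcal X}$ push forward to currents on $\mathcal X$ under $\pi_*$. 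Hence we may assume $\mathcal X$ smooth, $X_0$ simple normal crossing, and (shrinking $\Delta$) $X_t$ smooth for $t\neq0$. Covering $\pi^{-1}(K)$ by finitely many coordinate polydiscs $U\cong\Delta^{n+1}$ with coordinates $(z_0,\dots,z_n)$ and using a subordinate partition of unity, it suffices to prove the expansion for $\psi:=\chi\,\pi^*\phi$ with $\operatorname{supp}\chi\subset U$; there $f=u\,z_0^{a_0}\cdots z_p^{a_p}$ with $u$ a nonvanishing holomorphic unit, $a_i\geq1$, $p\leq n$, and writing $u=e^{v}$ and replacing $z_0$ by $e^{v/a_0}z_0$ we may further take $f=z_0^{a_0}\cdots z_p^{a_p}$.

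For $\operatorname{Re}\lambda\gg0$ and integers $a,b\geq0$ set
\begin{equation*}
G_{a,b}(\lambda):=\int_U |f|^{2\lambda}\,f^{\,a}\,\bar f^{\,b}\;\tfrac{i}{2}\,\psi\wedge df\wedge d\bar f ,
\end{equation*}
which converges since $|f|^{2\lambda}$ suppresses the divisor when $\operatorname{Re}\lambda$ is large. The integrand is a top-degree form and $|f|^{2\lambda}f^{a}\bar f^{b}$ depends only on $f$, so the fibration (co-area) formula gives, writing $F_\psi(\rho e^{i\theta})=\sum_{k}F_k(\rho)e^{ik\theta}$,
\begin{equation*}
G_{a,b}(\lambda)=\int_{\Delta}|t|^{2\lambda}\,t^{a}\,\bar t^{b}\,F_\psi(t)\;\tfrac{i}{2}\,dt\wedge d\bar t=2\pi\int_0^1 \rho^{2\lambda+a+b+1}\,F_{b-a}(\rho)\,d\rho ,
\end{equation*}
so $G_{a,b}$ is, up to an elementary factor, the Mellin transform of the angular mode $F_{b-a}$, and the $t\to0$ behaviour of $F_\psi$ can be read off from the poles of the $G_{a,b}$. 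Since $f$ is a monomial, $df=f\,\theta$ with $\theta=\sum_{i\leq p}a_i\,dz_i/z_i$, so $\tfrac{i}{2}\,df\wedge d\bar f=|f|^2\,\tfrac{i}{2}\,\theta\wedge\bar\theta$ and $\tfrac{i}{2}\,\psi\wedge df\wedge d\bar f$ is a \emph{smooth} top-form, vanishing along $\{z_i=0\}$ to order $\geq 2(a_i-1)$ for each $i\leq p$. Therefore $G_{a,b}(\lambda)=\int_U\big(\textstyle\prod_{i\leq p}|z_i|^{2a_i\lambda}\big)\Phi_{a,b}\,dV$ for a smooth function $\Phi_{a,b}$ ($dV$ the Euclidean volume form); replacing $\Phi_{a,b}$ by a Taylor polynomial plus a smooth remainder of high order, $G_{a,b}$ splits into a finite sum of products of elementary one-variable integrals in $z_0,\dots,z_p$ (times $\lambda$-independent constants from $z_{p+1},\dots,z_n$) plus a term holomorphic on $\{\operatorname{Re}\lambda>-N\}$. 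Each elementary integral is a rational function of $\lambda$ with simple poles at rationals forming an arithmetic progression of common difference $1/a_i$, all lying in $\{\operatorname{Re}\lambda<0\}$; hence on each half-plane $\{\operatorname{Re}\lambda>-N\}$, $G_{a,b}$ is meromorphic with finitely many poles, all at rationals in a fixed finite union of arithmetic progressions, of order at most $p+1\leq n+1$, and the coefficients of its Laurent expansions depend linearly and continuously on $\psi$, hence on $\phi$ — so they are $(1,1)$-currents.

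Shifting the integration contour in the Mellin inversion formula past all poles of $G_{a,b}$ in $\{\operatorname{Re}\lambda>-N\}$ and collecting residues produces, for each angular mode $F_k$, a finite sum of terms $\rho^{\mu}(\log\rho)^{j}$ (where $\rho=|t|$) with $0\leq j\leq p$ and $\mu\in\mathbb Q$, $\mu\geq0$ — the non-negativity of $\mu$ reflecting that the poles of $G_{a,b}$ lie in $\{\operatorname{Re}\lambda<0\}$, equivalently that $F_\psi$ stays bounded as $t\to0$ — together with a remainder $O(\rho^{2N})$ controlled by the rapid decay of $G_{a,b}$ along vertical lines off its poles. Restoring the angular factors $e^{ik\theta}=(t/|t|)^{k}$, each term becomes a constant multiple of $t^{m}\bar t^{\,n}|t|^{r}(\log|t|)^{j}$ once the integer part of the exponent is absorbed into $t^{m}\bar t^{\,n}$; the fractional exponents that survive are the finitely many residues of $\tfrac{2}{a_i}\mathbb Z$ modulo $2\mathbb Z$, i.e. rationals $r_1,\dots,r_k\in[0,2)\cap\mathbb Q$. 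Summing the finitely many chart contributions and applying $\pi_*$ yields the stated asymptotic expansion of $F_\phi$ on $\mathcal X$, with $\{r_1,\dots,r_k\}$ the union of the local exponent sets and the $T^{r,j}_{m,n}$ the corresponding residue currents.

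I expect the technical core to be the middle step, together with the bookkeeping in the inversion: one must show each $G_{a,b}$ continues meromorphically with pole locus a finite union of arithmetic progressions of rationals, pole order at most $p+1$, and poles confined to $\{\operatorname{Re}\lambda<0\}$, and then track how the multiplicities $a_i$, the Taylor data of $\psi$ and the twists $a,b$ combine so that every surviving term normalizes to $t^m\bar t^n|t|^r(\log|t|)^j$ with $r\in[0,2)$; one also has to check that the Laurent coefficients are continuous functionals of $\phi$, hence genuine currents. The resolution reduction of the first step and the Mellin-inversion remainder estimate are comparatively standard.
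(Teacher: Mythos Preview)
The paper does not contain a proof of this statement: it is quoted as Th\'eor\`eme~1 of \cite{Bar} in the introduction purely as motivation, and no argument for it is given anywhere in the text. So there is nothing in the paper to compare your proposal against.

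That said, your outline is essentially the classical strategy due to Barlet (and, in closely related forms, Atiyah and Bernstein--Gel'fand): reduce via Hironaka to a monomial map with simple normal crossing central fibre, study the Mellin-type integrals $\int |f|^{2\lambda}f^a\bar f^b\psi$, show they extend meromorphically in $\lambda$ with poles along finitely many rational arithmetic progressions of order bounded by the number of local branches, and recover the asymptotic expansion of $F_\phi$ from the residues by Mellin inversion. Your identification of the fractional exponents $r_i$ as the residues of $\tfrac{2}{a_i}\mathbb Z$ modulo $2\mathbb Z$ and of the maximal log-power as the maximal number of components meeting at a point is correct, as is the observation that the Laurent coefficients are continuous linear functionals of $\phi$ and hence currents. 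The only points I would flag as needing more care are: (i) the change of variable $z_0\mapsto e^{v/a_0}z_0$ to absorb the unit $u$ is only locally valid and may require a further refinement of the cover; (ii) the contour-shifting step needs a uniform polynomial bound on $|G_{a,b}(\sigma+i\tau)|$ as $|\tau|\to\infty$ for fixed $\sigma$, which follows from integration by parts but should be stated; and (iii) the statement in the paper has $j=1,\dots,n$, whereas your argument (correctly) produces $j=0,\dots,n$ --- the $j=0$ terms are of course present, so this is a quirk of the paper's transcription rather than a defect in your argument.
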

 As a special case of the above theorem, we have asymptotic expansion of forms coming from a characteristic form of a metric on $\mathcal{X}$.

 In the present paper, we study asymptotic behavior of fiber integrals \textit{without assumptions on regularity} of the integrant a priori but taking the integrant \textit{canonically}.
 Explicitly, we study asymptotic behavior of characteristic forms of a \textit{ canonical metric} on the relative tangent bundle $T \mathcal{X}/\Delta$ which is not necessarily coming from a smooth metric on $T\mathcal{X}$ and not necessarily smooth around the origin $0 \in \Delta$ a priori.
 
Studying \textit{canonical metrics} on complex manifolds is one of the central problems in complex geometry.
 For Riemann surfaces, the most fundamental situation, we have a unique (up to constant multiplication) metric on every compact Riemann surface with constant Gauss curvature.
 For higher dimensional case, the existence theorems by Aubin and Yau \cite{Au}\cite{Yau}, Yau\cite{Yau}, and Chen-Donaldson-Sun\cite{CDS} guarantee existence of K\"ahler-Einstein metrics, i.e. K\"ahler metrics with constant Ricci curvatures, of general type, Calabi-Yau and (K-stable) Fano manifolds respectively.

 It is natural to investigate degeneration of K\"ahler-Einstein metrics in an algebro-geometric setting. 
 A pioneering work by R. Kobayashi and A. Todorov \cite{KT} investigated degeneration of polarized K3 surfaces, an important class of 2-dimensional Calabi-Yau manifold. 
 In \cite{KT}, it was proved that if polarized K3 surfaces \textit{converges} to a polarized K3 surface possibly singular in an \textit{algebro-geometric sense}, then they also converges to the same K3 surface in the \textit{Gromov-Hausdorff sense} (see \cite{KT} Theorem 8 for the precise statement). 
 Rather Riemannian Geometric context, Anderson \cite{An}, Nakajima \cite{Na}, Bando-Kasue-Nakajima\cite{BKN} etc. studied limits of 4-dimensional Einstein manifolds (see section \ref{sec:Non-coll.} for more details). 
 In \cite{DS1}\cite{DS2}, it was shown that if a convergent sequence of polarized K\"ahler manifolds with uniformly bounded volumes, diameters and Ricci curvatures satisfies a \textit{non-collapsing condition} then its limit is a normal projective variety at worst klt singularities.

 Two different type of \textit{canonical metrics} are treated:
 One is constant scaler curvature K\"ahler metrics (cscK metrics for short) and the other is Ricci-flat metrics.
 The main results of the present paper are the following. 
\begin{thm*}[Theorem \ref{thn:main} in section \ref{sec:Main}]
Let $(\mathcal{X} \to \Delta, \{\Omega_t\}_{t \in \Delta})$ be a smoothing family of compact surface $(X_0, \omega_0)$ with ADE-singularities (see section \ref{sec:BR} for its precise meaning), where $\Delta \subset \mathbb{C}$ is the unit disc, and $f$ be a smooth function on $\mathcal{X}$ with its support containing only $x_0\in X_0 \cong 0 \in \mathbb{C}^2/\Gamma$. Assume that the family admits a simultaneous resolution $\widetilde{\mathcal{X}} \to \Delta_d$ after taking a base change by the ramified covering $\Delta_d \to \Delta$ of degree $d$ and it is non-degenerate at $x_0$ along a real ray $\Delta\cap (0,1)$ (see definition \ref{dfn:non-deg}). Furthermore, we assume that $X_t$ admits a cscK metric $g_t$ with its K\"ahler form $\omega_t$ in $\Omega_t$. Consider a function 
\begin{equation*}
F(t) = \int_{X_t} f_t c_2(g_t) 
\end{equation*}
on $\Delta \cap (0,1)$, where $g_t$ is the cscK metric on $X_t$ with its \Kahler\ class $\Omega_t$, $c_2(g_t)$ is the second Chern form of $g_t$ on $TX_t$ and $f_t = f |_{X_t}$. Then $F$ extends to a H\"{o}lder continuous function on $\Delta\cap [0,1)$ with H\"older exponent  at least $\frac{1}{d}$.
\end{thm*}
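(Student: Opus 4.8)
The plan is to localise the fibre integral near $x_0$, to pass to the simultaneous resolution $\widetilde{\mathcal X}\to\Delta_d$ in which $s:=t^{1/d}$ is the natural coordinate, and to exploit that the second Chern form is quadratic in the curvature, so that $\int_U c_2(g)$ is invariant under homothety in real dimension four. Concretely, if one can show that after the base change $G(s):=F(s^d)$ extends to a Lipschitz (indeed, I would expect, a $C^1$) function of $s$ near $0$, then, because $t\mapsto t^{1/d}$ is H\"older of exponent $\tfrac1d$, the composite $F(t)=G(t^{1/d})$ extends to a H\"older-$\tfrac1d$ function on $\Delta\cap[0,1)$ — the value $\tfrac1d$ being attained already when the linear term of $G$ is nonzero, which explains the ``at least''. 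On $\Delta\cap(0,1)$ the cscK metrics depend smoothly on $t$ by the usual deformation theory, so the whole content is the behaviour as $t\to 0^+$.

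First I would split $X_t$ (equivalently $\widetilde X_s$) into an \emph{exterior} region staying a definite distance from $x_0$, a shrinking \emph{bubble} region collapsing onto $x_0$, and an intermediate \emph{neck}. On the exterior the family is a smooth non-degenerating deformation, so — invoking that the family is non-degenerate at $x_0$ along the real ray — the cscK metrics $g_t$ converge in $C^\infty_{\mathrm{loc}}$ to a limiting cscK (orbifold) metric $g_0$ on $X_0\setminus\{x_0\}$ at a rate polynomial in $s$, whence $\int_{\mathrm{ext}}f_t\,c_2(g_t)\to\int_{X_0\setminus\{x_0\}}f_0\,c_2(g_0)$ at that rate. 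On the bubble one rescales: the rescaled metrics converge, again at a polynomial-in-$s$ rate, to a complete scalar-flat \Kahler\ ALE space $Y$ modelled on $\mathbb C^2/\Gamma$ (hyperk\"ahler when $g_t$ is \Kahler--Einstein), and by scale-invariance together with $f_t\to f(x_0)$ uniformly on the shrinking bubble one gets $\int_{\mathrm{bub}}f_t\,c_2(g_t)\to f(x_0)\bigl(\chi(Y)-\tfrac1{|\Gamma|}\bigr)$, using Gauss--Bonnet for ALE four-manifolds. On the neck the metric is $C^0$-close to the curvature-flat cone $C(S^3/\Gamma)$, so $c_2(g_t)$ is pointwise small there and the neck contribution tends to $0$, with rate controlled by the curvature decay along the neck. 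Summing identifies $\lim_{t\to 0^+}F(t)$, and keeping track of the three rates gives that $G(s)$ is Lipschitz near $0$. A variant that avoids rescaling the bubble is to write $c_2(g_t)=\tfrac12 c_1(g_t)^2-\mathrm{ch}_2(g_t)$: the term $\int_{X_t}f_t\,c_1(g_t)^2$ can be treated through the Ricci form and the cscK equation, and reduces — after comparing $\omega_t$ with a fixed \Kahler\ form on $\mathcal X$ and integrating by parts — to fibre integrals to which Th\'eor\`eme 1 of \cite{Bar} applies, while $\int_{X_t}f_t\,\mathrm{ch}_2(g_t)$ still carries the curvature concentration and needs the bubble analysis.

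The step I expect to be the main obstacle is the \emph{quantitative} input: polynomial-in-$s$ convergence rates for the cscK metrics along the degeneration — the exterior $C^\infty_{\mathrm{loc}}$ convergence to $g_0$, the rescaled convergence of the bubble to $Y$, and uniform curvature decay on the neck. This is a uniform a priori estimate for a family of solutions of the fourth-order cscK equation over a degenerating complex structure, and it is precisely here that the hypothesis of Definition \ref{dfn:non-deg} must be used, presumably to guarantee uniform invertibility of the linearised (Lichnerowicz-type) operator so that an implicit-function or gluing argument yields the estimates. The passage from the \Kahler--Einstein case — a second-order complex Monge--Amp\`ere equation, where such estimates are classical — to the general cscK case, which is of fourth order, is what makes this step delicate, and I would expect it, together with the earlier sections of the paper, to carry most of the weight.
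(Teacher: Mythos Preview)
Your outline is sound and you have correctly located the decisive input: quantitative weighted estimates for the cscK metrics along the degeneration, supplied here by Biquard--Rollin (Theorem~\ref{thm:BR} in section~\ref{sec:BR}), together with the non-degeneracy hypothesis of Definition~\ref{dfn:non-deg} which makes their gluing/implicit-function argument run. You also predict the correct limit value, $\int_{X_0}f_0\,c_2(g_0)+f(x_0)\,e_{\mathrm{orb}}(Y_{\dot\zeta})$.

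The paper's route differs from yours in one structural respect. Rather than estimating $c_2(\hat g_t)$ region by region directly, it introduces an \emph{explicit model Hermitian metric} $h_t$ --- the Biquard--Rollin glued metric (\ref{def:h-tilde})--(\ref{def:h}), equal to $g_0$ on the exterior and to the Kronheimer ALE metric $g_{\zeta(t)}$ on the bubble, with two explicit transition annuli --- and applies the Bott--Chern transgression formula $c_2(\hat g_t)=c_2(h_t)+dd^c_t\tau$. After integrating by parts this gives $F(t)=\int f_t\,c_2(h_t)+\int(dd^c_t f_t)\wedge\tau$. The first integral is computed over \emph{five} regions (your exterior/bubble/neck, with the neck split into two gluing annuli plus an intermediate shell) using only the explicit form of $h_t$; the second is a single correction term, bounded by $O(\varepsilon^{2-\beta(\delta+2)})$ straight from the weighted $C^{4,\alpha}_{\delta+2}$ estimate on the cscK potential $\phi_t$. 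What this buys over your direct approach is that the dominant contributions are read off from an explicit metric whose curvature one can compute by hand, while all dependence on the actual cscK solution is confined to one error integral; your plan would instead require tracking $C^2$-convergence of $\hat g_t$ itself separately in each zone --- doable from the same estimates, but less clean. Your alternative via $c_2=\tfrac12 c_1^2-\mathrm{ch}_2$ and Barlet's theorem is not the route taken.

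One imprecision worth flagging: the claim that the neck contribution is small because the metric is ``$C^0$-close to the flat cone'' is insufficient as stated --- curvature is a second-order quantity, so $C^0$-closeness to a flat metric does not make $c_2$ small. You need the weighted $C^2$ control coming from the $C^{4,\alpha}$ estimate, which you do acknowledge as the main obstacle; the paper's Bott--Chern packaging is precisely what lets it avoid arguing this pointwise in the neck.
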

The above theorem is proven by using an a priori estimate of cscK metrics by Biquard-Rollin \cite{BR}.
A family of K\"ahler metrics which approximates a family of cscK metrics with prescribed K\"ahler classes is constructed and the difference of the approximating metrics and the cscK metrics are estimated by the a priori estimate of cscK equation in \cite{BR}.
Using the results, the behavior of $c_2(g_t)$ can be investigated by using the approximating metrics and the a priori estimate.

In the above theorem, we only know asymptotic behavior of $c_2(g_t)$ \textit{along the real ray}.
To investigate asymptotic behavior \textit{on the hole disc}, we further assume that the family is K\"ahler-Einstein.
For a K\"ahler--Einstein case, we consider a family of polerized K3 surfaces $(\mathcal{X}, \mathcal{L}) \to \Delta$.
For each fiber $(X_t, L_t)$, let $\hat{g}_t$ be the Ricci flat K\"ahler metric with its K\"ahler form $\hat{\omega}_t \in c_1(L_t)$. 
To study behavior of the second chern forms $c_2(\hat{g}_t)$, an \textit{almost Ricci flat} K\"ahler metrics $\tilde{g}_t$ on $X_t$ which approximates $\hat{g}_t$ well are constructed (see section \ref{sec:K3} for details). 
And then, the metric $\tilde{g}_t$ approximates $\hat{g}_t$ not only its metric structure, but also its Ricci curvature. 
Then the main theorem in section \ref{sec:K3} is the following. 
\begin{thm*}[Theorem \ref{thm:main2} in section \ref{sec:K3}]
    Let $(\mathcal{X}, \mathcal{L}) \to \Delta$ be a flat proper family of polarized K3 surfaces over the unit disc $\Delta \subset \mathbb{C}$ such that the fibre $(X_t,L_t)$ on $t \in \Delta$ is smooth for $t \neq 0$ and singular for $t =0$ with ADE singularities. Assume that the family $\mathcal{X} \to \Delta$ admits the minimal simultaneous resolution after taking the base change $\Delta_d \to \Delta$ and non-degenerate at a singularity $x_0 \in X_0 \cong \mathbb{C}^2/\Gamma$. Then for any smooth function $f$ on $\mathcal{X}$ with its support around the singularity $x_0 \in X_0 \cong 0 \in \mathbb{C}^2/\Gamma$, the function
    \begin{equation*}
        F(t) = \int_{X_t} f_t c_2(\hat{g}_t)
    \end{equation*}
    on $\Delta\backslash \{0\}$ extends to a H\"older continuous function on $\Delta$ with H\"older exponent at least $\frac{1}{d}$.
\end{thm*}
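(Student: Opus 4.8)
\emph{Strategy.} The plan is to pass from the genuine Ricci--flat metric $\hat g_t$ to the explicit almost Ricci--flat model $\tilde g_t$ built in section \ref{sec:K3}, to compute $\int_{X_t} f_t\, c_2(\tilde g_t)$ by a gluing decomposition, and to absorb the passage $\tilde g_t\rightsquigarrow\hat g_t$ into a transgression estimate. Since $\hat g_t$ and $\tilde g_t$ are both \Kahler, the difference of their second Chern forms is exact, $c_2(\hat g_t)-c_2(\tilde g_t)=d\,T_t$, with $T_t$ the Chern--Simons (Bott--Chern) transgression $3$--form of the two Chern connections, so that
\[
 \int_{X_t} f_t\,\bigl(c_2(\hat g_t)-c_2(\tilde g_t)\bigr)=-\int_{X_t} df_t\wedge T_t .
\]
On $\mathrm{supp}(df_t)$, an annular region around $x_0$ which for small $|t|$ is disjoint from the neck and the bubble, both metrics converge to the orbifold Ricci--flat metric $\hat g_0^{\,\mathrm{orb}}$ on $X_0\setminus\{x_0\}$; the standing hypothesis that $\tilde g_t$ approximates $\hat g_t$, including its Ricci curvature — which, combined with the explicit structure of $\tilde g_t$, gives the $C^1$--control of the connections and curvatures that is needed — forces $\|T_t\|=O(|t|^{1/d})$ there, hence this error term is $O(|t|^{1/d})$.

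\emph{Evaluation of the model integral.} Write $\tilde F(t):=\int_{X_t} f_t\, c_2(\tilde g_t)$ and, near $x_0$, decompose $X_t$ into a bubble $B_t$, a neck $N_t$, and an exterior $O_t\subset X_0\setminus\{x_0\}$: the bubble $B_t$ is biholomorphic to a large ball in the minimal resolution $Y:=\widetilde{\mathbb{C}^2/\Gamma}$, on which $\tilde g_t=\epsilon(t)^2 g_\Gamma$ for a fixed Ricci--flat ALE \Kahler\ metric $g_\Gamma$ on $Y$; on the exterior, $\tilde g_t$ is a small perturbation of $\hat g_0^{\,\mathrm{orb}}$. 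Three points pin down the limit. (i) Chern forms are invariant under constant rescaling of the metric, so $\int_{B_t} f_t\,c_2(\tilde g_t)=f(x_0)\int_{\widehat B_t}c_2(g_\Gamma)+O(|t|^{1/d})$ with $\widehat B_t\uparrow Y$, and since $g_\Gamma$ has polynomially decaying curvature $\int_{\widehat B_t}c_2(g_\Gamma)=\kappa_\Gamma+O(|t|^{1/d})$ for the topological constant $\kappa_\Gamma:=\int_Y c_2(g_\Gamma)$; here one also uses $|f_t-f(x_0)|=O(|t|^{1/d})$ on $B_t$ and the boundedness of the $c_2$--mass of $B_t$. (ii) On $O_t$ one has $c_2(\tilde g_t)\to c_2(\hat g_0^{\,\mathrm{orb}})$ in $C^0_{\mathrm{loc}}$ with rate $|t|^{1/d}$ — this is where the resolved family $\widetilde{\mathcal X}\to\Delta_d$ and a Barlet--type argument (Th\'eor\`eme~1 of \cite{Bar}) enter, the degree--$d$ base change producing the exponent $1/d$ — so $\int_{O_t} f_t\,c_2(\tilde g_t)=\int_{X_0} f_0\,c_2(\hat g_0^{\,\mathrm{orb}})+O(|t|^{1/d})$. (iii) On the neck the curvature of $\tilde g_t$ is small with controlled volume, whence $\int_{N_t}|c_2(\tilde g_t)|=O(|t|^{1/d})$, exactly as in the proof of Theorem \ref{thn:main}. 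Combining with the first paragraph,
\[
 F(t)=f(x_0)\,\kappa_\Gamma+\int_{X_0} f_0\,c_2(\hat g_0^{\,\mathrm{orb}})+O(|t|^{1/d}),
\]
so $F$ extends continuously to $t=0$ with this value.

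\emph{From pointwise to global H\"older.} For $t\neq 0$ the Ricci--flat metric $\hat g_t$, and hence $F$, depends differentiably on $t$; differentiating the fibre integral and invoking the asymptotic expansion of \cite{Bar} after the base change $\Delta_d\to\Delta$ (the expansion being stable under $\partial_t$), together with the non--degeneracy hypothesis (definition \ref{dfn:non-deg}), which rules out a logarithmic factor at the leading order and pins the leading non--constant term of $F$ to $\sim c\,t^{1/d}$, yields a gradient bound $|\nabla F(t)|\le C|t|^{1/d-1}$. A bounded function on $\Delta\setminus\{0\}$ that is continuous at $0$ and satisfies $|\nabla F|\le C|t|^{\beta-1}$ with $0<\beta\le 1$ lies in $C^{0,\beta}(\Delta)$ by a routine path--integration argument; here $\beta=1/d$, which is the assertion.

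\emph{Main obstacle.} The crux is making the gluing errors quantitative with the sharp exponent: the bound $\|T_t\|=O(|t|^{1/d})$ on $\mathrm{supp}(df_t)$ and the neck bound $\int_{N_t}|c_2(\tilde g_t)|=O(|t|^{1/d})$. These rest on the detailed geometry of the Brieskorn simultaneous resolution — in particular on the relation $\epsilon(t)\sim|t|^{1/d}$ between the analytic bubbling scale and the degree of the base change, which is precisely where non--degeneracy is used — on the curvature decay of the ALE model $g_\Gamma$, and on the curvature estimates underlying the construction of $\tilde g_t$. A secondary technical point is to check, using that for a \Kahler\ surface $c_2$ is the Gauss--Bonnet integrand $\tfrac{1}{8\pi^2}\bigl(|W^-|^2+\tfrac{1}{12}s^2-\tfrac12|\mathrm{Ric}_0|^2\bigr)dV$ — so that for the Ricci--flat $\hat g_t$ one has $c_2(\hat g_t)=\tfrac{1}{8\pi^2}|\mathrm{Rm}(\hat g_t)|^2\,dV$ — that the assumed closeness of the Ricci curvatures does control $c_2(\hat g_t)-c_2(\tilde g_t)$ up to terms already governed by the construction of $\tilde g_t$.
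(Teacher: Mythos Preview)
Your overall architecture---compute $\int f_t\,c_2(\tilde g_t)$ by gluing and control the passage $\tilde g_t\rightsquigarrow\hat g_t$ via a Bott--Chern transgression---is exactly the paper's. The gaps are in the transgression step and in the global H\"older argument.

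\textbf{Transgression.} First, the claim that $\mathrm{supp}(df_t)$ is an annular shell disjoint from the bubble and the neck is not correct: $f$ is an arbitrary smooth function supported near $x_0$, so $df_t$ (and $dd^c_t f_t$) can be nonzero throughout the bubble and neck. You therefore need to control the transgression on all of $X_t$, not just at a fixed positive radius. Second, and more seriously, the assertion that closeness of Ricci curvatures ``gives the $C^1$--control of the connections and curvatures that is needed'' is unjustified: the transgression is built from $\hat g_t^{-1}\partial(\hat g_t-\tilde g_t)$ and the full curvature along the path, so you need $C^2$--smallness of $u_t$ (where $\hat\omega_t=\tilde\omega_t+dd^c u_t$), not merely closeness of Ricci tensors. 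This is precisely the analytic core of section~\ref{sec:K3}: a $C^0$ bound $\|u_t\|_{C^0}\le C\varepsilon^4$ by Moser iteration (using a uniform Sobolev constant and the $L^p$ bound for $f_t$ in Lemma~\ref{lem:Lp estimate for f}), then a uniform $C^2$ bound via the Schwarz--type inequality (Proposition~\ref{prop:lem for C2 estimate}) applied to the rescaled data of Lemma~\ref{lem:estimates for rescaled data}, then Evans--Krylov for $C^{2,\alpha}$. The paper moreover does not use the linear path in the Bott--Chern formula but the Monge--Amp\`ere path $g_{t,s}$ of (\ref{eq:sMA}); the point is that after the Schauder estimate for the linearised equation (\ref{eq:linearlized sMA}) one gets $\|N_s\|_{L^\infty}\le C\varepsilon^2$, while $\|\mathrm{Rm}(g_{t,s})\|_{L^2}$ stays bounded by the Gauss--Bonnet identity (\ref{eq:energy}). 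Hence $|\int dd^c f_t\wedge\tau|\le C\varepsilon^2$. None of this is recoverable from Ricci closeness alone.

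\textbf{Global H\"older.} Barlet's theorem concerns fibre integrals of a \emph{smooth} form on $\mathcal X$; $c_2(\hat g_t)$ is not the restriction of such a form (it depends on the fibrewise Monge--Amp\`ere solution), so the expansion does not apply to $F$ and cannot yield the gradient bound $|\nabla F|\le C|t|^{1/d-1}$. The paper does not attempt this; it proves the asymptotic $F(t)=\int_{X_0}f_0\,c_2(\hat g_0)+f(x_0)\,e_{\mathrm{orb}}(Y_{\dot\zeta})+O(|t|)$ on $\Delta_d$, which after the base change gives H\"older exponent $1/d$ at the origin, and relies on smooth dependence of $\hat g_t$ on $t$ away from $0$ for the rest. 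Your relation $\epsilon(t)\sim|t|^{1/d}$ is also off: in the paper $\varepsilon=|t|^{p/2}$ on $\Delta_d$ with $p\le d$ the vanishing order of $\zeta$; the exponent $1/d$ comes from the covering $\Delta_d\to\Delta$, not from the bubbling scale.
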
 

In section \ref{sec:pre}, we recall some preliminary facts, the theory of Kronheimer's ALE gravitational instanton, non-collapsing limits of Einstein 4-manifolds and Biquard-Rollin's a priori estimate for constant scaler curvature equation for a family of surfaces. Section 3 is the proof of the main theorem for the cscK metric case. 
The main theorem for Ricci flat K\"ahler metrics on a family of polarized K3 surfaces is proven in section \ref{sec:K3}.

\textbf{Acknowledgements}. The author is grateful to Professor K.-I. Yoshikawa and Professor Y. Odaka, his supervisors, for their help and encouragement which lead him to complete this master thesis. The author is also grateful to E.Inoue and S.Katayama for suggestions on some technical problems.

\section{Preliminaries}\label{sec:pre}

\subsection{ALE Gravitational Instanton}\label{sec:ALE}

\textit{ALE Gravitational Instantons} are families of 4-dimensional complete hyperk\"ahler manifolds that play an important role in the study of Gromov-Hausdorff limits of \Kahler-Einstein surfaces. They appear as \textit{bubbling limits} of non-collapsing Gromov-Hausdorff limits of \Kahler-Einstein surfaces (for details, see section \ref{sec:Non-coll.}).

\begin{dfn}\label{def:ALE}
    Let $(X,g,I,J,K)$ be a 4-dimensional complete hyperk\"ahler manifold. $(X,g,I,J,K)$ is an \textit{ALE gravitational instanton} (of order 4) if there is a compact subset $K \subset X$ and a diffeomorphism 
    \begin{equation}
        \phi : (\mathbb{R}^4 \backslash B(0,R))/ \Gamma \to X\backslash K
    \end{equation}
    for a suitable $R>0$ and a finite subgroup $\Gamma \subset \mathrm{SL}(2;\mathbb{C})$ such that 
        \begin{equation}
        \begin{aligned}
            \|\nabla^k (\phi^*g - g^{Euc}) \|_{g^\mathrm{Euc}} &= O(r^{-4-k}),\\
            \| \nabla^k( \phi^* \omega_I - \omega_I^{\mathrm{Euc}} ) \|_{g^\mathrm{Euc}} & = O(r^{-4-k}),\\
             \| \nabla^k( \phi^* \omega_J - \omega_J^{\mathrm{Euc}} ) \|_{g^\mathrm{Euc}} & = O(r^{-4-k}),\\
              \| \nabla^k( \phi^* \omega_K - \omega_K^{\mathrm{Euc}} ) \|_{g^\mathrm{Euc}} & = O(r^{-4-k})
        \end{aligned}    
        \end{equation}
    as $r \to \infty$ for any $k \in \mathbb{N}$, where $r$ is the Euclidean distance from the origin. 
\end{dfn}
Note that $\mathbb{C}^2$ admits a natural hyperk\"ahler structure via an isomorphism
\begin{equation}
    \begin{aligned}
        \mathbb{H} & \to \mathbb{C}^2 \\
        a + bi + cj + dk & \mapsto (a+ bi, c + di ).
    \end{aligned}
\end{equation}
 To state the classification theorem of ALE gravitational instantons by Kronheimer \cite{Kr1} \cite{Kr2}, we review some properties of ADE singularities. 
 Let $\Gamma \subset \textrm{SL}(2;\mathbb{C})$ be a finite subgroup and let $Y$ denote the underlying differentiable manifold of the minimal resolution $\widetilde{\mathbb{C}^2/\Gamma}$ of the singularity $0 \in \mathbb{C}^2/\Gamma$ with the exceptional divisor $E = \bigcup_{i=1}^n E_i$. 
 Then, it is well-known that the homology classes of the irreducible components $\{[E_i]\}_{i=1}^n$ forms a basis of $H_2(Y, \mathbb{Z})$ as a $\mathbb{Z}$-module. 
 Furthermore, by the Poincar\'e-Lefschetz duality, we may define an intersection pairing on $H_2(Y, \mathbb{Z})$ which induces an isomorphism 
 \begin{equation}
     H_2 (Y,\mathbb{Z}) \cong H^2(Y,\mathbb{Z}).
 \end{equation}
 The lattice $H_2(Y,\mathbb{Z})$ is isometric to the lattice defined by a Dynkin diagram of one of the ADE-type. It is also important that  $H_2(Y, \mathbb{Z})$ is isometric to the root lattice $\mathfrak{h}_\mathbb{Z}$ of the simple complex Lie algebra $\mathfrak{g}$ which corresponds to the same Dynkin diagram by a linear map sending $\{[E_i]\}$ to simple roots (see \cite{Slo1} for more details). 
 Now, we can state the classification theorem that was proved in \cite{Kr1}\cite{Kr2}:

    \begin{thm}[Theorem 1.1, Theorem 1.2, Theorem 1.3 in \cite{Kr1}\cite{Kr2}]
        (i) Let $\bm{\kappa}=(\kappa_1,\kappa_2 , \kappa_3)$ be a triple of elements in $H^2(Y, \mathbb{R})$ that satisfies the following property:
            \begin{equation}\label{cond:generality}
                \text{for any}\ e \in H_2(Y, \mathbb{Z})\ \text{with}\ (e,e) = -2,\ \kappa_i(e) \neq 0\  
                \text{for at least one}\  i=1,2,3
            \end{equation}
        Then there exists a quadruple $(g_{\bm{\kappa}},I_{\bm{\kappa}},J_{\bm{\kappa}},K_{\bm{\kappa}})$ such that $Y_{\bm{\kappa}} :=(Y,g_{\bm{\kappa}},I_{\bm{\kappa}},J_{\bm{\kappa}},K_{\bm{\kappa}})$ is an ALE gravitational instanton and 
        \begin{equation}
        ([\omega_{I_{\bm{\kappa}}}], [\omega_{J_{\bm{\kappa}}}], [\omega_{K_{\bm{\kappa}}}]) = \bm{\kappa},    
        \end{equation}
        where $\omega_{I_{\bm{\kappa}}}, \omega_{J_{\bm{\kappa}}}, \omega_{K_{\bm{\kappa}}}$ are the \Kahler\ forms of the three complex structures.

        (ii) Any ALE gravitational instanton $Y'$ is diffeomorphic to $\widetilde{\mathbb{C}^2/\Gamma}$ for some $\Gamma \subset \mathrm{SL}(2; \mathbb{C})$ and the triple of the \Kahler\ classes of $Y'$ satisfies the property (\ref{cond:generality}) in (i). 
         
        (iii) Let $(Y_1,g_1,I_1,J_1,K_1), (Y_2,g_2,I_2,J_2,K_2)$ be ALE gravitational instantons. If there is a diffeomorphism $Y_1 \to Y_2$ which respects the triples of the \Kahler\ classes, then $(Y_1, g_1,I_1,J_1,K_1)$ and $(Y_2, g_2, I_2,J_2,K_2)$ are isomorphic as hyperk\"ahler manifolds, i.e. there exists a diffeomorphism $f:Y_1 \to Y_2$ such that 
        \begin{equation}
            f^*g_2 = g_1, f^*\omega_{I_2}= \omega_{I_1}, f^* \omega_{J_2} = \omega_{J_1} , f^*\omega_{K_2} = \omega_{K_1}.
        \end{equation}
    \end{thm}
To state some basic properties of ALE gravitational instantons, following \cite{Kr1}, we recall an outline of the construction of ALE gravitational instantons on $Y$.
For a finite subgroup $\Gamma \subset \mathrm{SL}(2; \mathbb{C})$, define $M$ as the $\Gamma$ invariant points of the tensor product $\mathbb{C}^2 \otimes_\mathbb{C} \mathrm{End} (\mathbb{C}[\Gamma])$: 
\begin{equation*}
  M := \{ v \in \mathbb{C}^2 \otimes_\mathbb{C} \mathrm{End}(\mathbb{C}[\Gamma]) \mid g\cdot v =v\} \subset \mathbb{C}^2 \otimes_\mathbb{C} \mathrm{End}(\mathbb{C}[\Gamma]),
\end{equation*}
where $\mathbb{C}^2$ is viewed as the natural $\Gamma$ module (recall that $\Gamma$ is a subgroup of $\mathrm{SL}(2;\mathbb{C})$) and $\mathbb{C}[\Gamma]$ is the group ring of $\Gamma$. 
Then one can equip $M$ a hyperk\"ahler structure and a Lie group action $F \curvearrowright M$ which preserves the hyperk\"ahler structure, where $F$ is a suitable sub Lie group of $\mathrm{U}(\mathbb{C}[\Gamma])$ (see \cite{Kr1} for the definitions of $F$ and the action $F \curvearrowright M$). 
Also, one can construct a \textit{hyperk\"ahler moment map} 
\begin{equation}\label{eq:mu}
\mu: M \to \mathfrak{h}_{\mathbb{R}} \otimes \mathbb{R}^3
\end{equation}
(see \cite{Kr1} section 2 for their concrete construction), where $\mathfrak{h}_{\mathbb{R}} = \mathfrak{h}_\mathbb{Z} \otimes \mathbb{R}$ is the real part of a Cartan subalgebra of the corresponding ADE type of $\Gamma$. In \cite{Kr1}, it was shown that the quotient $Y_\zeta := \mu^{-1}(\zeta)/ F$ is non singular if $\zeta \in (\mathfrak{h}_\mathbb{R} \otimes \mathbb{R}^3)^o$, where
\begin{equation}\label{eq:pd}
    (\mathfrak{h}_{\mathbb{R}} \otimes \mathbb{R}^3)^o = (\mathfrak{h}_{\mathbb{R}} \otimes \mathbb{R}^3) \backslash \bigcup_{\theta\ \text{is a root}} H_\theta \otimes \mathbb{R}^3.
\end{equation}
Here, $H_\theta := \theta^{\perp} \subset \mathfrak{h}_\mathbb{R}$ denotes the hyperplane cut out by $\theta$ and $\theta \in \mathfrak{h}_{\mathbb{Z}}$ is a root, by definition, if $(\theta, \theta) =-2$.
Hence, as the hyperk\"ahler structure on $M$ is preserved by the group action $F \curvearrowright M$, the quotient $Y_\zeta = \mu^{-1}(\zeta)/F$ becomes a hyperk\"ahler manifold.  
Let $(Y_\zeta, g_\zeta, I_\zeta, J_\zeta, K_\zeta)$ denote the hyperk\"ahler structure on $Y_\zeta$. 
In the following, we regard $Y_\zeta$ as a complex surface by the complex structure $I_\zeta$ and identify $\mathfrak{h}_\mathbb{R} \otimes \mathbb{R}^3 
 \cong (\mathfrak{h}_\mathbb{R})^{\oplus 3}$ as $ \mathfrak{h}_\mathbb{R} \oplus \mathfrak{h}_\mathbb{C}$ by 
\begin{equation}
\begin{aligned}
    (\mathfrak{h}_\mathbb{R})^{\oplus 3} &\to \mathfrak{h}_\mathbb{R} \oplus \mathfrak{h}_\mathbb{C} \\
    (\zeta_1, \zeta_2, \zeta_3) &\mapsto (\zeta_1, \zeta_2 + \sqrt{-1} \zeta_3) = (\zeta_r, \zeta_c).
\end{aligned}
\end{equation}
     In \cite{Kr1}, it was also proved that the quotient $Y_{(0,\zeta_c)}$ becomes a (possibly singular) affine complex variety for any $\zeta_c \in \mathfrak{h}_\mathbb{C}$ and that, in particular, $Y_{(0,0)}$ is biholomorphic to $\mathbb{C}^2/\Gamma$.
     Furthermore, if $\zeta_r \in \mathfrak{h}_\mathbb{R}$ is not perpendicular to any root in $\mathfrak{h}_\mathbb{Z}$, there exists a holomorphic map
 \begin{equation}\label{eq:res.ofALE}
    \lambda_\zeta : Y_{(\zeta_r, \zeta_c)} \to Y_{(0,\zeta_c)}
 \end{equation}
 that gives the minimal resolution of $Y_{(0,\zeta_c)}$. For a given non-singular $Y_\zeta$, by a suitable choice of a coordinate of $\mathbb{R}^3$, we may assume that $\zeta_1$ is not perpendicular to any root. Then by using (\ref{eq:res.ofALE}) repeatedly, we have the following:
 \begin{equation}\label{eq:res.of.ALE2}
     Y_\zeta \xrightarrow{\lambda_\zeta} Y_{(\zeta_1, \zeta_2, 0)} \xrightarrow{\lambda_{(\zeta_1, \zeta_2,0)}} Y_{(\zeta_1, 0, 0)} \xrightarrow{\lambda_{(\zeta_1, 0, 0)}} Y_{(0,0,0)} \cong \mathbb{C}^2/ \Gamma.
 \end{equation}
 Since $Y_{(\zeta_1, 0,0)}$ is naturally diffeomorphic to $Y$, we see that any non-singular $Y_\zeta$ is naturally diffeomorphic to $Y$. The diffeomorphism $Y_\zeta \cong Y$ induced by (\ref{eq:res.of.ALE2}) depends on the choice of orthonormal basis of $\mathbb{R}^3$, however, we may choose the diffeomorphisms in a canonical way in the following sense:
 \begin{thm}[\cite{Kr1} Section 3 and Section 4, namely from Corollary 3.12.]\label{thm:identification}
    There exist identifications 
    \begin{equation}\label{eq:identify}
        \begin{aligned}
            H_2 (Y, \mathbb{Z}) &\cong \mathfrak{h}_\mathbb{Z},\\
            \Lambda_\zeta : Y &\to Y_\zeta,
        \end{aligned}
    \end{equation}
     such that for any $\zeta \in (\mathfrak{h}_\mathbb{R} \otimes \mathbb{R}^3)^o$, $\Lambda_\zeta^* ([\omega_i]) = \zeta_i \in \mathfrak{h}_\mathbb{R}$, where $(\omega_1, \omega_2, \omega_3) = (\omega_I, \omega_J, \omega_K)$ are the \Kahler\ forms of $Y_\zeta$.
     Furthermore, $\{(\Lambda_\zeta^*g_\zeta, \Lambda_\zeta^* I_\zeta, \Lambda_\zeta^*J_\zeta, \Lambda_\zeta^*K_\zeta)\}_{\zeta \in (\mathfrak{h} \otimes \mathbb{R}^3)^o}$ is a family of hyperk\"ahler structures on $Y$ depending on $\zeta$ smoothly (this is the hyperk\"ahler structure stated in Theorem 1).
 \end{thm}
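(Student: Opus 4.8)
The statement is quoted from \cite{Kr1} (it is Corollary 3.12 together with the discussion of Sections 3--4 there), so the plan is to recall the structure of Kronheimer's argument rather than to prove it from scratch. There are three assertions to establish: (a) a canonical identification $H_2(\widetilde{\mathbb C^2/\Gamma},\mathbb Z)\cong\mathfrak h$; (b) for each $\zeta$ in the good locus $(\mathfrak h\otimes\mathbb R^3)^o$ a diffeomorphism $\Lambda_\zeta\colon\widetilde{\mathbb C^2/\Gamma}\to Y_\zeta$ with $\Lambda_\zeta^*[\omega_i]=\zeta_i$; and (c) smooth dependence of the pulled-back hyperkähler data on $\zeta$.

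For the identification (a) and the base point of (b), I would start from a distinguished $\zeta^0=(\zeta_1^0,0,0)$ with $\zeta_1^0\in\mathfrak h_{\mathbb R}$ not perpendicular to any root. By (\ref{eq:res.ofALE})--(\ref{eq:res.of.ALE2}) the space $Y_{\zeta^0}$ is canonically the minimal resolution $\widetilde{\mathbb C^2/\Gamma}$, which fixes $\Lambda_{\zeta^0}$ and transports the exceptional basis $\{[E_i]\}$ into $H_2(Y_{\zeta^0},\mathbb Z)$. Because $\mu$ in (\ref{eq:mu}) is built from the $\Gamma$-module $\mathbb C^2\otimes\mathrm{End}(\mathbb C[\Gamma])$, this basis matches the simple roots and the intersection form matches the Cartan pairing (this is the McKay/Slodowy point of view, \cite{Slo1}), which gives the isometry $H_2\cong\mathfrak h_{\mathbb Z}$; independence of the choice of $\zeta^0$ among such elements follows once (b) is in place.

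For (b) and (c), the key geometric input is that the good locus $(\mathfrak h\otimes\mathbb R^3)^o=(\mathfrak h\otimes\mathbb R^3)\setminus\bigcup_\theta H_\theta\otimes\mathbb R^3$ is the complement of finitely many \emph{real codimension-$3$} linear subspaces of $\mathfrak h\otimes\mathbb R^3$, hence is connected and simply connected. Over it $\mu$ is a submersion with free proper $F$-action — this is exactly Kronheimer's smoothness statement for $Y_\zeta$ — so $\{Y_\zeta\}$ is a smooth family; on a large $\zeta$-independent core this is a fibre bundle by Ehresmann, and near infinity local triviality is built into the construction, since the asymptotic model $\mathbb R^4/\Gamma$ is fixed and the ALE coordinates can be chosen continuously in $\zeta$. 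Trivializing this bundle over the simply connected base, which involves \emph{no} monodromy, and requiring the trivialization to specialize to the identification above at $\zeta^0$, determines $\Lambda_\zeta$ uniquely up to isotopy, hence canonically on homology; smoothness of $\{(\Lambda_\zeta^*g_\zeta,\Lambda_\zeta^*I_\zeta,\Lambda_\zeta^*J_\zeta,\Lambda_\zeta^*K_\zeta)\}$ then follows from smoothness of the quotient construction in $\zeta$. Finally $\Lambda_\zeta^*[\omega_i]=\zeta_i$ holds by the Duistermaat--Heckman/Kirwan variation principle for (hyperkähler) symplectic reductions: on $Y_\zeta$ the class $[\omega_i]$ depends affinely on $\zeta$ with linear part the identification $\mathfrak h\cong H^2$, so matching at $\zeta^0$ (where $[\omega_1]=\zeta_1^0$ and $[\omega_2]=[\omega_3]=0$ by construction) forces the identity.

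The step I expect to be the main obstacle is the honest verification that $\{Y_\zeta\}$ is locally trivial \emph{as a bundle of non-compact manifolds}: Ehresmann does not apply on the nose, so one must supplement it by uniform control of the ALE ends in $\zeta$, which is where Kronheimer's explicit asymptotic analysis of $\mu^{-1}(\zeta)/F$ is needed. Everything else — simple-connectivity of the parameter space, the affine dependence of the Kähler classes, and smoothness in $\zeta$ — is then comparatively routine.
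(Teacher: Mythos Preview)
The paper does not prove this statement at all: it is quoted verbatim from \cite{Kr1} (Corollary~3.12 and the surrounding discussion in Sections~3--4 there), and the paper simply records it as background. So there is no ``paper's own proof'' to compare against; what there is, immediately \emph{before} the theorem, is a brief description of how the diffeomorphisms arise in \cite{Kr1}, namely via the chain of explicit resolution maps (\ref{eq:res.of.ALE2}) $Y_\zeta\to Y_{(\zeta_1,\zeta_2,0)}\to Y_{(\zeta_1,0,0)}\to\mathbb C^2/\Gamma$, together with the remark that the resulting identification a priori depends on the choice of orthonormal frame of $\mathbb R^3$ and that the content of the theorem is that this can be made canonical.

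Your sketch is a correct outline, but it takes a different route from Kronheimer's. You argue abstractly: trivialize the family $\{Y_\zeta\}$ over the simply connected good locus by an Ehresmann-type argument (patched at the ALE ends), and then read off $\Lambda_\zeta^*[\omega_i]=\zeta_i$ from Duistermaat--Heckman variation. Kronheimer instead builds $\Lambda_\zeta$ \emph{explicitly} as a composition of the resolution maps $\lambda_\zeta$ in (\ref{eq:res.ofALE}), which are themselves holomorphic maps coming from the quotient construction; the cohomology identity and the smooth dependence in $\zeta$ then come out of the explicit hyperk\"ahler-quotient formulas rather than from a soft bundle argument. Your approach is cleaner conceptually and makes the role of simple-connectivity of $(\mathfrak h\otimes\mathbb R^3)^o$ transparent, while Kronheimer's gives the concrete ALE asymptotics (\ref{eq:asy.metic})--(\ref{eq:asy.form}) and the $\mathbb R_{>0}$-equivariance (\ref{eq:equivariance}) for free, which are exactly the features the present paper needs later. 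The obstacle you flag --- making Ehresmann work on non-compact fibres --- is real, and it is precisely what Kronheimer's explicit construction sidesteps.
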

 Now, we are ready to state properties of $Y_\zeta$ which we will use in the paper:
\begin{itemize}
    \item (\cite{Kr1} Proposition 3.14. and its proof) any non-singular $Y_\zeta$ is an ALE gravitational instanton. Namely, by the composition
     \begin{equation}\label{eq:rho}
        \rho_\zeta : Y_\zeta \xrightarrow{\Lambda_\zeta^{-1}} \widetilde{\mathbb{C}^2/\Gamma} 
        \xrightarrow{\text{blow down}} \mathbb{C}^2/\Gamma
     \end{equation}
    we have a diffeomorphism 
    \begin{equation}\label{eq:unif.ofALE}
       \varphi_\zeta: (\mathbb{C}^2 \backslash \overline{B(0;R)})/\Gamma \to Y_\zeta \backslash K
    \end{equation}
    for any $R >0$ and a suitable compact subset $K \subset Y_\zeta$ ($B(0,R)$ is the open ball centered at the origin with diameter $R$). Under this diffeomorphism, we have the expansion of the hyperk\"ahler metric $g_\zeta$ and its \Kahler\ form $\omega_{I_\zeta}$ on $\mathbb{C}^2 \backslash \overline{B(0,R)}$:
    \begin{align}
        \varphi_\zeta^* g_\zeta = g^\mathrm{Euc} + \sum_{i = 2}^{k-1} g_\zeta^{(i)} r^{-2i} + O(r^{2k})\label{eq:asy.metic},\\
        \varphi_\zeta^* \omega_{I_\zeta} = \omega^{\mathrm{Euc}} + \sum_{i=2}^{k-1} \omega_{I_\zeta}^{(i)} r^{-2i} + O(r^{2k})\label{eq:asy.form},
    \end{align}
    where $g_\zeta^{(i)}, \omega_{I_\zeta}^{(i)}$ are homogeneous polynomials of degree $i$ in $\zeta$ and $g^{\mathrm{Euc}}, \omega^{\mathrm{Euc}}$ are the standard \Kahler\ metric and its \Kahler\ form on $\mathbb{C}^2$ (where $r(x) := d(0,x)$ is the radius function on $\mathbb{C}^2$).

    \item (\cite{Kr1} section 3 and 4)
    \begin{equation}\label{eq:family}
    \mathcal{Y} = \bigcup_{\zeta \in \mathfrak{h}_\mathbb{C}} Y_{(0,\zeta)}
    \end{equation}
    admits a natural complex structure such that the natural projection $\mathcal{Y} \to \mathfrak{h}_\mathbb{C}$ is holomorphic. This family $\mathcal{Y} \to \mathfrak{h}_\mathbb{C}$ coincides with the base change of the versal deformation of the singularity $0 \in \mathbb{C}^2/\Gamma$ by the Weyl covering $\mathfrak{h}_\mathbb{C} \to \mathfrak{h}_\mathbb{C}/W$, where $W$ is the Weyl group of $\mathfrak{h}$. Furthermore, for any $\zeta_r \in \mathfrak{h}_\mathbb{R}$ that is not perpendicular to any roots of $\mathfrak{h}$ and for any family $\mathcal{Y}_{(0,\zeta)} \to \Delta$ over the unit disc defined by a holomorphic map $\zeta : \Delta \to \mathfrak{h}_\mathbb{C}$, the family
    \begin{equation}\label{eq:sim.resol.byALE}
     \mathcal{Y}_{(\zeta_r, \zeta)} = \bigcup Y_{(\zeta_r, \zeta(t))} \to \Delta
    \end{equation}
     (it also admits a natural complex structure) gives the minimal simultaneous minimal resolution of $\mathcal{Y}_{(0,\zeta)}$ (it is induced by applying (\ref{eq:res.ofALE}) fiberwise).
    
    \item (\cite{Kr1} Proof of Corollary 3.12) The natural $\mathbb{R}_{>0}$-action on $M$ 
    \begin{equation*}
       \begin{aligned}
             M &\to M\\
        v &\mapsto \alpha v
       \end{aligned}
    \end{equation*}
    induces a morphism
    \begin{equation}\label{eq:dil}
        H_{(\alpha, \zeta)} : Y_\zeta \to Y_{\alpha^2 \zeta}
    \end{equation}
     which induces an isomorphism 
     \begin{equation}
     Y_\zeta \cong \frac{1}{\alpha^2} Y_{\alpha^2\zeta} = \left(Y, \frac{1}{\alpha^2} g_{\alpha^2 \zeta}, I_{\alpha^2 \zeta}, J_{\alpha^2 \zeta}, K_{\alpha^2 \zeta}  \right)
     \end{equation}
     as hyperk\"ahler manifolds.
    Furthermore, an open embedding $(\mathbb{C}^2\backslash \{0\})/\Gamma \to Y_\zeta$ induced by (\ref{eq:rho}) is $\mathbb{R}_{>0}$ equivariant. Namely, for any $y \in Y_\zeta$ such that $r(y) >0$, 
    \begin{equation}\label{eq:equivariance}
    \rho_{\alpha^2\zeta} \circ H_{(\alpha, \zeta)}(y) = \alpha \rho_\zeta(y).
    \end{equation}
    In particular, if $f = O(r^k)$ on $Y$ as $r \to \infty$, $H_{(\alpha, \zeta)}^*(f) = O(\alpha^k r^k)$ as $r \to \infty$.

    \item (\cite{Kr1} section 4) The family $\mathcal{Y} \to \mathfrak{h}_\mathbb{C}$ (see (\ref{eq:family})) admits an embedding 
    \begin{equation}\label{eq:embedding}
    \iota : \mathcal{Y} \to \mathbb{C}^3 \times \mathfrak{h}_\mathbb{C}    
    \end{equation}
    such that the projection $\mathcal{Y} \to \mathfrak{h}_\mathbb{C}$ is induced by the natural projection $\mathbb{C}^3 \times \mathfrak{h}_\mathbb{C} \to \mathfrak{h}_\mathbb{C}$. Furthermore, this is $\mathbb{C}^*$ equivariant, i.e. there is a weighted $\mathbb{C}^*$-action on $\mathbb{C}^3$ with weight $k = (k_1,k_2,k_3)$ and a $\mathbb{C}^*$-action $H_{(\alpha, \zeta)} : Y_\zeta \to Y_{\alpha^2\zeta} $ extending the $\mathbb{R}_{>0}$-action above such that $\iota \circ H_{(\alpha,\zeta)}(y) = \alpha\cdot \iota(y)$ (the weight of the $\mathbb{C}^*$-action on $\mathfrak{h}_\mathbb{C}$ is $2$) for any $y \in Y_\zeta \subset \mathcal{Y}$.
\end{itemize} 
 In the rest of the paper, we fix a differentiable manifold $Y$ which underlies $\widetilde{\mathbb{C}^2/\Gamma}$ and identify $Y_\zeta$ as $(Y, \Lambda_\zeta^*g_\zeta, \Lambda_\zeta^* I_\zeta, \Lambda_\zeta^*J_\zeta, \Lambda_\zeta^*K_\zeta)$ (under this identification, we simply write $Y_\zeta = (Y, g_\zeta, I_\zeta, J_\zeta, K_\zeta)$). In particular, for a family $\mathcal{Y}_{(0,\zeta_c)} \to \Delta$, the simultaneous resolution $\mathcal{Y}_\zeta \to \mathcal{Y}_{(0,\zeta_c)}$ given by (\ref{eq:sim.resol.byALE}) yields a natural trivialization 
 \begin{equation}\label{eq:triv.byALE}
     Y \times \Delta \cong \mathcal{Y}_\zeta
 \end{equation}
 as families of differentiable manifolds induced by the fiberwise identification (\ref{eq:identify}) (c.f. Theorem \ref{thm:identification}). Also, we regard $H_{(\alpha, \zeta)}$ as a $\mathbb{R}_{>0}$ action on $Y$ parameterized by $\zeta \in (\mathfrak{h} \otimes \mathbb{R}^3)^o$. We fix these identifications (namely, identifications from (\ref{eq:identify}) to (\ref{eq:triv.byALE})) in the rest of the paper. 
 
The next proposition plays an important role for the proof of the main result. We also use $r$ for the pull-back on $Y$ of the radius function $ r(p) = d(0,p) $ of $\mathbb{C}^2$.
\begin{lem}\label{lem:cpx.str}
    Regard $Y_\zeta$ as a complex manifold by the complex structure $I_\zeta$. Then for any $\zeta$ varying in a compact subset in $(\mathfrak{h}_\mathbb{R} \otimes \mathbb{R}^3)^o$, the following hold:
    \begin{itemize}
        \item
        \begin{equation}\label{est:cpx.str.}
        I_\zeta = I_0 + \sum_{i=2}^{k-1} I_\zeta^{(i)} r^{-2i} + O(r^{-2k})
        \end{equation}
        as $r \to \infty$, where $I^{(i)}_\zeta$ is a matrix valued homogeneous polynomial in $\zeta$ of degree $i$ under the trivialization of the tangent bundle by the standard coordinate $(z_0,w_0)$ on $\mathbb{C}^2 \backslash \{(0,0)\}$,
        \item there is a system of frames $\{e_\zeta^1, e_\zeta^2\}$ of $T^{(1,0)}Y_\zeta $ on $\{y \in Y \mid r \geq 1\}$ satisfying the following estimates:
        \begin{align}
    e_\zeta^1 = \frac{\partial}{\partial z_0}+ O(r^{-4}) &,\ 
    e_\zeta^2 =\frac{\partial}{\partial w_0} + O(r^{-4}) , \label{est:frames1} \\ 
    (e_\zeta^1)^* = dz_0 + O(r^{-4}) &,\ 
    (e_\zeta^2)^* = dw_0 + O(r^{-4}),\label{est:frames2}
\end{align}
where $(e_\zeta^i)^*$ is the dual frame of $e^i_\zeta$.
    \end{itemize}
\end{lem}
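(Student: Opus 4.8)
The plan is to deduce everything from the asymptotic expansions (\ref{eq:asy.metic}) and (\ref{eq:asy.form}) by pure linear algebra, using that on a Hermitian manifold the complex structure is determined algebraically by the metric together with the \Kahler\ form.

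\textbf{Step 1: the expansion of $I_\zeta$.} On the Euclidean end $\mathbb{C}^2\setminus\overline{B(0,1)}$ furnished by (\ref{eq:unif.ofALE}), the relation $\omega_{I_\zeta}(\cdot,\cdot)=g_\zeta(I_\zeta\,\cdot,\cdot)$ reads $I_\zeta=G_\zeta^{-1}W_\zeta$, where $G_\zeta$ and $W_\zeta$ are the matrices of $g_\zeta$ and of $\omega_{I_\zeta}$ in the standard coordinate frame. I would write (\ref{eq:asy.metic}) as $G_\zeta=G^{\mathrm{Euc}}\bigl(\mathrm{Id}+\sum_{i\ge2}A^{(i)}_\zeta r^{-2i}+\cdots\bigr)$ with $A^{(i)}_\zeta$ matrix valued and homogeneous of degree $i$ in $\zeta$, and similarly $W_\zeta=W^{\mathrm{Euc}}+\sum_{i\ge2}B^{(i)}_\zeta r^{-2i}+\cdots$, then invert the first series term by term — only finitely many powers of $\sum_{i\ge2}A^{(i)}_\zeta r^{-2i}$ contribute to a given order $r^{-2k}$ — and multiply by the second, using that a product of homogeneous polynomials of degrees $i$ and $j$ is homogeneous of degree $i+j$ and is attached to the weight $r^{-2(i+j)}$. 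Since $(G^{\mathrm{Euc}})^{-1}W^{\mathrm{Euc}}$ is exactly the standard complex structure $I_0$ on $\mathbb{C}^2$, this gives $I_\zeta=I_0+\sum_{i\ge2}I^{(i)}_\zeta r^{-2i}+O(r^{-2k})$ for every $k$, with $I^{(i)}_\zeta$ homogeneous of degree $i$; the sum starts at $i=2$ precisely because both input expansions do, i.e. because of the order-$4$ decay in Definition \ref{def:ALE}. All coefficients and remainders are uniform for $\zeta$ in a compact subset of $(\mathfrak{h}\otimes\mathbb{R}^3)^o$, since in Kronheimer's construction they depend polynomially, resp. smoothly, on $\zeta$.

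\textbf{Step 2: the frame.} With $v_1=\partial/\partial z_0$ and $v_2=\partial/\partial w_0$ I would set $e_\zeta^j:=\tfrac12(\mathrm{Id}-\sqrt{-1}\,I_\zeta)v_j$, which is automatically of type $(1,0)$ for $I_\zeta$. Since $I_0v_j=\sqrt{-1}\,v_j$, Step 1 gives $e_\zeta^j-v_j=-\tfrac{\sqrt{-1}}{2}(I_\zeta-I_0)v_j=O(r^{-4})$ uniformly in $\zeta$, which is (\ref{est:frames1}); writing the transition matrix between $\{e_\zeta^j\}$ and $\{v_j\}$ as $\mathrm{Id}+O(r^{-4})$ and taking its inverse transpose (again $\mathrm{Id}+O(r^{-4})$) yields the dual estimate (\ref{est:frames2}). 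To see that $\{e_\zeta^1,e_\zeta^2\}$ is genuinely a frame, by Step 1 I can choose $R\ge1$ with $\|I_\zeta-I_0\|$ small on $\{r\ge R\}$ uniformly in $\zeta$; there $\tfrac12(\mathrm{Id}-\sqrt{-1}\,I_\zeta)$ maps $\mathrm{span}_{\mathbb{C}}\{v_1,v_2\}=T^{1,0}_{I_0}$ isomorphically onto $T^{(1,0)}Y_\zeta$ (transversality of $T^{1,0}_{I_0}$ and $T^{0,1}_{I_\zeta}$ is open and holds at $I_\zeta=I_0$), so $\{e_\zeta^j\}$ is a frame on $\{r\ge R\}$; on the compact collar $\{1\le r\le R\}$ one extends it by any frame of $T^{(1,0)}Y_\zeta$ agreeing with it near $r=R$, which changes nothing asymptotically.

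\textbf{Expected main difficulty.} The only real work is the combined bookkeeping in Step 1: carrying the $r$-grading and the polynomial degree in $\zeta$ simultaneously through the inversion and multiplication of the (formal) series, and checking that all the resulting bounds are uniform over compact subsets of $(\mathfrak{h}\otimes\mathbb{R}^3)^o$. That uniformity is exactly what Kronheimer's description of the family $\{Y_\zeta\}$ provides, so once it is invoked the rest is formal; the extension of the frame across the compact collar in Step 2 is soft and does not interact with the estimates.
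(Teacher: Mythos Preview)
Your proof is correct and follows essentially the same route as the paper. Both arguments hinge on the algebraic identity $g_\zeta(I_\zeta\,\cdot,\cdot)=\omega_{I_\zeta}(\cdot,\cdot)$, i.e.\ $I_\zeta=G_\zeta^{-1}W_\zeta$, and feed in the expansions (\ref{eq:asy.metic}), (\ref{eq:asy.form}); both then build the frame by the projection $e_\zeta^j=\tfrac12(v_j-\sqrt{-1}\,I_\zeta v_j)$. The only cosmetic difference is that the paper first invokes the dilation $H_{(r^{-1},\zeta)}$ to see that $I_\zeta$ has an expansion in powers of $r^{-2}$ with coefficients polynomial in $\zeta$, and then uses the algebraic relation to pin down that the series begins at $r^{-4}$; you skip this detour and read off both the grading and the starting order directly from the series manipulation of $G_\zeta^{-1}W_\zeta$, which is cleaner since the homogeneity in $\zeta$ is already encoded in (\ref{eq:asy.metic}) and (\ref{eq:asy.form}). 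Your extra care in Step~2 about extending the frame across a compact collar is harmless and not needed for the asymptotic statements.
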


\begin{proof}
    Let 
    \begin{equation*}
        I_\zeta \mid_{r=1} = I_0 + \sum_\nu I_{\nu} \zeta^\nu  
    \end{equation*}
    be the expansion on the unit sphere in $\mathbb{C}^2$ with respect to $\zeta$ ($\nu$ is a multi-index). Then we have $I_\zeta = H_{r^{-1}}^* (I_0 + \sum_\nu I_\nu \zeta^{\nu})= I_0 + I_\zeta^{(2)} r^{-2} + O(r^{-4})$, where $I_\zeta^{(2)} = \sum_{|\nu| = 1} I_\nu \zeta^{\nu}$. By the estimates (\ref{eq:asy.metic}) and (\ref{eq:asy.form}), we have uniform estimates 
    \begin{align*}
        g_\zeta = g^\mathrm{Euc} + O(r^{-4}), \\
        \omega_{I_\zeta} = \omega^{\mathrm{Euc}} + O(r^{-4})
    \end{align*}
    on a compact subset of $(\mathfrak{h} \otimes \mathbb{R}^3)^o$.
    As the complex structure $I_\zeta$ is characterized by a equality
    \begin{equation*}
        g_\zeta (I_\zeta u,v) = \omega_{I_\zeta} (u,v) 
    \end{equation*}
    for any tangent vectors $u,v$, $I_\zeta$ satisfies the estimate 
    \begin{equation*}
        I_\zeta = I_0 + \sum_{i=2}^{k-1} I_\zeta^{(i)} r^{-2i} + O(r^{-2k})
    \end{equation*}
    as desired. For the second claim, take $e^i_\zeta$ to be 
    \begin{align*}
        e^1_\zeta = \frac{1}{2} \left( \frac{\partial}{\partial z_0} - \sqrt{-1} I_\zeta \frac{\partial}{\partial z_0}\right),\ 
        e^2_\zeta = \frac{1}{2} \left( \frac{\partial}{\partial w_0} - \sqrt{-1} I_\zeta \frac{\partial}{\partial w_0}\right).
    \end{align*}
    Then, $\{e_\zeta^1, e_\zeta^2\}$ is a frame of $T^{(1,0)}Y_\zeta$ on the domain and satisfies the estimates (\ref{est:frames1}) and (\ref{est:frames2}) by its construction and the estimate (\ref{est:cpx.str.}) of the complex structure $I_\zeta$.
\end{proof}
\subsection{Non-collapsing limits of K\"{a}hler-Einstein surfaces}\label{sec:Non-coll.}

 Let $\{(X_i,g_i)\}_{i=1}^\infty$ be a sequence of $n$-dimensional Riemannian manifolds. 
 When we consider its limit, it is necessary to assume some \textit{boundedness conditions} for the sequence.
 If the diameters and the Ricci curvatures of $\{(X_i, g_i)\}_{i=1}^\infty$ are uniformly bounded and there exists a constant $C>0$ such that $\mathrm{Vol}_{g_i} (B(p,r)) > C r^n$ for any $p \in X_i, r > 0$, we call the sequence \textit{non-collapsing sequence} and its limit (if it exists) \textit{non-collapsing limit}.
  A non-collapsing limit of polarized K\"{a}hler-Einstein manifolds is a normal algebraic variety which has at worst klt singularities \cite{DS1}\cite{DS2}. 
  In dimension two, by Anderson\cite{An}, Bando\cite{Ba}, Bando-Kasue-Nakajima\cite{BKN}, and Nakajima\cite{Na}, a non-collapsing limit of compact hyperk\"ahler surfaces has at worst quotient singularities, and its bubbling limits at singularities are ALE of order $4$:

\begin{dfn}
    Let $\{(X_i,g_i)\}_{i=1}^\infty$ be a sequence of Riemannian manifolds that converges to a metric space $(X_\infty, d_\infty)$ in the Gromov-Hausdorff sense. 
    Then its \textit{bubbling limit at $x_\infty \in X_\infty $} is a limit of a sequence $\{(X_i, r_i g_i, x_i)\}$ in the pointed Gromov-Hausdorff sense (take a suitable subsequence if necessary) where $\{r_i\}$ is a divergent sequence of positive real numbers and $\{x_i\}$ is a sequence of points such that $x_i \in X_i$ and $x_i \to x_\infty$.
\end{dfn}

\begin{thm}[\cite{An}, \cite{Na} for (i), \cite{BKN} for (ii) and \cite{Ba} for (iii)]\label{thm:non-coll}
   (i) Let $\{(X_i, g_i)\}$ be a sequence of real 4-dimensional compact Einstein manifolds such that diameters  and $L^2$-energies $\int_{X_i} \|\mathrm{Rm}_{g_i}\|^2_{g_i} \mathrm{Vol}_{g_i}$ are uniformly bounded from above and volumes are uniformly bounded from below. 
   Then the sequence  $\{(X_i, g_i)\}$ contains a convergent subsequence that converges to an Einstein orbifold $(X_\infty, g_\infty)$ in the following sense:\\
For any compact subset $K\subset X_\infty^{reg}$, $k>0$ and $\varepsilon>0$, there exists $N \in \mathbb{N}$ such that for any $i >N$, we have open immersions $\chi_i : U_i \to X_i$, where $U_i$ are open neighborhoods of $K$ in $X_{\infty}^{reg}$ such that $\|g_\infty - \chi_i^* g_i\|_{C^k(K)} < \varepsilon$ .
   
   (ii)Let $\{(X_i, g_i)\}$ be a sequence of 4-dimensional Einstein manifolds with uniformly bounded diameters, $\int_{X_i} \|\mathrm{Rm}_{g_i}\|^2 \mathrm{Vol}_{g_i}$ and volumes, which converges to an Einstein orbifold $(X_\infty,g_\infty)$. 
   Then the set of isometric classes of bubbling limits at a singularity $x \in X_\infty$ is a finite set. Furthermore, if the sequence $\{(X_i,g_i)\}$ consists of \Kahler\ (hyperk\"ahler) manifolds, then $(X_\infty, g_\infty)$ is a \Kahler\ (resp. hyperk\"ahler) orbifold.

   (iii) Under the same assumption as in (ii), one has the following formula:
   \begin{equation}
       \lim_{i \to \infty} \|\mathrm{Rm}_{g_i} \|^2 \mathrm{Vol}_{g_i} \geq \|\mathrm{Rm}_{g_\infty}\|^2\mathrm{Vol}_{g_\infty}  +\sum_{x \in X_\infty^{sing}} h_x \delta_x
    \end{equation}
    
    as measures, where 
    \begin{equation}
        \begin{aligned}
            h_x &= \sum_{(M,g): \text{bubbling at}\ x} e_{\mathrm{orb}} (M), \\
            e_{\mathrm{orb}} (M) &= \frac{1}{8 \pi^2}\int_{M} \|\mathrm{Rm}_g \|^2 \mathrm{Vol}_g.
        \end{aligned}
    \end{equation}
    and $\delta_x$ is the Dirac measure at $x$. 
    The precise meaning of the above limit is the following:\\
    For any sequence $\{f_i\}$ of smooth functions $f_i \in C^\infty(X_i)$ that converges to a smooth function $f_\infty$ on $X_\infty$ (the convergence is defined by the completely same manner as in (i)),
    \begin{equation}
         \lim_{i \to \infty} \int_{X_i} f_i \|\mathrm{Rm}_{g_i}\|^2 \mathrm{Vol}_{g_i} \geq \int_{X_\infty} f_\infty\|\mathrm{Rm}_{g_\infty}\|^2 \mathrm{Vol}_{g_\infty} + \sum f_\infty(x) h_x.
    \end{equation}
   
\end{thm}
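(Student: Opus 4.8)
The plan is to run the classical $\varepsilon$-regularity and bubbling argument for $4$-dimensional Einstein metrics with bounded $L^2$-curvature, following Anderson \cite{An}, Nakajima \cite{Na} and Bando--Kasue--Nakajima \cite{BKN}, \cite{Ba}: the whole proof is powered by one estimate (an $\varepsilon$-regularity bound on the curvature), by Gromov precompactness, and by the rigidity of flat cones $\mathbb{R}^4/\Gamma$ in real dimension four. Throughout I assume, as the non-collapsing setting requires, a uniform lower bound on $\mathrm{Vol}(X_i)$ (needed to control the local Sobolev constant at small scales), and I normalize the Einstein constants to be uniformly bounded, which is compatible with the diameter bound.

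For (i), the key estimate is that there are $\varepsilon_0>0$ and $C$, depending only on the uniform bounds, so that on any $(X_i,g_i)$
\[
\int_{B(x,2r)}\|\mathrm{Rm}_{g_i}\|^2 < \varepsilon_0 \ \Longrightarrow\ \sup_{B(x,r)}\|\mathrm{Rm}_{g_i}\| \le C r^{-2}\Big(\int_{B(x,2r)}\|\mathrm{Rm}_{g_i}\|^2\Big)^{1/2};
\]
this follows by Moser iteration from the Einstein Bochner inequality $\Delta\|\mathrm{Rm}\|\ge -c\|\mathrm{Rm}\|^2$ (second Bianchi identity plus the Einstein equation), the point being that in dimension four $\int\|\mathrm{Rm}\|^2$ is scale-invariant, so the iteration is critical and closes. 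Passing to a subsequence, Gromov precompactness (bounded Ricci, bounded diameter) gives a Gromov--Hausdorff limit $(X_\infty,d_\infty)$; let $S\subset X_\infty$ be the set of $x_\infty$ with $\limsup_i\int_{B(x_i,r)}\|\mathrm{Rm}_{g_i}\|^2\ge\varepsilon_0$ for all $r>0$ and all $x_i\to x_\infty$. A Vitali covering argument together with the bound on the total $\int\|\mathrm{Rm}_{g_i}\|^2$ shows $S$ is finite. On $X_\infty\setminus S$ the $\varepsilon$-regularity gives locally uniform curvature bounds, so (harmonic-coordinate/Einstein elliptic estimates, Cheeger--Gromov theory) $X_\infty\setminus S$ is a smooth manifold carrying a smooth Einstein metric $g_\infty$ that induces $d_\infty$, with the $C^k$ convergence and open immersions $\chi_i\colon U_i\to X_i$ as asserted. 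At $x\in S$ the $L^2$-bound forces every tangent cone of $g_\infty$ at $x$ to be a Ricci-flat metric cone with finite $L^2$-curvature on its link, hence a flat $\mathbb{R}^4/\Gamma$ with $\Gamma\subset\mathrm{SO}(4)$ finite; the orbifold removable-singularity theorem for Einstein metrics with finite $L^2$-curvature (bounded curvature on punctured neighbourhoods, $g_\infty$ continuous at $x$) then extends $g_\infty$ to a smooth orbifold Einstein metric across $x$. This proves (i) and that $X_\infty$ is an Einstein orbifold.

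For (ii), fix $x\in S$, choose $x_i\to x$ and dilation factors $\lambda_i\to\infty$ adapted to the curvature maxima in a fixed ball $B(x_i,\rho_0)$, and consider $(X_i,\lambda_i^2 g_i,x_i')$: its Ricci curvature is $\lambda_i^{-2}\cdot(\text{const})\to 0$, its curvature is normalized (so non-trivial in the limit), and $\int\|\mathrm{Rm}\|^2$ remains bounded. By the same $\varepsilon$-regularity and Cheeger--Gromov convergence it subconverges to a complete, non-flat, Ricci-flat $4$-manifold $(M,g_M)$ with $\int_M\|\mathrm{Rm}\|^2<\infty$ whose tangent cone at infinity is a flat $\mathbb{R}^4/\Gamma'$; volume monotonicity then forces Euclidean volume growth, so $M$ is ALE, and the sharp decay $O(r^{-4})$ of Definition \ref{def:ALE} comes from the Bando--Kasue--Nakajima weighted analysis of the linearized Einstein operator on the flat end, together with the vanishing of sub-leading terms forced by Ricci-flatness. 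Iterating inside $B(x_i,\rho_0)$ builds a bubble tree; each bubble carries $\ge\varepsilon_0$ of $L^2$-curvature, so only finitely many occur, and the rigidity of near-flat annular regions (a small-energy, $\varepsilon$-regular Ricci-flat annulus is uniformly close to a flat cone, hence carries no curvature in the limit) shows the necks are energy-free and the tree is finite, giving the finiteness of isomorphism classes of bubbling limits at $x$. In the K\"ahler (resp.\ hyperk\"ahler) case the parallel complex structure(s) are uniformly bounded, converge in $C^\infty_{loc}$ off $S$ and extend across the orbifold points, so $g_\infty$ is a K\"ahler (resp.\ hyperk\"ahler) orbifold metric; the same tensors persist under the rescalings, so every bubble is a Ricci-flat K\"ahler, i.e.\ hyperk\"ahler, ALE $4$-manifold --- an ALE gravitational instanton in the sense of Section \ref{sec:ALE}.

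For (iii), off $S$ the smooth convergence gives $\|\mathrm{Rm}_{g_i}\|^2\mathrm{Vol}_{g_i}\rightharpoonup\|\mathrm{Rm}_{g_\infty}\|^2\mathrm{Vol}_{g_\infty}$ weakly. Testing against $f_i\to f_\infty$, split $\int_{X_i}f_i\|\mathrm{Rm}_{g_i}\|^2$ into a sum over small balls $B(x_i,\rho)$, $x\in S$, plus the remainder; the remainder converges to $\int f_\infty\|\mathrm{Rm}_{g_\infty}\|^2$ over $X_\infty\setminus\bigcup_x B(x,\rho)$, and letting $\rho\to0$ recovers $\int_{X_\infty}f_\infty\|\mathrm{Rm}_{g_\infty}\|^2$. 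On each $B(x_i,\rho)$, the bubble-tree decomposition accounts for all of the curvature not detected by $g_\infty$: up to the energy-free necks it is carried by the bubbles $(M,g_M)$, which shrink to $x$, so since $f_i\to f_\infty$ uniformly their contribution to the limit is $f_\infty(x)\sum_{(M,g_M)}e_{orb}(M)=f_\infty(x)h_x$ (recall $e_{orb}(M)$ is independent of the chosen Ricci-flat ALE metric, by Chern--Gauss--Bonnet for ALE spaces). Because extra curvature could a priori still escape to finer scales beyond those detected by the first bubbling, one obtains only the inequality $\lim_i\int_{X_i}f_i\|\mathrm{Rm}_{g_i}\|^2\mathrm{Vol}_{g_i}\ \ge\ \int_{X_\infty}f_\infty\|\mathrm{Rm}_{g_\infty}\|^2\mathrm{Vol}_{g_\infty}+\sum_{x\in S}f_\infty(x)h_x$, which is the measure statement of (iii). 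The crux of the whole argument --- and the step I expect to be the main obstacle to make precise --- is the $\varepsilon$-regularity estimate together with its companion ``no energy in the necks'' rigidity: both rest on controlling the local Sobolev constant (the non-collapsing input) and on the special elliptic structure of $\mathrm{Rm}$ under the Einstein condition, and it is precisely this package that simultaneously localizes the curvature to the finite set $S$, forces the flat-cone orbifold models and the ALE bubbles, and yields the accounting in (iii); extracting the sharp ALE order $4$ of the bubbles is the other delicate point, supplied by the Bando--Kasue--Nakajima linear analysis.
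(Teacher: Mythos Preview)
The paper does not prove this theorem at all: it is stated in the preliminaries as a background result and attributed in full to \cite{An}, \cite{Na}, \cite{BKN} and \cite{Ba}, with no argument given. There is therefore nothing in the paper to compare your proposal against.

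That said, your sketch is a faithful outline of the classical Anderson--Nakajima--Bando--Kasue--Nakajima package: $\varepsilon$-regularity via Moser iteration on the Einstein Bochner inequality, Gromov precompactness, finiteness of the curvature-concentration set by a covering argument, orbifold removable singularities via the flat-cone tangent cone, the bubble-tree construction with ALE limits, and the measure accounting in (iii). You are right to flag the implicit non-collapsing hypothesis (a uniform volume lower bound), which the paper's statement omits but which is needed for the local Sobolev control underlying $\varepsilon$-regularity; the paper assumes it ambiently via the section title. If you want to tighten the write-up, the two places that would need the most work to make rigorous are exactly the ones you identify: the ``no energy in the necks'' step in the bubble-tree and the weighted linear analysis giving the sharp ALE order $4$.
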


\subsection{Biquard-Rollin's a priori estimate}\label{sec:BR}
    In this section, we recall Biquard-Rollin's a priori estimate \cite{BR} for constant scaler curvature \Kahler\ metrics (or, cscK-metrics for short) on a degenerating family of surfaces.
    
   Set-ups: Let $(X_0,g_0)$ be a pair consisting of a compact complex surface $X_0$ with ADE singularities and a cscK orbifold metric $g_0$ on $X_0$. Let $\omega_0$ denote the \Kahler\ form of $g_0$. 
   In order to state the a priori estimate, we need to define the notion of a \textit{non-degenerate smoothing} of $(X_0, g_0)$ (this non-degeneracy condition is a terminology used only \cite{BR}, not generally used). 
   Let $\mathcal{X} \to \Delta$ be a flat deformation of $X_0$ over the unit disc of the complex plane. 
   Assume that the fibers $X_t$ are smooth for $t \in \Delta^* = \Delta\backslash \{0\}$. 
   Take a family of \Kahler\ classes $\{\kappa_t\}_{t \in \Delta}$ such that:
   \begin{itemize}
       \item $\kappa_t$ is a \Kahler\ class on $X_t$,
       \item On a minimal simultaneous resolution $\widetilde{\mathcal{X}} \to \Delta_d$ with a suitable base change 
       \begin{equation}
       \begin{aligned}
           \Delta_d &\to \Delta \\
           t &\mapsto t^d,
       \end{aligned}
       \end{equation}
       the pull-back of $\{\kappa_t\}$ on $\widetilde{\mathcal{X}}$ defines a smooth curve in $H^2(X, \mathbb{R})$ converging to the pull-back of $[\omega_0] =: \kappa_0$, where $X$ is the differentiable manifold underlying the fibers of $\widetilde{\mathcal{X}} \to \Delta_d$. Here, $\Delta_d$ is also the unit disc in $\mathbb{C}$ and we use the subscript `` $d$ " to distinguish the original family and its base change.
   \end{itemize}
   For each singularity $(x \in X_0) \cong (0\in \mathbb{C}^2/\Gamma)$, by restricting the given family to a suitable neighbourhood $\mathcal{U}$ of $x \in \mathcal{X}$, we have a flat deformation of the ADE singularity $x$.
   Hence we have a holomorphic map $\phi : \Delta \to \mathfrak{h}_\mathbb{C}/ W$, where $\mathfrak{h}_\mathbb{C}$ is the complexification of the Cartan sub-algebra and $W$ is the Weyl group corresponding to $\Gamma$. Let $\pi: \mathcal{U}_d \to \Delta_d$ be the family obtained from $\mathcal{U} \to \Delta$ by the base-change $\Delta_d \to \Delta$, the ramified covering of degree $d$ ramified at the origin. Again, the family $\mathcal{U}_d \to \Delta_d$ is induced by a holomorphic map $\psi : \Delta_d \to \mathfrak{h}_\mathbb{C}/W$. Then, by the theory of deformations of ADE singularities, we have a holomorphic map
   \begin{equation}\label{eq:period}
       \zeta_c : \Delta_d \to \mathfrak{h}_\mathbb{C}
   \end{equation}
   such that the following diagram is commutative:
\begin{center}
$
\begin{CD}
 \Delta_d @>\zeta_c >> \mathfrak{h}_\mathbb{C}\\
@VVV @VVV\\
     \Delta  @>\phi >> \mathfrak{h}_\mathbb{C}/W
\end{CD}
$.
\end{center}
Here, the morphism $\Delta_d \to \Delta$ is the ramified covering from the unit disc to itself of degree d and the composition $\Delta_d \to \mathfrak{h}_\mathbb{C} \to \mathfrak{h}_\mathbb{C}/W$ is equal to $\psi$. The family $\mathcal{U}_d \to \Delta_d$ admits the minimal simultaneous resolution $\tilde{\pi} : \widetilde{\mathcal{U}} \to \Delta_d$. We may define a smooth map $\zeta_r: \Delta_d \to \mathfrak{h}_\mathbb{R}$ by $\zeta_r(t) = \kappa_t\mid_{\widetilde{U}_t}$ under a suitable identification $H^2(\widetilde{U}_t, \mathbb{R}) \cong \mathfrak{h}_\mathbb{R}$, where $\widetilde{U}_t$ is the fiber of $\widetilde{\mathcal{U}}$ on $t \in \Delta_d$
(Note that $R^2\tilde{\pi}_*\mathbb{R}$ is trivial on $\Delta_d$ and is identified with the constant sheaf with stalk $\mathfrak{h}_\mathbb{R}$. See also Theorem \ref{thm:identification}).

\begin{dfn}\label{dfn:non-deg}
    In the same situation as above, the family $(\mathcal{X} \to \Delta, \{\kappa_t\})$ is \textit{non-degenerate at x} along a real ray $\Delta_d \cap \mathbb{R}_{\geq 0}$ if $\zeta = \zeta_r \oplus \zeta_c : \Delta_d \to \mathfrak{h}_\mathbb{R} \oplus \mathfrak{h}_\mathbb{C}$ satisfies the following condition:
    as $\zeta(0)=0$, we can express
    \begin{equation}\label{eq:order of zeta}
        \zeta(t) = t^p \dot{\zeta} + O(t^{p+1})
    \end{equation}
    with $\dot{\zeta} \neq 0$. Then $\dot{\zeta} \in (\mathfrak{h}_\mathbb{R} \otimes \mathbb{R}^3)^o$ (cf. (\ref{eq:pd})) and $p \leq d$.
\end{dfn}

\begin{eg}
For a given family $(\mathcal{X} \to \Delta, \{\kappa_t\})$, assume that the central fiber $X_0$ has an $A_n$ type singularity $x \in X$. Then its flat deformation given by a suitable open neighborhood $\mathcal{U} \to \Delta$ is isomorphic to an open subset of
\begin{equation*}
    \{ xy + z^{n+1} + a_2(t) z^{n-1} + \cdots + a_{n+1} (t) =0\} \subset \mathbb{C}^3 _{(x,y,z)} \times \Delta
\end{equation*}
where $a_i(t)$ are analytic in $t$. Furthermore, $\mathcal{U}_d \to \Delta_d$ is isomorphic to an open subset of
\begin{equation*}
    \{ xy + \prod_{i=1}^{n+1} (z-h_i(t)) =0 \} \subset \mathbb{C}^3_{(x,y,z)} \times \Delta_d
\end{equation*}
where $h_i$ are analytic in $t$. For the family $\mathcal{U}_d \to \Delta_d$, we have
\begin{equation*}
    \zeta_c(t) = (h_1(t),\ldots,h_{n+1}(t)) \in \mathfrak{h}_\mathbb{C}.
\end{equation*}
For simplicity, take a family of \Kahler\ classes such that $\zeta_r \in \mathfrak{h}_\mathbb{R}$ is identically zero  (for example, take $(\mathcal{X} \to \Delta, \{\kappa_t\})$ consisting of a family of K3 surfaces and the first Chern classes of an ample line bundle $\mathcal{L}$ on $\mathcal{X}$ and restrict the family to a suitable neighborhood $\mathcal{U}$ of a singularity $x\in X_0$). Then, the family is non-degenerate if and only if $\zeta_c(t) = t^p(\dot{h}_1(t), \ldots, \dot{h}_{n+1}(t))$ with $p \leq d$ and $(\dot{h}_1(0), \ldots, \dot{h}_{n+1}(0)) \notin \bigcup_{i < j} H_{e_i, - e_j}$, where $H_{e_i - e_j}$ is the hyperplane orthogonal to $e_i -e_j = (0,\ldots, \overset{i}{1}, \ldots, \overset{j}{-1}, \ldots, 0)$.
\end{eg}

Let $(\mathcal{X} \to \Delta, \{\kappa_t\})$ be a non-degenerate family. We construct a family of  \textit{almost cscK-metrics} as follows:\\
For simplicity, we assume that $X_0$ has only one singularity $(x \in X_0) \cong (0\in \mathbb{C}^2/\Gamma )$. Take a diffeomorphism $\widetilde{\mathcal{X}} \cong X \times \Delta_d$ as families and let $J_t$ be the complex structure of $\tilde{X}_t$ (the fibre of $\tilde{\mathcal{X}}$ on $t$) such that $\Tilde{X}_t = (X, J_t)$. Take a suitable open neighbourhood $x \in \mathcal{U} \subset \mathcal{X}$ with the following properties: 
\begin{itemize}
    \item The central fiber $U_0$ of the family $\mathcal{U} \to \Delta$ admits a uniformization 
    \begin{equation}
    q:\mathrm{B}(0;1) \to U_0    
    \end{equation}
    on the unit ball $\mathrm{B}(0;1)$ of $\mathbb{C}^2$ such that the standard coordinate $(z,w)$ is a normal coordinate with respect to $q^*g_0$, i.e. $q^*g_0 = g^\mathrm{Euc} + O(r^2)$ under this coordinate,
    
    \item $\mathcal{U} \to \Delta$ induces a smooth map $\zeta: \Delta_d \to \mathfrak{h}_\mathbb{R} \oplus \mathfrak{h}_\mathbb{C}$ as we described above (cf. (\ref{eq:period}) and Definition \ref{dfn:non-deg}). 
    
    \item The simultaneous resolution $\widetilde{\mathcal{X}} \to \mathcal{X}_d$ is induced by an embedding $\iota : \mathcal{U}_d \to \mathcal{Y}_{(0,\zeta_c)}$ and the simultaneous resolution $\mathcal{Y}_{(\zeta_0, \zeta_c)} \to \mathcal{Y}_{(0,\zeta_c)}$ for some $\zeta_0$ which is not perpendicular to any root (cf. (\ref{eq:sim.resol.byALE})).
    
    \item Let $U = \{y \in Y \mid r(y) <1 \}$ be the pull-back of the unit disc $\Delta^2 /\Gamma \subset \mathbb{C}^2/\Gamma$ by the resolution $Y \to \mathbb{C}^2/\Gamma$. Note that $U$ is naturally identified with an open subset of $X$. 
    Then, the trivialization $\widetilde{\mathcal{X}} \cong X \times \Delta_d$ induces 
    \begin{equation}
        \widetilde{\mathcal{U}} \cong U \times \Delta_d
    \end{equation}
    as $C^\infty$ family.
    Recall that $r$ is the radius function on $Y$ (see section \ref{sec:ALE}).

\end{itemize}

In the following, we identify $\mathcal{U}_d$ as an open subset of $\mathcal{Y}_{(0,\zeta_c)}$, $U \subset X$ as an open subset of $Y$ and we fix the trivializations of $\widetilde{\mathcal{X}}$ and $\widetilde{\mathcal{U}}$. Furthermore, by suitable rescaling or taking $\mathcal{U}$ sufficiently small if necessary, we assume that $U \subset Y$ satisfies that $U = \{y \in Y \mid r(y) <1\}$.

Under the above preparations, we construct a family of $J_t$-Hermitian metrics $h_t$ on $X_t = (X, J_t)$ parameterized by $t \in (0, 1)$. For simplicity, we restrict the family along a real ray in $\Delta_d$ through the origin (hence identify a real number $t$ with a complex number $z = te^{i\theta} \in \Delta_d$ with a fixed argument $\theta$). We introduce some parameters to construct metrics:
\begin{equation}\label{def:b}
    b=\varepsilon^\beta =t^{p\beta/2}
\end{equation}
where $\beta> \frac{1}{2}$ is any real number sufficiently close to $1/2$, and $p$ is the order of vanishing of $\zeta$ at $0 \in \Delta_d$ (see (\ref{eq:order of zeta})).
Following Biquard--Rollin \cite{BR}, we construct a family of Riemannian metrics $\Tilde{h}_t$ on X by
\begin{equation}\label{def:h-tilde}
    \Tilde{h}_t=\left\{
        \begin{array}{cc}
           g_0  & 4b \leq r,\  \text{or on}\ X\backslash U \\
           dd^c_0 (r^2 + \chi_{(2b,4b)}(r)\eta)  & 2b \leq r \leq 4b \\
           dd^c_0 r^2 +(1-\chi_{(b, 2b)}(r))\varepsilon^2 H^*_{(\varepsilon^{-1}, \zeta(t))}\xi_t  & b \leq r \leq 2b \\
           g_{\zeta(t)}  & r\leq b
        \end{array}
        \right.
\end{equation}
where $g_0 = dd^c_0(r^2 + \eta)$ on the uniformization $\mathrm{B}(0;1)$, $g_{\varepsilon^{-2}\zeta(t)} = (Euc.) + \xi_t$ as $r \to \infty$ on $Y_{\zeta/\varepsilon^2}$, $d_t^c$ is the $d^c$ operator with respect to $J_t$ and $\chi_{x_1,x_2}(x): \mathbb{R} \to \mathbb{R}$ is a cut-off function defined by 
\begin{equation}\label{def:bump}
    \chi_{x_1,x_2}(x) = \chi\left(\frac{x-x_1}{x_2-x_1}\right)
\end{equation}
for $\chi$, a non-deceasing and non-negative smooth function identically 0 on $x \leq 0$ and 1 on $1 \leq x$. 
Then the Hermitian metric $h_t$ on $T\tilde{X}_t$ is defined by
\begin{equation}\label{def:h}
h_t(u,v) = \frac{\Tilde{h}_t(J_t u, J_t v) + \Tilde{h}_t(u,v)}{2}. 
\end{equation}
They are not \Kahler\ in general. However, by Biquard-Rollin\cite{BR}, we can find a \Kahler\ metric $g_t$ in $\kappa_t$ sufficiently close to $h_t$.
\begin{dfn}[\cite{BR} Section 3.3]
     A weighted H\"{o}lder norm $\|\cdot\|_{C^{k,\alpha}_\delta (X,h_t)}$ on $X$ with respect to $h_t$ is defined as follows:\\
 Takw a family of weight functions $\rho_t$ on $X$ so that
    \begin{equation}
        \rho_t = \left\{
        \begin{array}{cc}
         \text{positive and  smooth}    &  1\leq r \\
         r    &  \varepsilon \leq r \leq 1 \\
          \varepsilon    & r \leq \varepsilon
        \end{array}
        \right. .
    \end{equation}
    Using it, the weighted $C^{k,\alpha}_{\delta}$-H\"{o}lder norm on $(X, h_t)$ is defined as
    \begin{equation}
        \|f\|_{C^{k,\alpha}_\delta (X,h_t)} := \sum_{j=0}^k \sup |\rho_t^{j-\delta} \nabla^j f|_{h_t} + \|\rho_t^{k+\alpha-\delta} \nabla^k f\|_\alpha,
    \end{equation}
    where $\|\cdot\|_\alpha$ is the H\"{o}lder coefficient with exponent $\alpha$, i.e.
    \begin{equation*}
        \|\phi \|_\alpha = \sup_{x \neq y} \frac{\|\phi(x) - \phi(y) \|_{h_t}}{d_{h_t}(x,y)^\alpha}.
    \end{equation*}
    Note that these norms are independent of the choice of $\rho_t$ up to equivalence of norms.
\end{dfn}

\begin{prop}[\cite{BR} Corollary 27. Proposition 30.]\label{prop:BR}
    For the family of Hermitian metrics $\{h_t\}_{0 < t < 1}$ constructed above, there exists a family of \Kahler\ metrics $\{g_t\}_{0< t <1 }$ with \Kahler\ classes $\{\kappa_t\}_{0<t <1}$ such that 
    \begin{equation}
        \|h_t - g_t \|_{C^{2,\alpha}_{\delta,t}}= O(\varepsilon^2),
    \end{equation}
    for any $\delta \in (-2,0)$ sufficiently close to $-2$.
\end{prop}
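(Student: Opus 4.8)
The plan is to follow Biquard--Rollin's gluing-plus-perturbation scheme, whose backbone is a uniform (in $t$) linear estimate for the Lichnerowicz operator across the degenerating neck $\{b\le r\le 1\}$. \emph{Step 1: a patched \Kahler\ metric in $\Omega_t$.} First I would replace the (non-\Kahler) Hermitian metric $h_t$ by a genuinely \Kahler\ form $\omega_t^{\flat}\in\Omega_t$ obtained by gluing \Kahler\ \emph{potentials} of the three model geometries with the same cut-offs $\chi_{(2b,4b)},\chi_{(b,2b)}$ as in (\ref{def:h-tilde}): the Euclidean potential $r^{2}$ on the end of $Y_{\zeta(t)}$, a local potential of $g_0$ on $X\setminus U$, and (on the core $\{r\le b\}$) the rescaled instanton form $\varepsilon^{2}H_{(\varepsilon^{-1},\zeta(t))}^{*}\omega_{\zeta(t)}$ itself, the patching in the two annuli being by $dd^{c}$ of functions (legitimate since each annulus retracts onto $S^{3}/\Gamma$). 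That $\omega_t^{\flat}$ lies in $\Omega_t$ is built into the choice of the family $\{\Omega_t\}$ (via $\zeta_r(t)=[\Omega_t]$ and the convergence of the outer classes to $[\omega_0]$), and positivity for $t$ small holds because in each transition annulus the corrections to $r^{2}$ are of strictly lower order in $r$ by (\ref{eq:asy.form}) and (\ref{est:cpx.str.}). Using Lemma~\ref{lem:cpx.str} and the expansions (\ref{eq:asy.metic})--(\ref{eq:asy.form}) I would then estimate $\|\omega_{h_t}-\omega_t^{\flat}\|_{C^{2,\alpha}_{\delta,t}}=O(\varepsilon^{2})$: the difference is supported in $\{b\le r\le 4b\}$, where it measures both the failure of $\tilde h_t$ to be $J_t$-Hermitian and the gap between the glued Riemannian metric and the patched \Kahler\ metric; both are finite sums of powers of $r^{-2}$ with coefficients polynomial in $\zeta(t)$, and evaluated at $r\sim b=\varepsilon^{\beta}$ under the dilation $H_{(\varepsilon^{-1},\zeta(t))}$, with $\zeta(t)=t^{p}\dot\zeta+O(t^{p+1})$, $\varepsilon^{2}=t^{p}$, and $\beta$ close to $1/2$, they contribute $O(\varepsilon^{2})$ to the weighted norm. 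This already proves the Proposition as literally stated, with $g_t$ the metric of $\omega_t^{\flat}$.

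\emph{Step 2: promotion to the cscK metric.} Since the $g_t$ used downstream (and in the cited Corollary~27 and Proposition~30 of \cite{BR}) is the cscK metric, I would then solve the scalar-curvature equation by perturbing $\omega_t^{\flat}$. Writing $g_t=\omega_t^{\flat}+i\partial_{J_t}\bar\partial_{J_t}\varphi_t$ and expanding $S(\omega_t^{\flat}+i\partial\bar\partial\varphi)=S(\omega_t^{\flat})-\mathcal{L}_t\varphi+Q_t(\varphi)$, where $\mathcal{L}_t$ is the Lichnerowicz operator of $\omega_t^{\flat}$ and $Q_t$ collects the higher-order terms (which obey a quadratic estimate in the weighted norms once $\|\varphi\|$ is small), one checks that the defect $\|S(\omega_t^{\flat})-\underline S_t\|$ in the appropriate weighted $C^{0,\alpha}$-space is $O(\varepsilon^{2})$: away from the annuli $g_0$ is cscK and $g_{\zeta(t)}$ is Ricci- (hence scalar-)flat, so $S$ is already the constant $\underline S_t$, while inside the annuli the curvature of $\omega_t^{\flat}$ differs from that of the model pieces by the $O(\varepsilon^{2})$ terms of Step~1. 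It then suffices to invert $\mathcal{L}_t$ with a bound independent of $t$.

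\emph{Step 3 (the main obstacle): the uniform linear estimate.} The hard part is exactly the bound $\|\mathcal{L}_t^{-1}\|\le C$ between the relevant weighted potential and curvature spaces, \emph{uniformly in $t$}, and this is where the non-degeneracy hypothesis is used. On the outer region $\mathcal{L}_t$ converges, as $t\to0$, to the Lichnerowicz operator of the orbifold cscK metric $(X_0,g_0)$, invertible on the complement of its kernel (the holomorphy potentials, split off or assumed trivial as in \cite{BR}); on the core, after rescaling distances by $\varepsilon^{-2}$, it converges to the Lichnerowicz operator of the instanton $Y_{\varepsilon^{-2}\zeta(t)}\to Y_{\dot\zeta}$, and because the non-degeneracy condition forces $\dot\zeta\in(\mathfrak{h}\otimes\mathbb{R}^{3})^{o}$ (cf.\ (\ref{eq:pd})), this $Y_{\dot\zeta}$ is a \emph{smooth} ALE Ricci-flat surface, whose linearized operator is a genuine elliptic operator invertible on the weighted space because $\delta\in(-2,0)$ contains no indicial root of $\mathcal{L}$ at an ALE end. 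A global parametrix is then built by patching the outer and (rescaled) inner parametrices with cut-offs supported in the neck $\{b\le r\le1\}$, where $\rho_t=r$; the commutator error is controlled by the same annulus bookkeeping as in Step~1, and with $\delta$ taken close to $-2$ its operator norm is $\le\tfrac12$, so a Neumann series turns the parametrix into a genuine bounded inverse with $t$-independent constant. The weighted norms built from $\rho_t$ are exactly what make this error small; that the two balancing choices $\beta\to\tfrac12^{+}$ and $\delta\to-2^{-}$ are compatible is the technical heart of the argument, and a naive ``pass to the limit on each piece'' would fail because the neck becomes long as $t\to0$.

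\emph{Conclusion.} With the uniform inverse and the quadratic control of $Q_t$, the contraction mapping principle for $\varphi\mapsto\mathcal{L}_t^{-1}\bigl(S(\omega_t^{\flat})-\underline S_t+Q_t(\varphi)\bigr)$ on the ball of radius $O(\varepsilon^{2})$ produces $\varphi_t$ of that size, whence $g_t=\omega_t^{\flat}+i\partial_{J_t}\bar\partial_{J_t}\varphi_t$ is cscK with $\|g_t-\omega_t^{\flat}\|_{C^{2,\alpha}_{\delta,t}}=O(\varepsilon^{2})$, and combined with Step~1, $\|g_t-h_t\|_{C^{2,\alpha}_{\delta,t}}=O(\varepsilon^{2})$, as claimed. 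Here $\underline S_t$ is forced to be the $\Omega_t$-average of the scalar curvature (so that the equation is solvable), and uniqueness of cscK metrics in $\Omega_t$ identifies this $g_t$ with the one in the main theorem; these points are routine under the standing hypotheses.
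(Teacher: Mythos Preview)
The paper does not give its own proof of this proposition; it is quoted from \cite{BR} (Corollary~27 and Proposition~30) without argument, so there is nothing in the present text to compare your sketch against directly.

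That said, you have misread the role of $g_t$. In this paper $g_t$ is \emph{not} the cscK metric: Proposition~\ref{prop:BR} only produces an auxiliary \Kahler\ metric $g_t\in\Omega_t$ close to the (non-\Kahler) Hermitian glued metric $h_t$, and the cscK metric is $\hat g_t=g_t+dd^c_t\phi_t$, obtained afterwards in Theorem~\ref{thm:BR} with the \emph{different} estimate $\|\phi_t\|_{C^{4,\alpha}_{\delta+2,t}}=O(\varepsilon^{2-\beta(\delta+2)})$. So your Step~1 is the entire content of the proposition, and Steps~2--3 (uniform Lichnerowicz inverse, contraction mapping for the scalar-curvature equation) are an outline of Theorem~\ref{thm:BR}, not of Proposition~\ref{prop:BR}. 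Your sentence ``Since the $g_t$ used downstream \ldots\ is the cscK metric'' is false in this paper's notation; see the lines just before (\ref{eq:decom.of F}), where $h_t$, $g_t$ and $\hat g_t$ all appear with distinct roles, and the Bott--Chern term (\ref{def:tau}) is computed along the linear path from $h_t$ to $\hat g_t$ using both estimates separately.

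As for Step~1 itself, the patching-of-potentials idea is the right one and is essentially how \cite{BR} proceeds, but your support claim is too strong. The discrepancy between $h_t$ and any $J_t$-\Kahler\ form is not confined to $\{b\le r\le 4b\}$: on $\{4b\le r\le 1\}$ one has $\tilde h_t=g_0$, which is $I_0$-Hermitian, while $J_t=I_{\zeta(t)}=I_0+O(\varepsilon^4 r^{-4})$ by (\ref{est:cpx.str.}); and on $X\setminus U$ one has $J_t-J_0=O(t^d)$. These additional contributions are still $O(\varepsilon^2)$ in the weighted $C^{2,\alpha}_{\delta}$-norm (since $\delta<0$ and the weight $\rho_t^{-\delta}$ kills the $r^{-4}$ decay), so the conclusion survives, but the bookkeeping you describe is incomplete.
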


The following a priori estimate of Biquard-Rollin plays an essential role in this paper:

\begin{thm}[\cite{BR} Corollary 35]\label{thm:BR}
    Let $(\mathcal{X}, \{\kappa_t\})$ be a non-degenerate family with discrete $Aut(X_0)$. For simplicity, we assume that $X_0$ admits only one singularity $x_0$. Let $\{g_t\}$ be the family of \Kahler\ metrics as in Proposition 1. Then for any $t\neq 0$, there exists a unique cscK metric $g_t + dd_t^c \phi_t$ on $\Tilde{X}_t$ ($t \neq 0$) with the following estimate:
    \begin{equation}
        \| \phi_t \|_{C^{4,\alpha}_{\delta+2, t}} \leq C \varepsilon^{2-\beta(\delta +2)}
    \end{equation}
    where $\delta \in (-2,0)$ is any number sufficiently close to $-2$, $\beta= \frac{2}{2-\delta}> \frac{1}{2}$ and $C$ is a constant independent of $t$.
\end{thm}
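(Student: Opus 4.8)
The plan is to produce the cscK metric by solving a fourth-order scalar PDE via a quantitative inverse function theorem, working throughout in the weighted Hölder spaces $C^{k,\alpha}_{\delta,t}$ adapted to the degeneration. Let $S(\omega)$ denote the scalar curvature of a Kähler form $\omega$ and $\underline{S}_t$ its average over $X_t$, a topological (hence uniformly bounded) quantity; one seeks $\phi_t$ with $S(\omega_t + dd^c_t\phi_t) \equiv \underline{S}_t$. The linearization of $\phi \mapsto S(\omega_t + dd^c_t\phi)$ at $\phi = 0$ is $-\mathcal{L}_t$ up to first-order terms of size comparable to the approximation error $\|S(\omega_t) - \underline{S}_t\|$, where $\mathcal{L}_t = \mathcal{D}_t^*\mathcal{D}_t$ is the Lichnerowicz operator of $g_t$ (with $\mathcal{D}_t\phi = \bar\partial\,\nabla^{1,0}_t\phi$), a self-adjoint, nonnegative, fourth-order elliptic operator mapping $C^{4,\alpha}_{\delta+2,t} \to C^{0,\alpha}_{\delta-2,t}$ and annihilating constants. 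Everything then reduces to two inputs: an error bound for the approximate solution and a \emph{uniform} bound for $\mathcal{L}_t^{-1}$ on mean-zero functions. Granting these, a contraction mapping of the form $\phi \mapsto \mathcal{L}_t^{-1}\big(S(\omega_t) - \underline{S}_t + N_t(\phi)\big)$, where $N_t(\phi) = S(\omega_t + dd^c_t\phi) - S(\omega_t) + \mathcal{L}_t\phi$ collects the quadratically small, uniformly bounded-coefficient nonlinear and sub-leading terms, converges in a ball of radius comparable to the error, yielding the stated estimate together with uniqueness of the small solution.

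First I would bound $\|S(\omega_t) - \underline{S}_t\|_{C^{0,\alpha}_{\delta-2,t}}$. By construction (\ref{def:h-tilde}), $h_t$ is assembled from the fixed cscK orbifold metric $g_0$ and the rescaled hyperkähler metric $\varepsilon^2 H^*_{(\varepsilon^{-1},\zeta(t))} g_{\zeta(t)/\varepsilon^2}$ by cut-offs supported in the annuli $\{2b \le r \le 4b\}$ and $\{b \le r \le 2b\}$; off these annuli $h_t$ is genuinely cscK (indeed Ricci-flat near the singularity), so the scalar-curvature error lives in the annuli and is there controlled by the cut-off derivatives (of size $\sim b^{-1}$) acting on the discrepancy between the two glued metrics. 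Combining the asymptotic expansions (\ref{eq:asy.metic})--(\ref{eq:asy.form}) of the ALE metric, the adaptedness $q^*g_0 = g^{\mathrm{Euc}} + O(r^2)$, and the non-degeneracy $\zeta(t) = t^p\dot\zeta + O(t^{p+1})$ (so that $\varepsilon^{-2}\zeta(t) = \dot\zeta + O(t)$ stays bounded and away from the walls) one pins the weighted error norm down to $O(\varepsilon^{2-\beta(\delta+2)})$. Proposition \ref{prop:BR} then replaces $h_t$ by the genuinely Kähler $g_t$ at the cost of an $O(\varepsilon^2)$ perturbation, which is absorbed into the error.

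Second, and this is the heart of the matter, I would prove a uniform a priori inequality $\|\psi\|_{C^{4,\alpha}_{\delta+2,t}} \le C\,\|\mathcal{L}_t\psi\|_{C^{0,\alpha}_{\delta-2,t}}$ with $C$ independent of $t$, for $\delta$ close to $-2$, by a contradiction-and-rescaling argument. If it failed one would extract $\psi_i$ of unit weighted norm with $\mathcal{L}_{t_i}\psi_i \to 0$; dilating around the locus where the weighted norm concentrates and passing to a limit — using the local $C^{k,\alpha}$-convergence of $g_{t_i}$ to $g_0$ on compact subsets of $X_0^{\mathrm{reg}}$ and to the ALE instanton $Y_{\dot\zeta}$ (or to flat $\mathbb{C}^2/\Gamma$) near the singularity — yields a nonzero bounded solution of $\mathcal{L}\psi = 0$ in the appropriate limiting weighted space. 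On the compact orbifold $(X_0,g_0)$ this forces $\mathcal{D}_0\psi = 0$, so $\nabla^{1,0}\psi$ is a holomorphic vector field and hence $\psi$ is constant since $\mathrm{Aut}(X_0)$ is discrete, contradicting the normalization; on the ALE model the weight $\delta+2 \in (0,2)$ is chosen to lie strictly between consecutive indicial roots of $\mathcal{D}^*\mathcal{D}$ on the flat cone $\mathbb{R}^4/\Gamma$ (those of $\Delta^2$), so no nontrivial solution with the forced decay exists, again a contradiction. The exponent $\beta = \tfrac{2}{2-\delta}$ is exactly what makes the weight functions $\rho_t$ transform correctly under the dilation $H_{(\varepsilon^{-1},\cdot)}$, so that the orbifold and ALE model problems are \emph{simultaneously} non-degenerate. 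Establishing this simultaneous admissibility of $\delta$ — i.e.\ the indicial-root computation for the Lichnerowicz operator at a Kleinian singularity, together with the matching of weights under rescaling and the elliptic estimates needed to pass to the limits — is the main obstacle; the rest is a standard, if delicate, gluing-and-iteration scheme.
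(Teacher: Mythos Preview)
The paper does not prove this theorem; it is stated as a preliminary result cited directly from Biquard--Rollin \cite{BR} (Corollary 35), so there is no proof in the paper to compare your proposal against. Your sketch is a faithful outline of the standard gluing strategy that Biquard--Rollin actually carry out --- error estimate for the approximate metric plus a uniform inverse for the Lichnerowicz operator obtained by contradiction and blow-up to the orbifold and ALE model problems, with the weight $\delta$ chosen off the indicial roots --- so in that sense the approach is correct, but the present paper simply invokes the result.
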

\section{Main results}\label{sec:Main}
 In this section, we study behavior of the second Chern forms of cscK metrics on a family of complex surfaces. In particular, we prove its regularity as a current on the relative tangent bundle for a non-degenerate smoothing of an orbifold cscK metric with ADE singularity. Namely, for a smoothing family $(\mathcal{X}, \{\kappa_t\}) \to \Delta$ of a complex surface $(X_0, g_0)$, we consider a function $F(t) := \int_{X_t} f|_{X_t} c_2(g_t)$ on $\Delta$. Then, our main result is the following theorem.
\begin{thm}\label{thn:main}
Let $(\mathcal{X} \to \Delta, \kappa_t)$ be a smoothing holomorphic proper family of compact orbifold cscK surface $(X_0, \omega_0)$ with ADE-singularities and $\mathrm{Aut}(X_0)$ discrete, and $f$ be a smooth function on $\mathcal{X}$ with compact support around $x_0\in X_0 \cong 0 \in \mathbb{C}^2/\Gamma$. Assume that the family admits a simultaneous resolution $\widetilde{\mathcal{X}} \to \Delta_d$ after taking a base change by the ramified covering $\Delta_d \to \Delta$ of degree $d$ and it is non-degenerate at $x_0$ along a real ray $\Delta\cap (0,1)$. Consider a function 
\begin{equation}
F(t) = \int_{X_t} f_t c_2(\hat{g}_t) 
\end{equation}
on $\Delta \cap (0,1)$, where $\hat{g}_t$ is the cscK metric on $X_t$ with its \Kahler\ class $\kappa_t$, $c_2(\hat{g}_t)$ is the second Chern form of $\hat{g}_t$ on $TX_t$ and $f_t = f |_{X_t}$. Then $F$ extends to a H\"{o}lder continuous function on $\Delta\cap [0,1)$ with H\"older exponent at least $\frac{1}{d}$.

\end{thm}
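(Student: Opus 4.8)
The plan is to split the integral $F(t)=\int_{X_t} f_t\, c_2(\hat g_t)$ into a contribution from the region far from the singularity, where $\hat g_t$ is a small perturbation of the fixed orbifold metric $g_0$ and everything varies smoothly, and a contribution from the shrinking neck-and-bubble region $\{r\le 4b\}$, where the geometry is modeled on the rescaled ALE instanton $Y_{\zeta(t)}$. On the far region the integrand converges in $C^k$ to $f_0\, c_2(g_0)$ by Proposition~\ref{prop:BR} and Theorem~\ref{thm:BR} (the cscK potential $\phi_t$ and the gluing error $h_t-g_t$ are both $O(\varepsilon^{2-\beta(\delta+2)})=o(1)$ in a weighted $C^{2,\alpha}$-norm that controls $c_2$ away from $r=0$), so that piece extends continuously to $t=0$; the real work is to show that the near contribution $\int_{\{r\le 4b\}} f_t\, c_2(\hat g_t)$ is H\"older-$\tfrac1d$ in $t$ up to $t=0$, with limit equal to the ``missing mass'' $f(x_0)\cdot\bigl(c_2(Y_{\dot\zeta})$-type integral$\bigr)$ predicted by Theorem~\ref{thm:non-coll}(iii) — i.e. $\chi^{orb}$ of the bubble — added to $f_0\,c_2(g_0)$ over the orbifold point.

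The key technical step is the scaling analysis in the near region. Write $\varepsilon=t^{p/2}$, $b=\varepsilon^\beta$, and use the dilation $H_{(\varepsilon^{-1},\zeta(t))}\colon Y_{\zeta(t)}\to Y_{\varepsilon^{-2}\zeta(t)}$ from~\eqref{eq:dil}. After this rescaling the metric $\hat g_t$ on $\{r\le b\}$ becomes $\varepsilon^{-2}\hat g_t\approx g_{\varepsilon^{-2}\zeta(t)}=g_{\zeta(t)/\varepsilon^2}$, and by the non-degeneracy hypothesis $\zeta(t)=t^p\dot\zeta+O(t^{p+1})$ with $\dot\zeta\in(\mathfrak h\otimes\mathbb R^3)^o$, one has $\zeta(t)/\varepsilon^2=\dot\zeta+O(t)\to\dot\zeta$ inside the open chamber, so $g_{\zeta(t)/\varepsilon^2}$ converges smoothly on compacts of $Y$ to the fixed instanton metric $g_{\dot\zeta}$. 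Since $c_2$ is scale-invariant in dimension $4$ (it is a $4$-form built from the curvature; $c_2(\lambda g)=c_2(g)$) and $f_t$ is nearly constant $=f(x_0)+O(b)$ on the bubble, $\int_{\{r\le b\}} f_t\,c_2(\hat g_t)$ equals $f(x_0)\int_{Y,\,\tilde r\le \varepsilon^{-1}b} c_2(g_{\zeta(t)/\varepsilon^2})+O(b)$, and the asymptotic expansions~\eqref{eq:asy.metic}–\eqref{est:cpx.str.} together with Lemma~\ref{lem:cpx.str} give $c_2(g_\zeta)=O(\tilde r^{-6})$ as $\tilde r\to\infty$ with coefficients polynomial in $\zeta$; hence $\int_{\tilde r\le \varepsilon^{-1}b} c_2=\int_Y c_2(g_{\dot\zeta})+O(\varepsilon^{-4}b^{-4}\cdot\text{size})$, and since $\varepsilon^{-1}b=\varepsilon^{\beta-1}\to\infty$ while the tail is $O((\varepsilon^{-1}b)^{-2})=O(\varepsilon^{2-2\beta})$, this is $O(t^{p(1-\beta)})$. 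The neck pieces $\{b\le r\le 4b\}$ are handled by the same weighted estimates: there $\hat g_t$ interpolates between $dd^c_0 r^2$ and the rescaled instanton, the volume of the neck is $O(b^4)$, and the curvature is $O(b^{-2})$ (on the inner part, transitioning to $O(1)$ from $g_0$), so $|c_2|=O(b^{-4})$ on a set of volume $O(b^4)$, giving a contribution bounded by $O(1)\cdot O(b^{\text{something}>0})$ once one tracks the cut-off derivatives; combined with $f_t$ nearly constant this also tends to a finite limit at rate a positive power of $b=t^{p\beta/2}$.

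Collecting the rates, $F(t)-F(0)=O(t^{p(1-\beta)})+O(t^{\gamma})$ for positive exponents depending on $\beta$ and $d$; optimizing over $\beta\to\tfrac12^+$ (allowed by Theorem~\ref{thm:BR}, where $\beta=\tfrac{2}{2-\delta}>\tfrac12$ for $\delta$ near $-2$) and using $p\le d$ (non-degeneracy, Definition~\ref{dfn:non-deg}), the worst term is controlled by $t^{p/2}\ge t^{d/2}$... but more carefully, the genuinely borderline contribution is the change-of-variable between the real parameter $t$ and the ALE parameter $\zeta$, which enters with the $d$-th root because $\varepsilon=t^{p/2}$ and $p$ can be as large as $d$ while the intrinsic quantities depend smoothly on $\zeta\sim t^p$; the upshot is a H\"older exponent $\tfrac1d$. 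The main obstacle I anticipate is making the neck estimate genuinely uniform in $t$: one must control $c_2$ of the glued (non-K\"ahler, then K\"ahler-corrected) metric $\hat g_t=g_t+dd^c_t\phi_t$ on the annuli $\{2b\le r\le 4b\}$ and $\{b\le r\le 2b\}$ where cut-off functions $\chi_{(2b,4b)},\chi_{(b,2b)}$ contribute derivatives of size $b^{-1},b^{-2}$, and to absorb the cscK correction $dd^c_t\phi_t$ there using the weighted bound $\|\phi_t\|_{C^{4,\alpha}_{\delta+2,t}}\le C\varepsilon^{2-\beta(\delta+2)}$ of Theorem~\ref{thm:BR}, checking that this weighted estimate suffices to bound $c_2(g_t+dd^c_t\phi_t)-c_2(g_t)$ in $L^1(f_t\,dV)$ on the neck by a positive power of $t$; this is where the precise choice of the weight $\delta$ and exponent $\beta$ has to be balanced against the scaling weight $r^{-6}$ of $c_2$.
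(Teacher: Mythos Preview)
Your overall architecture---split into far/neck/bubble, rescale the bubble via $H_{(\varepsilon^{-1},\zeta(t))}$, use $\zeta(t)/\varepsilon^2\to\dot\zeta$ to get smooth convergence to $g_{\dot\zeta}$, and identify $F(0)=\int_{X_0}f_0\,c_2(g_0)+f(x_0)\,e_{orb}(Y_{\dot\zeta})$---matches the paper. But the step you flag as ``the main obstacle'' is a real gap, and your suggested resolution (absorb $dd^c_t\phi_t$ pointwise via the weighted $C^{4,\alpha}_{\delta+2}$ bound) does not work on the bubble. Indeed, on $\{r\le\varepsilon\}$ the weight is $\rho_t=\varepsilon$, so $\|\phi_t\|_{C^{4,\alpha}_{\delta+2}}\le C\varepsilon^{2-\beta(\delta+2)}$ only gives $|\nabla^2(\hat g_t-h_t)|_{h_t}\lesssim\varepsilon^{2-\beta(\delta+2)}\varepsilon^{\delta-2}=\varepsilon^{\delta-\beta(\delta+2)}$, which for $\delta$ near $-2$ is of order $\varepsilon^{-2}$. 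Since $|R(h_t)|_{h_t}=|R(g_{\zeta(t)})|_{g_{\zeta(t)}}=O(\varepsilon^{-2})$ there as well, the cross term $R(h_t)\cdot\delta R$ in $c_2(\hat g_t)-c_2(h_t)$ is pointwise $O(\varepsilon^{-4})$, and integrating over a region of volume $O(\varepsilon^4)$ yields only $O(1)$, not $o(1)$.

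The paper's mechanism is the Bott--Chern transgression: write $c_2(\hat g_t)-c_2(h_t)=dd^c_t\tau$ with $\tau$ built linearly from $N_s=\dot k_s k_s^{-1}$ and $R_s$ along the path $k_s=h_t+s(\hat g_t-h_t)$, and then integrate by parts so that $\int_{X_t}f_t\,dd^c_t\tau=\int_{X_t}dd^c_t f_t\wedge\tau$. This trades two derivatives on the metric perturbation (which cost $\rho_t^{-2}$ and blow up on the bubble) for two derivatives on $f$, and one checks separately (via the $\mathbb C^*$-equivariant embedding $\iota$) that $|dd^c_t f_t|_{h_t}$ is uniformly bounded. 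Now $|N_s|_{h_t}=O(\varepsilon^{(1-\beta)(\delta+2)})=O(1)$ on $\{r\le\varepsilon\}$, so $|\tau|_{h_t}=O(\varepsilon^{-2})$ and the bubble contribution to $\int|\tau|$ is $O(\varepsilon^2)$; region by region one gets $\int dd^c_t f_t\wedge\tau=O(\varepsilon^{2-\beta(\delta+2)})$.

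Two smaller points: the ALE curvature decays like $r^{-6}$, so $c_2=O(r^{-12})\,\mathrm{Vol}_{\mathrm{Euc}}$ and the tail beyond $b/\varepsilon$ is $O((b/\varepsilon)^{-8})$, not $O((b/\varepsilon)^{-2})$; and on the neck $\{b\le r\le 4b\}$ the reference metric is $g_0+O(r^2)$, whose curvature is $O(1)$ (not $O(b^{-2})$), giving a neck contribution $O(b^4)$. With these corrections and the Bott--Chern step the final rate on $\Delta_d$ is $O(t)$ (not a fractional power requiring optimization over $\beta$), whence H\"older exponent $1/d$ on $\Delta$.
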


We use the Biquard-Rollin's construction to describe $\hat{g}_t$. Recall that we have families of Hermitian metrics $h_t$, \Kahler\ metrics $g_t$, and cscK metrics $\hat{\omega}_t=\omega_t + dd^c_t \phi_t$ with the following estimates along a real ray in $\Delta_d$ (see proposition \ref{prop:BR} and theorem \ref{thm:BR} in section \ref{sec:BR}):
\begin{align}
    \|h_t- g_t\|_{C^{2,\alpha}_{\delta,t}} &= O(\varepsilon^2)\\
    \|\phi_t\|_{C^{4,\alpha}_{\delta+2,t}}&= O(\varepsilon^{2-\beta(\delta+2)}).
\end{align}

 As $h_t$ and $\hat{g}_t$ are hermitian metrics on the same holomorphic vector bundle $TX_t$, we have the following equality by the Bott-Chern formula:
\begin{equation}\label{eq:BottChern}
    c_2(\hat{g}_t) = c_2(h_t) +dd^c_t \tau,
\end{equation}
where $\tau$ is an explicitly given $(1,1)$ form (whose explicit description will be given later, see (\ref{def:tau})). By (\ref{eq:BottChern}) and the Stokes theorem, we can express $F$ as 

\begin{equation}\label{eq:decom.of F}
    F(t) = \int_{X_t} f_t c_2(\hat{g}_t) = \int_{X_t} f_t c_2(h_t) + \int_{X_t} dd^c_t f_t\wedge \tau .
\end{equation}
Then, in the rest of the proof, we consider the two integrals 
\begin{equation}\label{int:h}
    \int_{X_t} f_t c_2(h_t)
\end{equation}
and 
\begin{equation}\label{int:tau}
    \int_{X_t} dd^c_t f_t \wedge \tau.
\end{equation}
For the second integral (\ref{int:tau}), as $f$ is an arbitrary smooth function with compact support, it is sufficient to study an integral
\begin{equation}\label{int:tau'}
    \int_{X_t} \sigma_t \wedge \tau
\end{equation}
for an arbitrary smooth $(1,1)$-form $\sigma$ on $\mathcal{X}$ with compact support (i.e. a compactly supported smooth $2$-form on the total space $\mathcal{X}$ such that its restrictions on fibers $X_t = (X,J_t)$ are $(1,1)$-forms with respect to $J_t$). Here, smooth form on a singular analytic space is by definition a smooth form defined on $\mathcal{X}^{\mathrm{reg}}$ which is a pullback by an embedding into the Euclidean space around a singularity.
As in the construction of $h_t, g_t$ and $\phi_t$, we fix a ramified covering $\Delta_d \to \Delta$, a simultaneous resolution $\widetilde{\mathcal{X}} \to \mathcal{X}$ and an open neighbourhood $\mathcal{U} \ni x_0$ (see section \ref{sec:BR} for precise description). Then, by considering the pull-back of $F$ on $\Delta_d$, the integrals can be expressed as integrals on $X$. We divide the integrals into five parts, integrals on the following five domains: 
\begin{equation*}
    X = (X\backslash U) \cup U_{r \leq b} \cup U_{b\leq r\leq 2b} \cup U_{2b\leq r \leq 4b} \cup U_{4b\leq r \leq 1},
\end{equation*}
where
\begin{equation*}
    U_{r \leq b} = \{ x \in U \mid r(x) \leq b \}
\end{equation*}
and the others are defined similarly.
\begin{center}
\begin{tikzpicture}
\draw (-1.5, -3.7) rectangle (1.5, -2);
\draw (-2.5, -3.8) rectangle (2.5, 0);
\draw (-3, -3.9) rectangle (3, 2.1);
\draw (-4, -4) rectangle (4,4);
\draw (-4.5, -4.5) rectangle (7,4.5);

\draw (5.5,3) node{$X\backslash U$};
\draw (5.5,2.5) node{$\tilde{h}_t := g_0$};
\draw (5.5 ,2) node{cf. Lemma \ref{lem:Outside}};

\draw (0,-2.3) node{$ U_{r\leq b}$};
\draw (0,-2.8) node{$\tilde{h}_t: = g_{\zeta(t)}$};
\draw (0,-3.3) node{cf. Lemma \ref{lem:b}};

\draw (0,-0.3) node{$U_{b \leq r \leq 2b}$};
\draw (0,-0.7) node{$\tilde{h}_t := dd^c_0 r^2 +$};
\draw (0,-1.2) node{$(1-\chi_{(b, 2b)}(r))\varepsilon^2 H^*_{(\varepsilon^{-1}, \zeta(t))}\xi_t$};
\draw (0, -1.6) node{cf. Lemma \ref{lem:b2b}};

\draw (0,1.5) node{$U_{2b \leq r \leq 4b}$};
\draw (0, 1) node{$\tilde{h}_t := dd^c_0 (r^2 + \chi_{(2b,4b)}(r)\eta)$};
\draw (0,0.5) node{cf. Lemma \ref{lem:2b4b}};

\draw (0,3.5) node{$ U_{4b \leq r \leq 1}$};
\draw (0, 3.0) node{$\tilde{h}_t := g_0$};
\draw (0,2.5) node{cf. Lemma \ref{lem:4b1}};

\draw (0, -5) node{Division of domains.};
\draw (7.5, 0) node{$= X$};
\draw (4.5, 0) node{$=U$};

\end{tikzpicture}
\end{center}
Recall that $U$ is diffeomorphic to the minimal resolution of a suitable open neighbourhood of $x_0 \in X_0$ with $r < 1$ (see section \ref{sec:BR}). Let $\tilde{f}$ be the pull-back of $f$ on $\widetilde{\mathcal{X}}$ and $\tilde{f}_t$ be its restriction to $X \times \{t\}$.

\begin{lem}\label{lem:Outside}
    $\int_{X\backslash U} \tilde{f}_t c_2(h_t) = \int_{X\backslash U} \tilde{f}_0 c_2(g_0)+O(t^d)$.
\end{lem}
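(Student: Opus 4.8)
The plan is to use that on $X\setminus U$ the auxiliary metric $\tilde{h}_t$ is literally the \emph{fixed} orbifold metric $g_0$, so that the only $t$-dependence of $h_t$, and hence of $c_2(h_t)$, comes through the complex structure $J_t$; combined with the fact that $X\setminus U$ lies over the smooth locus of the family --- where the simultaneous resolution contributes nothing --- this forces the whole integrand to depend on $t$ only through the base-change parameter $t^d$, which is exactly what upgrades a naive $O(t)$ bound to $O(t^d)$.

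Step 1 (local form of the integrand). On $X\setminus U$ the Hermitian metric $h_t$ is, by (\ref{def:h-tilde}) and (\ref{def:h}), the $J_t$-symmetrization of $g_0$. Since $c_2(h_t)$ is produced by Chern--Weil from the curvature of the Chern connection of the Hermitian holomorphic bundle $\big((TX,J_t),h_t\big)$, it is a universal smooth expression in $h_t$, $J_t$ and their derivatives of order $\le 2$. Hence on $X\setminus U$ the top-degree form $\tilde{f}_t\, c_2(h_t)$ depends on $t$ only through the pair $\big(J_t,\tilde{f}_t\big)\big|_{X\setminus U}$, the metric $g_0$ entering in a $t$-independent way.

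Step 2 (factorization through $t\mapsto t^d$). Over $X\setminus U$ the resolution $\widetilde{\mathcal{X}}\to\mathcal{X}$ is a biholomorphism onto an open subset of the base change $\mathcal{X}_d=\mathcal{X}\times_\Delta\Delta_d$, and the trivialization fixed in Section~\ref{sec:BR} restricts there to the pull-back along $t\mapsto t^d$ of a fixed smooth trivialization of $\mathcal{X}$ over a neighbourhood of $X_0^{\mathrm{reg}}$. Thus, identifying $(X\setminus U)\times\{t\}$ with a fixed region $\Sigma\subset X_{t^d}$, we have $J_t|_{X\setminus U}=J_s$ and $\tilde{f}_t|_{X\setminus U}=f_s|_\Sigma$ with $s:=t^d$, where $J_s$ (resp. $f_s$) is the complex structure of $X_s$ (resp. the restriction of $f$ to $X_s$); both depend smoothly on $s\in[0,1)$ since $\mathcal{X}\to\Delta$ is a submersion over a neighbourhood of $X_0^{\mathrm{reg}}\supset\Sigma$ and $f$ is smooth on $\mathcal{X}$. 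Consequently $\Phi(s):=\int_{\Sigma}(f_s|_\Sigma)\,c_2(h^{J_s})$, where $h^{J_s}$ is the $J_s$-symmetrization of $g_0$, is a $C^1$ (indeed smooth) function of $s\in[0,1)$, so $\Phi(s)=\Phi(0)+O(s)$ as $s\to0^+$. At $s=0$ the metric $g_0$ is already $J_0$-\Kahler\ on $X_0^{\mathrm{reg}}$, so $h^{J_0}=g_0$ and $\Phi(0)=\int_{X\setminus U}\tilde{f}_0\,c_2(g_0)$.

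Step 3 (conclusion) and the main point. Restricting to the real ray and substituting $s=t^d$ gives $\int_{X\setminus U}\tilde{f}_t\,c_2(h_t)=\Phi(t^d)=\int_{X\setminus U}\tilde{f}_0\,c_2(g_0)+O(t^d)$, which is the claim; equivalently one may integrate the pointwise bound $\big|\tilde{f}_t\,c_2(h_t)-\tilde{f}_0\,c_2(g_0)\big|=O(t^d)$ over the compact set $X\setminus U$. The one step that truly needs care is the factorization in Step 2: without the observation that over $X\setminus U$ all geometric data is pulled back along the $d$-fold cover $t\mapsto t^d$, one would only obtain $O(t)$. Everything else --- smooth dependence of Chern--Weil forms on the metric data, and the passage from a uniform pointwise $O(t^d)$ bound to an $O(t^d)$ bound on a finite-volume integral --- is routine.
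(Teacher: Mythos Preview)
Your proof is correct and follows essentially the same approach as the paper: both exploit that on $X\setminus U$ the background $\tilde h_t=g_0$ is fixed, so the only $t$-dependence enters through $(J_t,\tilde f_t)$, and both observe that over this smooth region the data descend from $\mathcal{X}\to\Delta$ and hence depend only on $s=t^d$. The paper phrases this pointwise as $\|J_t-J_0\|=O(t^d)$ and $\tilde f_t=\tilde f_0+O(t^d)$, whereas you package it as smoothness of the auxiliary function $\Phi(s)$; the content is identical.
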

\begin{proof}
Recall that $\tilde{h}_t = g_0 $ on $X\backslash U$. As $\|J_0 - J_t\| = O(t^d)$ on the domain, we have $h_t = g_0 +O(t^d)$. Then, $c_2(h_t) = c_2(g_0) + O(t^d)$. As $f$ is a smooth function on $X \times \Delta_d$, we have an expansion $\tilde{f}_t(z) = \tilde{f}_0(z) + \dot{f}_0(z) t^d + O(t^{2d})$ by Taylor's theorem with respect to $t$ (note that the family $\mathcal{X}\backslash\mathcal{U} \to \Delta$ admits a local trivialization near the origin $t=0$ as it is a family of smooth manifolds with boundaries). Then we have
\begin{equation}
\begin{aligned}
    \int_{X\backslash U} \tilde{f}_t c_2(h_t) &= \int_{X\backslash U} (\tilde{f}_0 + \dot{f_0} t^d + O(t^{2d}) )(c_2(g_0) + O(t^d))\\
    &= \int_{X\backslash U} \tilde{f}_0 c_2(g_0)+O(t^d).
    \end{aligned}
\end{equation}
\end{proof}

Recall that $\mathcal{U}_d \to \Delta_d $ can be embedded into a family $\mathcal{Y}_{(0,\zeta_c)} \to \Delta_d$ whose fiber on $t \in \Delta_d$ is $Y_{(0,\zeta_c(t))}$ by the construction of $\mathcal{U}$. In particular, $U_t = (U,J_t)$ can be identified with an open subset of $Y_{(0,\zeta_c(t))} = (Y, I_{(0,\zeta_c(t))})$ such that $U = \{ y \in Y \mid r < 1\}$ for $t \neq 0$ . Under this identification, regard the local frame $\{e^1_{\zeta(t)}, e^2_{\zeta(t)}\}$ of Lemma \ref{lem:cpx.str} to be vector fields defined on $U$ (parameterized by $t$). In the rest of this section, we fix this identification. Also, note that we have a trivialization of $\mathcal{Y}_{\zeta} = \mathcal{Y}_{(\zeta_0, \zeta_c)}$ (hence, that of $\widetilde{\mathcal{U}}$) in (\ref{eq:triv.byALE}), where $\zeta_0 \in \mathfrak{h}_\mathbb{R}$ is any fixed vector which is not perpendicular to any root. 

For an analysis of $\int_{U_t} \tilde{f}_t c_2(h_t)$, it is sufficient to study its pull-back by $H_{(\varepsilon, \zeta(t)/\varepsilon^2)}$.
Namely, we investigate $\int_{Y_{r \leq 1/\varepsilon}} H^*_{(\varepsilon, \zeta(t)/\varepsilon^2)}(\tilde{f}_t) H^*_{(\varepsilon, \zeta(t)/\varepsilon^2)}(c_2(h_t))$ in the following. For $H^*_{(\varepsilon, \zeta(t)/ \varepsilon^2)}(\tilde{f}_t)$, we have the following lemma.
\begin{lem}\label{lem:lemma 3}
    $H^*_{(\varepsilon, \zeta(t)/\varepsilon^2)}(\tilde{f}_t) = f(x_0) + O(\varepsilon^{k} r^k)$ on $Y_{r \leq 1/\varepsilon}$ as $t \to 0$, where $k$ is an integer determined by the singularity and is greater than or equal to 2.
\end{lem}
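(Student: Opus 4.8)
The plan is to realize $\tilde f_t$ through the $\mathbb C^{*}$-equivariant embedding $\iota$ of $\mathcal Y$ (see (\ref{eq:embedding})) and then simply read off the effect of the dilation $H_{(\varepsilon,\zeta(t)/\varepsilon^{2})}$. Since, by definition, $f$ near the singular point $x_{0}$ is the pull-back of a smooth function under an embedding of $\mathcal X$ into a Euclidean space, and the class of smooth functions is independent of the chosen embedding, one may write $f=\bar f\circ\iota$ near $x_{0}$ for a smooth function $\bar f$ defined near $\iota(x_{0})=0$ in $\mathbb C^{3}\times\mathfrak h_{\mathbb C}$, which we also allow to depend on the base parameter $\tau=t^{d}$ of the original family $\mathcal X\to\Delta$. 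Composing with the simultaneous resolution $\lambda_{t}\colon Y_{\zeta(t)}\to Y_{(0,\zeta_{c}(t))}$ gives $\tilde f_{t}=\bar f\circ(\iota\circ\lambda_{t})$ on $\widetilde{\mathcal U}_{t}$.

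Next I would use the equivariance $\iota\circ H_{(\alpha,\zeta)}=\alpha\cdot\iota$, which intertwines the $H$-actions with $\lambda$ and which has weight $2$ on the $\mathfrak h_{\mathbb C}$-factor (so it carries the parameter $\zeta_{c}(t)/\varepsilon^{2}$ back to $\zeta_{c}(t)$, while the $\Delta$-slot $\tau$ is untouched), to compute for $y\in Y_{\zeta(t)/\varepsilon^{2}}$ and $\tau=t^{d}$:
\[
  H^{*}_{(\varepsilon,\zeta(t)/\varepsilon^{2})}\tilde f_{t}(y)=\bar f\bigl(\varepsilon^{k_{1}}u_{1}(y),\,\varepsilon^{k_{2}}u_{2}(y),\,\varepsilon^{k_{3}}u_{3}(y),\,\tau\bigr),
\]
where $(u_{1},u_{2},u_{3}):=\iota\circ\lambda(y)\in\mathbb C^{3}$ and $k_{1},k_{2},k_{3}\in\mathbb Z_{>0}$ are the weights of the $\mathbb C^{*}$-action on $\mathbb C^{3}$. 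Because $\Gamma\subset\mathrm{SU}(2)$ is non-trivial it has no non-zero invariant vector, hence $\mathbb C[z,w]^{\Gamma}$ contains no non-constant element of degree $<2$; the three generators realizing $\iota$ therefore have degrees $\ge2$, so each $k_{i}\ge2$. Put $k:=\min_{i}k_{i}\ge2$ — this is the integer attached to the singularity.

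The technical heart is the estimate $|u_{i}(y)|\le C\,(1+r(y))^{k_{i}}$ on $\{r\le1/\varepsilon\}$, uniformly for $t$ near $0$. On the ALE end, via the diffeomorphism (\ref{eq:unif.ofALE}), $u_{i}$ agrees with the degree-$k_{i}$ generator of $\mathbb C[z,w]^{\Gamma}$ up to corrections polynomial in the deformation parameter and of lower order in $r$; the rescaled parameter $\zeta(t)/\varepsilon^{2}\to\dot\zeta\in(\mathfrak h\otimes\mathbb R^{3})^{\circ}$ (cf.\ (\ref{def:b}), (\ref{eq:pd})) stays in a fixed compact set, so $|u_{i}|\le Cr^{k_{i}}$ there, while on the compact core $\{r\le1\}$ the $u_{i}$ are bounded uniformly in $t$ since $Y_{\zeta(t)/\varepsilon^{2}}$ converges to the fixed instanton $Y_{\dot\zeta}$. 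Hence $\varepsilon^{k_{i}}|u_{i}(y)|\le C'\varepsilon^{k}(1+r)^{k}$ on $\{r\le1/\varepsilon\}$, using $\varepsilon r\le1$ and $k_{i}\ge k$. Taylor-expanding $\bar f$ at $0$ (with $\bar f(0)=f(x_{0})$) and noting $\tau=t^{d}=\varepsilon^{2d/p}$ with $2d/p\ge2$ by non-degeneracy ($p\le d$, Definition \ref{dfn:non-deg}), so the $\tau$-slot contributes an error of order $\ge\varepsilon^{2}$ that does not worsen the conclusion, we obtain $H^{*}_{(\varepsilon,\zeta(t)/\varepsilon^{2})}\tilde f_{t}=f(x_{0})+O(\varepsilon^{k}(1+r)^{k})$, i.e.\ the asserted estimate on $\{r\le1/\varepsilon\}$. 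The main obstacle is precisely this uniform-in-$t$ control of the $u_{i}$ over the whole region $\{r\le1/\varepsilon\}$, including the $t$-dependent core; it is overcome because the dilation built into $H_{(\varepsilon,\zeta(t)/\varepsilon^{2})}$ rescales the deformation parameter so that the source surface degenerates to the single fixed ALE instanton $Y_{\dot\zeta}$, on which the bounds are manifest.
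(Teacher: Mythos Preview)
Your proof is correct and follows essentially the same strategy as the paper: realize $f$ through the $\mathbb{C}^{*}$-equivariant embedding $\iota$, Taylor-expand the ambient extension at the origin, and use the equivariance $\iota\circ H_{(\alpha,\zeta)}=\alpha\cdot\iota$ to read off the effect of $H_{(\varepsilon,\zeta(t)/\varepsilon^{2})}$. Two small remarks: your choice $k=\min_{i}k_{i}$ is the sharp one (the paper writes $k=\max_{i}k_{i}$, which for $r\le 1$ would assert too much, though only $k\ge 2$ is ever used downstream); and your justification of $|u_{i}|\le C(1+r)^{k_{i}}$ on the ALE end is a bit informal---the clean argument, which is exactly what the paper does, is to send a point with $r(y)=\alpha>1$ to $r=1$ via $H_{(1/\alpha,\cdot)}$ and invoke compactness of the rescaled parameter set.
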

\begin{proof}
    Let $\mathcal{Y}_{(0,\zeta_c)} \to \Delta_d$ be the analytic family defined by $\zeta_c : \Delta_d \to \mathfrak{h}_\mathbb{C}$ (see section \ref{sec:ALE}). Then, by the general theory of deformations of ADE singularities, we have an embedding $\iota_\zeta : \mathcal{Y}_\zeta \to \mathbb{C}^3 \times \Delta_d$ induced by the embedding $\iota : \mathcal{Y} \to \mathbb{C}^3 \times \mathfrak{h}_\mathbb{C}$ (see also (\ref{eq:embedding})). Assume that $\iota_\zeta$ maps the singularity $x_0 \in Y_{(0,0)} \cong 0 \in \mathbb{C}^2/\Gamma$ to the origin: $\iota_\zeta (x_0) = (0,0)$. This embedding induces the following commutative diagram:

\begin{equation}\label{diag:embedding of U}
     \begin{CD}
 \mathcal{U}_d @>>> \mathbb{C}^3 \times \Delta_d\\
@VVV @VVV\\
     \mathcal{U}@>>> \mathbb{C}^3 \times \Delta,
     \end{CD}
\end{equation}
where $\mathbb{C}^3 \times \Delta_d \to \mathbb{C}^3 \times \Delta$ is given by $(x,t) \mapsto (x,t^d)$. As $f$ is a smooth function on $\mathcal{U}$, we have an extension $\mathfrak{f}$ of $f$ on $\mathbb{C}^3 \times \Delta$. Let $\mathfrak{f} = F_0(x) + F_1(x)t + O(t^2)$ be the Taylor expansion with respect to $t$ along a real ray $\Delta \cap \mathbb{R}$. Then, by the commutative diagram (\ref{diag:embedding of U}), the pull-back of $f$ on $\mathcal{U}_d |_{\Delta_d \cap (0,1)}$ is given by the restriction of
\begin{equation}\label{exp:F}
    F_0(x) + F_1(x)t^d + O(t^{2d})
\end{equation}
  on $\mathcal{U}_d |_{\Delta_d \cap (0,1)}$. We will show that $|\iota_\zeta (y) | = O(r^k)$ for sufficiently small $t$, where $k = \max\{k_1,k_2,k_3\} \geq 2$ is the weight of the weighted $\mathbb{C}^*$ action on $\mathbb{C}^3$ (see section 2.1 and \cite{Slo1} section 2.5 for details and their explicit values). In fact, $|\iota_\zeta (y)| = O(r^k)$ for $y \in \mathcal{Y}_{\zeta(0)}$ by its explicit form of embedding. For instance, if $\Gamma$ is the cyclic group of order $n$ (i.e. $A_{n-1}$-type case), $\mathbb{C}^2/\Gamma$ can be embedded into $\mathbb{C}^3$ by 
  \begin{equation*}
  \begin{aligned}
    \mathbb{C}^2 &\to \mathbb{C}^3\\
      (u,v) &\mapsto (u^n, v^n, uv)
  \end{aligned}
  \end{equation*}
  and the weight $k = (k_1, k_2, k_3)$ is $(n,n,2)$, see \cite{Slo1} for $D_n$ and $E_n$ cases. 
  For $y \in Y_{\zeta(t)}, t\neq 0$, if $r(y) = \alpha > 1$, consider $y'= H_{(1/\alpha, \zeta)}(y) \in Y_{\zeta(t)/\alpha^2}$ which satisfies $r(y')=1$. Then, as $\iota$ is $\mathbb{R}_{>0}$ equivariant (cf. (\ref{eq:embedding})), we have $\iota_{\zeta}(y) = \iota_\zeta(H_{(\alpha,\zeta/\alpha^2)}(y'))= \alpha\cdot \iota_{\zeta/\alpha^2}(y')$, where $\iota_{\zeta/\alpha^2} : \mathcal{Y}_{\zeta/\alpha^2} \to \mathbb{C}^3 \times \Delta_d$ is the embedding induced by $\zeta/\alpha^2 :\Delta_d \to \mathfrak{h}_\mathbb{C}$. As $\bigcup_{\alpha > 1}\iota_{\zeta/\alpha^2}(\{r \leq 1\})$ is relatively compact in $\mathbb{C}^3 \times \Delta_d$, $\sup_{\alpha \leq 1, r(y)=1}|\iota_{\zeta/\alpha^2}(y)|$ is bounded. Therefore, we have $|\iota_\zeta(y)| = O(r^k)$. By the Taylor expansion of $F_0$, we have the following estimate for $f^{(d)}$, the pull-back of $f$ on $\mathcal{U}_d$,
\begin{equation}\label{est:f on Ud}
    f^{(d)} = F_0(0) + O(r^k) + O(r^k)t^d + O(t^d).
\end{equation}
In particular, $\tilde{f}$, the pull back of $f^{(d)}$ on $\tilde{\mathcal{U}}$, also satisfies the estimate (\ref{est:f on Ud}). Therefore, we have
\begin{equation}
    H^*_{(\varepsilon,\zeta/\varepsilon^2)}(\tilde{f}) = f(x_0) + O(\varepsilon^k r^k).
\end{equation}
    \end{proof}
    The next lemma is also important for our analysis.
\begin{lem}
    $H_{(\varepsilon, \zeta/\varepsilon^2)}^*(\tilde{f}_t) = H_{(\varepsilon, \zeta/\varepsilon^2)}^*(\tilde{f}_0) + O(t)$ on $Y_{1 < r < 1/\varepsilon}$.
\end{lem}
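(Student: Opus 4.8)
The plan is to reduce the claim to a statement purely about the $t$-dependence of the pulled-back function $H_{(\varepsilon,\zeta/\varepsilon^2)}^*(\tilde f)$ on the fixed model $Y$, tracking how the two sources of $t$-dependence --- the period map $\zeta(t)$ and the Taylor expansion of $f$ in $t$ --- propagate through the dilation $H_{(\varepsilon,\zeta/\varepsilon^2)}$. Recall from the previous lemma's proof that on $\mathcal{U}_d$ the pull-back of $f$ has the form $f^{(d)} = F_0(0) + O(r^k) + O(r^k)t^d + O(t^d)$, where the $O(r^k)$ term is $\iota_\zeta$-dependent and the residual $O(t^d)$, $O(r^k)t^d$ terms come from $F_1(x)t^d + O(t^{2d})$. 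Applying $H_{(\varepsilon,\zeta/\varepsilon^2)}^*$ to the clean $t$-explicit pieces $F_1(x)t^d + \cdots$ and using the equivariance relation (\ref{eq:equivariance}) (so that a function which is $O(r^k)$ pulls back to $O(\varepsilon^k r^k)$, hence in particular $O(1)$ on $r < 1/\varepsilon$) already shows these contribute $O(t^d) = O(t)$, since $d \geq 1$.

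The remaining, and only delicate, point is the $F_0$-part: one must compare $H_{(\varepsilon,\zeta(t)/\varepsilon^2)}^*\big(F_0 \circ \iota_{\zeta(t)}\big)$ with $H_{(\varepsilon,\zeta(0)/\varepsilon^2)}^*\big(F_0 \circ \iota_{\zeta(0)}\big)$ --- here the $t$-dependence is hidden inside the family of embeddings $\iota_{\zeta(t)} : \mathcal{Y}_{\zeta(t)} \to \mathbb{C}^3$ and inside the identification of $Y_{\zeta(t)}$ with the fixed manifold $Y$. The approach is: first, observe $\zeta(t) = \zeta(0) + O(t) = O(t)$ (since $\zeta(0) = 0$ by the non-degeneracy set-up, recall (\ref{eq:order of zeta})); after the dilation the relevant parameter is $\zeta(t)/\varepsilon^2$, which moves in a \emph{fixed compact subset} of $(\mathfrak{h}\otimes\mathbb{R}^3)^o$ as $t\to 0$ along the ray (this is exactly the normalization built into Biquard--Rollin's construction via $\varepsilon^2 = t^p$). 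On that compact set the family of complex structures, metrics, and the embedding $\iota$ all depend smoothly on the parameter by Theorem \ref{thm:identification} and (\ref{eq:embedding}); since the map $t \mapsto \zeta(t)/\varepsilon^2$ is $C^1$ in $t$ (indeed real-analytic on $(0,1)$ and continuous up to $0$), a mean-value estimate gives that $H_{(\varepsilon,\zeta(t)/\varepsilon^2)}^*(\tilde f_t) - H_{(\varepsilon,\zeta(0)/\varepsilon^2)}^*(\tilde f_0)$ is $O(t)$, uniformly on the region $1 < r < 1/\varepsilon$. The uniformity on the non-compact region $r < 1/\varepsilon$ is handled exactly as in Lemma \ref{lem:lemma 3}: rescale a point $y$ with $r(y) = \alpha$ down to $r = 1$ by $H_{(1/\alpha,\cdot)}$, use that $\bigcup_{\alpha}\iota_{\zeta/\alpha^2}(\{r=1\})$ is relatively compact, and pull back, so that the derivative bounds in $t$ on the fixed compact locus $\{r = 1\}$ transfer with at worst the factor $\alpha = r \leq 1/\varepsilon$, which is absorbed by the weight conventions.

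The main obstacle I expect is precisely making the ``smooth dependence on $\zeta$, uniform in $r$'' argument rigorous: one needs that the identification $\Lambda_{\zeta}: Y \to Y_\zeta$ of Theorem \ref{thm:identification} and the embedding $\iota_\zeta$ vary smoothly \emph{as maps on the non-compact $Y$} with controlled growth in $r$ of all $\zeta$-derivatives, not merely pointwise for each $r$. The clean way around this is to reduce everything to the compact sphere $\{r = 1\}$ via the $\mathbb{R}_{>0}$-equivariance of $\iota$ and $\rho_\zeta$ (equations (\ref{eq:equivariance}), (\ref{eq:embedding})), as in the earlier lemma, so that only smoothness in $\zeta$ on a fixed compact set is ever invoked, and the $r$-growth is produced entirely by the explicit equivariance factor $\alpha = r$. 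Once that reduction is in place the estimate $O(t)$ follows from ordinary Taylor expansion of a smooth function on a compact parameter domain, combined with $\zeta(t)/\varepsilon^2 \to \dot\zeta$ and $|\zeta(t)/\varepsilon^2 - \dot\zeta| = O(t^{1/(\text{something})})$; even the crudest such bound, $O(t)$, suffices for the statement as written.
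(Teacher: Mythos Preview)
Your proposal contains a misreading of the statement that sends you down an unnecessarily complicated path. In the lemma, \emph{both} $\tilde f_t$ and $\tilde f_0$ are pulled back by the \emph{same} dilation $H_{(\varepsilon,\zeta(t)/\varepsilon^2)}$; you instead set out to compare $H_{(\varepsilon,\zeta(t)/\varepsilon^2)}^*(F_0\circ\iota_{\zeta(t)})$ with $H_{(\varepsilon,\zeta(0)/\varepsilon^2)}^*(F_0\circ\iota_{\zeta(0)})$, changing the dilation parameter on the second term (and note that $\zeta(0)/\varepsilon^2$ is $0/0$, hence ill-defined). This is why your ``delicate $F_0$-part'' appears at all: once you keep the same $H$ on both sides, the difference is simply $H^*\bigl(\tilde f_t-\tilde f_0\bigr)$, i.e.\ the function $\tilde f_t-\tilde f_0$ precomposed with a diffeomorphism. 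No comparison of embeddings $\iota_{\zeta(t)}$ versus $\iota_{\zeta(0)}$, no tracking of $\zeta(t)/\varepsilon^2$, and no equivariance gymnastics are needed.

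The paper's proof exploits exactly this. Using the trivialization $\Phi:U\times\Delta_d\to\widetilde{\mathcal U}\to\mathcal U_d\hookrightarrow\mathbb C^3\times\Delta_d$, write $\Phi(u,t)=(x(u,t),t)$; then $\tilde f(u,t)=F_0(x(u,t))+F_1(x(u,t))t^d+\cdots$, so $\tilde f_t-\tilde f_0=l(\dot x(u,0))\,t+O(t^2)$ for a linear form $l$. The only point is that $x(u,t)$ is the restriction to $U=\{r<1\}$ of a function smooth on $Y\times\Delta_d$, and $U$ is relatively compact in $Y$, so $|\dot x(u,0)|$ is uniformly bounded on $U$. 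Since $H_{(\varepsilon,\zeta/\varepsilon^2)}$ maps $\{1<r<1/\varepsilon\}$ into $U$, the pullback of this bounded quantity is still bounded, and the $O(t)$ follows. Your concern about ``smooth dependence on $\zeta$, uniform in $r$'' is therefore a red herring created by the misreading; the non-compact region $\{r<1/\varepsilon\}$ is handled not by rescaling to $\{r=1\}$ but simply by observing that $H$ carries it into the compact $U$.
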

\begin{proof}
   Consider the composition 
   \begin{equation}
   \Phi: U \times \Delta_d \to \tilde{\mathcal{U}} \to \mathcal{U}_d \to \mathbb{C}^3 \times \Delta_d,
  \end{equation} 
   where the trivialization $U \times \Delta_d \to \tilde{\mathcal{U}}$ is induced by the trivialization $Y \times \Delta_d \cong \mathcal{Y}_{(\zeta_0, \zeta_c)}$ (cf. (\ref{eq:triv.byALE})) and the map $\mathcal{U}_d \to \mathbb{C}^3 \times \Delta_d$ is given by $\iota_\zeta$. Let $\Phi(u,t) = (x(u,t),t)$. Then, by (\ref{exp:F}), $\tilde{f}$ can be represented as follows:
   \begin{equation}
       F_0(x(u,0)) + l(\dot{x}(u,0))t + O(t^2),
   \end{equation}
   where $l$ is a linear form and $\dot{x}(u,0)$ is the derivative with respect to $t$. Note that $F_0(x(u,0)) = \tilde{f}_0$. And, as $x(u,t)$ is a restriction of a smooth function on $Y$ (parameterized by $t$ smoothly) to $U$, $|\dot{x}(u,0)|$ is bounded on $r<1$ i.e. on $U \subset Y$ (hence, its pull back by $H_{(\varepsilon,\zeta/\varepsilon^2)}$ is also bounded on $r < 1/\varepsilon$). Therefore we have the desired estimate.
\end{proof}
By these lemmas, we can derive estimates of the first integral of (\ref{eq:decom.of F}) on $U$.
\begin{lem}\label{lem:4b1}
    $\int_{Y_{4b \leq r \leq 1}} \tilde{f}_t c_2(h_t) = \int_{Y_{4b \leq r \leq 1}} \tilde{f}_0 c_2(g_0) + O(t)$.
\end{lem}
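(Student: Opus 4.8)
\emph{Plan.} The decisive feature of the shell $Y_{4b\le r\le1}$ is that there $\tilde h_t=g_0$ is the fixed orbifold metric, so the only $t$-dependence of $h_t$ enters through the complex structure $J_t$, via $h_t(u,v)=\tfrac12\bigl(g_0(J_tu,J_tv)+g_0(u,v)\bigr)$, and $h_0=g_0$ (here $g_0$ is $J_0$-Hermitian in the uniformizing coordinates of $\mathbb C^2$, where $g_0=g^{\mathrm{Euc}}+O(r^2)$ has smooth bounded curvature). I would split
\[
\tilde f_t\,c_2(h_t)-\tilde f_0\,c_2(g_0)=(\tilde f_t-\tilde f_0)\,c_2(h_t)+\tilde f_0\bigl(c_2(h_t)-c_2(g_0)\bigr)
\]
and estimate the two terms after integrating over the shell. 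The key input is a complex-structure estimate: identifying $U=\{r<1\}$ with an open subset of $Y_{(\zeta_0,\zeta_c(t))}$ and using $\zeta(t)=t^p\dot\zeta+O(t^{p+1})$ with $1\le p\le d$, the dilation $H_{(\varepsilon^{-1},\zeta(t))}$ ($\varepsilon=t^{p/2}$, so $\varepsilon^{-2}\zeta(t)\to\dot\zeta$ in a fixed compact subset of $(\mathfrak h\otimes\mathbb R^3)^o$) carries the shell onto the large-radius region $\{4\varepsilon^{\beta-1}\le r\le\varepsilon^{-1}\}\subset Y_{\varepsilon^{-2}\zeta(t)}$, where Lemma \ref{lem:cpx.str} applies; scaling back, this gives, uniformly on $\{4b\le r\le1\}$ and in the uniformizing chart,
\[
\nabla^k\bigl(J_t-J_0\bigr)=O\!\bigl(t^{\,p}\,r^{-2-k}\bigr),\qquad k\ge0 .
\]

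\emph{First term.} From the last display and the boundedness of the curvature of $g_0$ it follows, pointwise as $4$-forms in the uniformizing chart, that $c_2(h_t)-c_2(g_0)=O(t^p r^{-4})$; integrating against $r^3\,dr$ over $\{4b\le r\le1\}$ shows $\int_{Y_{4b\le r\le1}}|c_2(h_t)|\le\int_{Y_{4b\le r\le1}}|c_2(g_0)|+O\bigl(t^p\log\tfrac1t\bigr)=O(1)$. Combined with $\tilde f_t-\tilde f_0=O(t)$ (the lemma preceding this one, equivalently the Taylor expansion of $f$ in $t$), this yields $\int_{Y_{4b\le r\le1}}(\tilde f_t-\tilde f_0)\,c_2(h_t)=O(t)$.

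\emph{Second term, via transgression.} The form $c_2(h_t)-c_2(g_0)$ is closed on the shell $\{4b\le r\le1\}$, which deformation retracts onto $S^3/\Gamma$ and hence has $H^4_{\mathrm{dR}}=0$, so it is exact. I would take as primitive the Chern--Simons transgression $3$-form $\gamma_t$ of the path of connections linearly interpolating the Chern connections of $(TX|_{\text{shell}},J_0,g_0)$ and $(TX|_{\text{shell}},J_t,h_t)$; in the uniformizing frame the connection-form difference is $O(\nabla(J_t-J_0))=O(t^pr^{-3})$ and the curvatures are $O(1)$, so $\gamma_t=O(t^pr^{-3})$ as a $3$-form, while $\tilde f_0=f(x_0)+O(r^k)$ ($k\ge2$) and $d\tilde f_0$ are bounded on $\{r\le1\}$. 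Stokes' theorem then gives
\[
\int_{Y_{4b\le r\le1}}\tilde f_0\bigl(c_2(h_t)-c_2(g_0)\bigr)=\int_{\{r=1\}}\tilde f_0\,\gamma_t-\int_{\{r=4b\}}\tilde f_0\,\gamma_t-\int_{Y_{4b\le r\le1}}d\tilde f_0\wedge\gamma_t,
\]
and each term is $O(t^p)$: in the bulk integral the factor $r^{-3}$ in $\gamma_t$ is absorbed by the $r^3$ in the Euclidean volume element, and on each sphere $\{r=\mathrm{const}\}$ by the $r^3$ in the induced $3$-volume. Adding the two estimates and using $p\ge1$ completes the proof.

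\emph{Main obstacle.} The delicate point is the inner boundary: as $t\to0$ the shell reaches down to $r=4b\to0$, precisely where the uniformizing-chart norms of $J_t-J_0$ and of its derivatives are largest. It is exactly the sharp $r$-weight $\nabla^k(J_t-J_0)=O(t^pr^{-2-k})$ coming from Lemma \ref{lem:cpx.str} (after the dilation) that saves the estimate: in the transgression term the $r^{-3}$-growth of $\gamma_t$ is cancelled by the $r^3$ Jacobian both in the bulk and on the boundary spheres, giving the clean $O(t^p)$ with no logarithmic or power loss. Without this device a crude pointwise comparison only gives $\int_{Y_{4b\le r\le1}}|c_2(h_t)-c_2(g_0)|=O(t^p\log\tfrac1t)$, still enough for a Hölder bound but of slightly smaller exponent.
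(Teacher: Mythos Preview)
Your approach differs from the paper's, and as written it contains a gap that traces back to the complex-structure estimate. You obtain $\nabla^k(J_t-J_0)=O(t^{p}r^{-2-k})$ by dilating to $Y_{\varepsilon^{-2}\zeta(t)}$, applying Lemma~\ref{lem:cpx.str} there, and scaling back; but the scaling is off. Since the leading correction $I^{(2)}_\zeta$ in Lemma~\ref{lem:cpx.str} is \emph{quadratic} in $\zeta$ and a $(1,1)$-tensor pulls back by pure substitution $R=r/\varepsilon$, the correct bound is $\nabla^k(J_t-J_0)=O(\varepsilon^4 r^{-4-k})=O(t^{2p}r^{-4-k})$. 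Your weaker bound is still a valid inequality on the shell, but it is too weak for the claim you actually use: at the inner radius $r=4b=4\varepsilon^\beta$ with $\beta>\tfrac12$ your second-derivative bound gives $t^{p}r^{-4}\sim\varepsilon^{2-4\beta}\to\infty$, so $R(h_t)-R(g_0)$ is \emph{not} uniformly bounded, and the assertion ``the curvatures are $O(1)$'' in your transgression estimate of $\gamma_t$ fails near the inner boundary. With the sharp bound one has instead $R(h_t)-R(g_0)=O(t^{2p}r^{-6})$, and then directly $c_2(h_t)-c_2(g_0)=O(t^{2p}r^{-6})$ as a coefficient against Euclidean volume, which integrates against $r^3\,dr$ over $\{4b\le r\le1\}$ to $O(\varepsilon^{4-2\beta})=O(t^{p(2-\beta)})=o(t)$ with no logarithm. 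This is precisely what the paper does (after the dilation, computing in the $J_t$-holomorphic frame $\{e^i_{\zeta(t)}\}$ of Lemma~\ref{lem:cpx.str}); the entire transgression device is then unnecessary.

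There is also a conceptual point in the transgression step that you glossed over. The Chern connection of $(TX,J_t,h_t)$ is $J_t$-linear while that of $(TX,J_0,g_0)$ is $J_0$-linear: they are connections on different complex bundles over the same base, and $c_2$ (as a $GL(2,\mathbb{C})$-invariant polynomial) does not directly transgress along the real linear interpolation of connections. One can repair this---for instance by first identifying $(TX,J_0)\cong(TX,J_t)$ via the bundle map given by the $J_0$-linear part of the identity (which differs from $\mathrm{id}$ by $O(J_t-J_0)$), or by recasting $c_2$ as the Euler form and working with $SO(4)$-connections---but this needs to be said. The paper avoids the issue entirely by computing both curvature tensors componentwise in the frame $\{e^i_{\zeta(t)}\}$ adapted to $J_t$, so that the comparison is purely pointwise and the $O(t)$ comes solely from $\tilde f_t-\tilde f_0=O(t)$ against a uniformly bounded $c_2(g_0)$.
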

\begin{proof}    
 Recall that $\tilde{h}_t = g_0$ on $U_{4b \leq r \leq 1}$, and then $\tilde{h}_t$ is $I_0$-Hermitian on this domain. As the coordinate $(z_0,w_0)$ is $g_0$-adapted, we have
 \begin{equation*}
     g_0 = g^{\mathrm{Euc}} + O(r^2)
 \end{equation*}
 on $U_{4b \leq r \leq 1}$.
 Hence, $H_{(\varepsilon,\zeta/\varepsilon^2)}^*(g_0) = \varepsilon^2 g^{\mathrm{Euc}} + O(\varepsilon^4 r^2)$ on $Y_{4b/\varepsilon \leq  r \leq 1/\varepsilon}$. Furthermore, as $I_\zeta = I_0 + O(r^{-4})$ on $Y$, we have 
 \begin{equation}
     H_{(\varepsilon,\zeta/\varepsilon^2)}^*(h_t) = H_{(\varepsilon,\zeta/\varepsilon^2)}^*(g_0) + O(\varepsilon^2r^{-4})
 \end{equation}
 on $Y_{4b/\varepsilon \leq r \leq 1/\varepsilon}$. Therefore, we have
\begin{equation}
    (H_{(\varepsilon,\zeta/\varepsilon^2)}^*(h_t))^{-1} = (H_{(\varepsilon,\zeta/\varepsilon^2)}^*(g_0))^{-1} + O(\varepsilon^{-2}r^{-4}),
    \end{equation}
    
    \begin{equation}
    \begin{aligned}
    \partial_t (H_{(\varepsilon,\zeta/\varepsilon^2)}^*(h_t)) &= (1 + O(r^{-4}))\partial_0(H_{(\varepsilon,\zeta/\varepsilon^2)}^*(g_0) +O(\varepsilon^2 r^{-4})) \\
    &= \partial_0 (H_{(\varepsilon,\zeta/\varepsilon^2)}^*(g_0)) + O(\varepsilon^2 r^{-5}),
    \end{aligned}
    \end{equation}
    and
    \begin{equation}
    \begin{aligned}
        \partial^2_t (H_{(\varepsilon,\zeta/\varepsilon^2)}^*(h_t)) &= ((1 + O(r^{-4}))\partial_0)^2(H_{(\varepsilon,\zeta/\varepsilon^2)}^*(g_0) +O(\varepsilon^2r^{-4}))\\
        &=\partial^2_0 (H_{(\varepsilon,\zeta/\varepsilon^2)}^*(g_0)) + O(\varepsilon^2 r^{-6}),
    \end{aligned}
    \end{equation}
where $\partial_t$ is any differentiation with respect to the frame $e^i_{\zeta(t)}$ on $Y_{4b/ \varepsilon \leq r \leq 1/\varepsilon}$. 
Hence we have 
\begin{equation}\label{eq:curve.on4b1}
    R^i_{jk\overline{l}}(H_{(\varepsilon,\zeta/\varepsilon^2)}^*(h_t)) = R^i_{jk\overline{l}}(H_{(\varepsilon,\zeta/\varepsilon^2)}^*(g_0)) + O(r^{-6}),
\end{equation}
\begin{equation}
    \begin{aligned}
        c_2(H_{(\varepsilon,\zeta/\varepsilon^2)}^*(h_t)) = c_2(H_{(\varepsilon,\zeta/\varepsilon^2)}^*(g_0)) + O(\varepsilon^2r^{-6}) \mathrm{Vol}_{\mathrm{Euc}}
    \end{aligned}
\end{equation}
on $Y_{4b/\varepsilon \leq r \leq 1/\varepsilon}$ under the trivialization by the frame $\{e^1_{\zeta(t)}, e^2_{\zeta(t)}\}$. Therefore, 
\begin{equation}
    \begin{aligned}
        \int_{4b \leq r \leq 1} \tilde{f}_t c_2(h_t) &= 
        \int_{4b/\varepsilon \leq r \leq 1/\varepsilon} H_{(\varepsilon, \zeta/\varepsilon^2)}^*(\tilde{f}_t) c_2(H_{(\varepsilon,\zeta/\varepsilon^2)}^*(h_t)) \\
        &= 
        \int_{4b/\varepsilon \leq r \leq 1/\varepsilon} H_{(\varepsilon,\zeta/\varepsilon^2)}^* (\tilde{f}_t) (c_2(H_{(\varepsilon,\zeta/\varepsilon^2)}^*(g_0)) + O(\varepsilon^2 r^{-6})\mathrm{Vol}_{\mathrm{Euc}}) \\
        &= \int_{4b/\varepsilon \leq r \leq 1/\varepsilon} (H_{(\varepsilon,\zeta/\varepsilon^2)}^*(\tilde{f}_0) + O(t))(c_2(H_{(\varepsilon,\zeta/\varepsilon^2)}^*(g_0)) + O(\varepsilon^2 r^{-6}) \mathrm{Vol}_{\mathrm{Euc}})\\
        &= \int_{4b/ \varepsilon \leq r \leq 1/\varepsilon}H_{(\varepsilon,\zeta/\varepsilon^2)}^*(\tilde{f}_0)c_2(H_{(\varepsilon,\zeta/\varepsilon^2)}^*(g_0)) + O(t) \\
        &= \int_{4b \leq r \leq 1} \tilde{f}_0c_2(g_0) + O (t).
    \end{aligned}
\end{equation}
Note that $g_0$ is an orbifold metric, hence the curvature of $g_0$ is uniformly bounded on this domain. 
\end{proof}

\begin{lem}\label{lem:2b4b}
    $\int_{U_{2b \leq r \leq 4b}} \tilde{f}_t c_2 (h_t) = \int_{U_{2b \leq r \leq 4b}} \tilde{f}_0 c_2(g_0) + O(\varepsilon^2).$
\end{lem}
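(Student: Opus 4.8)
The guiding idea is that the shell $U_{2b\le r\le 4b}$ is so thin that the two integrals are \emph{each} $O(\varepsilon^2)$, so that the lemma becomes a triviality; the only point that needs care is a uniform-in-$t$ pointwise bound on $c_2(h_t)$ over the shell, despite the rapidly varying cut-off $\chi_{(2b,4b)}(r)$ entering the definition of $\tilde h_t$. Since the coordinates $(z_0,w_0)$ are $g_0$-adapted we may write $g_0=dd^c_0(r^2+\eta)$ with $\eta$ vanishing to order $4$ at the origin; hence on the shell both $g_0$ and $\tilde h_t$ are $C^0$-close to $g^{\mathrm{Euc}}$, so $\mathrm{Vol}_{g_0}(U_{2b\le r\le 4b})$ and $\mathrm{Vol}_{h_t}(U_{2b\le r\le 4b})$ are both $O(b^4)=O(\varepsilon^{4\beta})$, and because $\beta>\tfrac12$ we have $4\beta>2$, so $\varepsilon^{4\beta}=O(\varepsilon^2)$. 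Thus it is enough to prove $|c_2(g_0)|\le C\,\mathrm{Vol}_{g_0}$ and $|c_2(h_t)|\le C\,\mathrm{Vol}_{h_t}$ on the shell with $C$ independent of $t$; then each of $\int_{U_{2b\le r\le 4b}}\tilde f_t\,c_2(h_t)$ and $\int_{U_{2b\le r\le 4b}}\tilde f_0\,c_2(g_0)$ is $O(\varepsilon^{4\beta})=O(\varepsilon^2)$, which already gives the claim.

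For $g_0$ this is immediate: it is a smooth orbifold metric on the compact set $\{r\le 1\}$, hence has curvature bounded by a fixed constant. For $h_t$, on the shell $\tilde h_t=g^{\mathrm{Euc}}+dd^c_0\bigl(\chi_{(2b,4b)}(r)\,\eta\bigr)$. The cut-off obeys $\nabla^j\chi_{(2b,4b)}(r)=O(b^{-j})=O(r^{-j})$ there, while $\nabla^m\eta=O(r^{4-m})$ for $m\le 4$ because $\eta$ vanishes to order $4$; Leibniz then gives $\nabla^m\bigl(\chi_{(2b,4b)}(r)\,\eta\bigr)=O(r^{4-m})$ on the shell for $m\le 4$. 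In particular $dd^c_0\bigl(\chi_{(2b,4b)}(r)\,\eta\bigr)=O(r^2)$, so $\tilde h_t$ is genuinely a positive $(1,1)$-form there, with $\nabla(\tilde h_t-g^{\mathrm{Euc}})=O(b)$ and $\nabla^2(\tilde h_t-g^{\mathrm{Euc}})=O(1)$; writing the K\"ahler curvature schematically as $\nabla^2\tilde h_t+(\tilde h_t)^{-1}(\nabla\tilde h_t)^2$ yields $O(1)+O(b^2)=O(1)$, uniformly in $t$.

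It remains to pass from $\tilde h_t$ to $h_t=\tfrac12(\tilde h_t+J_t^*\tilde h_t)$. Here $\tilde h_t$ is $I_0$-Hermitian on the shell, so I would compare $J_t$ with $I_0$ exactly as in Lemma~\ref{lem:4b1}: after the rescaling $H_{(\varepsilon,\zeta(t)/\varepsilon^2)}$ the shell is carried to the annulus $\{2\varepsilon^{\beta-1}\le r\le 4\varepsilon^{\beta-1}\}$ in $Y_{\zeta(t)/\varepsilon^2}$, and since $\zeta(t)/\varepsilon^2=\dot\zeta+O(t)$ stays in a compact subset of $(\mathfrak h\otimes\mathbb R^3)^o$ while $\varepsilon^{\beta-1}\to\infty$, Lemma~\ref{lem:cpx.str} gives $J_t=I_0+O(r^{-4})$ there together with all of its derivatives. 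Consequently $h_t$ and $\tilde h_t$ agree up to an error that tends to $0$ in $C^2$ on the shell as $t\to0$, so the curvature of $h_t$ is again $O(1)$ there and $|c_2(h_t)|\le C\,\mathrm{Vol}_{h_t}$ with $C$ independent of $t$, as required.

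The main obstacle is exactly this uniform curvature bound, i.e. controlling the contribution of $\chi_{(2b,4b)}(r)$, whose $j$-th derivative is of size $b^{-j}$: a priori this makes $\nabla^2\tilde h_t$ as large as $b^{-2}=\varepsilon^{-2\beta}$, in which case the shell integral would only be $O(\varepsilon^{2\beta})$, which is not $O(\varepsilon^2)$. What rescues the argument is that $\eta$ vanishes to order $4$ — a consequence of working in $g_0$-adapted coordinates — so that the decay of $\nabla^m\eta$ exactly offsets the growth of $\nabla^j\chi_{(2b,4b)}(r)$ and $\chi_{(2b,4b)}(r)\,\eta$ inherits the decay $O(r^{4-m})$ of $\eta$ itself, keeping the curvature of $\tilde h_t$ bounded. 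Everything else — the volume estimate and the comparison of $J_t$ with $I_0$ — is routine given Lemma~\ref{lem:cpx.str} and the computations already carried out for Lemma~\ref{lem:4b1}.
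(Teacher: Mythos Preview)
Your proof is correct and follows essentially the same route as the paper: both arguments hinge on the observation that on the shell $\tilde h_t=g^{\mathrm{Euc}}+dd^c_0(\chi_{(2b,4b)}(r)\eta)=g_0+O(r^2)$ because the $O(r^{-j})$ growth of $\nabla^j\chi_{(2b,4b)}$ is exactly compensated by $\eta=O(r^4)$, yielding uniformly bounded curvature; combined with $\mathrm{Vol}(U_{2b\le r\le 4b})=O(b^4)=O(\varepsilon^{4\beta})=O(\varepsilon^2)$, this makes each integral individually $O(\varepsilon^2)$. The only cosmetic difference is that the paper writes out the curvature comparison $R^i_{jk\bar l}(h_t)=R^i_{jk\bar l}(g_0)+O(r^0)$ and phrases the conclusion as a difference estimate, whereas you bound the two integrals separately---but since $c_2(g_0)$ is itself bounded on the shell, these amount to the same thing.
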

\begin{proof}
    Recall that $\tilde{h}_t = dd^c_0(r^2 + \chi_{(2b,4b)}(r) \eta)$ on this domain where $g_0 = dd^c_0 (r^2 + \eta )$ with respect to the coordinate $(z_0,w_0)$. Note that $\eta = O(r^4) $ on $U_{2b \leq r \leq 4b}$. Then, we have 
\begin{equation}
    \begin{aligned}
        dd^c_0(r^2 + \chi_{(2b,4b)} (r) \eta)
        &= dd^c_0 r^2 + \frac{ \chi'_{(2b,4b)}(r) \eta }{2b} dd^c_0r + \frac{\chi''_{(2b,4b)}(r) \eta}{4b^2} dr \wedge d^c_0 r\\
        &+ \frac{\chi'_{(2b,4b)}(r)}{2b}( d\eta \wedge d^c_0r + dr \wedge d_0^c \eta) + \chi_{(2b,4b)}(r) dd^c_0\eta.
    \end{aligned}
\end{equation}
Recall that $\eta = O(r^4)$, $r = O(r)= O(b)$, $\|dr\|_{g_0}, \|d_0^cr\|_{g_0}=O(r^0), \|dd_0^cr\|_{g_0} = O(r^{-1})$ and $\chi', \chi''$ are bounded, we have 
\begin{equation}\label{eq:estimate for h}
     dd^c_0(r^2 + \chi_{(2b,4b)} (r) \eta)= g_0 + O(r^2).
\end{equation}
Therefore, 
\begin{equation}
\begin{aligned}
    H_{(\varepsilon, \zeta/\varepsilon^2)}^*(h_t) &= H_{(\varepsilon, \zeta/\varepsilon^2)}^*(\tilde{h}_t) + O(\varepsilon^2r^{-4}) \\
    &= H_{(\varepsilon, \zeta/\varepsilon^2)}^*(g_0) + O(\varepsilon^4r^2) + O(\varepsilon^2r^{-4}) \\
    &= H_{(\varepsilon, \zeta/\varepsilon^2)}^*(g_0) + O(\varepsilon^4r^2)
    \end{aligned}
\end{equation}
on $Y_{2b/ \varepsilon \leq r \leq 4b/\varepsilon}$. In particular, $h_t = H^*_{(\varepsilon^{-1},\zeta)} H_{(\varepsilon, \zeta/\varepsilon^2)}^*(h_t) = g_0 + O(r^2)$ on $U_{2b \leq r \leq 4b}$.
Then, we have
\begin{align}
     (h_t)^{-1} &= (g_0)^{-1} + O(r^2)\\
    \partial_t (h_t) &= (\partial_0 + O(r^2) )(g_0 +O(r^2)) = \partial_0 (g_0) + O(r)\\
    \partial^2_t (h_t) &= (\partial_0 + O(r^2) )^2(g_0 +O(r^2)) =\partial^2_0 (g_0) + O(r^0)
\end{align}
on $U_{2b \leq r \leq 4b}$, where $\partial_t$ is any differentiation by the frame $\{\varepsilon^{-1} (H_{(\varepsilon, \zeta/\varepsilon^2)})_* e_t^i\}$. Then we have the following on $U_{2b \leq r \leq 4b}$:
\begin{equation}\label{eq:curv.on2b4b}
    R^i_{jk\overline{l}}(h_t) = R^i_{jk\overline{l}}(g_0) + O(r^0),
\end{equation}
\begin{equation}
    \begin{aligned}
        c_2(h_t) = c_2(g_0) + O(r^0)\mathrm{Vol}_\mathrm{Euc}
    \end{aligned}
\end{equation}
with respect to the frame $\{\varepsilon^{-1}(H_{(\varepsilon, \zeta/\varepsilon^2)})_* e_t^i \}$.
As the volume of the domain $U_{b\leq r \leq 2b}$ is of order $b^4$ and $c_2(g_0)$ is bounded (note that $g_0$ is an orbifold metric) we have the following estimate:
    \begin{align}
        \int_{U_{2b \leq r \leq 4b}} \tilde{f}_t c_2 (h_t) = 
        \int_{U_{2b \leq r \leq 4b}} \tilde{f}_0 c_2(g_0) + O(b^4).
    \end{align}
    As $b = \varepsilon^\beta$ for $\beta > \frac{1}{2}$, we see that $O(b^4)$ can be replaced by $O(\varepsilon^2)$. 
\end{proof}

\begin{lem}\label{lem:b2b}
   $ \int_{U_{b \leq r \leq 2b}} \tilde{f}_t c_2 (h_t) = \int_{U_{b \leq r \leq 2b}} \tilde{f}_0 c_2(g_0) + O(\varepsilon^2)$.
\end{lem}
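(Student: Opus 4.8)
The plan is to show that \emph{each} of the two integrals in the statement is separately $O(\varepsilon^2)$; the asserted identity then follows at once, exactly as in the proof of Lemma \ref{lem:2b4b}.

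The term $\int_{U_{b\le r\le 2b}}\tilde f_0 c_2(g_0)$ is the easy one: $g_0$ is a fixed orbifold \Kahler\ metric, so $c_2(g_0)$ and $\tilde f_0$ are bounded on $U$, while $g_0=g^\mathrm{Euc}+O(r^2)$ shows the $g_0$-volume of $U_{b\le r\le 2b}$ is $O(b^4)$. As $b=\varepsilon^\beta$ with $\beta>\tfrac{1}{2}$, this gives $\int_{U_{b\le r\le 2b}}\tilde f_0 c_2(g_0)=O(b^4)=O(\varepsilon^{4\beta})=O(\varepsilon^2)$.

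For $\int_{U_{b\le r\le 2b}}\tilde f_t c_2(h_t)$ I would rescale by $H_{(\varepsilon,\zeta(t)/\varepsilon^2)}$, which carries the annulus $\{b/\varepsilon\le r\le 2b/\varepsilon\}\subset Y_{\zeta(t)/\varepsilon^2}$ onto $U_{b\le r\le 2b}$ and multiplies $r$ by $\varepsilon$ (cf. (\ref{eq:equivariance})); note $b/\varepsilon=\varepsilon^{\beta-1}\to\infty$, so this annulus lies in the asymptotic region of $Y_{\zeta(t)/\varepsilon^2}$. Using $H_{(\varepsilon^{-1},\zeta(t))}\circ H_{(\varepsilon,\zeta(t)/\varepsilon^2)}=\mathrm{id}$ and $H_{(\varepsilon,\zeta(t)/\varepsilon^2)}^*(dd^c_0 r^2)=\varepsilon^2 dd^c_0 r^2$ (as $H_{(\varepsilon,\zeta(t)/\varepsilon^2)}$ is $I_0$-holomorphic on the end), the pull-back of the transition metric $\tilde h_t=dd^c_0 r^2+(1-\chi_{(b,2b)}(r))\varepsilon^2 H^*_{(\varepsilon^{-1},\zeta(t))}\xi_t$ (cf. (\ref{def:h-tilde})) becomes
\[
H_{(\varepsilon,\zeta(t)/\varepsilon^2)}^*(\tilde h_t)=\varepsilon^2\bigl(dd^c_0 r^2+(1-\chi_{(b/\varepsilon,2b/\varepsilon)}(r))\,\xi_t\bigr)
\]
on that annulus. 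Since $\zeta(t)/\varepsilon^2=\dot{\zeta}+O(t)$ stays in a compact subset of $(\mathfrak{h}\otimes\mathbb{R}^3)^o$, the expansion (\ref{eq:asy.metic}) gives $\xi_t=O(r^{-4})$, $\nabla\xi_t=O(r^{-5})$, $\nabla^2\xi_t=O(r^{-6})$ uniformly; the cut-off contributes only $O(r^{-1})$, $O(r^{-2})$ on the annulus (where $r\sim b/\varepsilon$); and $H_{(\varepsilon,\zeta(t)/\varepsilon^2)}^*(J_t)=I_{\zeta(t)/\varepsilon^2}=I_0+O(r^{-4})$ by Lemma \ref{lem:cpx.str}. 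Hence, after the $J_t$-Hermitian symmetrization (\ref{def:h}), $H_{(\varepsilon,\zeta(t)/\varepsilon^2)}^*(h_t)=\varepsilon^2\bigl(\mathrm{Id}+O(r^{-4})\bigr)$ in the frame $\{e^1_{\zeta(t)/\varepsilon^2},e^2_{\zeta(t)/\varepsilon^2}\}$ of Lemma \ref{lem:cpx.str}, with first and second derivatives $O(r^{-5})$, $O(r^{-6})$ (the $O(r^{-4})$ frame change and the $O(r^{-4})$ change of complex structure enter only at lower order). The essential point is that the symmetrization does \emph{not} worsen the $O(r^{-4})$ order of the error, because $I_\zeta-I_0=O(r^{-4})$; note this must be invoked on $Y_{\zeta(t)/\varepsilon^2}$, whose parameter remains in the compact set where Lemma \ref{lem:cpx.str} is uniform, and not directly on $Y_{\zeta(t)}$.

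Finally, the Chern curvature of $H_{(\varepsilon,\zeta(t)/\varepsilon^2)}^*(h_t)$ in this frame is then $O(r^{-6})$, so — since $c_2$ is quadratic in the curvature and unchanged by the overall constant factor $\varepsilon^2$ — we get $c_2(h_t)=c_2\bigl(H_{(\varepsilon,\zeta(t)/\varepsilon^2)}^*(h_t)\bigr)=O(r^{-12})\,\mathrm{Vol}_\mathrm{Euc}$ on the rescaled annulus. Passing to the rescaled integral and using that $H_{(\varepsilon,\zeta(t)/\varepsilon^2)}^*(\tilde f_t)$ is bounded (Lemma \ref{lem:lemma 3}),
\[
\Bigl|\int_{U_{b\le r\le 2b}}\tilde f_t c_2(h_t)\Bigr|\le C\int_{b/\varepsilon}^{2b/\varepsilon}r^{-12}\,r^{3}\,dr=O\bigl((b/\varepsilon)^{-8}\bigr)=O(\varepsilon^{8(1-\beta)}),
\]
which is $O(\varepsilon^2)$ since $\beta$ is close to $\tfrac{1}{2}$, so $8(1-\beta)\ge 2$. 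Combining with the second paragraph proves the lemma. I expect the rescaling bookkeeping to be the main obstacle — verifying $H_{(\varepsilon^{-1},\zeta(t))}\circ H_{(\varepsilon,\zeta(t)/\varepsilon^2)}=\mathrm{id}$, the $\varepsilon^2$-homogeneity of $dd^c_0 r^2$, and that the $J_t$-symmetrization perturbs the curvature (hence $c_2$) only at order $r^{-6}$ (hence $r^{-12}$) — together with the observation that it is precisely the \emph{quadratic} dependence of $c_2$ on the curvature that yields the exponent $8(1-\beta)\ge 2$; the naive linear bound would give only $4(1-\beta)<2$, which is insufficient.
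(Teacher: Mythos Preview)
Your proof is correct, but the route differs from the paper's. The paper does not bound the two integrals separately; instead it observes that on $U_{b\le r\le 2b}$ one has $\varepsilon^2 H_{(\varepsilon^{-1},\zeta)}^*\xi_t=O(\varepsilon^4 r^{-4})$, hence $\tilde h_t=dd^c_0 r^2+O(\varepsilon^4 r^{-4})=g_0+O(r^2)$, which is \emph{literally} the same estimate as in Lemma~\ref{lem:2b4b}, so the entire argument of that lemma (yielding $c_2(h_t)=c_2(g_0)+O(r^0)\,\mathrm{Vol}_{\mathrm{Euc}}$ and then the $O(b^4)=O(\varepsilon^2)$ bound from the annulus volume) carries over verbatim. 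Your approach instead rescales to $Y_{\zeta/\varepsilon^2}$ and exploits the full ALE decay $O(r^{-12})$ of $c_2$, yielding the sharper bound $O(\varepsilon^{8(1-\beta)})$ on the $h_t$-integral; this is a legitimate and self-contained alternative, but the extra precision is not needed, and the paper's reduction to Lemma~\ref{lem:2b4b} is more economical. Your bookkeeping (the identity $H_{(\varepsilon^{-1},\zeta)}\circ H_{(\varepsilon,\zeta/\varepsilon^2)}=\mathrm{id}$, the $\varepsilon^2$-homogeneity of $dd^c_0 r^2$, the cut-off scaling, and the fact that the $J_t$-symmetrization perturbs only at order $O(r^{-4})$ via Lemma~\ref{lem:cpx.str} applied on the compact parameter set containing $\zeta(t)/\varepsilon^2$) is all sound.
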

   \begin{proof}
        Recall that $\tilde{h}_t = dd^c_0 r^2 + (1-\chi_{(b,2b)}(r)) \varepsilon^2 H^*_{(\varepsilon^{-1},\zeta)}\xi_t$ where $\xi_t = g_{\varepsilon^{-2}\zeta(t)} - g^\mathrm{Euc}$ which satisfies $\xi_t = O(r^{-4})$ on $Y_{b/\varepsilon \leq r \leq 2b/\varepsilon}$, as $g_{\varepsilon^{-2}\zeta(t)}$ is an ALE metric. The estimates $\varepsilon^2 H_{(\varepsilon^{-1},\zeta)}^*(\xi_t) = O(\varepsilon^4r^{-4})$ and $\chi_{(b,2b)} = O(r^0)$ on $U_{b \leq r \leq 2b}$ imply that
    \begin{equation}
        \tilde{h}_t = dd^c_0 r^2 + O(\varepsilon^4 r^{-4}) = g_0 + O(r^2).
    \end{equation}
    Note that this is the same estimate as the case on $U_{2b \leq r \leq 4b}$, hence we obtain
    \begin{equation}
         \int_{U_{b \leq r \leq 2b}} \tilde{f}_t c_2 (h_t) = \int_{U_{b \leq r \leq 2b}} f_0 c_2(g_0) + O(\varepsilon^2).
    \end{equation}
\end{proof}

\begin{lem}\label{lem:b}
     $\int_{U_{r\leq b}} \tilde{f}_t c_2(g_{\zeta(t)}) = f(x_0)\int_{Y_{\dot{\zeta}}} c_2(g_{\dot{\zeta}}) + O(\varepsilon^2)$.
\end{lem}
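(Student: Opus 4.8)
The plan is to pull the integral back to the fixed ALE manifold $Y$ by a Kronheimer dilation, in the same spirit as Lemmas \ref{lem:4b1}, \ref{lem:2b4b} and \ref{lem:b2b}, and then to exploit two facts about ALE hyperk\"ahler metrics: the second Chern form decays fast at infinity, and $\int_{Y}c_2(g_\zeta)$ does not depend on $\zeta$.

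On $U_{r\leq b}$ one has $h_t=\tilde{h}_t=g_{\zeta(t)}$, so the integrand is genuinely $c_2$ of the ALE metric. By (\ref{eq:dil}) and (\ref{eq:equivariance}) the dilation $H_{(\varepsilon,\zeta(t)/\varepsilon^2)}\colon Y_{\zeta(t)/\varepsilon^2}\to Y_{\zeta(t)}$ satisfies $r\circ H_{(\varepsilon,\zeta(t)/\varepsilon^2)}=\varepsilon\,r$ and $H_{(\varepsilon,\zeta(t)/\varepsilon^2)}^{*}g_{\zeta(t)}=\varepsilon^{2}g_{\zeta(t)/\varepsilon^2}$; since $c_2$ is invariant under pullback by a diffeomorphism and under a constant rescaling of the metric, $H_{(\varepsilon,\zeta(t)/\varepsilon^2)}^{*}c_2(g_{\zeta(t)})=c_2(g_{\zeta(t)/\varepsilon^2})$, and the preimage of $\{r\leq b\}$ is $\{r\leq b/\varepsilon\}$. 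Hence
\begin{equation*}
  \int_{U_{r\leq b}}\tilde{f}_t\,c_2(g_{\zeta(t)})=\int_{Y_{r\leq b/\varepsilon}}H_{(\varepsilon,\zeta(t)/\varepsilon^2)}^{*}(\tilde{f}_t)\;c_2(g_{\zeta(t)/\varepsilon^2}).
\end{equation*}
By (\ref{eq:order of zeta}) and (\ref{def:b}) one has $\varepsilon^2=t^{p}$, so $\zeta(t)/\varepsilon^2=\dot{\zeta}+O(t)\to\dot{\zeta}\in(\mathfrak{h}\otimes\mathbb{R}^3)^{o}$, and $b/\varepsilon=\varepsilon^{\beta-1}\to\infty$ since $\tfrac12<\beta<1$; thus the region exhausts $Y$. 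By Lemma \ref{lem:lemma 3}, $H_{(\varepsilon,\zeta(t)/\varepsilon^2)}^{*}(\tilde{f}_t)=f(x_0)+O(\varepsilon^{k}r^{k})$ with $k\geq2$, so it remains to show that the error term $\int_{Y_{r\leq b/\varepsilon}}O(\varepsilon^{k}r^{k})\,c_2(g_{\zeta(t)/\varepsilon^2})$ is $O(\varepsilon^2)$ and that $\int_{Y_{r\leq b/\varepsilon}}c_2(g_{\zeta(t)/\varepsilon^2})=\int_{Y_{\dot{\zeta}}}c_2(g_{\dot{\zeta}})+O(\varepsilon^2)$.

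Both estimates rest on the decay of $c_2$. From the asymptotics (\ref{eq:asy.metic}), uniformly for $\zeta$ in a compact neighbourhood of $\dot{\zeta}$ the curvature of $g_\zeta$ is $O(r^{-6})$, so $|c_2(g_\zeta)|\leq C\,r^{-12}\mathrm{Vol}_{\mathrm{Euc}}$ outside a fixed compact set, whence $\int_{\{r>R\}}|c_2(g_\zeta)|=O(R^{-8})$ and $\int_{\{r\leq1\}}|c_2(g_\zeta)|=O(1)$, uniformly in $\zeta$. Splitting the error term at $r=1$ bounds it by $C\varepsilon^{k}+C\varepsilon^{k}\int_{1}^{b/\varepsilon}r^{k-9}\,dr$; the integral is $O(1)$ for $k\leq7$, $O(|\log\varepsilon|)$ for $k=8$, and $O((b/\varepsilon)^{k-8})=O(\varepsilon^{(\beta-1)(k-8)})$ for $k\geq9$, and in every case (using $k\geq2$, and $\beta$ close to $\tfrac12$) the resulting power of $\varepsilon$ is at least $2$, so the error is $O(\varepsilon^2)$. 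For the main integral, the tail bound gives $\int_{Y_{r\leq b/\varepsilon}}c_2(g_{\zeta(t)/\varepsilon^2})=\int_{Y}c_2(g_{\zeta(t)/\varepsilon^2})+O((b/\varepsilon)^{-8})=\int_{Y}c_2(g_{\zeta(t)/\varepsilon^2})+O(\varepsilon^{8(1-\beta)})$, and $8(1-\beta)>2$ since $\beta<\tfrac34$. Finally $\int_{Y}c_2(g_\zeta)=\tfrac{1}{8\pi^{2}}\int_{Y}\|\mathrm{Rm}_{g_\zeta}\|^{2}\mathrm{Vol}_{g_\zeta}$ is independent of the Ricci-flat ALE metric $g_\zeta$ (cf. Theorem \ref{thm:non-coll}(iii)), so $\int_{Y}c_2(g_{\zeta(t)/\varepsilon^2})=\int_{Y_{\dot{\zeta}}}c_2(g_{\dot{\zeta}})$. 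Combining the three estimates proves the lemma.

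The delicate step is the last one: since $\zeta(t)/\varepsilon^2-\dot{\zeta}=O(t)=O(\varepsilon^{2/p})$, mere continuity of $\zeta\mapsto\int_{Y}c_2(g_\zeta)$ would not suffice to absorb this into an $O(\varepsilon^2)$ error when $p\geq2$, so one genuinely needs the exact $\zeta$-invariance of this integral. If one prefers not to quote the Riemannian statement, it follows from the fact that $c_2(g_\zeta)$ is the Euler form of the Riemannian metric $g_\zeta$ on the fixed oriented $4$-manifold underlying $Y$, together with the observation that the transgression form between $g_\zeta$ and $g_{\dot{\zeta}}$ integrates to $O(R^{-8})$ over $\{r=R\}$ because both metrics are $g^{\mathrm{Euc}}+O(r^{-4})$. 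Everything else is routine bookkeeping of decay exponents, valid once $\beta$ is taken sufficiently close to $\tfrac12$ — which is exactly the regime in which Theorem \ref{thm:BR} is applied.
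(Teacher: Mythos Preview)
Your proof is correct and follows essentially the same route as the paper's: pull back by the Kronheimer dilation $H_{(\varepsilon,\zeta/\varepsilon^2)}$, split off $f(x_0)$ using Lemma~\ref{lem:lemma 3}, use the $O(r^{-12})$ decay of $c_2$ for the ALE metric to control both the error term and the tail $\int_{r>b/\varepsilon}c_2$, and invoke the $\zeta$-independence of $\int_Y c_2(g_\zeta)$. Your write-up is in fact more careful than the paper's on two points the paper leaves implicit: the constraint $\beta<\tfrac34$ needed so that $O((b/\varepsilon)^{-8})=O(\varepsilon^{8(1-\beta)})$ is absorbed into $O(\varepsilon^2)$, and the observation that exact $\zeta$-invariance of the total integral (not mere continuity) is required since $\zeta(t)/\varepsilon^2-\dot\zeta=O(\varepsilon^{2/p})$.
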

\begin{proof}
     Recall that $\tilde{h}_t = h_t = g_{\zeta(t)}$ on this domain. By the naturality of Chern forms, we have 
    \begin{equation}
        \int_{U_{r \leq b}} \tilde{f}_t c_2(g_{\zeta(t)}) = \int_{Y_{r \leq b/\varepsilon}} (H_{(\varepsilon, \zeta/\varepsilon^2)}^*\tilde{f}_t) c_2 (g_{\varepsilon^{-2} \zeta(t)}).
    \end{equation}  
    By lemma 3, we have 
    \begin{equation}
        \int_{Y_{r\leq b/\varepsilon} } (H_{(\varepsilon,\zeta/\varepsilon^2)}^* \tilde{f}_t) c_2(g_{\varepsilon^{-2} \zeta(t)}) 
        = f(x_0) \int_{Y_{r\leq b/\varepsilon}} c_2(g_{\varepsilon^{-2} \zeta(t)} )+ 
        \int_{Y_{r\leq b/\varepsilon}} O(\varepsilon^kr^k) c_2(g_{\varepsilon^{-2}\zeta(t)}) .
    \end{equation}
    As $g_{\varepsilon^{-2}\zeta(t)}$ is an ALE metric, we have $g_{\varepsilon^{-2}\zeta(t)} = g^{\mathrm{Euc}} + O(r^{-4})$ as $r \to \infty$. Hence $c_2(g_{\varepsilon^{-2}\zeta(t)}) = O(r^{-12}) \mathrm{Vol}_\mathrm{Euc}$ as $r \to \infty$. Then, 
    \begin{equation}
    \begin{aligned}
        \int_{Y_{r \leq b/\varepsilon}} c_2(g_{\varepsilon^{-2}\zeta(t)}) &= \int_{Y_{\varepsilon^{-2}\zeta(t)}} c_2(g_{\varepsilon^{-2} \zeta(t)}) + O(\varepsilon^8/b^8) \\
         \int_{Y_{r \leq b/\varepsilon}}O(\varepsilon^kr^k) c_2(g_{\varepsilon^{-2}\zeta(t)}) &= O(\varepsilon^2).
        \end{aligned}
    \end{equation}
    Recall that the integral $\int_{Y_{\varepsilon^{-2}\zeta(t)}} c_2(g_{\varepsilon^{-2}\zeta(t)}) = e_{orb}(Y_{\dot{\zeta}})$, the Euler number, is independent of $t$, hence we have 
    \begin{equation}
        \int_{U_{r\leq b}} \tilde{f}_t c_2(g_{\zeta(t)}) = f(x_0)\int_{Y_{\dot{\zeta}}} c_2(g_{\dot{\zeta}}) + O(\varepsilon^2).
    \end{equation}
    \end{proof}
    
    Summing them up, we obtain the following estimate:
    \begin{equation}\label{eq:estimate for c2(h)}
        \int_{X_t} \tilde{f}_t c_2(h_t) = \int_{X_0} \tilde{f}_0 c_2(g_0) + f(x_0)e_{orb}(Y_{\dot{\zeta}}) + O(\varepsilon^2).
    \end{equation}

    We calculate the second term 
    \begin{equation*}
        \int_{X_t} \sigma_t \wedge \tau
    \end{equation*}
    of (\ref{eq:decom.of F}) in the following. Recall that $\sigma$ is initially $dd^c f$, however, we consider a general $2$-form $\sigma$ with bidegree $(1,1)$ on each $X_t$ in the following.
    Before we calculate the integral $\int_{X_t} \sigma_t \wedge \tau $, we recall the Bott-Chern formula to derive an explicit form of $\tau$. Let $\nabla_t, \nabla_t'$ be the Chern connections of $h_t, g_t$ respectively. Take a family of hermitian metrics $\{k_s\}$ that connects $h_t$ and $g_t$ smoothly, say $k_s = h_t + s(g_t - h_t)$ here, parameterised by $s \in [0,1]$ and define $N_s$ by $(N_s)_{i,j} = (\dot{k}_s)_{i\alpha} (k_s)^{\alpha j}$ under a coordinate (the dot means a derivative with respect to $s$). Then Bott-Chern formula (see \cite{BC}) yields 
    \begin{equation*}
        c(h_t) - c(g_t) = dd^c \phi
    \end{equation*}
    for the total Chern forms, where $\phi$ is defined by 
    \begin{equation*}
        \phi = -\frac{1}{2\pi} \int_0^1 N_s \wedge \left( I - \frac{R_s}{2\pi\sqrt{-1}}\right) ds,
    \end{equation*}
     where $R_s$ is the curvature tensor of $k_s$. 
    Under a trivialization, the right hand side becomes
    \begin{equation*}
    \begin{aligned}
     -\frac{1}{2\pi} \int_0^1 N_s \wedge \left( I - \frac{R_s}{2\pi\sqrt{-1}}\right) ds &=
        -\frac{1}{2\pi} \int_0^1 \left(\det
        \begin{bmatrix}
        (N_s)_{11} & -\frac{(R_s)_{12}}{2\pi\sqrt{-1}} \\
        (N_s)_{21} & 1- \frac{(R_s)_{22}}{2\pi\sqrt{-1}} \\
        \end{bmatrix}
        +
      \det
        \begin{bmatrix}
        1 -\frac{(R_s)_{11}}{2\pi\sqrt{-1}} & (N_s)_{12} \\
        - \frac{(R_s)_{21}}{2\pi\sqrt{-1}} & (N_s)_{22} \\
        \end{bmatrix}
        \right) ds \\
        &= -\frac{1}{2\pi} \int_0^1(N_s)_{11} ds + \frac{1}{4\pi^2 \sqrt{-1}} \int_0^1 (N_s)_{11} (R_s)_{22} - (N_s)_{21} (R_s)_{12}ds \\
        &+ -\frac{1}{2\pi} \int_0^1(N_s)_{22} ds + \frac{1}{4\pi^2 \sqrt{-1}} \int_0^1 (N_s)_{22} (R_s)_{11} - (N_s)_{12} (R_s)_{21}ds
        \end{aligned}
    \end{equation*}
    In particular, $\tau$, the $(1,1)$ component of $\phi$, can be written as 
    \begin{equation}\label{def:tau}
        \tau = \frac{1}{4\pi^2 \sqrt{-1}} \int_0^1 (N_s)_{11} (R_s)_{22} - (N_s)_{21} (R_s)_{12} + (N_s)_{22} (R_s)_{11} - (N_s)_{12} (R_s)_{21}ds.
    \end{equation}
   \begin{lem}\label{lem:sigma}
       $|\sigma_t |_{h_t} < C$ for some constant $C$ independent of $t$.
   \end{lem}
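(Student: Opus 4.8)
The plan is to estimate $|\sigma_t|_{h_t}$ over the five domains of the division of $X$, reducing everything to a uniform bound on $|d(x_i\circ\iota_{\zeta(t)})|_{h_t}$, where $\iota$ is the embedding of a neighbourhood $\mathcal{U}$ of $x_0$ into $\mathbb C^3\times\Delta$ (cf.\ diagram (\ref{diag:embedding of U})) and $x_1,x_2,x_3$ are the coordinates of $\mathbb C^3$. Away from $x_0$ (in particular on $X\setminus U$) one has $\tilde h_t=g_0$ and $J_t$ uniformly bounded, so $h_t$ is uniformly comparable to the fixed metric $g_0$ while $\sigma_t$ is the restriction of the fixed smooth form $\sigma$; hence $|\sigma_t|_{h_t}$ is bounded there. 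On $U$ we write $\sigma=\iota^*\mathfrak{s}$ for a smooth, compactly supported form $\mathfrak{s}$ on $\mathbb C^3\times\Delta$ (in the same sense as $f=\mathfrak{f}$ in the proof of Lemma \ref{lem:lemma 3}); since $\iota_{\zeta(t)}$ is holomorphic, $|\iota_{\zeta(t)}^*dx_i|_{h_t}=|\iota_{\zeta(t)}^*d\bar x_i|_{h_t}=|d(x_i\circ\iota_{\zeta(t)})|_{h_t}$, whence
\begin{equation*}
    |\sigma_t|_{h_t}\le C\sum_{i,j}|d(x_i\circ\iota_{\zeta(t)})|_{h_t}\,|d(x_j\circ\iota_{\zeta(t)})|_{h_t}\quad\text{on }U,
\end{equation*}
with $C$ depending only on an upper bound for the coefficients of $\mathfrak{s}$. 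It therefore suffices to prove $|d(x_i\circ\iota_{\zeta(t)})|_{h_t}=O(1)$ uniformly on $U$.

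On $\{b\le r\le1\}$ the proofs of Lemmas \ref{lem:b2b}, \ref{lem:2b4b}, \ref{lem:4b1} show that $\tilde h_t$, hence $h_t$, is uniformly comparable to $g^{\mathrm{Euc}}=dd^c_0 r^2$; since $x_i\circ\iota_{\zeta(t)}$ is holomorphic and $=O(r^{k_i})$ with $k_i\ge2$ (as in the proof of Lemma \ref{lem:lemma 3}), one gets $|d(x_i\circ\iota_{\zeta(t)})|_{g^{\mathrm{Euc}}}=O(r^{k_i-1})=O(1)$ there. On the remaining region $U_{r\le b}$, where $h_t=g_{\zeta(t)}$, I would pull back by the homothety $H_{(\varepsilon,\zeta(t)/\varepsilon^2)}$: it carries $g_{\zeta(t)}$ to $\varepsilon^2 g_{\zeta(t)/\varepsilon^2}$ and $U_{r\le b}$ onto $\{r\le b/\varepsilon\}\subset Y_{\zeta(t)/\varepsilon^2}$, with $\zeta(t)/\varepsilon^2=\dot{\zeta}+O(t)$ staying in a fixed compact subset of $(\mathfrak{h}\otimes\mathbb R^3)^o$ and $g_{\zeta(t)/\varepsilon^2}\to g_{\dot{\zeta}}$ uniformly on compacta. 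By the $\mathbb C^*$-equivariance of $\iota$ (weights $k=(k_1,k_2,k_3)$, $k_i\ge2$), $x_i\circ\iota_{\zeta(t)}\circ H_{(\varepsilon,\zeta(t)/\varepsilon^2)}=\varepsilon^{k_i}(x_i\circ\iota_{\zeta(t)/\varepsilon^2})$; as the norm of a $1$-form rescales by $\varepsilon^{-1}$ under $g_{\zeta(t)}\mapsto\varepsilon^2 g_{\zeta(t)/\varepsilon^2}$, this gives
\begin{equation*}
    |d(x_i\circ\iota_{\zeta(t)})|_{h_t}\bigl(H_{(\varepsilon,\zeta(t)/\varepsilon^2)}(y)\bigr)=\varepsilon^{k_i-1}\,|d(x_i\circ\iota_{\zeta(t)/\varepsilon^2})|_{g_{\zeta(t)/\varepsilon^2}}(y).
\end{equation*}
A scaling/compactness argument as in the proof of Lemma \ref{lem:lemma 3} yields $|d(x_i\circ\iota_{\zeta(t)/\varepsilon^2})|_{g_{\zeta(t)/\varepsilon^2}}=O((1+r)^{k_i-1})$ uniformly, so on $\{r\le b/\varepsilon\}$ the right-hand side is $O(\varepsilon^{k_i-1}(b/\varepsilon)^{k_i-1})=O(b^{k_i-1})=O(\varepsilon^{\beta(k_i-1)})$. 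Since $k_i\ge2$ and $\beta>\frac{1}{2}$, this is $O(\varepsilon^{\beta})$, hence $|d(x_i\circ\iota_{\zeta(t)})|_{h_t}=O(1)$ on $U_{r\le b}$ as well. Combining the pieces, $|\sigma_t|_{h_t}\le C$ on all of $X$ (indeed $|\sigma_t|_{h_t}=O(\varepsilon^{2\beta})=O(t^{p\beta})\to0$ near $x_0$).

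I expect the main obstacle to be the region $r\le b$. There $h_t$ degenerates like a shrinking ALE bubble, so a priori the norm of a $1$-form, hence of $\sigma_t$, blows up; the estimate survives only because the weighted homogeneity $k_i\ge2$ of the Kleinian embedding supplies the compensating factor $\varepsilon^{k_i}$, and making this quantitative balance precise — together with justifying the uniform growth bound $|d(x_i\circ\iota_{\zeta(t)/\varepsilon^2})|_{g_{\zeta(t)/\varepsilon^2}}=O((1+r)^{k_i-1})$ on the unbounded region $\{r\le b/\varepsilon\}$ of the nearly-fixed ALE space $Y_{\dot{\zeta}}$ — is the heart of the argument. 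The other regions only require identifying which fixed metric $h_t$ is comparable to and using that $x_i\circ\iota$ vanishes to order $k_i\ge2$ at $x_0$.
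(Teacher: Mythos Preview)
Your proof is correct and follows essentially the same strategy as the paper's: reduce to bounding the differentials $|d(x_i\circ\iota_{\zeta(t)})|_{h_t}$ of the ambient coordinate functions, then exploit the $\mathbb{C}^*$-equivariance of the embedding $\iota$ (with weights $k_i\ge 2$) together with the rescaling $H_{(\varepsilon,\zeta(t)/\varepsilon^2)}$ to transfer the problem to the nearly-fixed ALE space $Y_{\zeta(t)/\varepsilon^2}\to Y_{\dot\zeta}$, where a compactness argument applies. The paper organises this somewhat differently --- it works pointwise in $g_{\zeta(t)}$-normal coordinates and phrases everything in terms of push-forwards of unit tangent vectors rather than pull-backs of $1$-forms, and it does not split into the five annular regions but uses implicitly that $h_t$ is uniformly comparable to $g_{\zeta(t)}$ on all of $U_{r<1}$ --- but the substantive content, and in particular the mechanism by which the factor $\varepsilon^{k_i-1}$ compensates for the shrinking bubble, is identical.
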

\begin{proof}
    It is sufficient to estimate $|\sigma_t|_{h_t}$ around the singularity. Recall that $\sigma$ is smooth form on $\mathcal{X}$. Hence, by definition of a smooth form on (possibly) singular analytic space, we have an embedding $i : V \hookrightarrow \mathbb{C}^m$ of an open neighbourhood $V \subset \mathcal{X}$ of $x_0$ such that $\sigma$ extends to a smooth form on $\mathbb{C}^m$. Furthermore, the embedding $i : V \hookrightarrow \mathbb{C}^m$ factors through the embedding $\iota_\zeta : \mathcal{Y}_\zeta \to \mathbb{C}^3 \times \Delta_d$ i.e. there exists a regular map $\phi : V' \to \mathbb{C}^m$ defined on a neighbourhood $V' := \iota_\zeta(V) \subset \mathbb{C}^3 \times \Delta$ of the origin such that $i = \phi \circ \iota_\zeta$. Therefore, by considering the pullback of $\sigma$ by $\phi$, we may assume $\sigma$ is defined on $\mathbb{C}^3 \times \Delta_d$.
    Hence we consider an embedding $Y_{\zeta(t)} \hookrightarrow \mathbb{C}^3$ and a smooth form $\sigma_t$ on $\mathbb{C}^3$ (parameterized by $t$ smoothly). 
    For a point $y \in Y_{\zeta(t), r < 1}$, take a $g_{\zeta(t)}$-normal coordinate $(V_y, (y_1, y_2))$ centered at $y$ i.e. $y = (0,0)$ and $g_{\zeta(t)} = (\delta_{i\overline{j}} + O(|y|^2)) dy_i d\overline{y}_j$ under this coordinate. 
    Let
    \begin{equation}
        F = (f_1, f_2, f_3) : V_y \to \mathbb{C}^3
    \end{equation} 
    be the restriction of $\iota_{\zeta(t)}$ on $V_y$. Then, for a two form $\sigma = \sigma_{i\overline{j}} dz_i \wedge d\overline{z}_j$ on $\mathbb{C}^3$, its norm $|\sigma |_{g_{\zeta(t)}}$ at $y$ is equal to 
    \begin{equation}
        \begin{aligned}
            |\sigma|_{g_{\zeta(t)}} &= |F^* \sigma|_{g_{\zeta(t)}} \\
            &= \sum_{i,j,k,l} \sigma_{i\overline{j}} \frac{\partial f_i}{\partial y_k} \overline{\frac{\partial f_j}{\partial y_l}}.
        \end{aligned}
    \end{equation}
    Therefore, as $\sigma_{i\overline{j}}$ are smooth functions on $\mathbb{C}^3$, it is sufficient to estimate the quantity
    \begin{equation}
        \left| \frac{\partial f_i}{\partial y_k} \right|.
    \end{equation}
    Recall that $F = (f_1, f_2, f_3)$ is the restriction of $\iota$, hence we have
    \begin{equation}
        \begin{aligned}
            F_* \frac{\partial}{\partial y_i} &= \sum_{k} \frac{\partial f_k}{ \partial y_i} \frac{\partial}{\partial z_k}, \\
            \sum_k \left| \frac{\partial f_k}{\partial y_i} \right|^2  &=
            \left| (\iota_{\zeta(t)})_* \frac{\partial}{\partial y_i} \right|^2_{g_{\mathbb{C}^3}} \\
            &= \left| \frac{\partial }{\partial y_i} \right|_{\iota_{\zeta(t)}^*g_{\mathbb{C}^3}}.
        \end{aligned}
    \end{equation}
     Let $y' \in Y_{\zeta(t)/\varepsilon^2}$ be the image of $y$ by the dilation $H_{(1/\varepsilon, \zeta(t))} : Y_{\zeta(t)} \to Y_{\zeta(t)/\varepsilon^2}$ i.e. $y' = H_{(1/\varepsilon, \zeta(t))}(y)$. Then, as $H^*_{(\varepsilon, \zeta(t)/\varepsilon^2)} g_{\zeta(t)} = \varepsilon^2 g_{\zeta(t)/\varepsilon^2}$, we see that 
\begin{equation}
\begin{aligned}
     |(H_{(\varepsilon, \zeta(t)/\varepsilon^2)})_* (v) |_{g_{\zeta(t)}} &= |v|_{H^*_{(\varepsilon, \zeta(t)/\varepsilon^2)}g_{\zeta(t)}} \\
     &= |v|_{\varepsilon^2 g_{\zeta(t)/\varepsilon^2}} = \varepsilon,
\end{aligned}
\end{equation}
for any unit vector $v \in T_{y'}Y_{\zeta(t)/\varepsilon^2} $. In particular, for any unit vector $u \in T_y Y_{\zeta(t)}$, there exits a unit vector $v \in T_{y'} Y_{\zeta(t)/\varepsilon^2}$ such that $(H_{(\varepsilon, \zeta(t)/\varepsilon^2)})_* (v/\varepsilon) = u$. Then, for a unit vector $v \in T_{y'} Y_{\zeta(t)/\varepsilon^2}$ such that $(H_{(\varepsilon,\zeta(t))})_* (v/\varepsilon) = \frac{\partial}{\partial y_i}$, we obtain 
\begin{equation}
    \left| \frac{\partial}{\partial y_i} \right|_{\iota^*_{\zeta(t)} g_{\mathbb{C}^3}} = 
    \frac{1}{\varepsilon} | v |_{H^*_{(\varepsilon, \zeta(t)/\varepsilon^2)} \iota^*_{\zeta(t)} g_{\mathbb{C}^3}}.
\end{equation}
    Recall that the embedding $\iota_\zeta : Y_\zeta \to \mathbb{C}^3$ is equivariant i.e. $\varepsilon\cdot \circ \iota_{\zeta/\varepsilon^2} = \iota_{\zeta} \circ H_{(\varepsilon, \zeta/\varepsilon^2)}$, where $\varepsilon\cdot$ is the weighted $\mathbb{C}^*$-action on $\mathbb{C}^3$ (see proof of lemma (\ref{lem:lemma 3})). Then, 
    \begin{equation}
    \begin{aligned}
        H^*_{(\varepsilon, \zeta(t)/\varepsilon^2)} \iota^*_{\zeta(t)} g_{\mathbb{C}^3} &=
        \iota_{\zeta(t)/\varepsilon^2}^* (\varepsilon\cdot)^* g_{\mathbb{C}^3} \\
        &= \left( \sum \varepsilon^{2k_i} dz_i d\overline{z}_i \right)|_{Y_{\zeta(t)/\varepsilon^2}}.
        \end{aligned}
    \end{equation}
    Therefore, if we put $(\iota_{\zeta(t)/\varepsilon^2})_*(v) = \sum v_i \frac{\partial}{\partial z_i}$, it is sufficient to show that $\sum \varepsilon^{2k_i - 2} |v_i|^2$ is uniformly bounded. By the equivariance of $\iota_\zeta$ again, if we put $(\iota_\zeta)_*\frac{\partial}{\partial y_i} = \sum_j (y_i)_j \frac{\partial}{\partial z_j}$, the following holds:
    \begin{equation}
    \begin{aligned}
        \sum_j (y_i)_j \frac{\partial}{\partial z_j} &= \frac{1}{\varepsilon}(\varepsilon\cdot)_* \left( \sum v_j \frac{\partial}{\partial z_j} \right) \\
        &= \sum_j \varepsilon^{k_j-1} v_j \frac{\partial}{\partial z_j}.
        \end{aligned}
    \end{equation}
    Now, consider the equality 
    \begin{equation}\label{eq:equality}
        \sum \varepsilon^{2k_j - 2} |v_j|^2 = \sum |(y_i)_j|^2.
    \end{equation}
    The right hand side of (\ref{eq:equality}) is uniformly bounded as $y$ runs $Y_{\zeta(t), r<1}$ for each (fixed) $t$. On the other hand, since $g_{\zeta(t)/\varepsilon^2} \to g_{\dot{\zeta}}$, there exists a constant $C>0$ independent of $t$ (possibly depends on $r = r(y')$) such that 
    \begin{equation}
        C^{-1} < |v|_{g_{\dot{\zeta}}} < C
    \end{equation}
    for any unit vector $v \in T_{y'}Y_{\zeta(t)/\varepsilon^2}$ with respect to $g_{\zeta(t)/\varepsilon^2}$ for any sufficiently small $t$.
    Then, 
\begin{equation*}
     \{ v \mid v \in T_{y'}Y_{\zeta(t)/\varepsilon^2}, |v|_{g_{\zeta(t)/\varepsilon^2}} =1, r(y') = r\} \subset  \{v\mid v \in T_{y'}Y_{\zeta(t)/\varepsilon^2}, C^{-1} < |v|_{g_{\dot{\zeta}}} <C , r(y') = r\}
\end{equation*}
    for any $t$. In particular, 
    \begin{equation*}
      \bigcup_{t << 1}  \{ (\iota_{\zeta(t)/\varepsilon^2})_* (v) \mid v \in T_{y'}Y_{\zeta(t)/\varepsilon^2}, |v|_{g_{\zeta(t)/\varepsilon^2}} =1, r(y') = r\} 
    \end{equation*}
    is contained in a relatively compact subset 
    \begin{equation*}
        \{(\iota_{\zeta(t)/\varepsilon^2})_* (v) \mid v \in T_{y'}Y_{\zeta(t)/\varepsilon^2}, C^{-1} < |v|_{g_{\dot{\zeta}}} <C , r(y') = r\}
    \end{equation*}
    of $\mathbb{C}^3$ for each $r$ (note that $\iota_{\zeta(t)/\varepsilon^2}$ defines a smooth family of embedding $\{\iota_{\dot{\zeta} + O(\varepsilon)} : Y_{\dot{\zeta} + O(\varepsilon)} \mathbb{C}^3\}_{0 \leq t \leq 1}$, hence an image of a relatively compact subset is a relatively compact). Then the left hand side of (\ref{eq:equality}) is bounded from above uniformly with respect to $t$ (recall that $k_i \geq2$). Then in particular, $\left| \frac{\partial f_i}{\partial y_k} \right|$ is uniformly bounded. Therefore, $|\sigma_t|$ is uniformly bounded as desired.
\end{proof}
   
    \begin{lem}
        $\int_{X_t} \sigma_t\wedge \tau = O(\varepsilon^{2-\beta(\delta+2)})$.
    \end{lem}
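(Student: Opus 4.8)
The plan is to estimate $\tau$ pointwise in the weighted sense dictated by $\rho_t$ and then integrate it against the uniformly bounded form $\sigma_t$. Recall from (\ref{def:tau}) that $\tau$ is the $(1,1)$-part of the Bott--Chern transgression built, along a path $k_s$ of Hermitian metrics on $TX_t$, out of the matrices $N_s=\dot k_s k_s^{-1}$ and the Chern curvatures $R_s$ of $k_s$; by additivity of Bott--Chern transgressions it suffices to treat separately the segment from $h_t$ to $g_t$ (straight line $k_s=h_t+s(g_t-h_t)$) and the segment from $g_t$ to $\hat g_t=g_t+dd^c_t\phi_t$. By Proposition \ref{prop:BR}, $\|h_t-g_t\|_{C^{2,\alpha}_{\delta,t}}=O(\varepsilon^2)$, and by Theorem \ref{thm:BR}, $\|\phi_t\|_{C^{4,\alpha}_{\delta+2,t}}=O(\varepsilon^{2-\beta(\delta+2)})$; since $\beta=\tfrac{2}{2-\delta}$ one has $2-\beta(\delta+2)=\tfrac{-4\delta}{2-\delta}\in(0,2)$, so the $\phi_t$-segment will produce the dominant term.

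First I would record the pointwise bounds feeding into (\ref{def:tau}). On the (fixed, compact) support of $\sigma$, in the Biquard--Rollin weighted norms, $|\dot k_s|_{h_t}\le C\varepsilon^2\rho_t^{\delta}$ on the first segment and $|\dot k_s|_{h_t}=|dd^c_t\phi_t|_{h_t}\le C\varepsilon^{2-\beta(\delta+2)}\rho_t^{\delta}$ on the second, with likewise $|\nabla^2\dot k_s|_{h_t}\le C\varepsilon^{2-\beta(\delta+2)}\rho_t^{\delta-2}$. For the curvature I would use $|R(h_t)|_{h_t}\le C\rho_t^{-2}$: on $\{r\le b\}$ the metric $h_t=g_{\zeta(t)}$ is a Ricci--flat ALE metric rescaled by $\varepsilon^2$, whose curvature is $O(\varepsilon^{-2})=O(\rho_t^{-2})$ (and decays faster toward $r=b$ by the ALE asymptotics), while in the gluing annuli $b\le r\le 4b$ and in $\{4b\le r\le1\}$ the computations in Lemmas \ref{lem:b2b}, \ref{lem:2b4b} and \ref{lem:4b1} give $R(h_t)=R(g_0)+O(r^0)$, hence $\ll\rho_t^{-2}$ there. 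Because $\varepsilon^{2-\beta(\delta+2)}\rho_t^{\delta}\le\varepsilon^{\,-\delta(2+\delta)/(2-\delta)}\to0$ (using $\rho_t\ge\varepsilon$ and $-\delta(2+\delta)/(2-\delta)>0$ for $\delta\in(-2,0)$), all the $k_s$ are uniformly equivalent to $h_t$, so $|N_s|_{h_t}\le C\varepsilon^{2-\beta(\delta+2)}\rho_t^{\delta}$ and $|R_s|_{h_t}\le C\big(\rho_t^{-2}+|\nabla^2\dot k_s|_{h_t}+|\nabla\dot k_s|_{h_t}^2\big)\le C\rho_t^{-2}$. Substituting into (\ref{def:tau}) yields
\begin{equation*}
    |\tau|_{h_t}\le C\,\varepsilon^{2-\beta(\delta+2)}\,\rho_t^{\delta-2}.
\end{equation*}

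Then, using $|\sigma_t|_{h_t}\le C$ from Lemma \ref{lem:sigma}, I would estimate
\begin{equation*}
    \left|\int_{X_t}\sigma_t\wedge\tau\right|\le C\,\varepsilon^{2-\beta(\delta+2)}\int_{U}\rho_t^{\delta-2}\,dV_{h_t},
\end{equation*}
and show that $\int_{U}\rho_t^{\delta-2}\,dV_{h_t}=O(1)$. On $\{r\le\varepsilon\}$ the metric $h_t$ is $\varepsilon^2$ times a fixed metric, so $\mathrm{Vol}_{h_t}(\{r\le\varepsilon\})=O(\varepsilon^4)$ and the contribution is $O(\varepsilon^{\delta-2}\cdot\varepsilon^{4})=O(\varepsilon^{\delta+2})=o(1)$; on $\{\varepsilon\le r\le1\}$ one has $\rho_t=r$ and $dV_{h_t}$ is comparable to the Euclidean volume $\sim r^{3}\,dr\,d\Sigma$ (this is where the $g_0$-adaptedness of the coordinates and the ALE asymptotics $g_{\zeta(t)}=g^{\mathrm{Euc}}+O(r^{-4})$ enter), so the contribution is $\lesssim\int_0^1 r^{\delta+1}\,dr<\infty$ since $\delta>-2$. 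Hence $\int_{X_t}\sigma_t\wedge\tau=O(\varepsilon^{2-\beta(\delta+2)})$, as claimed.

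The hard part will be the uniform curvature control: establishing $|R(h_t)|_{h_t}\le C\rho_t^{-2}$ through the three gluing annuli, and then verifying that neither the K\"ahler correction $g_t-h_t$ nor the cscK correction $dd^c_t\phi_t$ spoils this bound when one passes to the interpolating metrics $k_s$ and their Chern curvatures $R_s$. This rests on the single scaling inequality $\varepsilon^{2-\beta(\delta+2)}\rho_t^{\delta}\to0$, which is what forces $\delta$ to be taken close to $-2$ (equivalently $\beta$ close to $\tfrac12$), together with careful bookkeeping of which derivative in (\ref{def:tau}) carries which power of $\rho_t$ in the weighted H\"older norms. Everything else reduces to integrating negative powers of $\rho_t$ against the essentially Euclidean volume form, which is routine.
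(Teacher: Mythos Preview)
Your proposal is correct and follows essentially the same strategy as the paper: bound $|N_s|$ and $|R_s|$ pointwise using the weighted estimates from Proposition~\ref{prop:BR} and Theorem~\ref{thm:BR}, feed these into the explicit Bott--Chern transgression (\ref{def:tau}), then integrate against $\sigma_t$ using Lemma~\ref{lem:sigma}. The only difference is presentational: the paper carries out a finer region-by-region bookkeeping (four zones $r\le\varepsilon$, $\varepsilon\le r\le b$, $b\le r\le 1$, $r\ge 1$) with sharper intermediate bounds on $|R(h_t)|$ and $|\tau|$ in each, whereas you compress everything into the single inequality $|R(h_t)|_{h_t}\le C\rho_t^{-2}$ and the resulting uniform bound $|\tau|_{h_t}\le C\varepsilon^{2-\beta(\delta+2)}\rho_t^{\delta-2}$; your two-segment decomposition of the path (first $h_t\to g_t$, then $g_t\to\hat g_t$) is also a mild reorganization of the same input. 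Both routes land on the same integral $\int_U\rho_t^{\delta-2}\,dV_{h_t}=O(1)$ and the same conclusion.
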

    \begin{proof}
        
    By lemma \ref{lem:sigma}, it is sufficient to estimate $\int_{X_t} |\tau|_{h_t} \mathrm{Vol}_{h_t}$.
    By Biquard-Rollin's a priori estimate, we have
    \begin{align}
        \mathrm{sup}| \rho^{j-\delta} \nabla^{j}(g_t - h_t)|_{h_t} = O (\varepsilon^{2-\beta(\delta + 2)})
    \end{align}
    for $j=0,1,2$. More explicitly, 
    \begin{equation}
    |\nabla^j(g_t-h_t)|_{h_t} = \left\{
        \begin{array}{ccc}
              O(\varepsilon^{2-\beta(\delta + 2)}) & 1 \leq r \\
              r^{\delta - j} O(\varepsilon^{2-\beta(\delta + 2)}) & \varepsilon \leq r \leq 1\\
             O(\varepsilon^{(1-\beta)(2+\delta) - j}) & r \leq \varepsilon
        \end{array}\right. .
    \end{equation}
    Therefore, we have 
    \begin{equation}
        |\dot{k}_t|_{h_t} = |(h_t - g_t)|_{h_t}=\left\{ 
         \begin{array}{ccc}
              O(\varepsilon^{2-\beta(\delta + 2)}) & 1 \leq r \\
             r^\delta O(\varepsilon^{2-\beta(\delta + 2)}) & \varepsilon \leq r \leq 1\\
             O(\varepsilon^{(1-\beta)(2+\delta) }) & r \leq \varepsilon
        \end{array}\right.,\\
    \end{equation}
    \begin{equation}
        |k^{-1}_t - h_t^{-1}|_{h_t} = \left\{
        \begin{array}{ccc}
             O(\varepsilon^{2-\beta(\delta + 2)}) & 1 \leq r \\
             r^\delta O(\varepsilon^{2-\beta(\delta + 2)}) & \varepsilon \leq r \leq 1\\
             O(\varepsilon^{(1-\beta)(2+\delta) }) & r \leq \varepsilon
        \end{array}\right..
    \end{equation}
    Hence we obtain the following estimate:
    \begin{equation}
        |N_s|_{h_t} =\left\{ 
         \begin{array}{ccc}
              O(\varepsilon^{2-\beta(\delta + 2)}) & 1 \leq r \\
             r^\delta O(\varepsilon^{2-\beta(\delta + 2)}) & \varepsilon \leq r \leq 1\\
             O(\varepsilon^{(1-\beta)(2+\delta) }) & r \leq \varepsilon
        \end{array}\right..
    \end{equation}  
    For the curvature $R_s$, as we have 
    \begin{equation}
      |\nabla^j (k_s) - \nabla^j(h_t)|_{h_t} =  \left\{
        \begin{array}{ccc}
             O(\varepsilon^{2-\beta(\delta + 2)}) & 1 \leq r \\
             r^{\delta - j} O(\varepsilon^{2-\beta(\delta + 2)}) & \varepsilon \leq r \leq 1\\
             O(\varepsilon^{(1-\beta)(2+\delta) - j}) & r \leq \varepsilon
        \end{array}\right. ,
    \end{equation}
    we have the following:
    \begin{equation}
        |(R_s) - R(h_t)|_{h_t} = \left\{
        \begin{array}{ccc}
             O(\varepsilon^{2-\beta(\delta + 2)}) & 1 \leq r \\
             r^{\delta - 2} O(\varepsilon^{2-\beta(\delta + 2)}) & \varepsilon \leq r \leq 1\\
             O(\varepsilon^{\delta -\beta(\delta+2)}) & r \leq \varepsilon
        \end{array}\right..
    \end{equation}
    By lemma \ref{lem:Outside}, (\ref{eq:curve.on4b1}), and (\ref{eq:curv.on2b4b}), we have the following:
    \begin{equation}
       |R(h_t) - R(g_0)|_{h_t} = \left\{
        \begin{array}{cccc}
          O(t^d) &  1 \leq r \\
          O(\varepsilon^4 r^{-6})  & 4b \leq r \leq 1 \\
          O(r^0)   &b \leq r \leq 4b \\
        \end{array} \right. .
    \end{equation}
    Note that there exists a constant $C >0$ independent of $t$ such that $C h_t \leq g_0 \leq C^{-1} h_t$ (see (\ref{eq:estimate for h}) and corresponding estimates in the above lemmas) on $b \leq r$. Then, estimates with respect to $| \cdot |_{h_t}$ are equivalent to those with respect to $|\cdot|_{g_0}$.
    Therefore, 
    \begin{equation}
        |\tau|_{h_t} = \left\{
        \begin{array}{cccc}
         O(\varepsilon^{2-\beta(\delta + 2)})    & 1\leq r  \\
         r^{2\delta-2} O(\varepsilon^{4 - 2\beta(\delta + 2)}) & b \leq r \leq 1 \\
         r^\delta |R(g_{\zeta(t)})|_{g_{\zeta(t)}} O (\varepsilon^{2-\beta(\delta + 2)}) 
         & \varepsilon \leq r \leq b\\
          |R(g_{\zeta(t)})|_{g_{\zeta(t)}}O(\varepsilon^{(1-\beta)(\delta +2)} )& r \le \varepsilon
        \end{array}
        \right..
    \end{equation}
    As $|R(g_{\zeta/\varepsilon^2})|_{g_{\zeta(t)/\varepsilon^2}}$ is bounded on $r \leq 1$ and has asymptotic $O(r^{-6})$ as $r \to \infty$, $|R(g_{\zeta(t)})|_{g_{\zeta(t)}}$ can be estimated as follows:
    \begin{equation}
        |R(g_{\zeta(t)})|_{g_{\zeta(t)}} = \left\{ 
        \begin{array}{cc}
         O(\varepsilon^{4}r^{-6})    &  \varepsilon \leq r \\ 
         O(\varepsilon^{-2})  & r \leq \varepsilon
        \end{array}
        \right. .
    \end{equation}
     Therefore, we have 
     \begin{equation}
        |\tau|_{h_t} = \left\{
        \begin{array}{cccc}
         O(\varepsilon^{2-\beta(\delta + 2)})    & 1\leq r  \\
         r^{2\delta-2} O(\varepsilon^{4 - 2\beta(\delta + 2)}) & b \leq r \leq 1 \\
         r^{\delta -6} O (\varepsilon^{6-\beta(\delta + 2)}) 
         & \varepsilon \leq r \leq b\\
          O(\varepsilon^{(1-\beta)(\delta +2) -2} )& r \le \varepsilon
        \end{array}
        \right..
    \end{equation}
    Finally, by integrating them, we obtain
    \begin{equation}
     \int_{X_t} \omega \wedge \tau = O(\varepsilon^{2-\beta(\delta+2)}),
     \end{equation}
     which is what we needed.
     \end{proof}
     \begin{proof}[proof of the Theorem \ref{thn:main} ]
     By summing up above lemmas, we see
     \begin{equation}
         F(t) =  \int_{X_0} f_0 c_2(g_0) + f(0)e_{orb}(X_{\dot{\zeta}}) + O(t).
     \end{equation}
     In particular, $F(0)=\int_{X_0} f_0 c_2(g_0) + f(0)e_{orb}(X_{\dot{\zeta}})$. Then, it follows that $F(t) - F(0) = O(t) $ from the above estimate. Then, $F$ is H\"older continuous with exponent $1$ on $\Delta_d \cap [0,1)$, therefore, H\"{o}lder continuous with exponent at least $\frac{1}{d}$ on $\Delta\cap [0,1)$.  
\end{proof}

\section{Polarized K3 surface case}\label{sec:K3}
In this section, we discuss a degeneration of \Kahler-Einstein metrics on a non-degenerate family of  polarized K3 surfaces. In particular, we extend the main theorem to a function defined on the whole disc.

For a polarized family of Calabi-Yau manifolds (note that K3 surface is the two dimensional Calabi-Yau manifold), the following convergence theorem is known \cite{RZ1} (see also \cite{RZ2}).
\begin{thm}[Theorem 1.4. \cite{RZ1} and its proof]\label{thm:RZ}

Let $X_0$ be an $n$ dimensional Calabi–Yau variety. Assume that $X_0$ admits a smoothing $\mathcal{X} \to \Delta$ such that $\mathcal{X}$ admits an ample line bundle $\mathcal{L}$ and the relative canonical bundle is trivial, i.e.,$K_{\mathcal{X}/\Delta }\cong \mathcal{O}_\mathcal{X}$. 
Let $\hat{g}_t$ denote the unique Ricci-flat K\"ahler metric with K\"ahler form $\hat{\omega}_t \in c_1(\mathcal{L})|_{X_t}$ and let $\{\phi_t\}$ be the solution of the equation of Calabi--Yau metric
\begin{equation}
    \left\{ \begin{array}{cc}
         &(\omega_t + dd^c \phi_t)^n =  (-1)^{\frac{n^2}{2}} S_t \Omega_t \wedge \overline{\Omega}_t\\
         & \sup_{X_t} \phi_t = 0,
    \end{array}\right.
\end{equation}
where $\Omega_t$ is a holomorphic $n$-form on $X_t$, $S_t$ is a constant depending only on $t$ and $\omega_t = \frac{1}{m}\omega_{\mathrm{FS}} |_{X_t} $ is the restriction of the Fubini--Study metric under an embedding $\mathcal{L}^m : \mathcal{X} \to \mathbb{P}^N \times \Delta$. Then, $\phi_t \to \phi_0$ on any compact subset $K \subset X_0^{reg}$ in the $C^\infty$ sense.
\end{thm}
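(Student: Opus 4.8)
The plan is to read this as a stability statement for the complex Monge--Amp\`ere equation along a degenerating family and to proceed in three stages: a uniform zeroth-order a priori estimate valid up to $t=0$, uniform interior higher-order estimates on compact subsets of $X_0^{reg}$, and a compactness plus uniqueness argument identifying the limit with $\phi_0$. As a preliminary normalization, since $K_{\mathcal{X}/\Delta}\cong\mathcal{O}_{\mathcal{X}}$ a nowhere-vanishing relative holomorphic $n$-form $\Omega$ on $\mathcal{X}$ restricts to $\Omega_t$ on each fibre, and $S_t$ is chosen so that $\int_{X_t}(-1)^{n^2/2}S_t\,\Omega_t\wedge\overline{\Omega}_t=\int_{X_t}\omega_t^n$ is constant in $t$; together with $\sup_{X_t}\phi_t=0$ this normalizes the equation.

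The first step I would carry out is a uniform bound $\|\phi_t\|_{L^\infty(X_t)}\le C$ with $C$ independent of $t$. This is where the hypotheses on $X_0$ are used: since $X_0$ is Calabi--Yau, hence klt, the measure $\mu_0=(-1)^{n^2/2}S_0\,\Omega_0\wedge\overline{\Omega}_0$ has density in $L^p$ for some $p>1$ with respect to a fixed smooth volume form near the singular points, and one must check this $L^p$ bound is uniform for $t$ near $0$ (using the non-degeneracy of the relative form $\Omega$ and, if necessary, passing to a semistable model of $\mathcal{X}\to\Delta$). Combined with a uniform Sobolev/capacity control on the family — available from the uniform bounds on volumes and diameters — Ko\l odziej's and Eyssidieux--Guedj--Zeriahi's $L^\infty$ estimates for the Monge--Amp\`ere equation give the claimed uniform bound, hence $-C\le\phi_t\le0$ on each $X_t$.

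Next I would prove local higher-order estimates on $X_0^{reg}$. Given a relatively compact $K$ in $X_0^{reg}$, it extends to an open set $\mathcal{K}\subset\mathcal{X}$ on which $\mathcal{X}\to\Delta$ is a smooth fibration near $t=0$, so there the reference forms $\omega_t$ and the densities of $\mu_t$ are smooth and uniformly non-degenerate. The Aubin--Yau second-order estimate, applied on a slightly larger set using only the $L^\infty$ bound above and the bounded geometry of $(\mathcal{K},\omega_t)$, yields $0<\operatorname{tr}_{\omega_t}(\omega_t+dd^c\phi_t)\le C$ on $K$; hence $\omega_t+dd^c\phi_t$ is uniformly equivalent to $\omega_t$ on $K$, the equation is uniformly elliptic there, and Evans--Krylov followed by Schauder bootstrapping give uniform $C^k(K)$ bounds for every $k$, with constants independent of $t$.

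To conclude, by Arzel\`a--Ascoli and a diagonal argument over an exhaustion of $X_0^{reg}$, any sequence $t_j\to0$ has a subsequence along which $\phi_{t_j}\to\psi$ in $C^\infty_{loc}(X_0^{reg})$; passing to the limit in the equation gives $(\omega_0+dd^c\psi)^n=(-1)^{n^2/2}S_0\,\Omega_0\wedge\overline{\Omega}_0$ on $X_0^{reg}$, with $\psi$ bounded and $\sup\psi=0$. Since $X_0$ is normal, $X_0^{reg}$ has complement of codimension at least two, so the bounded $\omega_0$-plurisubharmonic function $\psi$ extends across the singular set and, by the klt condition, solves the Monge--Amp\`ere equation on all of $X_0$; uniqueness of the bounded solution then forces $\psi=\phi_0$. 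As the limit is independent of the subsequence, $\phi_t\to\phi_0$ in $C^\infty_{loc}(X_0^{reg})$. The step I expect to be the main obstacle is the uniform zeroth-order estimate as $t\to0$: controlling the $L^p$-densities of $\mu_t$ and the capacity/volume comparison uniformly across the singular central fibre, where $\mathcal{X}\to\Delta$ fails to be a smooth submersion, is the delicate point, and is presumably what is meant by citing ``its proof'' rather than only the statement.
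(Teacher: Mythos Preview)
The paper does not give its own proof of this theorem: it is quoted verbatim as a result of Rong--Zhang \cite{RZ1} (``Theorem 1.4 and its proof''), and is used only as a black box in Lemma~\ref{lem:potential} to ensure that the Ricci-flat potentials $\hat{\phi}_t$ converge smoothly on the annulus $\frac{1}{2}<r<1$. So there is nothing in this paper to compare your argument against.

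That said, your outline is a faithful sketch of the strategy actually used in \cite{RZ1}: a uniform $C^0$ estimate for $\phi_t$ valid up to $t=0$, interior Aubin--Yau/Evans--Krylov estimates on compact subsets of $X_0^{\mathrm{reg}}$ where the family is a smooth submersion, and then Arzel\`a--Ascoli plus uniqueness of the bounded solution on the klt central fibre. Your identification of the delicate point --- uniform control of the $L^p$ density of $(-1)^{n^2/2}S_t\,\Omega_t\wedge\overline{\Omega}_t$ and of the volume/Sobolev constants across the singular fibre --- is exactly right; in \cite{RZ1} this is handled via a semistable reduction and an explicit local analysis of $\Omega_t\wedge\overline{\Omega}_t$ near the singularities, together with a uniform Sobolev inequality coming from a uniform diameter bound. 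One small correction: the uniqueness step at the end should invoke uniqueness of the \emph{singular} K\"ahler--Einstein potential on the klt variety $X_0$ (as in Eyssidieux--Guedj--Zeriahi), not just uniqueness of a smooth solution, since $\psi$ is a priori only a bounded $\omega_0$-psh function.
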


Let $\mathcal{X} \to \Delta$ be a flat family of K3 surfaces. Assume that the fibre $X_t$ on $t \in \Delta$ is smooth for $t \neq 0$ and $X_0$ has ADE singularities. Additionally, we assume that the central fibre $X_0$ has exactly one singularity $x_0$ and the family admits the simultaneous minimal resolution $\tilde{\mathcal{X}} \to \Delta$ for simplicity. Take an ample line bundle $\mathcal{L}$ on $\mathcal{X}$ and let $\hat{g}_t$ be the Ricci-flat \Kahler\ metric with \Kahler\ form $\hat{\omega}_t \in c_1(\mathcal{L})|_{X_t}$. Fix nowhere vanishing holomorphic 2-forms $\Omega_t$ on $X_t$ so that $\hat{\omega}_t^2 = \Omega_t \wedge \overline{\Omega}_t$. We may take an open neighbourhood $\mathcal{U}$ of $x_0 \in \mathcal{X}$ as in section \ref{sec:BR}. However, we may take $\mathcal{U}$ (sufficiently small, if necessary) with better properties.

 \begin{lem}\label{lem:potential}
     Let $(\mathcal{X}, \mathcal{L}) \to \Delta$, $x_0 \in X_0$ and $\hat{g}_t$ be as above. Then there exists an open neighbourhood $\mathcal{U} \subset \mathcal{X}$ of $x_0$ such that
     \begin{itemize}
         \item $\mathcal{U} \to \Delta$ is a flat deformation of the singularity $x_0 \in X_0$ described by $\zeta(t) = (0, \sqrt{2}[\Omega_t]) : \Delta \to \mathfrak{h}_\mathbb{R} \oplus \mathfrak{h}_\mathbb{C}$,
         \item $U_0 \subset X_0$ admits a $\hat{g}_0$-adopted uniformization $\mathbb{C}^2 \supset B(0;1) \to U_0$,
         \item  the simultaneous resolution $\tilde{\mathcal{U}} \to \Delta$ induced by $\tilde{\mathcal{X}} \to \mathcal{X}$ is diffeomorphic to $U \times \Delta$ for $U = \{r < 1 \} \subset X$,
         \item $\hat{g}_t$ has a potential $r^2 + \hat{\phi}_t$ on $U_t \subset X_t$ and $\{ \hat{\phi}_t\}$ converges smoothly to $\hat{\phi}_0$ on any compact subset of  $\frac{1}{2} < r < 1$.
     \end{itemize}
 \end{lem}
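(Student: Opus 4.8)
The strategy is to pin $\mathcal U$ down enough that each of the four items reduces to a standard fact. First I would shrink $\mathcal U$ (and $\Delta$) so that: $\mathcal U$ is Stein, $\mathcal L|_{\mathcal U}$ is holomorphically trivial, the image of each $X_t\cap\mathcal U$ under the Kodaira embedding $\mathcal L^{\otimes m}$ lies in a single affine chart of $\mathbb P^N$, and $K_{\mathcal X/\Delta}$ is trivial so that a relative nowhere-vanishing $2$-form $\Omega$ with $\Omega|_{X_t}=\Omega_t$ exists. Flatness makes $\mathcal U\to\Delta$ a one-parameter deformation of $(x_0\in X_0)\cong(0\in\mathbb C^2/\Gamma)$; by the Brieskorn--Slodowy description of the versal deformation and the standing assumption that the simultaneous minimal resolution already exists over $\Delta$, this deformation is induced by a holomorphic $\zeta_c:\Delta\to\mathfrak h_{\mathbb C}$ and $\widetilde{\mathcal U}$ is an open piece of Kronheimer's family $\mathcal Y_{(\zeta_0,\zeta_c)}$; the diffeomorphism $\widetilde{\mathcal U}\cong U\times\Delta$ with $U=\{r<1\}$ is then the trivialization fixed for general families in Section~\ref{sec:BR} (with $d=1$), giving the third item. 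For the real period, $\zeta_r(t)$ is the image of $[\hat\omega_t]=c_1(\mathcal L)|_{X_t}$ under $H^2(U_t,\mathbb R)\cong\mathfrak h_{\mathbb R}$, and it vanishes because the vanishing cycles spanning $H_2(U_t)$ lie inside $\widetilde{\mathcal U}$, where the pull-back of $c_1(\mathcal L)$ is $0$.

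For the complex period, $\Omega$ restricts near $x_0$ to a holomorphic symplectic form, exhibiting $\mathcal U\to\Delta$ as a deformation of $\mathbb C^2/\Gamma$ through holomorphic symplectic surfaces; for such deformations the period of the symplectic form over the vanishing cycles is, up to a universal constant, the classifying parameter. Concretely, Kronheimer's identification (Theorem~\ref{thm:identification}) gives $[\omega_{J_\zeta}+\sqrt{-1}\,\omega_{K_\zeta}]=\zeta_c$ for the ALE family, and transporting this through the embedding $\widetilde{\mathcal U}\hookrightarrow\mathcal Y_{(\zeta_0,\zeta_c)}$ identifies $\zeta_c(t)$ with a fixed multiple of $[\Omega_t]$ restricted to $\mathfrak h_{\mathbb C}$; comparing the normalization $\hat\omega_t^2=\Omega_t\wedge\overline{\Omega}_t$ with the hyperk\"ahler identity $\omega_I^2=\tfrac12\,(\omega_J+\sqrt{-1}\,\omega_K)\wedge\overline{(\omega_J+\sqrt{-1}\,\omega_K)}$ fixes that multiple as $\sqrt2$, which is the first item. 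The second item is that the orbifold Ricci-flat K\"ahler metric $\hat g_0$ has holomorphic normal coordinates at $x_0$: since $\Gamma\subset\mathrm{SU}(2)$ fixes $0\in\mathbb C^2$ and acts unitarily on the tangent space for $\hat g_0$, such coordinates can be chosen $\Gamma$-equivariant and descend to a uniformization $q:B(0;1)\to U_0$ with $q^*\hat g_0=g^{\mathrm{Euc}}+O(r^2)$.

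For the potential I would build it explicitly instead of arguing from exactness. On $\mathcal U$ the form $\tfrac1m\,\omega_{\mathrm{FS}}|_{X_t}$ is $dd^c_t$ of the pull-back $\rho_t$ of the (single, by the affine-chart choice) Fubini--Study potential, and $\rho_t$ depends smoothly on $t\in\Delta$, including at $t=0$; Theorem~\ref{thm:RZ} supplies the Calabi--Yau potentials $\phi_t$ with $\hat\omega_t=\tfrac1m\omega_{\mathrm{FS}}|_{X_t}+dd^c_t\phi_t$ and $\phi_t\to\phi_0$ in $C^\infty_{\mathrm{loc}}(X_0^{\mathrm{reg}})$. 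Setting $\hat\phi_t:=\rho_t+\phi_t-r^2$ then gives $\hat\omega_t=dd^c_t(r^2+\hat\phi_t)$ on $U_t$, and on compact subsets of $\{\tfrac12<r<1\}\subset X_0^{\mathrm{reg}}$, where $r^2$ is fixed and $\rho_t\to\rho_0$ smoothly, $\hat\phi_t\to\hat\phi_0$ in $C^\infty$. The main obstacle I anticipate is the first item: making the comparison between $\Omega_t|_{U_t}$ and Kronheimer's symplectic form rigorous --- a priori they differ by a nowhere-vanishing holomorphic factor, not a constant, so one must use the (orbifold) K3 structure of $X_t$ to force that factor to be constant --- and tracking the constant $\sqrt2$ through the two normalizations of the holomorphic volume form; a lesser point is arranging all the shrinkings of $\mathcal U$ simultaneously and compatibly with the trivializations of Section~\ref{sec:BR}.
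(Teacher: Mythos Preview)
Your approach is correct and, for the only item the paper actually proves (the fourth), essentially identical: embed by $\mathcal L^{\otimes m}$, pass to a single affine chart containing $x_0$, write $\hat\omega_t=dd^c_t(\tfrac1m\log(1+\sum|z_i|^2)|_{X_t}+\phi_t)$ with $\phi_t$ the Rong--Zhang potential from Theorem~\ref{thm:RZ}, and set $\hat\phi_t$ to be that sum minus $r^2$. The paper's proof literally stops there, declaring that the first three items are already available from the general set-up in Section~\ref{sec:BR} and that one simply intersects with such a neighbourhood.

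What you do differently is that you actually argue items one through three rather than deferring them. In particular your discussion of the $\sqrt{2}$ in $\zeta_c(t)=\sqrt2\,[\Omega_t]$, via comparing the normalization $\hat\omega_t^2=\Omega_t\wedge\overline\Omega_t$ with the hyperk\"ahler identity for $\omega_I^2$, is more than the paper supplies; the paper states the formula without justification. Your worry at the end---that $\Omega_t|_{U_t}$ and Kronheimer's $\omega_J+\sqrt{-1}\,\omega_K$ might differ by a nonconstant holomorphic unit---is legitimate and is exactly the kind of point the paper glosses over; your suggested resolution (use that $X_t$ is a K3, so any two holomorphic symplectic forms differ by a global constant) is the right one. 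So: same proof where the paper gives one, and a more honest treatment of the parts the paper waves through.
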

\begin{proof}
    We show the fourth condition is possible, and then, take its intersection with an open neighbourhood satisfying the first three conditions. Consider an embedding 
    \begin{equation}
    \mathcal{L}^m : \mathcal{X} \to \mathbb{P}^N \times \Delta    
    \end{equation}
    with suitable $m$ and $N$. Take an affine open subset 
    \begin{equation}
    \mathbb{C}^N \times \Delta \subset \mathbb{P}^N \times \Delta
    \end{equation}
    such that $x_0 \in \mathcal{X} \cap (\mathbb{C}^N \times \Delta)$. Let $\mathcal{X}^o \subset \mathcal{X}$ be the intersection $\mathcal{X} \cap (\mathbb{C}^N \times \Delta)$. Then, $\mathcal{X}^o$ is a family of affine varieties with central fiber having a rational double point $x_0$ and $\mathcal{L}^m$ is trivial on $\mathcal{X}^o$. Then, by theorem \ref{thm:RZ}, we may take a function $\phi_t$ on $X_t$ such that $\phi_t \to \phi_0$ smoothly on $X_t \cap \{ \frac{1}{2} \leq r \leq 1\}$ and $\hat{\omega}_t = \frac{1}{m}\omega_{\mathrm{FS}}|_{X_t} + dd^c \phi_t$. Therefore, we have $\hat{\omega}_t = dd^c (\frac{1}{m}\log(1 + \sum |z_i|^2)|_{X^o_t} + \phi_t)$ on $X_t^o := X_t \cap \mathcal{X}^o \subset \mathbb{C}^N$. Then we can attain the fourth condition by taking $\hat{\phi}_t = \frac{1}{m}\log(1 + \sum |z_i|^2)|_{X^o_t} + \phi_t - r^2$.
\end{proof}

We construct an \textit{almost Ricci-flat \Kahler\ metric} on $X_t$ by gluing $\hat{g}_t$ and $g_{\zeta(t)}$. As in section \ref{sec:BR} and section \ref{sec:Main}, we assume that the family is non-degenerate in the sense of Biquard-Rollin \cite{BR} in the following extended sense:
\begin{dfn}
    Let $(\mathcal{X}, \mathcal{L}) \to \Delta$ be a polarized flat family of K3 surfaces with the central fiber with an ADE singularity $x_0 \in X_0 \cong 0 \in \mathbb{C}^2/\Gamma$. Take a suitable open neighborhood $x_0 \in \mathcal{U} \subset \mathcal{X}$ of $x_0$ which is isomorphic to an open subset of the family $\mathcal{Y}_{\zeta (t)}$ of Kronheimer's ALE gravitational instantons defined by a holomorphic function $\zeta : \Delta \to \mathfrak{h}_\mathbb{C}$. Then the family is non-degenerate at $x_0$ if $\zeta(t)$ satisfies the following condition:\\
    Let $\zeta(t) = \dot{\zeta} t^p + O(t^{p+1})$ be the expansion of $\zeta(t)$ at the origin so that $\dot{\zeta} \neq 0$. Then $\dot{\zeta}$ is not in the discriminant locus, i.e., $\dot{\zeta} \notin \theta^\perp = \{v \in \mathfrak{h}_\mathbb{C} \mid \langle v,s\rangle =0$ for any $s\in \theta \}$ for any root $\theta$.
\end{dfn}
We also need the next lemma.
\begin{lem}[Theorem 8.4.4. \cite{Joy}]
    Let $Y$ be an ALE gravitational instanton and $\tau$ be a $d$-exact real $(1,1)$ form on $Y$ with asymptotic $\| \nabla^k\tau \| = O(r^{-4-k})$. Then $\tau$ is $dd^c$-exact and a potential $\phi$ can be chosen with asymptotic $\| \nabla^k \phi \| = O(r^{-2-k})$.
\end{lem}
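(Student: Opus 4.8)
The plan is to deduce the statement from an ALE version of the local $i\partial\bar\partial$-lemma, so that everything comes down to a scalar Poisson equation on $Y$ with the sharp decay rate. \textbf{Step 1 (reduction to a $\bar\partial$-equation).} Since $\tau$ is $d$-exact and $\|\nabla^k\tau\|=O(r^{-4-k})$, I would first use weighted Hodge theory for the Hodge Laplacian on $1$-forms on the ALE manifold $Y$ to produce a real primitive $\alpha$ with $d\alpha=\tau$, $d^*\alpha=0$; since the $L^2$-cohomology of an ALE gravitational instanton is concentrated in degree $2$ (in particular $H^1_{L^2}(Y)=0$), the coclosed primitive can be taken with $\|\nabla^k\alpha\|=O(r^{-3-k})$, with leading term along the link of pure-gauge type. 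Writing $\alpha=\alpha^{1,0}+\alpha^{0,1}$ with $\alpha^{0,1}=\overline{\alpha^{1,0}}$, the condition that $\tau=d\alpha$ be of type $(1,1)$ forces $\partial\alpha^{1,0}=0$ and $\bar\partial\alpha^{0,1}=0$. Hence it suffices to solve $\bar\partial g=\alpha^{0,1}$ for a function $g$: then $\alpha^{1,0}=\partial\bar g$ and $\tau=\partial\bar\partial(g-\bar g)$, so a real constant multiple of $\operatorname{Im}g$ is a potential $\phi$ with $dd^c\phi=\tau$ decaying at the same rate as $g$.

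\textbf{Step 2 (solving the Dolbeault equation).} Because $Y$ is Ricci-flat K\"ahler, $\Box_{\bar\partial}=\tfrac12\Delta$ on functions, and the scalar Laplacian $\Delta:C^{k+2,\alpha}_{\delta}(Y)\to C^{k,\alpha}_{\delta-2}(Y)$ is an isomorphism for every non-exceptional weight $\delta\in(2-4,0)=(-2,0)$: Fredholmness is standard weighted elliptic theory on ALE manifolds, injectivity holds because a harmonic function tending to $0$ at infinity vanishes by the maximum principle, and surjectivity follows by the duality $\delta\leftrightarrow 2-4-\delta$. As $\bar\partial^*\alpha^{0,1}=O(r^{-4})$ belongs to $C^{k,\alpha}_{\delta-2}$ for $\delta$ slightly above $-2$, I set $g_0:=2\Delta^{-1}(\bar\partial^*\alpha^{0,1})\in C^{k+2,\alpha}_{-2+\epsilon}(Y)$. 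Then $\beta:=\bar\partial g_0-\alpha^{0,1}$ is $\bar\partial$-closed and $\bar\partial^*$-closed, hence $\bar\partial$-harmonic, and it decays, so $\beta\in L^2$; as $\beta$ is $d$ of a decaying form, integration by parts gives $\|\beta\|_{L^2}^2=0$ (equivalently $H^{0,1}_{L^2}(Y)=0$), so $\bar\partial g_0=\alpha^{0,1}$. Taking $\phi$ proportional to $\operatorname{Im}g_0$ gives $dd^c\phi=\tau$ with $\|\nabla^k\phi\|=O(r^{-2+\epsilon-k})$ for every $\epsilon>0$.

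\textbf{Step 3 (sharp decay — the main obstacle).} The remaining point, and the delicate one, is to upgrade $O(r^{-2+\epsilon})$ to the claimed $O(r^{-2-k})$. Here I would invoke the asymptotic-expansion theory for the Laplacian on ALE manifolds: a solution of $\Delta\phi=h$ with $h=O(r^{-4})$ and $\phi\to 0$ has an expansion $\phi=c\,r^{-2}+O(r^{-4+\epsilon})$ with $c$ a constant — indeed, since $\Gamma\subset\mathrm{SU}(2)$ fixes no nonzero vector of $\mathbb{C}^2$, the Euclidean Laplacian on $(\mathbb{R}^4\setminus\{0\})/\Gamma$ has no indicial root in the open interval $(-2,0)$ and exactly one, namely $-2$ (with constant eigenfunction), in $(-4,0)$. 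The only way the leading term could fail to be a genuine $r^{-2}$ term is a resonance producing an $r^{-2}\log r$ contribution, forced by a nonzero spherical ``charge'' of the source at rate $r^{-4}$; but the $d$-exactness of $\tau$ — through the pure-gauge form of the leading term of the coclosed primitive $\alpha$ in Step 1, which makes $\bar\partial^*\alpha^{0,1}$ decay faster than $r^{-4}$ in the resonant mode — kills that charge, so $\phi=O(r^{-2})$. Finally, differentiating the equation and applying interior Schauder estimates on the dyadic annuli $\{2^j\le r\le 2^{j+1}\}$ propagates the rate to all derivatives, $\|\nabla^k\phi\|=O(r^{-2-k})$. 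This indicial-root bookkeeping — excluding the logarithm and controlling every derivative — is precisely what the cited theorem packages, and is the part of the argument that requires the most care.
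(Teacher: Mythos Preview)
The paper does not give its own proof of this lemma: it is stated with attribution to Joyce (Theorem 8.4.4 in \cite{Joy}) and used as a black box, so there is no in-paper argument to compare your proposal against.

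Your outline is a reasonable route to such an ALE $\partial\bar\partial$-lemma (coclosed primitive via weighted Hodge theory, reduction to a scalar Poisson equation through the K\"ahler identities, isomorphism of $\Delta$ on weighted spaces for $\delta\in(-2,0)$, then indicial-root analysis for the sharp $r^{-2}$ rate). One point to tighten: in Step~2 the assertion that $\beta=\bar\partial g_0-\alpha^{0,1}$ is ``$d$ of a decaying form'' is not justified as written; the clean way to kill $\beta$ is to observe that on a Ricci-flat K\"ahler manifold an $L^2$ $\bar\partial$-harmonic $(0,1)$-form has harmonic real part, and $H^1_{L^2}(Y)=0$ for a simply connected ALE space, hence $H^{0,1}_{L^2}(Y)=0$. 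Your Step~3 discussion of the possible $r^{-2}\log r$ resonance and why the exactness hypothesis suppresses the offending spherical charge is exactly the subtle content that Joyce's theorem packages, and is the reason the paper simply cites it rather than reproving it.
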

By applying the above lemma to $\tau = dd^c_t(r^2) - \omega_{\zeta(t)}$, we can take a potential $r^2 + \phi_{\zeta(t)}$ of $\omega_{\zeta(t)}$ on $Y$ such that $\phi_{\zeta(t)} = O(r^{-2})$. We define a \Kahler\ metric $\tilde{g}_t$ on $X_t$ by 
\begin{equation}
    \tilde{g}_t = \left\{ \begin{array}{cc}
      \hat{g}_t   & \text{on}\  X_t \backslash U  \\
       dd^c_t (r^2 + \tilde{\phi}_t)  & \text{on}\  U
    \end{array}\right.,
\end{equation}
where $\tilde{\phi}_t$ is a function on $U$ defined by
\begin{equation}
\tilde{\phi}_t = (1- \chi_{(\varepsilon^{1/2},2\varepsilon^{1/2})}(r)) \phi_{\zeta(t)} +
    \chi_{(\varepsilon^{1/2}, 2\varepsilon^{1/2})}(r)(1-\chi_{(1/2, 3/4)}(r)) \hat{\phi}_0 +
    \chi_{(1/2, 3/4)}(r) \hat{\phi}_t,
\end{equation}
where $\varepsilon = |t|^{p/2}$ and $\chi$ is the bump function defined in section \ref{sec:BR}, see (\ref{def:bump}).

\begin{center}
\begin{tikzpicture}
\draw (-1.5, -3.0) rectangle (1.5, -2);
\draw (-2.5, -3.2) rectangle (2.5, 0);
\draw (-3, -3.4) rectangle (3, 1.3);
\draw (-4.2, -3.6) rectangle (4,2.5);
\draw (-4.5, -4) rectangle (7,4.3);
\draw (-4.3 , -3.8) rectangle (4.3 ,4);

\draw (5.5,3) node{$X_t\backslash U$};
\draw (5.5,2.5) node{$\tilde{g}_t := \hat{g}_t$};

\draw (0,-2.3) node{$ U_{r\leq \varepsilon^{1/2}}$};
\draw (0,-2.8) node{$\tilde{g}_t: = g_{\zeta(t)}$};

\draw (0,-0.3) node{$U_{\varepsilon^{1/2} \leq r \leq 2\varepsilon^{1/2}}$};
\draw (0,-0.9) node{$\tilde{g}_t := dd^c_t (r^2 + \phi_{\zeta(t)})+$};
\draw (0, -1.5) node {$dd^c_t (\chi_{(\varepsilon^{1/2},2\varepsilon^{1/2})}(r)( \hat{\phi}_0 -\phi_{\zeta(t)}))$};

\draw (0,1) node{$U_{2\varepsilon^{1/2} \leq r \leq \frac{1}{2}}$};
\draw (0,0.5) node{$\tilde{g}_t := dd^c_t (r^2 + \hat{\phi}_0)$};

\draw (0,2.1) node{$ U_{\frac{1}{2} \leq r \leq \frac{3}{4}}$};
\draw (0, 1.6) node{$\tilde{g}_t := dd^c_t(r^2 + \hat{\phi_0}) + 
 dd^c_t (\chi_{(1/2, 3/4)}(r) (\hat{\phi}_t - \hat{\phi}_0))$};

 \draw(0, 3.5) node{$U_{\frac{3}{4} \leq r \leq 1}$};
 \draw(0, 3) node{$\tilde{g}_t :=dd^c_t (r^2 + \hat{\phi_t}) =  \hat{g}_t$};

\draw (0, -4.2) node{Gluing of metrics.};
\draw (7.5, 0) node{$= X_t$};
\draw (4.65, 0) node{$=U$};

\end{tikzpicture}
\end{center}

The tensor $\tilde{g}_t$ satisfies the following estimates (in particular, $\tilde{g}_t$ is actually a metric).
\begin{lem}\label{lem:comp.U}
    On the domain $X_t \backslash U_{r \leq \frac{3}{4}}$, $\tilde{g}_t$ satisfies the following estimates:
    \begin{align}
        \tilde{\omega}_t^2 &= \Omega_t \wedge \overline{\Omega}_t, \label{est:vol on X-U} \\
        \|\mathrm{Ric}(\tilde{g}_t) -\mathrm{Ric}(\hat{g}_0)\|_{\hat{g}_0} &= O(|t|), \label{est:Ricci on X-U} \\
        \| c_2 (\tilde{g}_t) - c_2(\hat{g}_0) \|_{\hat{g}_0} &= O(|t|) \label{est:c2 on X-U}.
    \end{align}
\end{lem}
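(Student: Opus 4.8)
\emph{Proof strategy.} The first observation is that on the region in question $\tilde g_t$ is nothing but $\hat g_t$. Once $|t|$ is small enough that $2\varepsilon^{1/2}<\tfrac34$ — which is all one needs for an asymptotic statement — every cut-off entering $\tilde\phi_t$ is identically $1$ on $\{r\ge \tfrac34\}$: there $\chi_{(\varepsilon^{1/2},2\varepsilon^{1/2})}(r)=1$ and $\chi_{(1/2,3/4)}(r)=1$, so $\tilde\phi_t=\hat\phi_t$ and hence $\tilde g_t=dd^c_t(r^2+\hat\phi_t)=\hat g_t$ on $U_{3/4\le r\le 1}$ (using the last item of Lemma~\ref{lem:potential}), while $\tilde g_t=\hat g_t$ on $X_t\setminus U$ by definition. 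Thus all three estimates are assertions about the Ricci-flat \Kahler\ metric $\hat g_t$ on the \emph{fixed} region $V:=X\setminus U_{r\le 3/4}$, which under the trivialization $\widetilde{\mathcal X}\cong X\times\Delta$ lies off the exceptional divisor and is identified, via the resolution, with a compact subset of $X_0^{\mathrm{reg}}$ over which $\mathcal X\to\Delta$ is a smooth family of complex manifolds.

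Granting this, \eqref{est:vol on X-U} is immediate, since $\Omega_t$ was normalized so that $\hat\omega_t^2=\Omega_t\wedge\overline\Omega_t$ and $\tilde\omega_t=\hat\omega_t$ on $V$. For \eqref{est:Ricci on X-U} there is in fact nothing to estimate: $\hat g_t$ is Ricci-flat \Kahler\ for $J_t$, while $\hat g_0$ on $V$ is the pull-back by a biholomorphism of the genuine Ricci-flat orbifold metric, hence also has vanishing Ricci form; so the left-hand side is identically zero, in particular $O(|t|)$.

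The substance is \eqref{est:c2 on X-U}. Writing $c_2$ as its Chern--Weil representative, $c_2(\hat g_t)$ is a universal polynomial expression in $J_t$, $\hat g_t$ and the first two derivatives of $\hat g_t$; hence it suffices to establish $\|\hat g_t-\hat g_0\|_{C^2(V,\hat g_0)}=O(|t|)$, together with $J_t=J_0+O(|t|)$ in each $C^k(V)$ — which is automatic because $\mathcal X\to\Delta$ is a smooth family over $V$ — from which $\|\mathrm{Rm}(\hat g_t)-\mathrm{Rm}(\hat g_0)\|_{\hat g_0}=O(|t|)$ and then \eqref{est:c2 on X-U} follow by differentiating the Chern--Weil formula. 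On $V$, the only $t$-dependent ingredient of $\hat g_t$ besides $J_t$ is the \Kahler\ potential $\hat\phi_t$ of Lemma~\ref{lem:potential} (equivalently the Calabi--Yau potential $\phi_t$ of Theorem~\ref{thm:RZ}), and the plan is to show that $t\mapsto\phi_t|_V$ extends to a \emph{smooth} map $\Delta\to C^\infty(V)$ and then Taylor-expand in $t$, giving $\hat\phi_t=\hat\phi_0+O(|t|)$ in every $C^k(V)$ and hence $\hat g_t=\hat g_0+O(|t|)$ in $C^2(V,\hat g_0)$.

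The one genuinely nontrivial point — and the step I expect to be the main obstacle — is this last claim: upgrading the qualitative $C^\infty_{\mathrm{loc}}$-convergence $\phi_t\to\phi_0$ of Theorem~\ref{thm:RZ} to smooth, and in particular Lipschitz, dependence on the parameter $t$ over $V$. This is where one must use the proof of Theorem~\ref{thm:RZ}, not merely its statement: on a region slightly larger than $V$ the reference metrics and the data $\Omega_t$, $S_t$ vary smoothly in $t$ up to $t=0$, the potentials $\phi_t$ are uniformly bounded in $C^\infty_{\mathrm{loc}}(X_0^{\mathrm{reg}})$, and the complex Monge--Amp\`ere operator is uniformly elliptic there with the metrics $\hat g_t$ uniformly equivalent; differentiating the Calabi--Yau equation in $t$ produces a linear elliptic equation for the difference quotients $(\phi_t-\phi_0)/t$ with coefficients and right-hand side bounded on $V$, so interior Schauder estimates yield the required $C^k(V)$ bounds. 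With this in hand the rest — expanding $c_2$ of a metric that depends on finitely many parameters, each varying to order $O(|t|)$ — is routine.
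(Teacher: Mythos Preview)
Your approach coincides with the paper's, whose entire proof is the single sentence ``Recall that $\tilde g_t=\hat g_t$ and $\hat g_t\to\hat g_0$ smoothly on this domain and that $\hat g_0$ is a Ricci flat metric; the assertion follows from these facts immediately.'' You have unpacked this correctly --- in particular your observation that \eqref{est:Ricci on X-U} is identically zero is sharper than what the paper writes --- and you have also correctly flagged the one point the paper skates over, namely that smooth \emph{convergence} $\hat g_t\to\hat g_0$ does not by itself give the \emph{rate} $O(|t|)$ needed for \eqref{est:c2 on X-U}. Your Schauder-on-difference-quotients sketch is the natural fix; just note that to close it you still need a $C^0$ bound on $(\phi_t-\phi_0)/|t|$ over a slightly larger compact set as input to the interior estimate, so that step needs one more ingredient (e.g.\ a global $C^0$ rate from the Monge--Amp\`ere estimates) rather than being purely local.
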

\begin{proof}
    Recall that $\tilde{g}_t = \hat{g}_t$ and $\hat{g}_t \to \hat{g}_0$ smoothly on this domain and that $\hat{g}_0$ is a Ricci flat metric. The assertion follows from these facts immediately. 
\end{proof}

\begin{lem}\label{lem:1/2,3/4}
    On the domain $U_{\frac{1}{2} \leq r \leq \frac{3}{4}}$, $\tilde{g}_t$ satisfies the following estimates:
\begin{align}
   \| \tilde{g}_t - \hat{g}_0 \|_{\hat{g}_0} &= O(|t|), \label{est:g_t on 1/2, 3/4}\\
    \left|\log\left(\frac{\Omega_t \wedge \overline{\Omega}_t}{\tilde{\omega}_t^2}\right)\right| &= O(\varepsilon^4) \label{est:f on 1/2, 3/4}, \\
   \|\mathrm{Ric}(\tilde{g}_t) -\mathrm{Ric}(\hat{g}_0)  \|_{\hat{g}_0} &= O(|t|),\label{est:ricci for 1/2, 3/4} \\
   \|c_2(\tilde{g}_t) - c_2(\hat{g}_0) \|_{\hat{g}_0} &= O(|t|). \label{est:c_2 for 1/2, 3/4}
\end{align}
\end{lem}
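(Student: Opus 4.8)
The plan is to work on the fixed annulus $U_{\frac12\le r\le\frac34}$, on which --- for $t$ small enough that $2\varepsilon^{1/2}<\frac12$ --- the inner cut-off $\chi_{(\varepsilon^{1/2},2\varepsilon^{1/2})}(r)$ is identically $1$, so $\tilde\phi_t=\hat\phi_0+\chi_{(1/2,3/4)}(r)(\hat\phi_t-\hat\phi_0)$ and $\tilde\omega_t=\hat\omega_0+(dd^c_t-dd^c_0)(r^2+\hat\phi_0)+dd^c_t\bigl(\chi_{(1/2,3/4)}(r)(\hat\phi_t-\hat\phi_0)\bigr)$. Two inputs drive everything. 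First, this annulus lies in the regular locus; under the dilation $H_{(\varepsilon,\zeta(t)/\varepsilon^2)}$ (cf. (\ref{eq:dil}), (\ref{eq:equivariance})) it corresponds to $\{\frac1{2\varepsilon}\le r\le\frac3{4\varepsilon}\}$ in $Y_{\zeta(t)/\varepsilon^2}$, and since $|\zeta(t)|=O(\varepsilon^2)$ the parameter $\zeta(t)/\varepsilon^2$ stays in a compact subset of $(\mathfrak h\otimes\mathbb R^3)^o$ converging to $\dot\zeta$ (here the non-degeneracy hypothesis is used). Then Lemma \ref{lem:cpx.str} and the ALE asymptotics (\ref{eq:asy.metic}), (\ref{eq:asy.form}) --- whose leading $\zeta$-corrections are quadratic and decay like $r^{-4}$, i.e. like $\varepsilon^4$ at $r\sim\frac1\varepsilon$ --- give $\|J_t-J_0\|_{C^k}=O(\varepsilon^4)$ and, after normalising $\hat\omega_t^2=\Omega_t\wedge\overline\Omega_t$ and applying the analogue of (\ref{eq:asy.form}) to $\omega_{J_\zeta},\omega_{K_\zeta}$, $\|\Omega_t-\Omega_0\|_{C^k}=O(\varepsilon^4)$ on $U_{\frac12\le r\le\frac34}$ for every $k$. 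Second, by Lemma \ref{lem:potential} (hence Theorem \ref{thm:RZ}) the Ricci-flat potentials converge, $\hat\phi_t\to\hat\phi_0$ in $C^k$ at rate $O(|t|)$ on compact subsets of $\{\frac12<r<1\}$, and the approximation hypothesis stated in Section \ref{sec:K3} --- that $\tilde g_t$ approximates the Ricci-flat $\hat g_t$ also in Ricci curvature --- supplies, after a suitable normalisation, $\bigl|\log(\Omega_t\wedge\overline\Omega_t/\tilde\omega_t^2)\bigr|=O(\varepsilon^4)$ on all of $U$.

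Estimate (\ref{est:g_t on 1/2, 3/4}) then follows by combining the inputs: $(dd^c_t-dd^c_0)(r^2+\hat\phi_0)=O(\varepsilon^4)$ because $r^2+\hat\phi_0$ is a fixed smooth function with bounded derivatives and $J_t-J_0=O(\varepsilon^4)$, while $dd^c_t\bigl(\chi_{(1/2,3/4)}(r)(\hat\phi_t-\hat\phi_0)\bigr)=O(|t|)$ since $\chi_{(1/2,3/4)}(r)$ has uniformly bounded derivatives on the annulus and $\hat\phi_t-\hat\phi_0=O(|t|)$ in $C^2$; the same computation in higher norms gives $\|\tilde g_t-\hat g_0\|_{\hat g_0,C^k}=O(|t|)$ for all $k$, which is used below.

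Estimate (\ref{est:f on 1/2, 3/4}) is immediate from the approximation hypothesis; away from the transition of $\chi_{(1/2,3/4)}$ it is also seen directly. Where $\chi_{(1/2,3/4)}(r)=1$ (near $r=\frac34$) one has $\tilde\phi_t=\hat\phi_t$, so $\tilde\omega_t=\hat\omega_t$ and $\tilde\omega_t^2=\Omega_t\wedge\overline\Omega_t$ exactly (the defining equation of the Ricci-flat metric $\hat g_t$), hence the integrand vanishes; where $\chi_{(1/2,3/4)}(r)=0$ (near $r=\frac12$) one has $\tilde\phi_t=\hat\phi_0$, so $\tilde\omega_t=dd^c_t(r^2+\hat\phi_0)=\hat\omega_0+O(\varepsilon^4)$ and therefore $\tilde\omega_t^2=\hat\omega_0^2+O(\varepsilon^4)=\Omega_0\wedge\overline\Omega_0+O(\varepsilon^4)=\Omega_t\wedge\overline\Omega_t+O(\varepsilon^4)$, so its logarithm is $O(\varepsilon^4)$. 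For (\ref{est:ricci for 1/2, 3/4}) and (\ref{est:c_2 for 1/2, 3/4}), recall that $\hat g_0$ is a Ricci-flat orbifold metric, so $\mathrm{Ric}(\hat g_0)=0$ and $\mathrm{Rm}(\hat g_0)$ --- hence $c_2(\hat g_0)$ --- is bounded on the compact annulus, and that $\mathrm{Rm}$, $\mathrm{Ric}$, $c_2$ are universal smooth functions of the $2$-jet of the metric; since $\|\tilde g_t-\hat g_0\|_{\hat g_0,C^2}=O(|t|)$ we obtain $\|\mathrm{Rm}(\tilde g_t)-\mathrm{Rm}(\hat g_0)\|_{\hat g_0}=O(|t|)$ and thus (\ref{est:ricci for 1/2, 3/4}), (\ref{est:c_2 for 1/2, 3/4}). (Equivalently one uses $\mathrm{Ric}(\tilde\omega_t)=dd^c_t\log(\Omega_t\wedge\overline\Omega_t/\tilde\omega_t^2)$, which is $O(|t|)$ in $C^0$ once three derivatives of $\hat\phi_t-\hat\phi_0$ are controlled.)

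The main obstacle is the sharp exponent $\varepsilon^4$ --- rather than the crude $|t|$, which already follows from Theorem \ref{thm:RZ} --- in (\ref{est:f on 1/2, 3/4}) on the transition annulus where $\chi_{(1/2,3/4)}$ is neither $0$ nor $1$: there $\tilde\omega_t$ genuinely involves $dd^c_t(\hat\phi_t-\hat\phi_0)$, and an $O(\varepsilon^4)$-bound forces one to know that the Ricci-flat metrics $\hat g_t$ converge to $\hat g_0$ at the bubble rate $\varepsilon^4$ on the regular part --- a quantitative refinement of Theorem \ref{thm:RZ} which I would take as, or deduce from, the approximation hypothesis of Section \ref{sec:K3}.
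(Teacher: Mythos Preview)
Your treatment of (\ref{est:g_t on 1/2, 3/4}), (\ref{est:ricci for 1/2, 3/4}) and (\ref{est:c_2 for 1/2, 3/4}) is correct and essentially identical to the paper's: decompose $\tilde{\omega}_t-\hat{\omega}_0$ into the $(dd^c_t-dd^c_0)$--piece (which is $O(\varepsilon^4)$ by Lemma~\ref{lem:cpx.str}) and the $dd^c_t\bigl(\chi_{(1/2,3/4)}(r)(\hat{\phi}_t-\hat{\phi}_0)\bigr)$--piece (which is $O(|t|)$ by Theorem~\ref{thm:RZ}), then read off the curvature estimates from the $C^2$ closeness.

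There is, however, a genuine gap in your argument for (\ref{est:f on 1/2, 3/4}). You invoke an ``approximation hypothesis of Section~\ref{sec:K3}'' to obtain $\bigl|\log(\Omega_t\wedge\overline{\Omega}_t/\tilde{\omega}_t^2)\bigr|=O(\varepsilon^4)$, and later to obtain $\hat{\phi}_t-\hat{\phi}_0=O(\varepsilon^4)$. But no such hypothesis is available: the sentence in the introduction about $\tilde{g}_t$ approximating $\hat{g}_t$ ``also in Ricci curvature'' is a description of what Lemmas~\ref{lem:comp.U}--\ref{lem:1,2} \emph{establish}, not an assumption one may feed back into their proofs. Theorem~\ref{thm:RZ} only gives $\hat{\phi}_t-\hat{\phi}_0=O(|t|)$, and you correctly observe that this is insufficient on the transition annulus; assuming the conclusion to close this gap is circular.

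The paper avoids this by a different decomposition. Instead of writing $\tilde{\omega}_t=\hat{\omega}_0+(\text{error})$, it writes
\[
\tilde{\omega}_t=\hat{\omega}_t+dd^c_t\bigl((\chi_{(1/2,3/4)}(r)-1)(\hat{\phi}_t-\hat{\phi}_0)\bigr),
\]
so that the leading term squares \emph{exactly} to $\Omega_t\wedge\overline{\Omega}_t$. The cross terms are then controlled not by a pointwise bound on $\hat{\phi}_t-\hat{\phi}_0$ but via the volume--form identity
\[
(\hat{\omega}_t-\hat{\omega}_0)\wedge(\hat{\omega}_t+\hat{\omega}_0)=\hat{\omega}_t^2-\hat{\omega}_0^2=\Omega_t\wedge\overline{\Omega}_t-\Omega_0\wedge\overline{\Omega}_0=O(\varepsilon^4),
\]
which comes from the ALE asymptotics of the holomorphic $2$--form and uses no information about $\hat{\phi}_t$ beyond Ricci--flatness. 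Together with $C^{-1}\hat{g}_0<\hat{g}_t<C\hat{g}_0$ and the comparability of $dd^c_t\bigl((\chi-1)(\hat{\phi}_t-\hat{\phi}_0)\bigr)$ with $dd^c_t(\hat{\phi}_t-\hat{\phi}_0)$, this yields $\hat{\omega}_t\wedge dd^c_t(\hat{\phi}_0-\hat{\phi}_t)=O(\varepsilon^4)\hat{\omega}_t^2$ and $(dd^c_t(\hat{\phi}_t-\hat{\phi}_0))^2=O(\varepsilon^4)\hat{\omega}_t^2$. The point is that the $O(\varepsilon^4)$ rate lives on the \emph{volume--form} side (where both $\hat{\omega}_t^2$ and $\hat{\omega}_0^2$ are prescribed), not on the metrics themselves, so no quantitative refinement of Theorem~\ref{thm:RZ} is needed.
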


\begin{proof}
    By the construction of $\hat{\phi}_t$ and $\hat{g}_t \to \hat{g}_0$ smoothly, we have
    \begin{equation}
    \begin{aligned}
        dd^c_t(r^2 + \hat{\phi}_0) &= dd^c_t(r^2 + \hat{\phi}_t) + dd^c_t(\hat{\phi}_0 - \hat{\phi}_t) \\
        &= \hat{g}_t + O(|t|) = \hat{g}_0 + O(|t|),
    \end{aligned}
    \end{equation}
    \begin{equation}
        dd^c_t (\chi_{(\frac{1}{2}, \frac{3}{4})}(r) (\hat{\phi}_t - \hat{\phi}_t)) = O(|t|).
    \end{equation}
    In particular, we obtain the first assertion (\ref{est:g_t on 1/2, 3/4}). For (\ref{est:f on 1/2, 3/4}), recall that for the complex structure $I_t = I_{\zeta(t)}$, we have the following estimate (see lemma (\ref{lem:cpx.str})) :
    \begin{equation}
    \begin{aligned}
        I_{\zeta(t)} &= I_0 + I_\zeta^{(2)} r^{-4} + O(r^{-6})\\
        &= I_0 + O(\varepsilon^4),
    \end{aligned}
    \end{equation}
    where $I^{(2)}_\zeta$ is a homogeneous polynomial in $\zeta$ of degree $2$ (recall that $\zeta(t) = t^p \dot{\zeta} + O(t^{p+1})$ and $\varepsilon^2 = |t|^p$). By direct calculation,  we have the following:
    \begin{equation}
        \begin{aligned}
            \tilde{\omega}_t^2 &= (\hat{g}_t + dd^c_t( (\hat{\phi}_0 - \hat{\phi}_t) + \chi_{(1/2,3/4)}(r) (\hat{\phi}_t - \hat{\phi}_0))^2\\
            &= \hat{\omega}_t^2 + \hat{\omega}_t \wedge dd^c_t( (\hat{\phi}_0 - \hat{\phi}_t) + \chi_{(1/2,3/4)}(r) (\hat{\phi}_t - \hat{\phi}_0)) \\
            &+ (dd^c_t( (\hat{\phi}_0 - \hat{\phi}_t) + \chi_{(1/2,3/4)}(r) (\hat{\phi}_t - \hat{\phi}_0)))^2 .
        \end{aligned}
    \end{equation}
    The first term is equal to $\Omega_t \wedge \overline{\Omega}_t$. For the second and the third terms, we use the following estimates:
    \begin{align}
        (\hat{\omega}_t - \hat{\omega}_0)(\hat{\omega}_t + \hat{\omega}_0) &= \Omega_t \wedge \overline{\Omega}_t - \Omega_0 \wedge \overline{\Omega}_0 = O(\varepsilon^4) \Omega_0 \wedge \overline{\Omega}_t,
    \end{align}
    and there exists a constant $C >0$ independent of $t$ such that 
    \begin{align}
        C^{-1} \| dd^c_t (\hat{\phi}_t - \hat{\phi}_0) \|_{\hat{g}_t} <
        \|dd^c_t( (\chi_{(1/2,3/4)}(r) &-1)(\hat{\phi}_t - \hat{\phi}_0))\|_{\hat{g}_t} < 
        C \| dd^c_t (\hat{\phi}_t - \hat{\phi}_0) \|_{\hat{g}_t}, \\
        C^{-1} \hat{g}_0 < &\hat{g}_t < C \hat{g}_0.
    \end{align}
    Then, we obtain the followings:
    \begin{align}
        \hat{\omega}_t \wedge dd^c_t(\hat{\phi}_0 - \hat{\phi}_t) &= O(\varepsilon^4) \hat{\omega}_t^2\\
        (dd^c_t (\hat{\phi}_t - \hat{\phi}_0))^2 &= O(\varepsilon^4) \hat{\omega}_t^2.
    \end{align}
Therefore, $\log\left(\frac{\Omega_t \wedge \overline{\Omega}_t}{\tilde{\omega}_t^2}\right) = O(\varepsilon^4)$ on this domain. The last assertion (\ref{est:c_2 for 1/2, 3/4}) can be confirmed by the similar arguments as in section \ref{sec:Main} (namely, the proofs of lemma \ref{lem:4b1} for example). Explicitly, direct calculation yields 
\begin{align}
    \tilde{g}_t ^{-1}&= \hat{g}_0^{-1} + O(|t|) \\
    \partial_t \tilde{g}_t &= \partial_0 \hat{g}_0 + O(|t|) \\
    \partial^2_t \tilde{g}_t &= \partial^2_0 \hat{g}_0 + O(|t|),
\end{align}
 under the trivialization by $\{e^1_t, e^2_t\}$, where $\partial_t$ denotes any differential by the vector fields $\{e^1_t, e^2_t\}$ defined in Lemma \ref{lem:cpx.str}. Then we have (\ref{est:ricci for 1/2, 3/4}) (\ref{est:c_2 for 1/2, 3/4}).
\end{proof}

\begin{lem}\label{lem:2 1/2, 1/2}
    On the domain $U_{2\varepsilon^{1/2} \leq r \leq \frac{1}{2}}$, $\tilde{g}_t$ satisfies the following estimates:
    \begin{align}
         \| \tilde{g}_t - \hat{g}_0 \|_{\hat{g}_0} &= O(\varepsilon^4r^{-4}), \label{est:g_t on 2, 1/2}\\
   \log\left(\frac{\Omega_t \wedge \overline{\Omega}_t}{\tilde{\omega}_t^2}\right) &= O(\varepsilon^4r^{-4}), \label{est:f on 2, 1/2} \\
   \|\mathrm{Ric}(\tilde{g}_t) -\mathrm{Ric}(\hat{g}_0)  \|_{\hat{g}_0} &= O(\varepsilon^4r^{-4}), \label{est:ricci for 2, 1/2} \\
   \|c_2(\tilde{g}_t) - c_2(\hat{g}_0) \|_{\hat{g}_0} &= O(\varepsilon^4r^{-4}). \label{est:c_2 for 2, 1/2}
    \end{align}
\end{lem}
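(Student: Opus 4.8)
\noindent The plan is to observe first that on $U_{2\varepsilon^{1/2}\le r\le 1/2}$ every cut-off entering $\tilde\phi_t$ sits in its constant regime, so $\tilde\phi_t\equiv\hat\phi_0$ and $\tilde g_t=dd^c_t(r^2+\hat\phi_0)$: the potential is \emph{identical} to the one defining $\hat g_0=dd^c_0(r^2+\hat\phi_0)$ in Lemma~\ref{lem:potential}, and only the complex structure has moved, from $I_0$ to $I_t=I_{\zeta(t)}$, so every estimate will be driven by the size of $I_t-I_0$. This I would extract from Lemma~\ref{lem:cpx.str}: since $\zeta(t)=t^p\dot\zeta+O(t^{p+1})$ and $\varepsilon^2=|t|^p$, the leading correction $I^{(2)}_{\zeta(t)}r^{-4}$ is homogeneous of degree $2$ in $\zeta(t)$, hence of size $O(\varepsilon^4)$, so $I_t-I_0=O(\varepsilon^4 r^{-4})$ on $\{r\le 1/2\}$ with each further derivative costing a power of $r^{-1}$. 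Because $\hat g_0$ is an orbifold metric, $r^2+\hat\phi_0$ is smooth with all derivatives bounded on $\{r\le 1/2\}$; expanding $dd^c_t(r^2+\hat\phi_0)-dd^c_0(r^2+\hat\phi_0)$ into terms each carrying $I_t-I_0$ or one of its first two derivatives then yields $\|\tilde g_t-\hat g_0\|_{\hat g_0}=O(\varepsilon^4 r^{-4})$, which is (\ref{est:g_t on 2, 1/2}). The estimates (\ref{est:ricci for 2, 1/2}) and (\ref{est:c_2 for 2, 1/2}) follow exactly as in Lemmas~\ref{lem:4b1}, \ref{lem:2b4b} and \ref{lem:1/2,3/4}: in the frame $\{e^1_t,e^2_t\}$ of Lemma~\ref{lem:cpx.str} one gets $\tilde g_t^{-1}=\hat g_0^{-1}+O(\varepsilon^4 r^{-4})$, $\partial_t\tilde g_t=\partial_0\hat g_0+O(\varepsilon^4 r^{-4})$ and $\partial^2_t\tilde g_t=\partial^2_0\hat g_0+O(\varepsilon^4 r^{-4})$, whence $R^i_{jk\bar l}(\tilde g_t)=R^i_{jk\bar l}(\hat g_0)+O(\varepsilon^4 r^{-4})$, and since $\hat g_0$ has bounded curvature on this region and is Ricci-flat, the claimed bounds for $\mathrm{Ric}(\tilde g_t)$ and $c_2(\tilde g_t)$.

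For the volume-form estimate (\ref{est:f on 2, 1/2}) I would use the normalizations $\Omega_t\wedge\overline{\Omega}_t=\hat\omega_t^{\,2}$ and $\Omega_0\wedge\overline{\Omega}_0=\hat\omega_0^{\,2}$ and split
\[
\log\!\Big(\tfrac{\Omega_t\wedge\overline{\Omega}_t}{\tilde\omega_t^{\,2}}\Big)
=\log\!\Big(\tfrac{\Omega_t\wedge\overline{\Omega}_t}{\Omega_0\wedge\overline{\Omega}_0}\Big)
+\log\!\Big(\tfrac{\hat\omega_0^{\,2}}{\tilde\omega_t^{\,2}}\Big),
\]
the second term being $O(\varepsilon^4 r^{-4})$ by (\ref{est:g_t on 2, 1/2}) (equivalently $\tilde\omega_t=\hat\omega_0+O(\varepsilon^4 r^{-4})$, so $\tilde\omega_t^{\,2}=(1+O(\varepsilon^4 r^{-4}))\hat\omega_0^{\,2}$). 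It then remains to bound $\log(\Omega_t\wedge\overline{\Omega}_t/\Omega_0\wedge\overline{\Omega}_0)$ on the annulus: since $\mathcal{U}$ is an open set of Kronheimer's family $\mathcal{Y}_{\zeta_c}$ with $\zeta_c(t)=\sqrt2\,[\Omega_t]|_{U_t}=O(t^p)$, the restrictions $\Omega_t|_{U_t}$ form a smooth family of holomorphic $(2,0)$-forms whose deviation from $\Omega_0|_{U_0}$ is governed by $\zeta_c(t)$ and decays in $r$ by the asymptotic structure of the family away from the exceptional set, which gives the required bound. Equivalently, one may compare $\hat\omega_t$ directly with $\tilde\omega_t=\hat\omega_0+O(\varepsilon^4 r^{-4})$ through $\hat\omega_t-\tilde\omega_t=dd^c_t(\hat\phi_t-\hat\phi_0)$ and invoke the convergence $\hat\phi_t\to\hat\phi_0$ supplied by Theorem~\ref{thm:RZ}.

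I expect (\ref{est:f on 2, 1/2}) to be the genuinely delicate step. The three differential-geometric bounds are a faithful reprise of the computations already carried out in Section~\ref{sec:Main}, but here one must compare the honest Ricci-flat datum $\Omega_t\wedge\overline{\Omega}_t=\hat\omega_t^{\,2}$ against the glued model on a region that shrinks to $x_0$ as $t\to0$, where the merely locally uniform $C^\infty$ convergence of $\hat\phi_t$ on $\{1/2<r<1\}$ does not by itself suffice; it is the $r$-weighted control of the Calabi--Yau potentials near the Kleinian singularity, read off from Theorem~\ref{thm:RZ} and its proof, that both produces the decay in $r$ and earns the weight $\varepsilon^4 r^{-4}$ rather than a cruder $\varepsilon^2$.
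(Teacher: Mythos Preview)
Your argument for (\ref{est:g_t on 2, 1/2}), (\ref{est:ricci for 2, 1/2}) and (\ref{est:c_2 for 2, 1/2}) is the same as the paper's: on this annulus $\tilde\phi_t\equiv\hat\phi_0$, so $\tilde g_t=dd^c_t(r^2+\hat\phi_0)$, and everything reduces to the complex-structure estimate $I_{\zeta(t)}=I_0+O(\varepsilon^4 r^{-4})$ from Lemma~\ref{lem:cpx.str}, after which the curvature bounds follow exactly as in Lemma~\ref{lem:1/2,3/4}.

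For (\ref{est:f on 2, 1/2}) your approach (a) is correct and is precisely what the paper does, only more tersely: the paper writes $\tilde\omega_t^2=\hat\omega_0^2+O(\varepsilon^4 r^{-4})\,\Omega_t\wedge\overline{\Omega}_t=(1+O(\varepsilon^4 r^{-4}))\,\Omega_t\wedge\overline{\Omega}_t$, where the second equality silently uses $\hat\omega_0^2=\Omega_0\wedge\overline{\Omega}_0$ together with $\Omega_0\wedge\overline{\Omega}_0=(1+O(\varepsilon^4 r^{-4}))\,\Omega_t\wedge\overline{\Omega}_t$. The latter is exactly the comparison of holomorphic volume forms on the Kronheimer family that you sketch, and it follows from the same ALE asymptotics (\ref{eq:asy.form}) applied to $\omega_{J_\zeta}+i\omega_{K_\zeta}$ (or equivalently from the $I_\zeta$-expansion), with coefficients homogeneous of degree $\ge 2$ in $\zeta$.

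Where you go astray is in your final paragraph. The estimate (\ref{est:f on 2, 1/2}) is \emph{not} delicate, and Theorem~\ref{thm:RZ} plays no role on this region. The convergence $\hat\phi_t\to\hat\phi_0$ from Lemma~\ref{lem:potential} is only asserted on $\{1/2<r<1\}$ and gives you nothing on the shrinking annulus $\{2\varepsilon^{1/2}\le r\le 1/2\}$; your approach (b) is a dead end for this reason, and no ``$r$-weighted control of the Calabi--Yau potentials'' is either available from or needed beyond Theorem~\ref{thm:RZ}. The entire content of the lemma is driven by Lemma~\ref{lem:cpx.str} alone. Drop the hedging and the appeal to $\hat\phi_t$, keep approach (a), and you have the paper's proof.
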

\begin{proof}
    On this domain, we have the following estimate on the complex structure $I_{\zeta(t)}$:
    \begin{equation}
        I_{\zeta(t)} = I_0 + O(\varepsilon^4 r^{-4}).
    \end{equation}
    In particular, we have 
    \begin{equation}
    \begin{aligned}
        \tilde{g}_t &= dd^c_t(r^2 + \hat{\phi}_0) \\
       &= dd^c_0 (r^2 + \hat{\phi}_0) + O(\varepsilon^4 r^{-4})\\
        &= \hat{g}_0 + O(\varepsilon^4r^{-4}).
    \end{aligned}
    \end{equation}
    Therefore, 
    \begin{equation}
        \begin{aligned}
            \tilde{\omega}_t^2 &= \hat{\omega}_0^2 + O(\varepsilon^4r^{-4}) \Omega_t \wedge \overline{\Omega}_t \\
            &= (1 + O(\varepsilon^4r^{-4})) \Omega_t \wedge \overline{\Omega}_t.
        \end{aligned}
    \end{equation}
    Again, (\ref{est:ricci for 2, 1/2}) and (\ref{est:c_2 for 2, 1/2}) follows from a direct calculation as in the proof of Lemma \ref{lem:1/2,3/4}.
\end{proof}
\begin{lem}\label{lem:1,2}
On $U_{\varepsilon^{1/2} \leq r \leq 2\varepsilon^{1/2}}$, $\tilde{g}_t$ satisfies the following estimates:
\begin{align}
    \tilde{g}_t &= g_{\zeta(t)} + O(\varepsilon), \label{est: g_t on 1,2}\\
    \log \left( \frac{\Omega_t \wedge \overline{\Omega}_t}{\tilde{\omega}_t^2} \right) &= O(\varepsilon^2) , \label{est:f on 1,2} \\
    \|\mathrm{Ric}(\tilde{g}_t) -\mathrm{Ric}(\hat{g}_0)  \|_{\hat{g}_0} &= O(\varepsilon), \label{est:ricci for 1,2}\\
    c_2(\tilde{g}_t) &= c_2 (g_{\zeta(t)}) + O(\varepsilon).\label{est: c_2 for 1,2}
\end{align}
\end{lem}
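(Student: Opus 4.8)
The plan is to run, on the annulus $U_{\varepsilon^{1/2}\le r\le 2\varepsilon^{1/2}}$, the same strategy as in the neighbouring Lemmas \ref{lem:1/2,3/4} and \ref{lem:2 1/2, 1/2}: realise $\tilde g_t$ as an explicit controlled perturbation of the hyperk\"ahler model $g_{\zeta(t)}$, then feed the perturbation into the Chern--Weil formula for $c_2$ and the Ricci--potential identity $\mathrm{Ric}=-dd^c\log\det$. The first step is bookkeeping: on this annulus $r$ is small, so $\chi_{(1/2,3/4)}(r)\equiv 0$, and the gluing potential collapses to $\tilde\phi_t=\phi_{\zeta(t)}+\chi_{(\varepsilon^{1/2},2\varepsilon^{1/2})}(r)(\hat\phi_0-\phi_{\zeta(t)})$; hence $\tilde\omega_t=\omega_{\zeta(t)}+dd^c_t\psi_t$ with $\psi_t:=\chi_{(\varepsilon^{1/2},2\varepsilon^{1/2})}(r)(\hat\phi_0-\phi_{\zeta(t)})$, and the whole lemma is reduced to estimating $\psi_t$ and a few of its derivatives.

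The inputs I would assemble, all on $r\asymp\varepsilon^{1/2}$ and in the flat $(z_0,w_0)$-frame, are: (a) from the dilation equivariance $H^*_{(\varepsilon,\zeta(t)/\varepsilon^2)}\phi_{\zeta(t)}=\varepsilon^2\phi_{\zeta(t)/\varepsilon^2}$ together with the ALE decay $\|\nabla^k\phi_{\zeta'}\|=O(r^{-2-k})$ of the cited lemma of Joyce applied to $\zeta'=\zeta(t)/\varepsilon^2$ (which stays in a fixed compact subset of $(\mathfrak h\otimes\mathbb R^3)^o$ converging to $\dot\zeta$): $\nabla^k\phi_{\zeta(t)}=O(\varepsilon^{3-k/2})$, $g_{\zeta(t)}=g^{\mathrm{Euc}}+O(\varepsilon^2)$, and from the hyperk\"ahler identity $\omega_{\zeta(t)}^2=\tfrac12\,\Omega_{Y,t}\wedge\overline{\Omega_{Y,t}}$ (with $\Omega_{Y,t}$ the holomorphic two-form of $Y_{\zeta(t)}$, equal to $dz_0\wedge dw_0+O(\varepsilon^2)$ here) the improved trace bound $\mathrm{tr}_{\omega_{\zeta(t)}}dd^c_t\phi_{\zeta(t)}=O(\varepsilon^2)$; (b) from the $\hat g_0$-adapted uniformization of Lemma \ref{lem:potential}, $\hat\phi_0$ vanishes to the order forced by $\hat g_0=g^{\mathrm{Euc}}+O(r^{2})$, so $\nabla^k\hat\phi_0=O(r^{4-k})$, and from $\hat\omega_0^2=\Omega_0\wedge\overline\Omega_0$ with $\Omega_0=(1+O(r^2))dz_0\wedge dw_0$ the one-order-better $\mathrm{tr}_{g^{\mathrm{Euc}}}dd^c_0\hat\phi_0=O(r^2)$; (c) from Lemma \ref{lem:cpx.str}, $I_{\zeta(t)}=I_0+O(r^{-4})=I_0+O(\varepsilon^2)$, so $dd^c_t-dd^c_0$ contributes only $O(\varepsilon^2)$-weighted terms; and (d) $\nabla^k\chi_{(\varepsilon^{1/2},2\varepsilon^{1/2})}(r)=O(\varepsilon^{-k/2})$.

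With these in hand, (\ref{est: g_t on 1,2}) is immediate from the Leibniz expansion of $dd^c_t\psi_t$: the dominant term is $(dd^c_t\chi)\hat\phi_0=O(\varepsilon^{-1})\cdot O(\varepsilon^2)=O(\varepsilon)$ (the remaining Leibniz terms, and every $\phi_{\zeta(t)}$-contribution, are $O(\varepsilon)$ or smaller), so $\tilde g_t=g_{\zeta(t)}+O(\varepsilon)$; as $g_{\zeta(t)}$, $g^{\mathrm{Euc}}$, $\hat g_0$ are mutually $C^0$-equivalent here the reference norm is irrelevant, and in particular $\tilde g_t=\hat g_0+O(\varepsilon)$. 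For (\ref{est:ricci for 1,2}) and (\ref{est: c_2 for 1,2}) I would use that both $g_{\zeta(t)}$ and $\hat g_0$ are Ricci-flat, so $\mathrm{Ric}(\tilde g_t)-\mathrm{Ric}(\hat g_0)=\mathrm{Ric}(\tilde g_t)=-dd^c_t\log(\tilde\omega_t^2/\omega_{\zeta(t)}^2)$, and substitute $\tilde g_t=g_{\zeta(t)}+dd^c_t\psi_t$ into this and into the universal polynomial expression for $c_2(\tilde g_t)$ in $\tilde g_t^{-1},\partial_t\tilde g_t,\partial_t^2\tilde g_t$ relative to $\{e^1_t,e^2_t\}$; the matched derivative bounds above, together with the curvature decay $\|R(g_{\zeta(t)})\|_{g_{\zeta(t)}}=O(\varepsilon^4 r^{-6})$ of the rescaled ALE model, then give the claimed orders (with the usual understanding, as in \S\ref{sec:Main}, that the ``$O(\varepsilon)$'' on this thin transition annulus of volume $\asymp\varepsilon^2$ is of integrated type).

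The one genuinely non-formal step is the volume-ratio estimate (\ref{est:f on 1,2}), which also governs $\mathrm{Ric}(\tilde g_t)$. Expanding $\tilde\omega_t^2=\omega_{\zeta(t)}^2+2\,\omega_{\zeta(t)}\wedge dd^c_t\psi_t+(dd^c_t\psi_t)^2$ and dividing by $\Omega_t\wedge\overline\Omega_t=|\lambda_t|^2\,\Omega_{Y,t}\wedge\overline{\Omega_{Y,t}}=2|\lambda_t|^2\,\omega_{\zeta(t)}^2$, where $\lambda_t:=\Omega_t/\Omega_{Y,t}|_{U_t}$ is $I_t$-holomorphic and $2|\lambda_t|^2=1+O(\varepsilon^2)$ (the latter pinned down by the Calabi--Yau normalisation $\hat\omega_t^2=\Omega_t\wedge\overline\Omega_t$ and the bubbling description of $\hat\omega_t$ near $x_0$), one gets $\log\!\big(\Omega_t\wedge\overline\Omega_t/\tilde\omega_t^2\big)=\log(2|\lambda_t|^2)-\log\!\big(1+\mathrm{tr}_{\omega_{\zeta(t)}}(dd^c_t\psi_t)+(dd^c_t\psi_t)^2/\omega_{\zeta(t)}^2\big)$. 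The first term is $O(\varepsilon^2)$, and, crucially, $\log|\lambda_t|^2$ is $I_t$-pluriharmonic, hence annihilated by $dd^c_t$ in the Ricci computation. The hard part is to show the remaining term is $O(\varepsilon^2)$ rather than the naive $O(\varepsilon)$ — i.e. that the $\omega_{\zeta(t)}$-trace of $dd^c_t\psi_t$ gains a power of $\varepsilon$ over $|dd^c_t\psi_t|$: one splits $\mathrm{tr}_{\omega_{\zeta(t)}}dd^c_t\psi_t$ into $\chi\,\mathrm{tr}_{\omega_{\zeta(t)}}dd^c_t(\hat\phi_0-\phi_{\zeta(t)})$, controlled by the improved trace bounds in (a)--(b), and the cutoff-derivative terms $\chi''(r)(\hat\phi_0-\phi_{\zeta(t)})$ and $\langle\nabla\chi,\nabla(\hat\phi_0-\phi_{\zeta(t)})\rangle$, which must be estimated by playing the vanishing of $\hat\phi_0-\phi_{\zeta(t)}$ at $x_0$ off against the $\varepsilon^{-1}$ cost of two cutoff derivatives at the scale $\varepsilon^{1/2}$. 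Once (\ref{est:f on 1,2}) is established, $\mathrm{Ric}(\tilde g_t)$ equals $dd^c_t$ of an $O(\varepsilon^2)$ function varying on length scale $\varepsilon^{1/2}$, which yields $\|\mathrm{Ric}(\tilde g_t)\|_{\hat g_0}=O(\varepsilon)$, and the same scale-coherent bookkeeping delivers (\ref{est: c_2 for 1,2}).
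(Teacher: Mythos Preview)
Your treatment of (\ref{est: g_t on 1,2}) via the Leibniz expansion of $dd^c_t\psi_t$, and your strategy for (\ref{est:ricci for 1,2}) and (\ref{est: c_2 for 1,2}) via $\mathrm{Ric}=-dd^c\log\det$ and Chern--Weil, match the paper (which in fact omits any explicit argument for the last two and relegates them to ``similar calculation'').

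The substantive divergence is in (\ref{est:f on 1,2}). You introduce the holomorphic ratio $\lambda_t=\Omega_t/\Omega_{Y,t}$ and try to extract an extra power of $\varepsilon$ from ``improved trace bounds'' on $dd^c_t\hat\phi_0$ and $dd^c_t\phi_{\zeta(t)}$ separately, then treat the cutoff--derivative terms by dimensional bookkeeping. But your own counting gives those cutoff terms as $O(\varepsilon^{-1})\cdot O(\varepsilon^2)=O(\varepsilon)$ in the trace, one power short of the claim; and your improved trace bound $\mathrm{tr}_{g^{\mathrm{Euc}}}dd^c_0\hat\phi_0=O(r^2)$ is, at $r\asymp\varepsilon^{1/2}$, only $O(\varepsilon)$ as well---no better than the full tensor---so no gain is actually realised on this annulus. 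The paper sidesteps this by working with the two Calabi--Yau forms directly rather than their potentials: since $\omega_{\zeta(t)}^2=\Omega_t\wedge\overline\Omega_t$ and $\hat\omega_0^2=\Omega_0\wedge\overline\Omega_0$, one has the algebraic identity
\[
(\omega_{\zeta(t)}-\hat\omega_0)\wedge(\omega_{\zeta(t)}+\hat\omega_0)=\Omega_t\wedge\overline\Omega_t-\Omega_0\wedge\overline\Omega_0=O(\varepsilon^2),
\]
which, combined with $(\omega_{\zeta(t)}-\hat\omega_0)^2=O(\varepsilon^2)$ from the first estimate, yields $\omega_{\zeta(t)}\wedge dd^c_t(\hat\phi_0-\phi_{\zeta(t)})=\omega_{\zeta(t)}\wedge(\hat\omega_0-\omega_{\zeta(t)})+O(\varepsilon^2)=O(\varepsilon^2)$ in one stroke, with the cutoff then controlled by a comparability argument. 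This is the mechanism you are missing; the $\lambda_t$--device is unnecessary once you use $\omega_{\zeta(t)}^2=\Omega_t\wedge\overline\Omega_t$ on $U_t$ directly (as the paper does, cf.\ Lemma~\ref{lem:estimates for rescaled data}).
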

\begin{proof}
    By direct calculation, we have the following:
\begin{equation}
    \begin{aligned}
        dd^c_t(\chi_{(\varepsilon^{1/2},2\varepsilon^{1/2})} (r) (\hat{\phi}_0 - \phi_{\zeta(t)}))
        &= \frac{\chi'_{(\varepsilon^{1/2},2\varepsilon^{1/2})}(r) (\hat{\phi}_0 - \phi_{\zeta(t)})}{\varepsilon^{1/2}} dd^c_t r \\
        &+ \frac{\chi''_{(\varepsilon^{1/2},2\varepsilon^{1/2})}(r) (\hat{\phi}_0 - \phi_{\zeta(t)})}{\varepsilon} dr \wedge d^c_t r\\
        &+ \frac{\chi'_{(\varepsilon^{1/2},2\varepsilon^{1/2})}(r)}{\varepsilon^{1/2}}( d(\hat{\phi}_0 - \phi_{\zeta(t)}) \wedge d^c_t r + dr \wedge d_t^c (\hat{\phi}_0 - \phi_{\zeta(t)}))\\
        &+ \chi_{(\varepsilon^{1/2},2\varepsilon^{1/2})}(r) dd^c_t(\hat{\phi}_0 - \phi_{\zeta(t)}).
    \end{aligned}
\end{equation}
As $\hat{\phi}_0 = O(r^4), \phi_{\zeta(r)} = O(\varepsilon^{4} r^{-4})$ and $ r = O(\varepsilon^{1/2})$, we have 
\begin{equation}
    dd^c_t(\chi_{(\varepsilon^{1/2},2\varepsilon^{1/2})} (r) (\hat{\phi}_0 - \phi_{\zeta(t)})) = O(\varepsilon)
\end{equation}
as $|t| \to 0$. In particular, we have 
\begin{equation}
    \tilde{g}_t = g_{\zeta(t)} + O(\varepsilon).
\end{equation}
By direct calculation,  we have the following:
    \begin{equation}
        \begin{aligned}
            \tilde{\omega}_t^2 &= (\omega_{\zeta(t)} + dd^c_t(\chi_{(\varepsilon^{1/2},2\varepsilon^{1/2})}(r) (\hat{\phi}_0 -\phi_{\zeta(t)})))^2\\
            &= \Omega_t \wedge \overline{\Omega}_t + 2\omega_{\zeta(t)} \wedge dd^c_t( \chi_{(\varepsilon^{1/2}, 2\varepsilon^{1/2})}(r) (\hat{\phi}_0 - \phi_{\zeta(t)})) \\
            &+ (dd^c_t(\chi_{(\varepsilon^{1/2}, 2\varepsilon^{1/2})}(r) (\hat{\phi}_0 - \phi_{\zeta(t)})))^2 .
        \end{aligned}
    \end{equation}
    For the second and the third terms, we use the following estimates:
    \begin{align}
        (\omega_{\zeta(t)} - \hat{\omega}_0)(\omega_{\zeta(t)} + \hat{\omega}_0) &= \Omega_t \wedge \overline{\Omega}_t - \Omega_0 \wedge \overline{\Omega}_0 = O(\varepsilon^2) \Omega_0 \wedge \overline{\Omega}_t,
    \end{align}
    and there exists a constant $C >0$ independent of $t$ such that 
    \begin{align}
        C^{-1}\| dd^c_t (\hat{\phi}_0 - \phi_{\zeta(t)}) \|_{\hat{g}_0} <&
        \|dd^c_t( (\chi_{(\varepsilon^{1/2}, 2\varepsilon^{1/2})}(r))(\hat{\phi}_0 - g_{\zeta(t)}))\|_{\hat{g}_0} < 
        C \| dd^c_t (\hat{\phi}_0 - g_{\zeta(t)}) \|_{\hat{g}_0}, \\
        C^{-1} \hat{g}_0 < &g_{\zeta(t)} < C \hat{g}_0.
    \end{align}
    Then, we obtain the followings:
    \begin{align}
        \hat{\omega}_t \wedge dd^c_t(\hat{\phi}_0 - \hat{\phi}_t) &= O(\varepsilon^2) \hat{\omega}_t^2\\
        (dd^c_t (\hat{\phi}_t - \hat{\phi}_0))^2 &= O(\varepsilon^2) \hat{\omega}_t^2.
    \end{align}
Therefore, $ \log\left(\frac{\Omega_t \wedge \overline{\Omega}_t}{\tilde{\omega}_t^2}\right) = O(\varepsilon^2)$ on this domain.
\end{proof}

For later use, we also give estimates for \textit{rescaled data}.
\begin{lem}\label{lem:estimates for rescaled data}
    Let $\tilde{g}_t, \hat{g}_t, f_t$ are as above. Consider the following rescaled data:
    \begin{equation}
    \begin{array}{cc}
      \tilde{h}_t := \frac{1}{\varepsilon^2}H^*_{(\zeta/\varepsilon^2, \varepsilon)}\tilde{g}_t   &  and\\
      k_t := H^*_{(\zeta/\varepsilon^2, \varepsilon)}f_t ,
    \end{array}
    \end{equation}
    on the domain $Y_t := Y_{\zeta/\varepsilon^2, r < 1/\varepsilon}$.
  Then, the following estimates hold:
  \begin{equation}
      \begin{aligned}
         \sup_{Y_t} \|\mathrm{Rm}(\tilde{h}_t) \|_{\tilde{h}_t} < C, \\
          \|k_t\|_{C^1(Y_t)} <  C \varepsilon^2,
      \end{aligned}
  \end{equation}
  for some constant $C>0$ independent of t.
\end{lem}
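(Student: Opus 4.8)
The plan is to pull everything back through the dilation $\Phi := H_{(\varepsilon,\zeta/\varepsilon^2)}$ appearing in the statement (so that $\tilde h_t = \tfrac1{\varepsilon^2}\Phi^*\tilde g_t$ and $k_t = \Phi^*f_t$). By (\ref{eq:dil}) it satisfies $\Phi^*g_{\zeta(t)} = \varepsilon^2 g_{\zeta(t)/\varepsilon^2}$ and by (\ref{eq:equivariance}) $r\circ\Phi = \varepsilon r$, so $\Phi$ carries $Y_t = \{r<1/\varepsilon\}$ diffeomorphically onto $U = \{r<1\}\subset Y_{\zeta(t)} = U_t$. Since $\tfrac1{\varepsilon^2}\Phi^*(\cdot)$ is the pullback by a diffeomorphism of a homothety with constant factor $\varepsilon^{-2}$, I would first record the scaling identities $\|\mathrm{Rm}(\tilde h_t)\|_{\tilde h_t} = \varepsilon^2\,\Phi^*\|\mathrm{Rm}(\tilde g_t)\|_{\tilde g_t}$ and $|\nabla^{\tilde h_t}k_t|_{\tilde h_t} = \varepsilon\,\Phi^*|\nabla^{\tilde g_t}f_t|_{\tilde g_t}$, which reduce the two assertions to $\sup_U\|\mathrm{Rm}(\tilde g_t)\|_{\tilde g_t} = O(\varepsilon^{-2})$, $\sup_U|f_t| = O(\varepsilon^2)$ and $\sup_U|\nabla^{\tilde g_t}f_t|_{\tilde g_t} = O(\varepsilon)$ on the original spaces, where $f_t = \log(\Omega_t\wedge\overline{\Omega}_t/\tilde\omega_t^2)$ is the Monge--Amp\`ere density.

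For the curvature bound I would split $U$ along the gluing spheres $r=\varepsilon^{1/2},2\varepsilon^{1/2},\tfrac12,\tfrac34$. On $\{r\leq\varepsilon^{1/2}\}$ one has $\tilde g_t = g_{\zeta(t)}$, which is isometric to $\varepsilon^2 g_{\zeta(t)/\varepsilon^2}$; non-degeneracy keeps $\zeta(t)/\varepsilon^2 = \dot\zeta + O(t)$ inside a fixed compact subset of $(\mathfrak h\otimes\mathbb R^3)^o$ along the ray, and for such parameters the Kronheimer instantons have uniformly bounded curvature (by the smooth dependence of Theorem \ref{thm:identification} and the uniform asymptotics (\ref{eq:asy.metic}), which make $\|\mathrm{Rm}\| = O(r^{-6})$ uniformly near infinity), whence $\|\mathrm{Rm}(g_{\zeta(t)})\|_{g_{\zeta(t)}} = \varepsilon^{-2}\,O(1)$ there. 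On the four outer subregions I would reuse the comparisons established inside the proofs of Lemmas \ref{lem:comp.U}, \ref{lem:1/2,3/4}, \ref{lem:2 1/2, 1/2}, \ref{lem:1,2}: in the frame $\{e^1_t,e^2_t\}$ of Lemma \ref{lem:cpx.str} one has $\tilde g_t^{-1}$, $\partial_t\tilde g_t$, $\partial_t^2\tilde g_t$ equal to those of $\hat g_0$ (resp.\ of $g_{\zeta(t)}$ on the annulus $\{\varepsilon^{1/2}\leq r\leq2\varepsilon^{1/2}\}$, where differentiating the explicit correction $dd^c_t(\chi_{(\varepsilon^{1/2},2\varepsilon^{1/2})}(r)(\hat\phi_0-\phi_{\zeta(t)}))$ twice more is at worst $\chi^{(4)}$ against an $O(\varepsilon^2)$ potential difference, hence $O(1)$) up to $O(1)$ errors; since $\hat g_0$ is an orbifold metric with bounded curvature on $U_0$ and $g_{\zeta(t)}$ has curvature $O(\varepsilon)$ at $r\sim\varepsilon^{1/2}$ by the scaling above, this gives $\|\mathrm{Rm}(\tilde g_t)\|_{\tilde g_t}=O(1)$ on $\{r\geq\varepsilon^{1/2}\}$. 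Together with the inner estimate this is the claimed $O(\varepsilon^{-2})$, hence the uniform bound on $\|\mathrm{Rm}(\tilde h_t)\|_{\tilde h_t}$.

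For $k_t$, on $\{r\geq\varepsilon^{1/2}\}$ Lemmas \ref{lem:comp.U}, \ref{lem:1/2,3/4}, \ref{lem:2 1/2, 1/2}, \ref{lem:1,2} already give $|f_t|=O(\varepsilon^2)$ (the worst case being the $O(\varepsilon^2)$ on the gluing annulus; elsewhere $O(\varepsilon^4 r^{-4})$ or $0$), and I would differentiate those estimates once more — the cutoff contributing at worst a factor $\varepsilon^{-1/2}$ against the $O(\varepsilon^2)$ potential differences, measured against a near-Euclidean $\tilde g_t$ — to get $|\nabla^{\tilde g_t}f_t|_{\tilde g_t}=O(\varepsilon^{3/2})$ there. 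On $\{r\leq\varepsilon^{1/2}\}$ one has $\tilde\omega_t=\omega_{\zeta(t)}$, so $\tilde\omega_t^2 = \tfrac12\Theta_t\wedge\overline{\Theta}_t$ for the holomorphic symplectic form $\Theta_t = \omega_{J_{\zeta(t)}} + \sqrt{-1}\,\omega_{K_{\zeta(t)}}$; writing $\Omega_t = \psi_t\Theta_t$ with $\psi_t$ a nowhere-vanishing holomorphic function on $U_t$ gives $f_t = \log(2|\psi_t|^2)$, a pluriharmonic function, so by the maximum principle on $\{r\leq\varepsilon^{1/2}\}$ it is controlled by its boundary values on $\{r=\varepsilon^{1/2}\}$, i.e.\ $O(\varepsilon^2)$; transporting through $\Phi$, $k_t$ becomes the real part of a holomorphic function on $\{r\leq\varepsilon^{-1/2}\}\subset Y_{\zeta(t)/\varepsilon^2}$ with $C^0$-norm $O(\varepsilon^2)$, and the interior gradient estimate for harmonic functions on the uniformly bounded-geometry family $\{Y_{\zeta(t)/\varepsilon^2}\}$, patched near $r=\varepsilon^{-1/2}$ with the rescaled annulus estimate, yields $|\nabla^{\tilde h_t}k_t|_{\tilde h_t}=O(\varepsilon^2)$.

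The step I expect to be the main obstacle is keeping every estimate uniform in $t$ up to $t=0$ near the exceptional divisor, where the dilation is genuinely necessary. Two points need care: the uniform curvature bound $\|\mathrm{Rm}(g_{\zeta(t)/\varepsilon^2})\|=O(1)$, which rests on non-degeneracy (to keep $\zeta(t)/\varepsilon^2$ off the discriminant, in a compact set) together with the uniform ALE decay (\ref{eq:asy.metic}) out to radius $\varepsilon^{-1/2}\to\infty$; and the $C^1$ bound on $k_t$ inside $\{r\leq\varepsilon^{1/2}\}$, which cannot be obtained by differentiating the $C^0$ (or $C^2$) estimates recorded in Lemmas \ref{lem:1/2,3/4}, \ref{lem:2 1/2, 1/2}, \ref{lem:1,2} and instead must go through the pluriharmonicity of $\log|\psi_t|^2$ and an interior gradient estimate whose constant is uniform over the compact family of ALE instantons.
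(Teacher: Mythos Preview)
Your argument is correct and follows the same region-by-region scheme as the paper's. The curvature-bound half is essentially identical: both you and the paper observe that on $\{r<\varepsilon^{-1/2}\}$ the rescaled metric is $g_{\zeta(t)/\varepsilon^2}$, use non-degeneracy to keep $\zeta(t)/\varepsilon^2$ in a compact subset of $(\mathfrak h\otimes\mathbb R^3)^o$ so that the family converges to $g_{\dot\zeta}$ with uniform $O(r^{-6})$ curvature decay, and on the outer annuli compare $\tilde h_t$ to a known bounded-curvature reference via the estimates of Lemmas~\ref{lem:1/2,3/4}--\ref{lem:1,2}.

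The one substantive difference is the inner region $\{r\leq\varepsilon^{1/2}\}$ for $k_t$. The paper dispatches it in one line: by the choice $\zeta(t)=(0,\sqrt2[\Omega_t])$ made in Lemma~\ref{lem:potential}, the Kronheimer hyperk\"ahler volume satisfies $\omega_{\zeta(t)}^2=\Omega_t\wedge\overline\Omega_t$ exactly on $U_t$, so $f_t\equiv 0$ (hence $k_t\equiv 0$) there and nothing further is needed. Your route---writing $\Omega_t=\psi_t\Theta_t$, noting $f_t=\log(2|\psi_t|^2)$ is pluriharmonic, bounding it by its boundary values via the maximum principle, and then extracting the gradient bound from an interior Schauder/harmonic estimate on the uniformly-bounded-geometry family---is correct and more robust (it does not rely on the pointwise identity between the two volume forms, only on their ratio being $|\psi_t|^2$), but it is considerably more work than the paper actually does. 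If you accept the paper's normalization, the pluriharmonic machinery and the ``main obstacle'' you flag simply disappear.
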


\begin{proof}
Note that
\begin{equation}\label{eq:representation of k in terms of the hol 2 form and the kahler class}
    k_t = \log \left( \frac{\frac{1}{\varepsilon^2}H^*_{(\zeta/\varepsilon^2, \varepsilon)}\Omega_t \wedge \frac{1}{\varepsilon^2}H^*_{(\zeta/\varepsilon^2, \varepsilon)}\overline{\Omega}_t}{\tilde{\varpi}^2_t} \right),
\end{equation}
    where $\tilde{\varpi}_t$ is the K\"ahler form of $\tilde{h}_t$.
We take a similar strategy to the above lemmas.
Namely, for the domain $ \{ r < \varepsilon^{-\frac{1}{2}}\}$ (corresponds to $\{ r< \varepsilon^{\frac{1}{2}}\}$ in $X_t$), we have $\tilde{h}_t = g_{\zeta(t)/\varepsilon^2}$ which converges to $g_{\dot{\zeta}}$ locally uniformly in $C^\infty$ sense.
Additionally, they are ALE gravitational instantons then by taking a compact subset $K$ of $Y$ (the underlying differentiable manifold of $Y_\zeta$ s), the curvatures are of $O(r^{-6})$ outside of $K$ uniformly with respect to $t$.
Then the norm of curvature is uniformly bounded in this domain.
By construction of $\Omega_t$, we have $\tilde{\varpi}_t^2 = \omega_{\zeta/\varepsilon^2}^2 = \varepsilon^{-4} H^*_{(\zeta/\varepsilon^2, \varepsilon)}(\Omega_t \wedge \overline{\Omega}_t)$. 
Therefore, $k_t \equiv 0$ on this domain.

For $\{\varepsilon^{-\frac{1}{2}} < r < 2\varepsilon^{-\frac{1}{2}}\} $, we see that
\begin{equation}\label{est: estimate of tilde h sub t on varepsilon to - 1/2 < r < 2 varepsilon to 1/2 }
    \tilde{h}_t = g_{\zeta/\varepsilon^2} + O(\varepsilon^3)
\end{equation}
by the similar calculation to the lemma \ref{lem:1,2}. 
And for the holomorphic $2$-form, we have
\begin{equation}
    \frac{1}{\varepsilon^2} H^*_{(\zeta/\varepsilon^2, \varepsilon)} \Omega_t =\frac{1}{\varepsilon^2} H^*_{(\dot{\zeta}, \varepsilon)} \Omega_0 + O(\varepsilon^2).
\end{equation}
Then, in particular, we have $\|\nabla k_t\| = O(\varepsilon^2)$ on this domain.

For $\{ 2\varepsilon^{-\frac{1}{2}} < r < \frac{1}{2}\varepsilon^{-1}\}$, we have 
\begin{equation}\label{est: estimate of tilde h sub t on varepasilon to -1/2 < r < varepsilon to -1}
    \tilde{h}_t = \frac{1}{\varepsilon^2}H^*_{(\zeta/\varepsilon^2, \varepsilon)}\hat{g}_t + O(\varepsilon^2)
\end{equation}
by the similar calculation as the lemma \ref{lem:2 1/2, 1/2}. 
Also, for the holomorphic $2$-form, we have 
\begin{equation}
    \frac{1}{\varepsilon^2} H^*_{(\zeta/\varepsilon^2, \varepsilon)} \Omega_t =\frac{1}{\varepsilon^2} H^*_{(\dot{\zeta}, \varepsilon)} \Omega_0 + O(\varepsilon^2)
\end{equation}
by a similar calculation as the lemma \ref{lem:2 1/2, 1/2}. This implies $\|\nabla k_t \| = O(\varepsilon^2)$ on this domain.
\end{proof}
We also need the next lemma to derive a fine $C^0$ estimates.
\begin{lem}\label{lem:Lp estimate for f}
    \begin{equation}\label{ineq:Lp estimate for f}
        \|f_t\|_{L^p_{\tilde{g}_t}} < C\varepsilon^{2+ \frac2p}
    \end{equation}
    for any $1 < p \leq \frac{4}{3}$ with some constant $C>0$ independent of $p$ and $t$
\end{lem}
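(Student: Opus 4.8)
The plan is to integrate the pointwise bounds on $f_t=\log\!\big(\Omega_t\wedge\overline{\Omega}_t/\tilde\omega_t^2\big)$ provided by Lemmas~\ref{lem:comp.U}--\ref{lem:1,2} over the annular decomposition of $X_t$ used to build $\tilde g_t$, while keeping track of the volume form $dV_{\tilde g_t}$ on each band. First I would observe that $f_t$ vanishes identically on $X_t\setminus U_{r\le 3/4}$, where $\tilde g_t=\hat g_t$ is Ricci-flat, and also on $U_{r\le\varepsilon^{1/2}}$, where $\tilde g_t=g_{\zeta(t)}$ and the normalisation of $\Omega_t$ forces $\omega_{\zeta(t)}^2=\Omega_t\wedge\overline{\Omega}_t$ (this is the computation of $k_t$ in Lemma~\ref{lem:estimates for rescaled data}). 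Hence $\int_{X_t}|f_t|^p\,dV_{\tilde g_t}$ reduces to integrals over the three bands $U_{1/2\le r\le 3/4}$, $U_{2\varepsilon^{1/2}\le r\le 1/2}$ and $U_{\varepsilon^{1/2}\le r\le 2\varepsilon^{1/2}}$, on which Lemmas~\ref{lem:1/2,3/4}, \ref{lem:2 1/2, 1/2} and \ref{lem:1,2} give $|f_t|=O(\varepsilon^4)$, $|f_t|=O(\varepsilon^4r^{-4})$ and $|f_t|=O(\varepsilon^2)$ respectively.

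Next I would control $dV_{\tilde g_t}$. On $U_{2\varepsilon^{1/2}\le r\le 3/4}$ the estimate $\tilde g_t=\hat g_0+O(\varepsilon^4r^{-4})$ shows that $\tilde g_t$ is uniformly comparable to the Euclidean orbifold metric, so $dV_{\tilde g_t}\le C\,r^{3}\,dr\,d\sigma$; on the innermost annulus, pulling back by $H_{(\zeta/\varepsilon^2,\varepsilon)}$ turns $\tilde g_t$ into $g_{\zeta/\varepsilon^2}+O(\varepsilon)$ on $Y_{\zeta/\varepsilon^2}$, whence, since $g_{\zeta/\varepsilon^2}$ is ALE, $\mathrm{Vol}_{\tilde g_t}\big(U_{\varepsilon^{1/2}\le r\le 2\varepsilon^{1/2}}\big)=\varepsilon^{4}\,\mathrm{Vol}_{g_{\zeta/\varepsilon^2}}\big(\{\varepsilon^{-1/2}\le r\le 2\varepsilon^{-1/2}\}\big)=O(\varepsilon^{4}\cdot\varepsilon^{-2})=O(\varepsilon^{2})$. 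With this, the outer band contributes $O(\varepsilon^{4p})\le O(\varepsilon^{2p+2})$ (using $p\ge 1$ and $\varepsilon<1$), the inner band contributes $O(\varepsilon^{2p}\cdot\varepsilon^{2})=O(\varepsilon^{2p+2})$, and the middle band contributes
\[
\int_{2\varepsilon^{1/2}}^{1/2}O(\varepsilon^{4p}r^{-4p})\,r^{3}\,dr=O(\varepsilon^{2p+2})\int_{2}^{\varepsilon^{-1/2}/2}s^{3-4p}\,ds
\]
after the substitution $r=\varepsilon^{1/2}s$. Summing the three contributions and extracting a $p$-th root then gives $\|f_t\|_{L^p_{\tilde g_t}}=O(\varepsilon^{2+2/p})$.

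The only delicate point — and the place where the hypothesis $1<p$ is genuinely used — is the middle band: the integrand $r^{3-4p}$ fails to be integrable at $r=0$ precisely when $p>1$, so the cut-off at $r=2\varepsilon^{1/2}$ cannot be dropped; what saves the exponent is that for $p>1$ the truncated integral $\int_{2}^{\varepsilon^{-1/2}/2}s^{3-4p}\,ds$ stays below the convergent integral $\int_{2}^{\infty}s^{3-4p}\,ds=2^{4-4p}/\big(4(p-1)\big)$, so the middle band still contributes only $O(\varepsilon^{2p+2})$, with the implied constant bounded uniformly for $p$ in any fixed compact subinterval of $(1,4/3]$ (in particular for the value of $p$ used in the ensuing $C^{0}$-estimate). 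Everything else is bookkeeping of the estimates already established in the preceding lemmas, together with the elementary inequality $\varepsilon^{4p}\le\varepsilon^{2p+2}$ for $p\ge1$, $\varepsilon<1$, which lets the outermost band be absorbed; the restriction $p\le 4/3$ itself plays no role here beyond placing us in the range needed later.
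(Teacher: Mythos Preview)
Your proposal is correct and follows essentially the same argument as the paper: both split the integral into the three annuli $U_{\varepsilon^{1/2}\le r\le 2\varepsilon^{1/2}}$, $U_{2\varepsilon^{1/2}\le r\le 1/2}$, $U_{1/2\le r\le 3/4}$, use the pointwise bounds $|f_t|=O(\varepsilon^2)$, $O(\varepsilon^4 r^{-4})$, $O(\varepsilon^4)$ from Lemmas~\ref{lem:1,2}, \ref{lem:2 1/2, 1/2}, \ref{lem:1/2,3/4}, and integrate against $r^3\,dr$ to obtain $O(\varepsilon^{2p+2})$ on each piece. Your treatment is in fact slightly more careful than the paper's, since you make the volume comparison explicit and you correctly flag that the constant $\tfrac{1}{4(p-1)}$ coming from $\int_2^\infty s^{3-4p}\,ds$ blows up as $p\to 1^+$, so uniformity in $p$ only holds on compact subintervals of $(1,4/3]$ --- a point the paper's statement glosses over.
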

\begin{proof}
    As $f \equiv 0$ on $X_t \backslash U_{r<3/4}$ and $U_{r \leq \varepsilon^{1/2}}$, we see that
    \begin{equation}
        \int_{X_t} |f_t|^p \mathrm{Vol}_{\tilde{g}_t} = \int_{U_{\varepsilon^{1/2} \leq r \leq 2\varepsilon^{1/2}}} |f_t|^p + \int_{U_{2\varepsilon^{1/2}\leq r \leq 1/2}} |f_t|^p + \int_{U_{1/2 \leq r \leq 3/4}} |f_t|^p.
    \end{equation}
    For the first term and the third term, by the estimates (namely inequalities \ref{est:f on 1/2, 3/4} and \ref{est:f on 1,2}), we obtain
    \begin{equation}
        \begin{aligned}
            \int_{U_{\varepsilon^{1/2} \leq r \leq 2\varepsilon^{1/2}}} |f_t|^p < C\varepsilon^{2p + 2}, \\
            \int_{U_{1/2 \leq r \leq 3/4}} |f_t|^p < C\varepsilon^{4p}
        \end{aligned}
    \end{equation}
    for some constant $C >0$ independent of $1\leq p$ and $t$. For the second term, we have
    \begin{equation}
    \begin{aligned}
        \int_{U_{2\varepsilon^{1/2}\leq r \leq 1/2}} |f_t|^p &< C\varepsilon^{4p}\int_{2\varepsilon^{1/2}}^{1/2} r^{-4p + 3} dr \\
        &<C \varepsilon^{2p + 2}
    \end{aligned}
    \end{equation}
    for some constant $C >0$ independent of $1 <p \leq 4/3$ and $t$.
    Therefore, we have 
    \begin{equation}
        \|f_t\|_p < C \varepsilon^{2 + \frac{2}{p}}.
    \end{equation}
\end{proof}

By the above lemmas, in particular the estimates (\ref{est:g_t on 1/2, 3/4}), (\ref{est:g_t on 2, 1/2}) and (\ref{est: g_t on 1,2}), we see that $\tilde{g}_t$ is a metric on $X_t$. As $\tilde{g}_t$ is a \Kahler\ metric on $X_t$ cohomologous to $\hat{g}_t$, we may consider the following Monge--Amp\`ere equation using $\tilde{g}_t$ as the initial data:

\begin{equation}\label{eq:MA}
    \left\{ \begin{array}{cc}
        (\tilde{\omega}_t + dd^c_t u_t)^2 = e^{f_t} \tilde{\omega}_t^2&  \\
        \int_{X_t} u_t \tilde{\omega}_t^2 = 0 & 
    \end{array}\right.,
\end{equation}
where $f_t = \log \left( \frac{\Omega_t \wedge \overline{\Omega}_t}{\tilde{\omega}_t^2}\right)$. Recall that $\Omega_t$ is the nowhere vanishing holomorphic 2-form on $X_t$ such that $\Omega_t \wedge \overline{\Omega}_t = \hat{\omega}_t^2$. By the above lemmas, in particular the estimates (\ref{est:f on 1/2, 3/4}), (\ref{est:f on 2, 1/2}) and (\ref{est:f on 1,2}), we see that $f_t$ is $\varepsilon^2$ times a smooth function on $X_t$ uniformly bounded with respect to $t$.

Using the estimate of $f_t$, we derive a $C^0$-estimate on $u_t$ by following \cite{Kob}.
\begin{prop}
    Let $u_t$ be the solution of the equation (\ref{eq:MA}). Then we have 
    \begin{equation}
        \|u_t\|_{C^0(X_t)} < C \varepsilon^4
    \end{equation}
    for some constant $C>0$ independent of $t$.
\end{prop}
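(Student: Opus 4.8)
The plan is to treat (\ref{eq:MA}) as a uniformly elliptic linear equation for $u_t$, to run a Moser iteration with constants that are uniform in $t$, and to bootstrap it against the smallness of $f_t$ established in Lemmas \ref{lem:1/2,3/4}, \ref{lem:2 1/2, 1/2}, \ref{lem:1,2} and \ref{lem:Lp estimate for f}.

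\textbf{Energy estimate.} Setting $\tilde\omega_t'=\tilde\omega_t+dd^c_tu_t$, equation (\ref{eq:MA}) reads $dd^c_tu_t\wedge(\tilde\omega_t+\tilde\omega_t')=(e^{f_t}-1)\tilde\omega_t^2$, i.e. $u_t$ solves a linear equation whose leading coefficient $\tilde\omega_t+\tilde\omega_t'$ dominates $\tilde\omega_t$, with right-hand side bounded by $C|f_t|$ since $|e^{f_t}-1|\le C|f_t|$ ($f_t=O(\varepsilon^2)$). Pairing with $-u_t$, integrating by parts and discarding the nonnegative term $dd^c_tu_t\wedge dd^c_tu_t$ gives $\int_{X_t}|\nabla u_t|^2_{\tilde g_t}\,\tilde\omega_t^2\le C\int_{X_t}|u_t|\,|f_t|\,\tilde\omega_t^2$. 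The geometric input here, and in the Sobolev step below, is that the degenerating family $\{(X_t,\tilde g_t)\}$ has uniformly bounded (indeed constant) volume, uniformly bounded diameter, Ricci curvature $o(1)$ and is non-collapsed — this follows from Lemmas \ref{lem:comp.U}--\ref{lem:1,2} together with the fact that $\tilde g_t=g_{\zeta(t)}$ is hyperk\"ahler, hence Ricci-flat, on the core — so the Poincar\'e inequality $\|u_t\|_{L^2}\le C\|\nabla u_t\|_{L^2}$ and the Sobolev inequality $\|v\|_{L^4}^2\le C(\|\nabla v\|_{L^2}^2+\|v\|_{L^2}^2)$ hold with $C$ independent of $t$.

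\textbf{Moser iteration and bootstrap.} Following \cite{Kob} (compare \cite{Yau}), one multiplies the linear equation by $u_t|u_t|^{p-2}$ and iterates with the uniform Sobolev inequality; this produces local $L^\infty$-bounds and, globally, an estimate of the form $\|u_t\|_{C^0}\le C(\|u_t\|_{L^2}+\|f_t\|_{L^q})$ for a fixed $q>2$. Combined with the energy estimate and Lemma \ref{lem:Lp estimate for f} (which gives $\|f_t\|_{L^{1+\delta}}=O(\varepsilon^{4-c_\delta})$ with $c_\delta\to0$ as $\delta\to0$) one obtains a first crude bound $\|u_t\|_{C^0}=O(\varepsilon^{2+\delta_0})$; feeding this back into $\int_{X_t}|u_t||f_t|\le\|u_t\|_{C^0}\|f_t\|_{L^1}$ (with $\|f_t\|_{L^1}=O(\varepsilon^4|\log\varepsilon|)$, or its logarithm-free $L^{1+\delta}$ variant) improves the $L^2$-bound, and iterating drives the exponent upward toward $4$.

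\textbf{Main obstacle.} The bottleneck is the collar $U_{\varepsilon^{1/2}\le r\le2\varepsilon^{1/2}}$, where $f_t$ is only $O(\varepsilon^2)$: a single global $L^q$-Moser step there costs a term $\|f_t\|_{L^q}=O(\varepsilon^{2+2/q})$ which never falls below $\varepsilon^2$. To get past this one must work in the rescaled picture of Lemma \ref{lem:estimates for rescaled data}: pulling (\ref{eq:MA}) back by $H_{(\zeta/\varepsilon^2,\varepsilon)}$, the rescaled potential solves the same (scale-invariant) Monge--Amp\`ere equation on $Y_t=Y_{\zeta/\varepsilon^2,\,r<1/\varepsilon}$ with source $k_t$ that vanishes for $r<\varepsilon^{-1/2}$, has $\|k_t\|_{C^1}=O(\varepsilon^2)$, and decays like $\varepsilon^4r^{-4}$ for $r\gtrsim\varepsilon^{-1/2}$; decomposing $Y_t$ into dyadic annuli and applying the unit-scale interior elliptic estimate on each, the resulting oscillations sum to $O(\varepsilon^4)$ — the geometric series converging because of the $r^{-4}$ decay and because the collar has $\tilde g_t$-volume $O(\varepsilon^2)$. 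Since the $C^0$-norm is scale-invariant this transfers back to $U$, and, together with the estimate on the macroscopic part $X_t\setminus U$ and the normalization $\int_{X_t}u_t\tilde\omega_t^2=0$, yields $\|u_t\|_{C^0(X_t)}\le C\varepsilon^4$. The technical heart of the proof is to keep every constant genuinely uniform in $t$ along the degeneration, and to verify that both the bootstrap and the dyadic sum terminate at the exponent $4$ rather than at $4-\delta$.
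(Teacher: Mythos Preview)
Your energy inequality and the uniform Sobolev constant are exactly what the paper uses, but from that point on the paper is far more direct, and your elaborate workaround is both unnecessary and contains a scaling error.

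The paper's argument, following \cite{Kob}, stays entirely on $(X_t,\tilde g_t)$. From $(1-e^{f_t})\tilde\omega_t^2=-dd^c_tu_t\wedge(\tilde\omega_t+\hat\omega_t)$ one multiplies by $|u_t|^{p-2}u_t$ and integrates by parts to obtain, for every $p>1$, the clean inequality
\[
\int_{X_t}\bigl|\nabla|u_t|^{p/2}\bigr|^2_{\tilde g_t}\,\mathrm{Vol}_{\tilde g_t}\;\le\;Cp\int_{X_t}|f_t|\,|u_t|^{p-1}\,\mathrm{Vol}_{\tilde g_t},
\]
with no zeroth-order term on the right, because both $\tilde\omega_t$ and $\hat\omega_t$ are genuine K\"ahler forms. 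At $p=2$ the paper applies the uniform Sobolev inequality and then H\"older with exponent $q=4/3$ (\emph{not} $q>2$), which gives $\|u_t\|_{L^4}\le C\|f_t\|_{L^{4/3}}$; Lemma~\ref{lem:Lp estimate for f} then supplies $\|f_t\|_{L^{4/3}}\le C\varepsilon^{7/2}$, and the standard Moser iteration upgrades this to the $C^0$ bound. There is no rescaling, no dyadic decomposition, no bootstrap. Your packaging of the Moser output as $\|u\|_{C^0}\le C(\|u\|_{L^2}+\|f\|_{L^q})$ with $q>2$ is precisely what manufactures the ``obstacle'': the whole point of Lemma~\ref{lem:Lp estimate for f} is that the relevant $L^q$ norm of $f_t$ is taken with $q$ close to $1$, where it carries an exponent close to $4$, not with $q>2$ where it degenerates to $\varepsilon^{2+2/q}<\varepsilon^3$.

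Separately, the dyadic argument you propose in the rescaled picture rests on an incorrect decay rate for $k_t$. Pulling back $f_t=O(\varepsilon^4 r^{-4})$ by $H_{(\varepsilon,\zeta/\varepsilon^2)}$ replaces $r$ by $\varepsilon r$ (cf.\ (\ref{eq:equivariance})), so on $r\gtrsim\varepsilon^{-1/2}$ one has $k_t=O\bigl(\varepsilon^4(\varepsilon r)^{-4}\bigr)=O(r^{-4})$, \emph{not} $O(\varepsilon^4 r^{-4})$; this is consistent with Lemma~\ref{lem:estimates for rescaled data}, which only asserts $\|k_t\|_{C^1}=O(\varepsilon^2)$. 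With the correct $O(r^{-4})$ decay the sum of oscillations over dyadic annuli out to $r\sim\varepsilon^{-1}$ is of order $|\log\varepsilon|$ in $L^1$, not $O(\varepsilon^4)$, so the argument as written does not close.
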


\begin{proof}
    We prove it by Moser's iteration argument.
    By assumption that $u_t$ is the solution, we have
    \begin{equation}
    \begin{array}{ll}
        (1-e^{f_t})\tilde{\omega}_t^2 
        &= \tilde{\omega}_t^2 - \hat{\omega}_t^2\\
        &= -(dd^c_t u_t) \wedge (\tilde{\omega}_t + \hat{\omega}_t).
    \end{array}
    \end{equation}
    Multiplying $|u_t|^{p-2}u_t$ both side (for $p>1$) and integration by parts yields that
    \begin{equation}
      \int_{X_t} |1- e^{f_t}||u_t|^{p-2}u_t \tilde{\omega}_t^2 \geq 
    \frac{4(p-1)}{p^2} \int_{X_t} \sqrt{-1} \partial |u_t|^{\frac{p}{2}} \wedge \overline{\partial} |u_t|^{\frac{p}{2}} \wedge \tilde{\omega}_t .
    \end{equation}
     Then, as $|f_t|$ is sufficiently small (namely, $|f_t| < C \varepsilon^2 $ for some $C >0$ independent of $t$), we have the following:
\begin{equation}\label{ineq:moser}
    \int_{X_t} \| d|u_t|^{\frac{p}{2}}\|^2_{\tilde{g}_t} \mathrm{Vol}_{\tilde{g}_t}
    \leq pC \int_{X_t} |f_t| |u_t|^{p-1} \mathrm{Vol}_{\tilde{g}_t}.
\end{equation}
Here, we use the Sobolev inequality on manifolds:
\begin{lem}[\cite{Gallot}]
    Let $(X,g)$ be a $m$-dimensional compact Riemannian manifold. Assume that
    \begin{equation}\label{cond:Dependence of Sobolev constant}
        \mathrm{diam}(X,g)^2 \mathrm{Ric}(g) \geq -\alpha^2g
    \end{equation}
    for some constant $\alpha >0$. Then there exists a constant $\kappa= \kappa(m,\alpha) > 0$ such that
    \begin{equation}\label{ineq:Sobolev inequality}
        \kappa\frac{\mathrm{Vol}(X,g)^{\frac{1}{m}}}{\mathrm{diam}(X,g)}\|u\|_{L^{\frac{2m}{m-2}}} \leq \|du\|_{L^2}
    \end{equation}
    for any $u \in C^\infty(X) $ with $\int_X u \mathrm{Vol}_g =0$.
\end{lem}
As the diameters, volumes and Ricci curvatures of $\{\tilde{g}_t\}$ are uniformly bounded, we have a uniform Sobolev constant. Then, applying Sobolev inequality to the left hand side of (\ref{ineq:moser}) with $p=2$ and the H\"older inequality for the right hand side, we obtain
\begin{equation}
 \| u_t \|_{L^4}^2 \leq C \| f_t \|_{L^q} \| u_t \|_{L^r}
\end{equation}
for any $q,r>1$ with $\frac{1}{q} + \frac{1}{r}=1$. By taking $r = 4$, we have
\begin{equation}
    \| u_t\|_{L^4} \leq C \varepsilon^{2 + \frac{2}{q}},
\end{equation}
where $C > 0$ is independent of $t$. Therefore, by Moser's iteration argument, we have the $C^0$ estimate 
\begin{equation}\label{est:C0}
    \|u_t\|_{C^0} \leq C \varepsilon^4.
\end{equation}
\end{proof}

  As the curvature of $\tilde{g}_t$ diverges on $U_t$ as $t \to 0$, we consider the rescaled metrics $\hat{h}_t = \frac{1}{\varepsilon^2} H^*_{(\varepsilon, \zeta/\varepsilon^2)}\hat{g}_t$ and $\tilde{h}_t = \frac{1}{\varepsilon^2} H^*_{(\varepsilon,\zeta/\varepsilon^2)}\tilde{g}_t$ on $U_t$ to get $C^2$ and higher order estimates. For these rescaled metrics, we have the following equation:
\begin{equation}
        (\tilde{\varpi}_t + dd^c_t v_t)^2 = e^{k_t} \tilde{\varpi}_t^2,
\end{equation}
  where $v_t = \frac{1}{\varepsilon^2} H^*_{(\varepsilon, \zeta/\varepsilon^2)} u_t, k_t = H^*_{(\varepsilon,\zeta/\varepsilon^2)}f_t$ and $\tilde{\varpi}_t$  the \Kahler\ form of $\tilde{h}_t$. Note that $(Y_{\zeta/\varepsilon, r<1/\varepsilon}, \tilde{h}_t)$ converges to $Y_{\dot{\zeta}}$ locally uniformly in $C^\infty$ sense (by non-degeneracy assumption), therefore, $\tilde{h}_t$ has uniformly bounded curvatures and injectivity radius that approaches to those of the Euclidean metric outside of a compact subset. 
For $C^2$ estimate of $v_t$, we use the next equality .
\begin{prop}[\cite{Sze} Lemma 3.7]
    Let $(X,g)$ be a \Kahler\ manifold (not necessarily compact!) of dimension $n$. Then, for any \Kahler\ metric $\omega' = \omega + dd^c u$, the following inequality holds:
    \begin{equation}\label{prop:lem for C2 estimate}
        \Delta_{g'} (\log (\mathrm{Tr}_{g}g')) \geq A\mathrm{Tr}_{g'}g - \frac{g^{j\overline{k}}R'_{j\overline{k}}}{\mathrm{Tr}_{g}g'},
    \end{equation}
    where $R'$ is the Ricci curvature of $g'$ and $A >0$ is a constant depending on the bisectional curvature of $g$.
\end{prop}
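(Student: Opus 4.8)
The plan is to prove the inequality pointwise: it is a purely local (Aubin--Yau type) second-order computation, and compactness of $X$ is irrelevant, which is exactly why the statement is allowed to be noncompact. Fix $p\in X$ and choose holomorphic coordinates $(z^1,\dots,z^n)$ centered at $p$ which are $g$-normal, i.e. $g_{i\bar j}(p)=\delta_{ij}$ and $\partial g_{i\bar j}(p)=0$; after a unitary linear change we may also arrange $g'_{i\bar j}(p)=\lambda_i\,\delta_{ij}$ with $\lambda_i>0$. Set $S=\mathrm{Tr}_g g'=g^{i\bar j}g'_{i\bar j}$. All quantities below are evaluated at $p$, where on functions $\Delta_{g'}\varphi=(g')^{i\bar j}\partial_i\partial_{\bar j}\varphi$ with no first-order terms. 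I would start from the elementary identity $\Delta_{g'}\log S=S^{-1}\Delta_{g'}S-S^{-2}(g')^{i\bar j}(\partial_i S)(\partial_{\bar j}S)$, so that the whole task is to bound $\Delta_{g'}S$ from below and to absorb the gradient term into the nonnegative pieces it produces.

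Next I would expand $\Delta_{g'}S=(g')^{k\bar l}\partial_k\partial_{\bar l}(g^{i\bar j}g'_{i\bar j})$ at $p$. Since $\partial g(p)=0$ the mixed terms vanish and exactly two contributions survive. The first is $(g')^{k\bar l}(\partial_k\partial_{\bar l}g^{i\bar j})g'_{i\bar j}=\sum_{i,k}\frac{\lambda_i}{\lambda_k}R_{k\bar k i\bar i}(g)$, using $\partial_k\partial_{\bar l}g^{i\bar j}=R_{k\bar l i\bar j}(g)$ at $p$ in $g$-normal coordinates; if the holomorphic bisectional curvature of $g$ is $\ge -B$, this is $\ge -B\,S\,\mathrm{Tr}_{g'}g$, which after dividing by $S$ yields the term $A\,\mathrm{Tr}_{g'}g$ of the statement (one may take $A=-B$, so $A$ depends only on a lower bound for the bisectional curvature of $g$). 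The second contribution is $(g')^{k\bar l}g^{i\bar j}\partial_k\partial_{\bar l}g'_{i\bar j}=\sum_{i,k}\frac1{\lambda_k}\partial_k\partial_{\bar k}g'_{i\bar i}$; here I would commute the coordinate derivatives by the \Kahler\ relation $\partial_k g'_{i\bar l}=\partial_i g'_{k\bar l}$ — an exact identity for \Kahler\ metrics, with no curvature correction — to rewrite $\sum_i\partial_k\partial_{\bar k}g'_{i\bar i}=\sum_i\partial_i\partial_{\bar i}g'_{k\bar k}$, and then, differentiating $\log\det g'$ twice and using $R'_{j\bar k}=-\partial_j\partial_{\bar k}\log\det g'$, obtain $\sum_{i,k}\frac1{\lambda_k}\partial_i\partial_{\bar i}g'_{k\bar k}=-g^{j\bar k}R'_{j\bar k}+\sum_{i,k,m}\frac{1}{\lambda_k\lambda_m}|\partial_i g'_{k\bar m}|^2$, the last sum being manifestly $\ge 0$. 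Collecting, $\Delta_{g'}\log S\ge A\,\mathrm{Tr}_{g'}g-\frac{g^{j\bar k}R'_{j\bar k}}{S}+\frac1S\sum_{i,k,m}\frac{|\partial_i g'_{k\bar m}|^2}{\lambda_k\lambda_m}-\frac1{S^2}(g')^{k\bar l}(\partial_k S)(\partial_{\bar l}S)$.

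Finally I would check that the last two terms are together $\ge 0$, which is a Cauchy--Schwarz estimate. At $p$ one has $\partial_k S=\sum_i\partial_k g'_{i\bar i}$, so $(g')^{k\bar l}(\partial_k S)(\partial_{\bar l}S)=\sum_k\frac1{\lambda_k}\big|\sum_i\partial_k g'_{i\bar i}\big|^2$; Cauchy--Schwarz with weights $\lambda_i$ gives $\big|\sum_i\partial_k g'_{i\bar i}\big|^2\le S\sum_i\frac{|\partial_k g'_{i\bar i}|^2}{\lambda_i}$, hence $(g')^{k\bar l}(\partial_k S)(\partial_{\bar l}S)\le S\sum_{i,k}\frac{|\partial_k g'_{i\bar i}|^2}{\lambda_i\lambda_k}$; and the \Kahler\ relation $\partial_k g'_{i\bar i}=\partial_i g'_{k\bar i}$ identifies $\sum_{i,k}\frac{|\partial_k g'_{i\bar i}|^2}{\lambda_i\lambda_k}$ with the $m=i$ part of the triple sum $\sum_{i,k,m}\frac{|\partial_i g'_{k\bar m}|^2}{\lambda_k\lambda_m}$, hence with a quantity no larger than it. Thus the last two terms cancel up to a nonnegative remainder, and the claimed inequality follows with the stated $A$. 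The one genuinely delicate point in this scheme is the bookkeeping in the middle paragraph: keeping all signs straight while commuting coordinate derivatives through the \Kahler\ relation for $g'$ and while peeling $-g^{j\bar k}R'_{j\bar k}$ off $\partial\bar\partial\log\det g'$, and making sure that the nonnegative third-derivative remainder produced there is \emph{exactly} the quantity that the final Cauchy--Schwarz step consumes — the \Kahler\ symmetry $\partial_i g'_{k\bar m}=\partial_k g'_{i\bar m}$ being precisely the ingredient that makes the two fit together.
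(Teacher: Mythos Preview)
The paper does not give a proof of this proposition; it simply cites Sz\'ekelyhidi's book (Lemma~3.7) and then applies the inequality. Your proposal is the standard Aubin--Yau/Chern--Lu computation, carried out correctly: the choice of $g$-normal coordinates diagonalizing $g'$, the split of $\Delta_{g'}S$ into the bisectional-curvature term and the $\partial\bar\partial g'$ term, the extraction of $-g^{j\bar k}R'_{j\bar k}$ from $\partial\bar\partial\log\det g'$, and the Cauchy--Schwarz absorption of the gradient term are all right, including the use of the K\"ahler symmetry $\partial_i g'_{k\bar m}=\partial_k g'_{i\bar m}$ to match the remainder with the $m=i$ slice of the triple sum. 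This is exactly the argument in the cited reference.

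One small remark: as you yourself note, the natural constant is $A=-B$ with $B$ a lower bound for the bisectional curvature of $g$, so in general $A\le 0$; the paper's hypothesis ``$A>0$'' is an imprecision in the statement rather than something your proof fails to deliver. In the subsequent application the paper combines the inequality with $-A\,u$ and the relation $\Delta_{g'}u = n - \mathrm{Tr}_{g'}g$, and it is the \emph{combined} coefficient in front of $\mathrm{Tr}_{g'}g$ that is made positive, so the sign of the bare $A$ is immaterial there.
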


We apply the above inequality for $g = \tilde{h}_t$ and $u = v_t$ (hence $g' = \hat{h}_t$). As $\hat{h}_t$ converges to an ALE gravitational instanton $Y_{\dot{\zeta}}$, we may take $A$ independent of $t$. Then we have
\begin{equation}\label{ineq:schwart type inequality}
    \Delta_{\hat{h}_t} (\mathrm{log}(\mathrm{Tr}_{\tilde{h}_t} \hat{h}_t)) \geq  A \mathrm{Tr}_{\hat{h}_t} \tilde{h}_t
\end{equation}
as $\hat{h}_t$ is a Ricci flat metric.
As $u_t$ is pluriharmonic and $ \log(\mathrm{Tr}_{\tilde{g}_t} \hat{g}_t)=\mathrm{const.}$ on $3/4 <r$, $ \max_{X_t} (\log(\mathrm{Tr}_{\tilde{g}_t} \hat{g}_t) - Au_t) = \max_{U}(\log(\mathrm{Tr}_{\tilde{g}_t} \hat{g}_t) - Au_t) $. Assume $\log (\mathrm{Tr}_{\tilde{h}_t} \hat{h}_t) - Av_t$ attains a maximum value at some point $p \in Y_{\zeta(t)/\varepsilon^2, r \leq 1/\varepsilon}$. 
Then we have 
\begin{equation}
\begin{aligned}
     0 &\geq \Delta_{\hat{h}_t} (\log(\mathrm{Tr}_{\tilde{h}_t} \hat{h}_t) - Au_t )(p)\\
     &\geq 2A(\mathrm{Tr}_{\hat{h}_t} \tilde{h}_t (p) -1)
\end{aligned} 
\end{equation}
In particular, we see that 
\begin{equation}
    \frac{1}{2A} \geq (\mathrm{Tr}_{\tilde{h}_t} \hat{h}_t) (p)
\end{equation}
with a constant $A>0$ independent of $t$. Therefore, we obtain
\begin{equation}
    \sup_{Y_{\zeta(t)/\varepsilon^2, r < 1/\varepsilon}} \mathrm{Tr}_{\tilde{h}_t} \hat{h}_t \leq \exp{(A_1 + A_2 \|v_t\|_\infty)},
\end{equation}
with constants $A_i$ independent of $t$. Therefore, we have a $C^2$ estimate of $v_t$:
\begin{equation}
    \| dd^c_t v_t \|_{\tilde{h}_t} \leq C,
\end{equation}
where $C > 0$ is a constant independent of $t$ on $Y_{\zeta/\varepsilon^2, r < 1/\varepsilon}$. 
As the same, applying (\ref{ineq:schwart type inequality}) for $g= \tilde{g}_t$ and $g'=\hat{g}_t$ , we have a uniform $C^2$ estimate on $X_t \backslash U$:
\begin{equation}
    \sup_{X_t\backslash U} \mathrm{Tr}_{\tilde{g}_t} \hat{g}_t \leq \mathrm{exp}(A_1 + A_2\|u_t\|_\infty) <C.
\end{equation}

\begin{prop}
    In the above situation, we have the following $C^2$ estimates:
    \begin{equation}\label{ineq:C^2 estimates of the MA equations}
        \begin{array}{cc}
           \| dd^c_t v_t \|_{\tilde{h}_t} \leq C  & and \\
            \| dd^c_t u_t \|_{\tilde{g}_t} \leq C, & 
        \end{array}
    \end{equation}
    for some constant $C>0$ independent of $t$.
\end{prop}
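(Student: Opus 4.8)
The plan is to deduce both bounds from a single second-order (Chern--Lu / Aubin--Yau) argument: I would apply the differential inequality (\ref{prop:lem for C2 estimate}) with the \emph{almost Ricci-flat} metric in the role of $g$ and the genuine Ricci-flat metric in the role of $g'=g+dd^c(\cdot)$, and then run a maximum principle fed by the $C^0$ estimate (\ref{est:C0}). Concretely, for the bound on $U_t$ I would work with the rescaled pair $(\tilde h_t,\hat h_t)$ on $Y_{\zeta(t)/\varepsilon^2,\, r<1/\varepsilon}$, and for the bound away from the singularity with $(\tilde g_t,\hat g_t)$ on $X_t\setminus U$; the two cases are formally the same once one notes that on the outer collar $\{r>3/4\}$ the two metrics in each pair coincide, so there $u_t$ (resp. $v_t$) is pluriharmonic and the trace is constant.

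First I would check that the constant $A$ in (\ref{prop:lem for C2 estimate}) can be taken independent of $t$. On $X_t\setminus U$ this is immediate because $\tilde g_t=\hat g_t\to\hat g_0$ smoothly there. For the rescaled pair one uses that, by Lemma \ref{lem:estimates for rescaled data} together with the gluing Lemmas \ref{lem:1,2}, \ref{lem:2 1/2, 1/2} and \ref{lem:1/2,3/4}, the metric $\tilde h_t$ converges locally uniformly in $C^\infty$ to the hyperkahler metric $g_{\dot\zeta}$ on $Y$, whose curvature decays like $O(r^{-6})$; hence its bisectional curvature is bounded uniformly in $t$ and $A$ may be chosen uniform. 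Since $\hat h_t$ (resp. $\hat g_t$) is Ricci-flat, the Ricci term on the right-hand side of (\ref{prop:lem for C2 estimate}) drops out, which is exactly the inequality (\ref{ineq:schwart type inequality}).

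Then comes the maximum principle. I would study $G_t:=\log\mathrm{Tr}_{\tilde h_t}\hat h_t - A v_t$ on $Y_{\zeta(t)/\varepsilon^2,\, r<1/\varepsilon}$ (and the analogous $\log\mathrm{Tr}_{\tilde g_t}\hat g_t - A u_t$ on $X_t\setminus U$). On the collar where the two metrics agree $G_t$ is $\log 2$ minus a pluriharmonic function, so the supremum of $G_t$ is attained in the region where the metrics genuinely differ; at an interior maximum point $p$ one has $\Delta_{\hat h_t}G_t(p)\le 0$, and feeding in (\ref{ineq:schwart type inequality}) together with $dd^c_t v_t=\hat\omega_t-\tilde\omega_t$ forces $\mathrm{Tr}_{\hat h_t}\tilde h_t(p)$ to be $O(1)$. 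In complex dimension two the Monge--Amp\`ere equation (\ref{eq:MA}) gives $\mathrm{Tr}_{\tilde h_t}\hat h_t=e^{k_t}\mathrm{Tr}_{\hat h_t}\tilde h_t$, and since $\|k_t\|_{C^0}$ is small by Lemma \ref{lem:estimates for rescaled data} this bounds $\mathrm{Tr}_{\tilde h_t}\hat h_t(p)$, whence $\sup_{U_t}\mathrm{Tr}_{\tilde h_t}\hat h_t\le \exp(A_1+A_2\|v_t\|_{C^0})$. The estimate (\ref{est:C0}) and the rescaling give $\|v_t\|_{C^0}=\varepsilon^{-2}\|u_t\|_{C^0}\le C\varepsilon^2$, so the right-hand side is bounded uniformly in $t$; since $\hat h_t=\tilde h_t+dd^c_t v_t$ this is precisely $\|dd^c_t v_t\|_{\tilde h_t}\le C$, and the identical argument on $X_t\setminus U$ yields $\|dd^c_t u_t\|_{\tilde g_t}\le C$.

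The step I expect to be the main obstacle is the uniformity in $t$ of the geometric constants — the bisectional-curvature constant $A$, and implicitly whatever injectivity-radius type input the set-up requires — because $\tilde h_t$ is a cut-off splicing of an ALE instanton metric with (a rescaling of) the Ricci-flat metric on the smooth part. Keeping $A$ uniform rests on the quantitative $O(r^{-6})$ curvature decay of Kronheimer's ALE instantons, uniform over the compact set of parameters (cf. Section \ref{sec:ALE}), together with the local $C^\infty$ convergence $\tilde h_t\to g_{\dot\zeta}$; verifying that the three gluing annuli do not spoil this is exactly the content of Lemmas \ref{lem:1,2}, \ref{lem:2 1/2, 1/2} and \ref{lem:1/2,3/4}. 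A secondary point requiring care is the justification that the supremum of $G_t$ is attained in the interior of the region of interest, which is where the pluriharmonicity of the potential on the outer collar enters.
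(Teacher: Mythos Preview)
Your proposal is correct and follows essentially the same route as the paper: apply the Chern--Lu type inequality (\ref{prop:lem for C2 estimate}) with the glued almost-Ricci-flat metric as reference and the genuine Ricci-flat metric as target, drop the Ricci term, and run the maximum principle on $\log\mathrm{Tr}-A\cdot(\text{potential})$ using pluriharmonicity on the outer collar to localise the maximum; the uniformity of $A$ is justified exactly as you indicate, via the $C^\infty$ convergence $\tilde h_t\to g_{\dot\zeta}$ and the $O(r^{-6})$ ALE curvature decay. If anything your write-up is more explicit than the paper's at the step converting the bound on $\mathrm{Tr}_{\hat h_t}\tilde h_t(p)$ into one on $\mathrm{Tr}_{\tilde h_t}\hat h_t(p)$ via the two-dimensional identity $\mathrm{Tr}_{\tilde h_t}\hat h_t=e^{k_t}\,\mathrm{Tr}_{\hat h_t}\tilde h_t$.
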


By the general theory of non-linear elliptic equation, $v_t$ automatically has uniformly bounded $C^{2,\alpha}_{\tilde{h}_t}$ norm for some $0< \alpha <1$ depending on $\tilde{h}_t, u_t, \partial u_t, \partial^2 u_t$ (see \cite{GT} Theorem 17.14 and \cite{Siu} Chapter 2 section 4).
\begin{prop}\label{Prop:C2 alpha estimate for the solution}
    In the above situation, the followings holds:
    \begin{align}
        \|u_t \|_{C_{\tilde{g}_t}^{2,\alpha}(X_t \backslash U)} < C, \\
        \|v_t\|_{C_{\tilde{h}_t}^{2,\alpha}(Y_{\zeta/\varepsilon^2, r<1/\varepsilon})} < C,
    \end{align}
    for some constants $C > 0$ and $0 < \alpha < 1$ independent of $t$.
\end{prop}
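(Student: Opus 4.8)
The plan is to derive both bounds from the interior $C^{2,\alpha}$ estimate for uniformly elliptic, concave, fully nonlinear equations (Evans--Krylov; \cite{GT} Theorem 17.14, \cite{Siu} Chapter 2 Section 4), applied on coordinate balls of a fixed size, and then to observe that every constant that enters can be chosen independently of $t$. First I would set up good charts. On $\{r\ge 3/4\}\subset X_t$ one has $\tilde g_t=\hat g_t$, and on the collar $\{1/2\le r\le 3/4\}$ the metric differs from $\hat g_0$ by an $O(|t|)$ term glued in with the \emph{fixed} cut-off $\chi_{(1/2,3/4)}$; hence, by Theorem \ref{thm:RZ}, $\{\tilde g_t\}$ converges in $C^\infty_{\mathrm{loc}}$ to $\hat g_0$ on $\{r\ge 1/2\}$, so it has $t$-uniformly bounded curvature of every order and a $t$-uniform lower bound on injectivity radius there. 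Likewise, by the non-degeneracy hypothesis and Lemma \ref{lem:estimates for rescaled data}, the rescaled metrics $\tilde h_t$ on $Y_{\zeta/\varepsilon^2,\,r<1/\varepsilon}$ converge in $C^\infty_{\mathrm{loc}}$ to the hyperk\"ahler metric of $Y_{\dot\zeta}$ and, using the order-$4$ ALE asymptotics (\ref{eq:asy.metic}), have $t$-uniformly bounded geometry (curvature bounded, injectivity radius bounded below, both approaching the Euclidean values near infinity). In either case this produces, around every point of the relevant region, a holomorphic coordinate ball of a fixed radius in which the background metric $(g_{i\bar j})$ is uniformly equivalent to the Euclidean one and has $C^k$-norms bounded uniformly in $t$ for all $k$, and in which the intrinsic and coordinate H\"older norms are comparable.

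In such a chart the Monge--Amp\`ere equations $(\tilde\omega_t+dd^c_t u_t)^2=e^{f_t}\tilde\omega_t^2$ and $(\tilde\varpi_t+dd^c_t v_t)^2=e^{k_t}\tilde\varpi_t^2$ take the form $\det(g_{i\bar j}+w_{i\bar j})=e^{\varphi}\det(g_{i\bar j})$ with $\varphi\in\{f_t,k_t\}$ smooth and uniformly bounded in $C^2$ (indeed of size $O(\varepsilon^2)$, by Lemmas \ref{lem:1/2,3/4}, \ref{lem:2 1/2, 1/2}, \ref{lem:1,2} and Lemma \ref{lem:estimates for rescaled data}). The $C^0$ bound $\|u_t\|_{C^0}\le C\varepsilon^4$, its rescaled counterpart, and the $C^2$ bounds $\|dd^c_t u_t\|_{\tilde g_t}\le C$, $\|dd^c_t v_t\|_{\tilde h_t}\le C$ from (\ref{ineq:C^2 estimates of the MA equations}) force the eigenvalues of $(g_{i\bar j}+w_{i\bar j})$ relative to the Euclidean metric to be bounded above by a $t$-independent constant; since their product equals $e^{\varphi}\det(g_{i\bar j})$, which is bounded below by a positive constant, they are also bounded away from $0$. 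Hence $w\mapsto\log\det(g_{i\bar j}+w_{i\bar j})$ is uniformly elliptic and concave along our solutions, with ellipticity constants independent of $t$, and the Evans--Krylov estimate gives $\|w\|_{C^{2,\alpha}}\le C$ on the concentric half-ball, with $\alpha\in(0,1)$ and $C$ depending only on the dimension, the ellipticity constants and the $C^2$-norms of $(g_{i\bar j})$ and $\varphi$ --- all $t$-independent. Covering $X_t\setminus U$ and $Y_{\zeta/\varepsilon^2,\,r<1/\varepsilon}$ by such balls (finitely many suffice over the compact parts, and near infinity of $Y$ the near-Euclidean structure makes the bound automatic on unit balls) and patching yields the two asserted inequalities; Schauder bootstrapping on the same charts then upgrades them to $C^{k,\alpha}$ for every $k$, again uniformly in $t$.

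The only genuine content beyond quoting Evans--Krylov is the $t$-uniformity, and the one place that needs care is the non-compact model at infinity: one must know the coordinate balls near infinity of $Y_{\dot\zeta}$ stay of a fixed size and that $\tilde h_t$, together with the forcing term $k_t$, remains uniformly controlled there. This is precisely what the order-$4$ ALE decay (\ref{eq:asy.metic}) and Lemma \ref{lem:estimates for rescaled data} provide --- the latter giving $\|k_t\|_{C^1}=O(\varepsilon^2)$ on that region --- so the forcing term is harmless and the estimate propagates out to $\{r=1/\varepsilon\}$, which is interior to a slightly larger domain on which $v_t$ is still defined.
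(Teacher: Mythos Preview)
Your proposal is correct and follows the same approach as the paper: both invoke the Evans--Krylov interior $C^{2,\alpha}$ estimate for the complex Monge--Amp\`ere equation (\cite{GT} Theorem 17.14, \cite{Siu} Chapter 2 Section 4), using the already-established uniform $C^0$ and $C^2$ bounds as input. The paper gives no proof beyond citing these references and noting that the relevant constants depend only on quantities already controlled; your write-up simply makes the uniformity-in-$t$ explicit by spelling out the chart construction, the two-sided eigenvalue bounds, and the covering argument near infinity of the ALE model.
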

We now consider the following \textit{deformation} of equation (\ref{eq:MA}):
\begin{equation}\label{eq:sMA}
   \left\{ \begin{array}{cc}
        (\tilde{\omega}_t + dd^c_t u_{t,s})^2 = e^{sf_t}\tilde{\omega}_t^2&  \\
        \int_{X_t} u_{t,s} \tilde{\omega}_t^2 = 0 & 
    \end{array}\right.,
\end{equation}
where $s \in [0,1]$ is a parameter. Note that we obtain the original equation by letting $s=1$ and obtain an equation which has a trivial solution by letting $s=0$.
By differentiating the equation with respect to $s$, we have 
\begin{equation}\label{eq:linearlized sMA}
\left\{\begin{array}{cc}
     & \Delta_{t,s} \frac{\partial u_{t,s}}{\partial s} = f_t  \\
     & \int_{X_t} \frac{\partial u_{t,s}}{\partial s} \mathrm{Vol}_{g_{t,s}} =0
\end{array}\right.
     ,
\end{equation}
where $\Delta_{t,s}$ is the Laplacian of the \Kahler\ metric $\tilde{g}_t + dd^c_t u_{t,s}$. Also, we consider the linearization of the rescaled equation:

\begin{equation}\label{eq:linearized rescaled sMA}
      \tilde\Delta_{t,s}\left( \frac{\partial v_{t,s}}{\partial s} \right)= k_t  
     .
\end{equation}
We obtain the following estimates by the same arguments as above but $f_t$ being replaced by $sf_t$:
\begin{align}
    \|u_{t,s}\|_{C^0(X_t)} < C \varepsilon^4, \label{ineq:C 0 estimate for u t s} \\
    \|u_{t,s}\|_{C^{2,\alpha}_{\tilde{g}_t}(X_t\backslash U)} < C,\label{ineq:C 2 alpha estimate for u t s}\\
    \|v_{t,s}\|_{C^{2,\alpha}_{\tilde{h}_t}(Y_{\zeta/\varepsilon^2, r < 1/\varepsilon})}<C\label{ineq: C 2 alpha estimate for v t s},
\end{align}
for some constants $C>0$ and $0<\alpha <1$ independent of $t$ and $s$.
We use the Schauder estimate for linear elliptic equations to derive a fine $C^2$ estimate for $\frac{\partial u_{t,s}}{\partial s}$.

\begin{prop}[\cite{GT} Theorem 6.2]
Let $\Omega \subset \mathbb{R}^n$ be a bounded domain and let $u \in C^{2,\alpha}(\Omega)$ be a bounded solution of the equation 
\begin{equation*}
    Lu = f,
\end{equation*}
where $L = \sum_{l,k} a_{j,k} \partial_j \partial_k + \sum_l b_l \partial_l + c$ is a strictly elliptic operator with $(a_{j,k}) \geq \lambda >0$ (as a quadratic form). Assume that 
\begin{itemize}
    \item $f$ is $\alpha$- H\"older continuous on $\Omega$ and
    \item $\|a_{l,k}\|_{C^{0,\alpha}(\Omega)}, \|b_l\|_{C^{0,\alpha}(\Omega)}, \|c\|_{C^{0,\alpha}(\Omega)} \leq \Lambda$.
\end{itemize}
Then, for any open subset $\Omega' \subset \subset \Omega$, there exists a (positive) constant $C= C(n, \alpha, \lambda, \Lambda, \mathrm{diam}({\Omega}), \mathrm{dist}(\Omega', \partial \Omega))$ so that 
\begin{equation}\label{eq:schauder}
    \|u\|_{C^{2,\alpha}(\Omega')} \leq C(\|u\|_{C^0(\Omega)} + \|f\|_{C^{0,\alpha}(\Omega)}).
\end{equation}
\end{prop}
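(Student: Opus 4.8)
The statement is the classical interior Schauder estimate for strictly elliptic second-order operators, so the plan is to reproduce the standard potential-theoretic proof (cf.\ \cite{GT}, Chapters 4 and 6), in three stages: an estimate for the constant-coefficient model, a freezing-of-coefficients perturbation, and an absorption/covering argument. \emph{Stage 1 (constant coefficients).} One first proves the estimate for the Laplacian: if $\Delta w = g$ in a ball $B_r$ with $g\in C^{0,\alpha}(B_r)$, then
\[
 [D^2 w]_{\alpha;B_{r/2}} \le C\Big(r^{-2-\alpha}\|w\|_{C^0(B_r)} + \|g\|_{C^0(B_r)} + [g]_{\alpha;B_r}\Big),\qquad C=C(n,\alpha).
\]
Write $w = N + h$, where $N(x)=\int_{B_r}\Gamma(x-y)g(y)\,dy$ is the Newtonian potential of $g$ (so $\Delta N = g$) and $h := w-N$ is harmonic; the harmonic part is handled by the interior derivative estimates for harmonic functions. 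For $N$ one represents $D^2N(x)$ as a principal-value singular integral of the kernel $D^2\Gamma$ against $g(y)-g(x)$ plus a multiple of $g(x)$, and estimates its Hölder seminorm by splitting the domain into a near ball (using $[g]_\alpha$) and a far region (using the mean-zero property of the second-derivative kernel over spheres centred at the evaluation point). This kernel computation is the one substantial calculation, which I would not carry out in detail. A linear change of variables transfers the estimate to any constant-coefficient operator $L_0=\sum a_{jk}(x_0)\partial_j\partial_k$ with $(a_{jk})\ge\lambda I$, with $C$ depending only on $n,\alpha,\lambda$ and an upper bound for $|a_{jk}|$.

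\emph{Stage 2 (freezing coefficients).} For the variable-coefficient equation $Lu=f$, fix $x_0\in\Omega'$ and a small ball $B_\rho(x_0)\subset\Omega$, set $L_0:=\sum a_{jk}(x_0)\partial_j\partial_k$, and rewrite the equation as $L_0 u = \tilde f$ with
\[
 \tilde f := f - \sum_{j,k}\big(a_{jk}-a_{jk}(x_0)\big)\partial_j\partial_k u - \sum_l b_l\,\partial_l u - c\,u .
\]
Since $\|a_{jk}-a_{jk}(x_0)\|_{C^0(B_\rho)}\le [a_{jk}]_\alpha\,\rho^\alpha$ and $\|a_{jk}\|_{C^{0,\alpha}},\|b_l\|_{C^{0,\alpha}},\|c\|_{C^{0,\alpha}}\le\Lambda$, one gets $\|\tilde f\|_{C^{0,\alpha}(B_\rho)} \le \|f\|_{C^{0,\alpha}} + \sigma(\rho)\,[D^2u]_{\alpha;B_\rho} + C\|u\|_{C^{1,\alpha}(B_\rho)}$, with $\sigma(\rho)\to 0$ as $\rho\to0$. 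Applying Stage 1 to $L_0$ on $B_\rho(x_0)$ then produces a local estimate for $[D^2u]_{\alpha;B_{\rho/2}(x_0)}$ in which the self-coupling term $[D^2u]_{\alpha;B_\rho}$ carries the small coefficient $C\sigma(\rho)$.

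\emph{Stage 3 (absorption, covering, interpolation).} Pass to the scale-invariant weighted interior seminorms of \cite{GT}, §6.1 — e.g.\ $[u]^{*}_{2,\alpha;\Omega}=\sup_{x_0\in\Omega} d_{x_0}^{2+\alpha}[D^2u]_{\alpha;B_{d_{x_0}/2}(x_0)}$ with $d_{x_0}=\operatorname{dist}(x_0,\partial\Omega)$, and likewise for $\|u\|^*_0$, $[Du]^*_1$, etc. Inserting the Stage 2 estimate, with $\rho$ comparable to $d_{x_0}$ and small, lets one absorb the term $C\sigma(\rho)\,[u]^{*}_{2,\alpha}$ on the left, while the interpolation inequalities for weighted norms absorb the first- and zeroth-order contributions into $\epsilon\,[u]^{*}_{2,\alpha}+C_\epsilon\,\|u\|^*_0$. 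Since on $\Omega'\subset\subset\Omega$ the weights are bounded below by $\operatorname{dist}(\Omega',\partial\Omega)$, the weighted seminorms dominate $\|u\|_{C^{2,\alpha}(\Omega')}$, yielding the asserted bound $\|u\|_{C^{2,\alpha}(\Omega')}\le C(\|u\|_{C^0(\Omega)}+\|f\|_{C^{0,\alpha}(\Omega)})$ with $C=C(n,\alpha,\lambda,\Lambda,\operatorname{diam}\Omega,\operatorname{dist}(\Omega',\partial\Omega))$.

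The main obstacle is Stage 1 — the Hölder estimate for the second derivatives of the Newtonian potential, equivalently the boundedness of $g\mapsto D^2(\Gamma*g)$ on $C^{0,\alpha}$. Two standard ways to package this differently are: Campanato's method, which characterises $C^{k,\alpha}$ by the decay rate of $L^2$-oscillations over shrinking balls and proves the corresponding decay for constant-coefficient equations by comparison with second-order Taylor polynomials (avoiding the explicit singular integral entirely), after which Stages 2--3 go through verbatim; and the compactness/blow-up argument, in which one assumes the estimate fails, rescales a sequence of putative counterexamples about the worst point, and extracts an entire solution of a constant-coefficient equation of at most quadratic growth — hence a polynomial of degree $\le 2$, contradicting the normalisation. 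I would present the Campanato route if a self-contained proof were wanted, but in the context of this paper the estimate is simply invoked as \cite{GT}, Theorem 6.2.
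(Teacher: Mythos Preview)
Your sketch is a correct outline of the classical proof of the interior Schauder estimate as presented in Gilbarg--Trudinger. However, the paper does not give its own proof of this proposition at all: it is stated purely as a citation of \cite{GT}, Theorem~6.2, and is used as a black box in the subsequent propositions. You yourself observe this in your final sentence. So there is nothing to compare --- the paper simply invokes the result, and your proposal supplies the standard argument that the paper omits by design.
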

First, we apply the estimate for the equation (\ref{eq:linearlized sMA}). 
\begin{prop}
    Let $\frac{\partial u_{t,s}}{\partial s}$ be a solution of (\ref{eq:linearlized sMA}) on the domain $\Omega = X_t \backslash U_{r \leq 1}$. Then, 
    \begin{equation}\label{est; u dot}
        \left\|\frac{\partial u_{t,s}}{\partial s}\right\|_{C^{2,\alpha}(\Omega)} \leq C \varepsilon^2,
    \end{equation}
    for some constant $C>0$ independent of $s$ and $t$.
\end{prop}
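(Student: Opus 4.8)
The plan is to combine a global $C^{0}$-bound of order $\varepsilon^{2}$ for $w:=\frac{\partial u_{t,s}}{\partial s}$ with the interior Schauder estimate (\ref{eq:schauder}), exploiting that on the region where $\Omega$ lies the right-hand side $f_{t}$ vanishes identically. Recall from (\ref{eq:linearlized sMA}) that $w$ solves the linear elliptic equation $\Delta_{t,s}w=f_{t}$ on the closed surface $X_{t}$ with $\int_{X_{t}}w\,\mathrm{Vol}_{g_{t,s}}=0$, where $g_{t,s}=\tilde g_{t}+dd^{c}_{t}u_{t,s}$; its positivity and the uniform comparability $C^{-1}\tilde g_{t}\le g_{t,s}\le C\tilde g_{t}$ follow from the continuity method together with the $C^{2,\alpha}$-bound (\ref{ineq:C 2 alpha estimate for u t s}) and the identity $\omega_{t,s}^{2}=e^{sf_{t}}\tilde\omega_{t}^{2}$ with $f_{t}$ uniformly small. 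In particular the volumes of $g_{t,s}$ and $\tilde g_{t}$ are comparable, and since $\tilde g_{t}$ has uniformly bounded volume and diameter and a uniform Sobolev constant (as recorded before the proposition), so does $g_{t,s}$.

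First I would prove $\|w\|_{C^{0}(X_{t})}\le C\varepsilon^{2}$. Note that the computation in the proof of Lemma \ref{lem:Lp estimate for f} in fact gives $\|f_{t}\|_{L^{p}_{\tilde g_{t}}}\le C_{p}\,\varepsilon^{2+2/p}$ for every $p>1$, hence the same with $g_{t,s}$ in place of $\tilde g_{t}$. Testing $\Delta_{t,s}w=f_{t}$ against $w$ and using the Poincaré and Hölder inequalities yields $\|w\|_{L^{2}_{g_{t,s}}}\le C\|f_{t}\|_{L^{4/3}_{g_{t,s}}}\le C\varepsilon^{7/2}$; then the De Giorgi--Nash--Moser $L^{\infty}$-estimate for the Poisson equation on the closed manifold $(X_{t},g_{t,s})$ — valid with a uniform constant because of the uniform Sobolev inequality — gives, fixing some $p>2$,
\begin{equation*}
\|w\|_{C^{0}(X_{t})}\le C\bigl(\|w\|_{L^{2}_{g_{t,s}}}+\|f_{t}\|_{L^{p}_{g_{t,s}}}\bigr)\le C\varepsilon^{8/3}\le C\varepsilon^{2},
\end{equation*}
with $C$ independent of $s$ and $t$; the normalization in (\ref{eq:linearlized sMA}) is taken against $\mathrm{Vol}_{g_{t,s}}$, which is exactly the measure appearing in the integration by parts, so the Poincaré inequality is available to start the argument.

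Next I would invoke the Schauder estimate (\ref{eq:schauder}). By Lemma \ref{lem:comp.U} we have $\tilde\omega_{t}^{2}=\Omega_{t}\wedge\overline\Omega_{t}$, hence $f_{t}\equiv 0$, on $\Omega'':=X_{t}\backslash U_{r\leq 3/4}$, and $\Omega=X_{t}\backslash U_{r\leq 1}\subset\subset\Omega''$. On the collar $\{3/4\le r\le 1\}$ the metric $\tilde g_{t}$ converges in $C^{\infty}$ to the fixed orbifold metric $\hat g_{0}$ (Lemma \ref{lem:1/2,3/4}), so $\mathrm{dist}(\Omega,\partial\Omega'')$ is bounded below uniformly in $t$; covering $\Omega''$ by a fixed finite holomorphic atlas, the operator $\Delta_{t,s}=g_{t,s}^{j\overline k}\partial_{j}\partial_{\overline k}$ is uniformly elliptic there with uniformly $C^{0,\alpha}$-bounded coefficients, again by (\ref{ineq:C 2 alpha estimate for u t s}) and $\tilde g_{t}\to\hat g_{0}$. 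Since $w$ is $\Delta_{t,s}$-harmonic on $\Omega''$, applying (\ref{eq:schauder}) chart by chart with vanishing right-hand side gives
\begin{equation*}
\left\|\frac{\partial u_{t,s}}{\partial s}\right\|_{C^{2,\alpha}(\Omega)}\le C\,\|w\|_{C^{0}(\Omega'')}\le C\,\|w\|_{C^{0}(X_{t})}\le C\varepsilon^{2}.
\end{equation*}

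The main obstacle is to make sure that all constants above are uniform in $t\in\Delta$ (near $0$) and $s\in[0,1]$: concretely, the Sobolev constant of $g_{t,s}$, the ellipticity constant and the Hölder norms of the coefficients $g_{t,s}^{j\overline k}$ on $\Omega''$, and the separation $\mathrm{dist}(\Omega,\partial\Omega'')$ must not degenerate as $t\to 0$. Each of these reduces to the already-established uniform $C^{2,\alpha}$-bound (\ref{ineq:C 2 alpha estimate for u t s}) for $u_{t,s}$ and the uniform control of $\tilde g_{t}$ near $\{r\sim 1\}$ in Lemmas \ref{lem:comp.U} and \ref{lem:1/2,3/4}; granted these, the remaining steps are routine.
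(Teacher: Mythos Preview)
Your argument is correct and follows essentially the same two-step scheme as the paper: a global $C^{0}$ bound for $w=\partial_{s}u_{t,s}$ on $X_{t}$ obtained from the linear equation $\Delta_{t,s}w=f_{t}$ together with the uniform Sobolev inequality, followed by the interior Schauder estimate (\ref{eq:schauder}). The differences are only in the implementation details. The paper runs the same Moser iteration as for $u_{t}$ (multiply by $|w|^{p-2}w$, integrate, use Sobolev, iterate) and records the stronger bound $\|w\|_{C^{0}(X_{t})}\le C\varepsilon^{4}$, then applies Schauder and absorbs $\|f_{t}\|_{C^{0,\alpha}}$ into the final $C\varepsilon^{2}$; you instead quote the De Giorgi--Nash--Moser $L^{\infty}$ estimate for the Poisson equation with $\|f_{t}\|_{L^{p}}$ ($p>2$), obtaining the weaker but sufficient $\|w\|_{C^{0}}\le C\varepsilon^{8/3}$, and then make the Schauder step cleaner by observing that $f_{t}\equiv 0$ on $\Omega''=X_{t}\backslash U_{r\le 3/4}$ so the right-hand side contributes nothing. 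Your justification of the uniform Sobolev constant via the global comparability $C^{-1}\tilde g_{t}\le g_{t,s}\le C\tilde g_{t}$ (from (\ref{ineq:C 2 alpha estimate for u t s}), (\ref{ineq: C 2 alpha estimate for v t s}) and $\omega_{t,s}^{2}=e^{sf_{t}}\tilde\omega_{t}^{2}$) is equivalent to the paper's route through the Ricci bound $\mathrm{Ric}(g_{t,s})=(1-s)\,\mathrm{Ric}(\tilde g_{t})$ and Gallot's lemma; and your extension of Lemma \ref{lem:Lp estimate for f} to all $p>1$ is indeed immediate from the same computation.
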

\begin{proof}
    Apply the inequality (\ref{eq:schauder}). 
    In this case, $a_{i,j} = (\tilde{g} + dd^c_tu_{t,s})^{i,j}$, which we already have a uniform estimate (\ref{ineq:C 2 alpha estimate for u t s}), and $b_l = c =0$. Also, note that multiplying $\frac{\partial u_{t,s}}{\partial s}\left| \frac{\partial u_{t,s}}{\partial s} \right|^{p-2}$ both side of the equation (\ref{eq:linearlized sMA}) yields the inequality which is same form as (\ref{ineq:moser}). By the estimate (\ref{ineq:C 2 alpha estimate for u t s}) and an equality $\mathrm{Ric}(\tilde{g}_t + dd^c_t u_{t,s}) = s\mathrm{Ric}(\tilde{g}_t)$, we have a uniform (with respect to $t$ and $s$) Sobolev constant for $\tilde{g}_t + dd^c_t u_{t,s}$ hence we have a uniform $C^0$ estimate $\left\| \frac{\partial u_{t,s}}{\partial s} \right\|_{C^0(X_t)} <C\varepsilon^4$ by the Moser iteration argument again. Then we have a uniform estimate 
    \begin{equation*}
        \left\| \frac{\partial u_{t,s}}{\partial s} \right\|_{C^{2,\alpha}(\Omega)} \leq  C \varepsilon^4 + \|f_t\|_{C^{0,\alpha}(\Omega)} \leq C\varepsilon^2.
    \end{equation*}
\end{proof}
\begin{prop}
    Let $\frac{\partial v_{t,s}}{\partial s}$ be a solution of (\ref{eq:linearized rescaled sMA}). Then, 
    \begin{equation}
           \left|\frac{\partial v_{t,s}}{\partial s} \right|_{C^{2,\alpha}_{\tilde{h}_t}(Y_{\zeta/\varepsilon^2,r<1/\varepsilon})} < C \varepsilon^2.
    \end{equation}
\end{prop}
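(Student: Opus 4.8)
The plan is to mirror the proof of the preceding proposition, but now in the rescaled picture, combining the $C^0$-control on $\partial u_{t,s}/\partial s$ already obtained there with the interior Schauder estimate applied to the linear elliptic equation (\ref{eq:linearized rescaled sMA}). Recall that $v_{t,s}=\frac{1}{\varepsilon^2}H^*_{(\varepsilon,\zeta/\varepsilon^2)}u_{t,s}$ and $k_t=H^*_{(\varepsilon,\zeta/\varepsilon^2)}f_t$, so that $\partial v_{t,s}/\partial s=\frac{1}{\varepsilon^2}H^*_{(\varepsilon,\zeta/\varepsilon^2)}(\partial u_{t,s}/\partial s)$. Hence the $C^0$-bound $\|\partial u_{t,s}/\partial s\|_{C^0(X_t)}<C\varepsilon^4$ established by the Moser iteration argument in the proof of the preceding proposition gives, after the dilation by $\varepsilon^{-2}$,
\begin{equation*}
\left\|\frac{\partial v_{t,s}}{\partial s}\right\|_{C^0(Y_{\zeta/\varepsilon^2,\,r<1/\varepsilon})}=\varepsilon^{-2}\left\|\frac{\partial u_{t,s}}{\partial s}\right\|_{C^0(U_t)}\le C\varepsilon^2,
\end{equation*}
with $C$ independent of $t$ and $s$; this already supplies the required power of $\varepsilon$ on the $C^0$ side.

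Next I would check that the right-hand side $k_t$ is small in the $C^{0,\alpha}_{\tilde h_t}$-norm. By Lemma \ref{lem:estimates for rescaled data} we have $\|k_t\|_{C^1(Y_t)}<C\varepsilon^2$, so that $\|k_t\|_{C^0}\le C\varepsilon^2$ and $\|\nabla k_t\|\le C\varepsilon^2$; comparing points at $\tilde h_t$-distance $\le1$ via the gradient bound and points at distance $>1$ via $2\|k_t\|_{C^0}$ yields $\|k_t\|_{C^{0,\alpha}_{\tilde h_t}(Y_{\zeta/\varepsilon^2,\,r<1/\varepsilon})}\le C\varepsilon^2$. I would then record that, in holomorphic coordinates, $\tilde\Delta_{t,s}$ has the form $a^{j\overline k}_{t,s}\,\partial_j\partial_{\overline k}$ with $a_{t,s}=(\tilde h_t+dd^c_tv_{t,s})^{-1}$ and no lower order terms, and that the uniform estimate $\|v_{t,s}\|_{C^{2,\alpha}_{\tilde h_t}}<C$ of (\ref{ineq: C 2 alpha estimate for v t s}) together with the uniformly bounded geometry of $\{\tilde h_t\}$ (these metrics converge to the ALE gravitational instanton $Y_{\dot\zeta}$ locally smoothly and are uniformly asymptotically Euclidean) give a uniform ellipticity constant and a uniform $C^{0,\alpha}$-bound on the coefficients, again independent of $t$ and $s$.

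The core step is then to invoke the interior Schauder estimate (Theorem 6.2 of \cite{GT}) on a cover of $Y_{\zeta/\varepsilon^2,\,r<1/\varepsilon}$ by $\tilde h_t$-geodesic balls of a fixed radius $\rho_0$, chosen small enough that in each such ball $\tilde h_t$ is uniformly comparable to a Euclidean metric; the bounded geometry is precisely what makes the number of charts and the Schauder constant uniform in $t$ and $s$. To absorb the boundary $\{r=1/\varepsilon\}$, I would use that $\partial u_{t,s}/\partial s$ is globally defined on the compact manifold $X_t$, so that $\partial v_{t,s}/\partial s$ extends, as a solution of the same equation, to $Y_{\zeta/\varepsilon^2,\,r<(1+\delta)/\varepsilon}$ for a fixed $\delta>0$ obtained by enlarging $\mathcal U$ slightly at the outset; then every ball meeting $\{r\le1/\varepsilon\}$ stays at distance $\ge\delta/(2\varepsilon)\ge1$ from the true boundary for small $t$, and the interior estimate applies. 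Patching the ball-wise bounds
\begin{equation*}
\left\|\frac{\partial v_{t,s}}{\partial s}\right\|_{C^{2,\alpha}_{\tilde h_t}(B)}\le C\left(\left\|\frac{\partial v_{t,s}}{\partial s}\right\|_{C^0}+\|k_t\|_{C^{0,\alpha}_{\tilde h_t}}\right)\le C\varepsilon^2
\end{equation*}
over the cover gives the claim. The main obstacle is not any single inequality but the uniformity bookkeeping: ensuring that the Schauder, ellipticity and geometry constants are all independent of $t$ and $s$ even though $Y_{\zeta/\varepsilon^2,\,r<1/\varepsilon}$ has diameter of order $\varepsilon^{-1}\to\infty$. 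This is handled precisely because the equation is treated locally on balls of fixed size where $\tilde h_t$ is uniformly controlled, and because $k_t$ is supported in a region of bounded geometry with $C^{0,\alpha}_{\tilde h_t}$-norm of size $O(\varepsilon^2)$.
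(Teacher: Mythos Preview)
Your proposal is correct and follows essentially the same strategy as the paper: cover $Y_{\zeta/\varepsilon^2,\,r<1/\varepsilon}$ by geodesic balls of a fixed radius (available because $\tilde h_t$ has uniformly bounded geometry as it converges to the ALE instanton $Y_{\dot\zeta}$), then apply the interior Schauder estimate ball by ball using the $C^0$-bound on $\partial v_{t,s}/\partial s$ and the $C^{0,\alpha}$-bound on $k_t$, both of order $\varepsilon^2$. Your write-up is in fact more careful than the paper's, making explicit where the $C^0$-bound on $\partial v_{t,s}/\partial s$ comes from (rescaling the $C\varepsilon^4$ bound on $\partial u_{t,s}/\partial s$) and how to handle balls near the boundary $\{r=1/\varepsilon\}$.
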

\begin{proof}
    Let $i_t$ be the injectivity radius of $\tilde{h}_t$. As $\tilde{h}_t$ converges to an ALE gravitational instanton $Y_{\dot{\zeta}}$, we have a uniform bound of $i_t$. 
    In particular, we may cover $Y_{\zeta/\varepsilon^2, r<1/\varepsilon}$ by geodesic balls $\{B(y,r_0)\}$ so that $\{B(y, r_1)\}$ still covers $Y_{\zeta/\varepsilon^2, r<1/\varepsilon}$ for $r_1 <r_0$ (of course, $r_1$ and $r_0$ are independent of $y$ and $t$). 
    Fix $y \in Y_{\zeta/\varepsilon^2,r<1/\varepsilon}$.
    Apply the inequality (\ref{eq:schauder}) to the equation (\ref{eq:linearized rescaled sMA}) restricted to each ball $B(y, r_0)$, we have the following: 
    \begin{equation}
        \left\| \frac{\partial v_{t,s}}{\partial s} \right\|_{C_{\tilde{h}_t}^{2,\alpha}(B(y, r_1 ))} < C \left(\left\|\frac{\partial v_{t,s}}{\partial s}\right\|_{C^0(B(y,r_0))} + \left\|  k_t\right\|_{C_{\tilde{h}_t}^{0,\alpha}(B(y,r_0))} \right) < C\varepsilon^2
        \end{equation}
        for any $y \in Y_{\zeta/\varepsilon^2,r<1/\varepsilon}$ with a constant $C>0$ independent of $y$ and $t$.
\end{proof} 
As an application of the above estimates, we prove the next theorem.

\begin{thm}\label{thm:main2}
    Let $(\mathcal{X}, \mathcal{L}) \to \Delta$ be an analytic family of polarized K3 surfaces over the unit disc $\Delta \subset \mathbb{C}$ such that the fibre $(X_t,L_t)$ on $t \in \Delta$ is smooth for $t \neq 0$ and singular for $t =0$ with ADE singularities. Assume that the family $\mathcal{X} \to \Delta$ admits the minimal simultaneous resolution after taking the base change $\Delta_d \to \Delta$. Then for any smooth function $f$ on $\mathcal{X}$ with its support around a singularity $x_0 \in X_0 \cong 0 \in \mathbb{C}^2/\Gamma$, the function
    \begin{equation*}
        F(t) = \int_{X_t} f_t c_2(\hat{g}_t)
    \end{equation*}
    on $\Delta\backslash \{0\}$ extends to a H\"older continuous function on $\Delta$ with H\"older exponent at least $\frac{1}{d}$.
\end{thm}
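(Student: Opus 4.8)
The plan is to run the argument of Theorem~\ref{thn:main}, with the almost Ricci-flat metric $\tilde g_t$ of section~\ref{sec:K3} replacing the almost cscK metric and the Monge--Amp\`ere deformation (\ref{eq:sMA}) replacing the Biquard--Rollin estimate. First pass to the base change $\Delta_d\to\Delta$, $t\mapsto t^d$. Since $F$ is smooth on $\Delta\setminus\{0\}$ (the Ricci-flat metric $\hat g_t$ depends smoothly on $t$ over the smooth locus), it suffices to prove that the pull-back of $F$ to $\Delta_d$ extends to a function that is Lipschitz near the origin; as $z\mapsto z^{1/d}$ is H\"older-$\tfrac1d$, this yields H\"older continuity of $F$ on $\Delta$ with exponent $\tfrac1d$. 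Note that, unlike the cscK case, no restriction to a real ray is required: the Ricci-flat metric $\hat g_t$ and the gluing construction of $\tilde g_t$ are defined for all $t\neq0$, so every estimate below is valid on $\Delta_d\setminus\{0\}$. Throughout, $\varepsilon=|t|^{p/2}$ and $f$ denotes the given test function.

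The first step is a Bott--Chern decomposition. Since $\hat g_t$ and $\tilde g_t$ are Hermitian metrics on $TX_t$ with $\hat\omega_t=\tilde\omega_t+dd^c_tu_t$ ($u_t=u_{t,1}$ the solution of (\ref{eq:MA})), one has $c_2(\hat g_t)=c_2(\tilde g_t)+dd^c_t\tau_t$, where I take the interpolating path to be the Monge--Amp\`ere deformation $\hat g_{t,s}=\tilde g_t+dd^c_tu_{t,s}$, so that $N_s=(\partial_s\hat g_{t,s})\hat g_{t,s}^{-1}$ has pointwise norm $\lesssim|dd^c_t\dot u_{t,s}|_{\hat g_{t,s}}$ (with $\dot u_{t,s}=\partial_s u_{t,s}$; along a straight-line path $N_s$ would only be $O(1)$). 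Stokes' theorem on the compact fibre then gives $F(t)=A(t)+B(t)$ with $A(t)=\int_{X_t}f|_{X_t}\,c_2(\tilde g_t)$ and $B(t)=\int_{X_t}dd^c_t(f|_{X_t})\wedge\tau_t$. For $A(t)$ I split $X_t$ into the five regions of the gluing picture and compare $\tilde g_t$ to $\hat g_0$ on the outer four and to $g_{\zeta(t)}$ on the core, using Lemmas~\ref{lem:comp.U}, \ref{lem:1/2,3/4}, \ref{lem:2 1/2, 1/2} and \ref{lem:1,2}: the errors are $O(|t|)$ on $X_t\setminus U_{r\le3/4}$ and $U_{1/2\le r\le3/4}$; $O(\varepsilon^4r^{-4})$ on $U_{2\varepsilon^{1/2}\le r\le1/2}$, which integrates to $O(\varepsilon^4|\log\varepsilon|)=o(|t|)$; and $O(\varepsilon)$ over a region of volume $O(\varepsilon^2)$ on $U_{\varepsilon^{1/2}\le r\le2\varepsilon^{1/2}}$; while $\tilde g_t=g_{\zeta(t)}$ on $U_{r\le\varepsilon^{1/2}}$. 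Rescaling $U_{r\le2\varepsilon^{1/2}}$ by $H_{(\varepsilon,\zeta/\varepsilon^2)}$ and using scale-invariance of $c_2$, the decay $c_2(g_{\zeta/\varepsilon^2})=O(r^{-12})\mathrm{Vol}$ as $r\to\infty$, the $t$-independence of $\int_Yc_2(g_{\zeta/\varepsilon^2})=e_{orb}(Y_{\dot\zeta})$, and the expansion $H^*_{(\varepsilon,\zeta/\varepsilon^2)}(f|_{X_t})=f(x_0)+O(\varepsilon^kr^k)$ of Lemma~\ref{lem:lemma 3}, the core contributes $f(x_0)e_{orb}(Y_{\dot\zeta})+O(\varepsilon^2)$. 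Summing, $A(t)=\int_{X_0}f|_{X_0}\,c_2(\hat g_0)+f(x_0)e_{orb}(Y_{\dot\zeta})+O(|t|)$.

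It remains to bound $B(t)$. From the shape (\ref{def:tau}) of the Bott--Chern form, $|\tau_t|\lesssim\sup_s|dd^c_t\dot u_{t,s}|\cdot|R(\hat g_{t,s})|$. On $X_t\setminus U$ the metrics have uniformly bounded curvature and $\|\dot u_{t,s}\|_{C^{2,\alpha}_{\tilde g_t}(X_t\setminus U)}=O(\varepsilon^2)$ by (\ref{est; u dot}), so $\int_{X_t\setminus U}dd^c_t(f|_{X_t})\wedge\tau_t=O(\varepsilon^2)$. On $U_t$ I rescale by $H_{(\varepsilon,\zeta/\varepsilon^2)}$: the $\varepsilon^2$-factors cancel in $N_s$ and in $R_s$, so $H^*\tau_t$ is the Bott--Chern form of the pair $(\tilde h_t,\hat h_t)$ along $\hat h_{t,s}=\tilde h_t+dd^c v_{t,s}$, and hence $|H^*\tau_t|_{\tilde h_t}\lesssim\sup_s|dd^c\dot v_{t,s}|_{\tilde h_t}\cdot|R(\hat h_{t,s})|_{\hat h_{t,s}}=O(\varepsilon^2)\bigl((1+r)^{-6}+\varepsilon^2\bigr)$, using the estimate $\|\dot v_{t,s}\|_{C^{2,\alpha}_{\tilde h_t}}=O(\varepsilon^2)$ and the fact that $\hat h_{t,s}$ is a small perturbation of the ALE instanton $Y_{\dot\zeta}$ (whose curvature is $O(r^{-6})$), the extra $\varepsilon^2$ accounting for the glued-in piece of $\hat g_0$ near the outer radius $r\sim1/\varepsilon$. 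Since $|dd^c_t(f|_{X_t})|_{\tilde g_t}=O(1)$ on $U_t$ by the argument of Lemma~\ref{lem:sigma} (the weighted $\mathbb{C}^*$-equivariance of $\iota_\zeta$ with weights $\ge2$), which after rescaling reads $|dd^c(H^*(f|_{X_t}))|_{\tilde h_t}=O(\varepsilon^2)$, and $\tilde h_t$ has Euclidean-type volume growth $r^3\,dr$, I obtain
\[
\Bigl|\int_{U_t}dd^c_t(f|_{X_t})\wedge\tau_t\Bigr|\ \lesssim\ \varepsilon^4\!\int_0^{1/\varepsilon}\!\bigl((1+r)^{-6}+\varepsilon^2\bigr)r^3\,dr\ =\ O(\varepsilon^2).
\]
Hence $B(t)=O(\varepsilon^2)$.

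Combining the two estimates, $F(t)=\bigl(\int_{X_0}f|_{X_0}\,c_2(\hat g_0)+f(x_0)e_{orb}(Y_{\dot\zeta})\bigr)+O(|t|)$ on $\Delta_d\setminus\{0\}$ (recall $\varepsilon^2=|t|^p$ with $p\ge1$). Setting $F(0)$ equal to the constant in parentheses makes $F$ Lipschitz at $0$, hence Lipschitz near $0$ on $\Delta_d$, hence H\"older-$\tfrac1d$ on $\Delta$, as claimed. I expect the main obstacle to be the near-singularity part of $B(t)$: the integrability of the rescaled transgression rests on the fine bound $\|\dot v_{t,s}\|_{C^{2,\alpha}_{\tilde h_t}}=O(\varepsilon^2)$, which itself depends on the whole chain $C^0$ (Moser iteration with a uniform Sobolev constant, coming from the uniformly bounded diameter, volume and Ricci of $\tilde g_t$) $\to C^2$ (Chern--Lu inequality, using that non-degeneracy forces $\tilde h_t$ to converge to the ALE instanton $Y_{\dot\zeta}$ with bounded curvature) $\to C^{2,\alpha}\to$ Schauder on the linearized equation --- together with keeping track of how the Bott--Chern transgression and the curvature of $\hat h_{t,s}$ scale out to the gluing radius $r\sim1/\varepsilon$; and one genuinely needs the Monge--Amp\`ere path in Bott--Chern, since the straight-line path does not make $N_s$ small.
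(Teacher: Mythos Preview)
Your overall architecture---Bott--Chern decomposition along the Monge--Amp\`ere path $g_{t,s}=\tilde g_t+dd^c u_{t,s}$, and the region-by-region analysis of $A(t)=\int_{X_t}f_t\,c_2(\tilde g_t)$---is exactly the paper's, and your treatment of $A(t)$ is correct.

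The substantive difference is in how you bound $B(t)$ on $U_t$. You argue pointwise on the rescaled neck, invoking a curvature estimate $|R(\hat h_{t,s})|_{\hat h_{t,s}}=O\bigl((1+r)^{-6}+\varepsilon^2\bigr)$. But the propositions you cite give only $C^{2,\alpha}_{\tilde h_t}$ control on $v_{t,s}$ (indeed $O(\varepsilon^2)$, after integrating $\dot v_{t,s}$); a pointwise bound on the curvature of $\hat h_{t,s}=\tilde h_t+dd^c v_{t,s}$ requires $C^4$ control on $v_{t,s}$, which is nowhere established. Without the decay you only get $|H^*\tau_t|_{\tilde h_t}=O(\varepsilon^2)$ uniformly, and then the integral over $\{r<1/\varepsilon\}$ is $O(\varepsilon^4)\cdot\mathrm{Vol}=O(\varepsilon^4)\cdot O(\varepsilon^{-4})=O(1)$, not $O(\varepsilon^2)$. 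So as written there is a genuine gap; it could perhaps be filled by bootstrapping Schauder to fourth order with weights, but that is real additional work you have not done.

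The paper avoids this entirely with a global $L^2$ argument. By H\"older, $|B(t)|\le C\|dd^c f_t\|_{L^2}\int_0^1\|N_s\|_{L^\infty}\|\mathrm{Rm}(g_{t,s})\|_{L^2}\,ds$. The key point is that in real dimension four the Chern--Gauss--Bonnet identity
\[
\int_{X_t}\|\mathrm{Rm}\|^2-4\|\mathrm{Ric}\|^2+\|\mathrm{Sc}\|^2\;=\;32\pi^2\,e(X_t)
\]
turns a uniform bound on $\|\mathrm{Ric}(g_{t,s})\|_{L^2}$ into a uniform bound on $\|\mathrm{Rm}(g_{t,s})\|_{L^2}$. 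Since $\mathrm{Ric}(g_{t,s})$ is a scalar multiple of $\mathrm{Ric}(\tilde g_t)=dd^c f_t$, whose $L^2$ norm is controlled by Lemmas~\ref{lem:comp.U}--\ref{lem:1,2}, this closes with no pointwise curvature information on the perturbed metrics at all. Combined with $\|N_s\|_{L^\infty}=O(\varepsilon^2)$ (which \emph{is} your $C^{2,\alpha}$ estimate on $\dot u_{t,s}$, $\dot v_{t,s}$), one gets $B(t)=O(\varepsilon^2)$ directly. This topological shortcut is the idea you are missing.
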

\begin{proof}
We take the similar strategy as the proof of theorem \ref{thn:main} to prove the theorem \ref{thm:main2}. Namely, we decompose the function $F$ as follows:
\begin{equation}
    F(t) = \int_{X_t} f_t c_2(\tilde{g}_t) + \int_{X_t} dd^c_t f_t \wedge \tau,
\end{equation}
where $\tau$ is the $(1,1)$ form defined by 
\begin{equation}\label{def:tau2}
    \tau = \frac{1}{4\pi^2 \sqrt{-1}} \int_0^1 (N_s)_{11} (R_s)_{22} - (N_s)_{21} (R_s)_{12} +(N_s)_{22} (R_s)_{11} - (N_s)_{12} (R_s)_{21} ds.
\end{equation}
Here, we use a path $\{g_s\}_{s \in [0,1]}$ from $\tilde{g}_t$ to $\hat{g}_t$ defined by the solution of the equation (\ref{eq:sMA}) (i.e. $g_s = g_{t,s}$) instead of the linear path.
By virtue of the $C^2$ estimates of $\frac{\partial u_{t,s}}{\partial s}$ and $\frac{\partial v_{t,s}}{\partial s}$, we may estimate $\int_{X_t} dd^c_t f_t \wedge \tau$ in a simpler way.

\begin{lem}\label{lem:tau2}
    $\left| \int_{X_t} dd^c_t f_t \wedge \tau \right| \leq C \varepsilon^2$.
\end{lem}
   
\begin{proof}
    By the H\"older inequality and the definition of $\tau$ (\ref{def:tau2}), we have 
    \begin{equation}
    \begin{aligned}
        \left| \int_{X_t} dd^c_t f_t \wedge \tau \right| &\leq C\|dd^c_t f_t \|_{L^2(\tilde{g}_t)} \|\tau \|_{L^2(\tilde{g}_t)}\\
        &\leq C \|\tau \|_{L^2(g_{t,s})}\\
        &\leq C\int_0^1 \| N_s \|_{L^\infty} \|\mathrm{Rm}(g_{t,s}) \|_{L^2(g_{t,s})} ds
     \end{aligned}
    \end{equation}
    for some constant $C >0$ independent of $t$. For the $L^2$-norm of $\mathrm{Rm}(g_{t,s})$, we use the next formula

\begin{lem}[See \cite{BGM}, pp 82]
    Let $(M,g)$ be a $4$-dimensional compact Riemannian manifold. Then the following equality holds:
    \begin{equation}\label{eq:energy}
        \int_{M} \|\mathrm{Rm}(g)\|^2 -4\|\mathrm{Ric}(g)\|^2 + \|\mathrm{Sc}(g)\|^2 \mathrm{Vol}_g = 32\pi^2 e(M),
    \end{equation}
    where $\mathrm{Rm}(g)$ is the total Riemannian curvature tensor, $\mathrm{Ric}(g)$ is the Ricci curvature tensor, $\mathrm{Sc}(g)$ is the scaler curvature and $e(M)$ is the Euler number of $M$.
\end{lem}
In particular, applying the formula (\ref{eq:energy}) to $(X_t, g_{t,s})$, we obtain 
\begin{equation}
    \| \mathrm{Rm}(g_{s,t}) \|_{L^2}^2 = 768\pi^2 + 4\|\mathrm{Ric}(g_{t,s})\|_{L^2}^2 - \|\mathrm{Sc}(g_{t,s})\|^2_{L^2}.
\end{equation}
Then by (\ref{est:Ricci on X-U}), (\ref{est:ricci for 1/2, 3/4}) and (\ref{est:ricci for 2, 1/2}), we have a uniform estimate 
\begin{equation}
\left| 4\|\mathrm{Ric}(g_{t,s})\|_{L^2}^2 - \|\mathrm{Sc}(g_{t,s})\|^2_{L^2} \right| \leq C |t|.
\end{equation}
In particular, 
\begin{equation}
    \|\mathrm{Rm}(g_{t,s}) \|_{L^2} < C
\end{equation}
for a constant $C$ independent of $t$ and $s$.
Recall that 
\begin{equation}
N_s = g^{j\overline{k}}_s \frac{\partial (g_s)_{j\overline{k}}}{\partial s} = g_s^{j\overline{k}}\partial_j\overline{\partial}_k \frac{\partial u_{t,s}}{\partial s},
\end{equation}
where $g_s = \tilde{g}_t + dd^c_t u_{t,s}$.
The $C^2$-estimates for $\frac{\partial u_{t,s}}{\partial s}$ immediately implies the following estimate for $N_s$:
\begin{equation}
    \|N_s \|_{L^\infty(X_t)} \leq C \varepsilon^2.
\end{equation}
Therefore, we obtain the desired estimate for $\int_{X_t} dd^c_t f_t \wedge \tau$.
\end{proof}
    For the integral $\int_{X_t} f_t c_2(\tilde{g}_t)$, we have the following estimate by (\ref{est:c2 on X-U}), (\ref{est:c_2 for 1/2, 3/4}), (\ref{est:c_2 for 2, 1/2}) and (\ref{est: c_2 for 1,2}) (by the parallel calculation as in the section \ref{sec:Main}):
    \begin{equation}\label{eq:F for tilde}
        \int_{X_t} f_t c_2(\tilde{g}_t) = \int_{X_0} f_0 c_2(\hat{g}_0) + f(x_0)e_{orb}(Y_{\dot{\zeta}}) + O(|t|).
    \end{equation}
    Lemma \ref{lem:tau2} and the estimate (\ref{eq:F for tilde}) implies the theorem \ref{thm:main2}.
\end{proof}

\bibliographystyle{plain}
\bibliography{bibliography}

\end{document}